\documentclass[11pt]{article}

\usepackage[T1]{fontenc}
\usepackage[utf8]{inputenc}
\usepackage{amsmath,amssymb,amsthm,mathtools,mathrsfs}
\usepackage{aliascnt}
\usepackage[margin=1in]{geometry}
\usepackage{microtype}
\usepackage{enumitem}
\usepackage{booktabs,tabularx}
\usepackage{etoolbox}
\usepackage{xcolor}
\usepackage[most]{tcolorbox}
\usepackage{placeins}
\usepackage{needspace}
\usepackage{tikz}
\usetikzlibrary{arrows.meta,calc,positioning}
\usepackage{pgfplots}
\pgfplotsset{compat=1.18}
\usepackage[intoc,norefeq,norefpage]{nomencl}
\usepackage[
  colorlinks=true,
  linkcolor=blue!55!black,
  citecolor=blue!55!black,
  urlcolor=blue!55!black,
  pdftitle={The Gromov--Hausdorff Distance Between Consecutive Spheres},
  pdfauthor={Donghan Kim, Sunhyuk Lim, Facundo M\'emoli}
]{hyperref}
\usepackage[nameinlink,capitalise]{cleveref}
\usepackage[normalem]{ulem}

\newtheorem{letteredtheorem}{Theorem}

\newenvironment{maintheorem}[1][]{\begin{letteredtheorem}[#1]}{\end{letteredtheorem}}
\crefname{letteredtheorem}{Theorem}{Theorems}
\Crefname{letteredtheorem}{Theorem}{Theorems}
\newaliascnt{lemma}{theorem}
\newtheorem{lemma}[lemma]{Lemma}
\aliascntresetthe{lemma}
\newaliascnt{proposition}{theorem}
\newtheorem{proposition}[proposition]{Proposition}
\aliascntresetthe{proposition}
\newaliascnt{corollary}{theorem}
\newtheorem{corollary}[corollary]{Corollary}
\aliascntresetthe{corollary}
\newaliascnt{remark}{theorem}
\newtheorem{remark}[remark]{Remark}
\aliascntresetthe{remark}
\newaliascnt{definition}{theorem}
\newtheorem{definition}[definition]{Definition}
\aliascntresetthe{definition}

\newcommand{\R}{\mathbb{R}}
\newcommand{\Sp}{\mathbb{S}}
\newcommand{\dis}{\operatorname{dis}}

\newcommand{\dgh}{d_{\mathrm{GH}}}

\makenomenclature

\renewcommand{\nomgroup}[1]{\item[\bfseries
  \ifstrequal{#1}{A}{Metric spaces and correspondences}{\ifstrequal{#1}{B}{The anchored--chord construction}{\ifstrequal{#1}{C}{Pairwise comparisons and the admissible region}{\ifstrequal{#1}{D}{Spherical caps and the upper boundary}{Auxiliary notation}}}}]}

\DeclareRobustCommand{\nomdefpage}[1]{\textit{Defined on }\hyperref[#1]{p.~\pageref*{#1}}}
\DeclareRobustCommand{\nomdefpages}[2]{\textit{Defined on }\hyperref[#1]{p.~\pageref*{#1}}\textit{ and }\hyperref[#2]{p.~\pageref*{#2}}\textit{, respectively}}
\DeclareRobustCommand{\nomresult}[1]{\textit{See }\hyperref[#1]{\Cref*{#1}, p.~\pageref*{#1}}}
\DeclareRobustCommand{\nomdefresults}[2]{\textit{Defined in }\hyperref[#1]{\Cref*{#1}, p.~\pageref*{#1}}\textit{ and }\hyperref[#2]{\Cref*{#2}, p.~\pageref*{#2}}\textit{, respectively}}

\newcommand{\acConstructionHeading}{\section{Geometry of the canonical gain}\label{sec:canonical-anchored-chord-all-n}}
\newcommand{\acEstimatesHeading}{\section{Same-anchor comparisons and the lower
  \texorpdfstring{\((Q,R)\)-boundary}{(Q,R)-boundary}}\label{sec:anchored-chord-estimates}}
\newcommand{\acUpperBoundaryHeading}{\section{Spherical-cap reduction and the upper
  \texorpdfstring{\((Q,R)\)-boundary}{(Q,R)-boundary}}\label{sec:anchored-chord-upper-boundary}}
\newcommand{\acAssemblyHeading}{\section{Remaining cases and proof of
  \texorpdfstring{\Cref{thm:main}}{the consecutive-sphere theorem}}\label{sec:anchored-chord-assembly}}
\newcommand{\acSharpDistortionEquation}{\eqref{eq:main-distortion}}
\newcommand{\acMainResultEquation}{\eqref{eq:main-result}}
\newcommand{\acLowerBoundArgument}{The reverse inequality is \Cref{prop:adjacent-lower-bound}.}

\title{The Gromov--Hausdorff Distance Between Consecutive Spheres}
\author{Donghan Kim\thanks{Department of Mathematical Sciences, Korea Advanced
  Institute of Science and Technology (KAIST), Daejeon, Republic of Korea.
  Email: \href{mailto:patrick6231@kaist.ac.kr}{\texttt{patrick6231@kaist.ac.kr}}.}
  \and
  Sunhyuk Lim\thanks{Department of Mathematics, Sungkyunkwan University
  (SKKU), Suwon, Republic of Korea. Email:
  \href{mailto:lsh3109@skku.edu}{\texttt{lsh3109@skku.edu}}.}
  \and
  Facundo M\'emoli\thanks{Department of Mathematics, Rutgers, The State
  University of New Jersey, Piscataway, NJ, USA. Email:
  \href{mailto:facundo.memoli@gmail.com}{\texttt{facundo.memoli@gmail.com}}.}}

\date{\today}

\begin{document}

\maketitle

\begin{abstract}

We determine the Gromov--Hausdorff distance between consecutive unit round
spheres equipped with their geodesic metrics. Put $\zeta_n:=\arccos(-\tfrac{1}{n+1}),$
the common geodesic distance between distinct vertices of a regular simplex
with \(n+2\) vertices inscribed in \(\Sp^n\). 

\smallskip
We prove that
\[
 d_{\mathrm{GH}}(\Sp^n,\Sp^{n+1})=\frac{\zeta_n}{2}
 \qquad(n\geq1),
\]
resolving a conjecture of Lim, Mémoli, and Smith. All cases \(n\geq4\)
were previously open. 

\smallskip
This equality is established by explicitly constructing  a family of correspondences
\(\mathcal R_n\subseteq \Sp^{n+1}\times \Sp^n\), whose distortion matches the known quantitative Borsuk--Ulam lower bound $\zeta_n$.

\smallskip
We also introduce synchronized spherical joins and suspensions of
correspondences and prove that the distortion of a join is exactly the
maximum of the distortions of its factors. In particular, suspension
preserves distortion. Applying these join and suspension operations to
the optimal correspondences \(\mathcal R_n\) yields new
bounds for spheres of nonconsecutive dimensions, including
\[
 \lim_{m\to\infty}
 d_{\mathrm{GH}}\bigl(\Sp^m,\Sp^{m+d(m)}\bigr)
 =\frac{\pi}{4}
 \qquad\text{whenever } d(m)\geq1,\ \text{and }d(m)=o(m).
\]

\end{abstract}

\noindent\textbf{2020 Mathematics Subject Classification.}
Primary 53C23; secondary 51F30, 55M20.

\noindent\textbf{Keywords.}
Gromov--Hausdorff distance, round spheres, metric distortion,
correspondences, Borsuk--Ulam theorem, spherical joins.

\newpage
\tableofcontents

\newpage
\section{Main results and proof strategy}
\label{sec:introduction}

Recall that a correspondence between compact metric spaces \(X\) and \(Y\)
is a relation \(\mathcal R\subseteq X\times Y\) whose two coordinate
projections are surjective.  Its distortion is
\phantomsection\label{def:correspondence-distortion}
\[
 \dis(\mathcal R)
 :=\sup_{(x,y),(x',y')\in\mathcal R}
 \left|d_X(x,x')-d_Y(y,y')\right|.
\]
For \((x,y),(x',y')\in\mathcal R\), we call
\(\left|d_X(x,x')-d_Y(y,y')\right|\) their \emph{metric defect}.  We
usually drop the qualifier and call this quantity their \emph{defect}.
Moreover,
\begin{equation}\label{eq:gh-correspondence-formula}
 \dgh(X,Y)=\frac12\inf_{\mathcal R}\dis(\mathcal R).
\end{equation}
For a function \(f\colon X\to Y\), we write \(\dis(f)\) for the
distortion of its graph.

\paragraph{{The consecutive-sphere problem.}}
For each \(n\geq1\), we equip the unit round sphere
\(\Sp^n\subset\R^{n+1}\) with its geodesic metric
\phantomsection\label{def:geodesic-metric}
\[
 d_n(x,x'):=\arccos(x\cdot x')
\]
and  put
\begin{equation}\label{eq:intro-zeta}
 \zeta_n:=\arccos\left(-\frac1{n+1}\right);
\end{equation}
this is the geodesic distance between any two distinct vertices of a
regular simplex with \(n+2\) vertices inscribed in \(\Sp^n\).

Via a \emph{quantitative} Borsuk-Ulam principle, Lim, M\'emoli and Smith proved
\(\dgh(\Sp^m,\Sp^{n})\geq\tfrac{\zeta_m}{2}\) for all $m<n$
\cite[Theorem~B]{lim2021gromov} and conjectured that equality always holds when $n = m+1$
\cite[Conjecture~1]{lim2021gromov}.  Our principal result proves their
conjecture.

\begin{maintheorem}\label{thm:main}
 For every integer \(n\geq1\),
\begin{equation}\label{eq:main-result}
 \dgh(\Sp^n,\Sp^{n+1})
 =\frac{\zeta_n}{2}
 =\frac12\arccos\left(-\frac1{n+1}\right).
\end{equation}
Furthermore, for every integer \(n\geq1\), the relation \(\mathcal R_n\) constructed in
\Cref{def:anchored-chord-relation} is a correspondence between
\(\Sp^{n+1}\) and \(\Sp^n\) satisfying
\begin{equation}\label{eq:main-distortion}
 \dis(\mathcal R_n)=\zeta_n.
\end{equation}
\end{maintheorem}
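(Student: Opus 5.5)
The equality splits into two inequalities. The lower bound \(\dgh(\Sp^n,\Sp^{n+1})\geq\zeta_n/2\) is the quantitative Borsuk--Ulam bound of Lim--M\'emoli--Smith \cite[Theorem~B]{lim2021gromov}, which is stated earlier and which I take as given; I would also record it for later use as \Cref{prop:adjacent-lower-bound}. By \eqref{eq:gh-correspondence-formula}, the lower bound gives \(\dis(\mathcal R)\geq\zeta_n\) for every correspondence \(\mathcal R\) between \(\Sp^{n+1}\) and \(\Sp^n\). So the whole content of the theorem is the upper bound. For that I need one explicit correspondence \(\mathcal R_n\subseteq\Sp^{n+1}\times\Sp^n\) with \(\dis(\mathcal R_n)\leq\zeta_n\). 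Equality in \eqref{eq:main-distortion} then follows from the lower bound.

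For the construction I would use a regular simplex. Fix vertices \(v_0,\dots,v_{n+1}\) of a regular simplex in \(\Sp^n\), with pairwise distance \(\zeta_n\). Embed \(\Sp^n\) as the equator of \(\Sp^{n+1}\) with poles \(\pm e\). A point \(x\in\Sp^{n+1}\) with \(x\neq\pm e\) has a nearest-point projection \(\pi(x)\in\Sp^n\). The map should be close to the identity near the equator. Near the poles it should fold each hemisphere onto \(\Sp^n\) in an \emph{anchored} way: the polar cap is pushed out along chords toward anchor points, which are the simplex vertices, so that antipodal-ish configurations are absorbed by the simplex geometry.

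Concretely, I would parametrize \(x=(\cos t)\,u+(\sin t)\,e\) with \(u\in\Sp^n\) and \(t\in[-\pi/2,\pi/2]\). I would relate \(x\) to those \(y\in\Sp^n\) obtained by moving from \(u\) toward an anchor, the vertex nearest \(-u\) or \(u\) depending on the sign of \(t\), by an amount governed by a ``gain'' function of \(|t|\). The gain is calibrated so that the pole \(e\) corresponds to the whole Voronoi cell of the anchors. This makes the relation a correspondence: surjectivity onto \(\Sp^n\) is clear from the equator, and surjectivity onto \(\Sp^{n+1}\) is built in. Using a relation rather than a function gives room at the poles and on Voronoi boundaries.

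The distortion estimate is a case analysis on pairs \((x,y),(x',y')\in\mathcal R_n\). First, I would reduce the defect \(|d_{n+1}(x,x')-d_n(y,y')|\) to a two-variable problem. Set \(Q=x\cdot x'\) and \(R=y\cdot y'\), and characterize the admissible region of \((Q,R)\). The claim \(\dis\leq\zeta_n\) becomes the statement that this region lies between the curves \(\arccos R=\arccos Q\pm\zeta_n\).

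The cases are organized by anchors. For \emph{same-anchor} pairs (both points in one hemisphere using the same vertex), the gain makes the chord motion 1-Lipschitz-like. This should give the lower boundary \(\arccos R\geq\arccos Q-\zeta_n\) by monotonicity in the heights \(t,t'\).

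For different-anchor or opposite-hemisphere pairs, \(d_n(y,y')\) could exceed \(d_{n+1}(x,x')\) by a lot. I would bound this by reducing to a spherical cap: show the worst case has \(x,x'\) on a common great circle through \(e\). Then \(\arccos R-\arccos Q\leq\zeta_n\) follows from the fact that two distinct vertices are exactly \(\zeta_n\) apart, together with a convexity inequality in the cap angle. The remaining cases are pairs involving the poles, Voronoi-boundary points, and equatorial points. I would handle these by a limiting or continuity argument, since \(\mathcal R_n\) should be closed.

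The main obstacle is the upper \((Q,R)\)-boundary, i.e.\ showing that \(d_n(y,y')\leq d_{n+1}(x,x')+\zeta_n\) when the anchors differ. The difficulty is that the bound is sharp and must hold uniformly in \(n\). My first attempt would be to choose the gain function so that the extremal inequality is tight exactly along a one-parameter family, then verify it by a derivative or monotonicity argument in the cap angle, reducing everything to a one-variable trigonometric inequality involving \(\cos\zeta_n=-1/(n+1)\).
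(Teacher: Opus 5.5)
Your lower-bound step and overall architecture (simplex anchors, normalized chord motion, the $(Q,R)$-region, same- versus distinct-anchor cases) match the paper. But the upper bound, which is the whole content of the theorem, is never carried out, and some of the concrete choices you make fail.

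\emph{The pole fiber.} The fiber over a pole consists of targets paired with a single source point, so its diameter must be at most $\zeta_n$. In the paper it is exactly $\{v_1,\dots,v_{n+2}\}$, which also gives $\dis(\mathcal R_n)\ge\zeta_n$. If ``the pole corresponds to the whole Voronoi cell'' means the fiber contains a cell $M_i$, the distortion is already at least the cell diameter $\eta_n$, and $\eta_n>\zeta_n$ for $n\ge2$.

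\emph{The other hemisphere.} Where you anchor at ``the vertex nearest $-u$'', the other pole is again paired with simplex vertices. Some $v_k$ is then paired with both $e$ and $-e$, a defect of $\pi$. The anchors there must be the antipodes $-v_k$. The paper builds $\mathcal R_n^+$ on the closed upper hemisphere and extends antipodally. The helmet trick (\Cref{lem:relation-helmet-trick}) then makes opposite-hemisphere comparisons free, whereas you list them as a hard case.

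\emph{The gain.} The gain is never specified, and making the bound tight on a one-parameter family can only fix it on $[0,\alpha_n^\ast]$. Sharpness on the equal-level swapped-anchor pairs $(p_\alpha(-v_j),z_i(r))$, $(p_\alpha(-v_i),z_j(r))$ forces $r_n(\alpha)=\sin\Theta_n(\alpha)/\sin(\widehat\zeta_n-\Theta_n(\alpha))$. Beyond $\alpha_n^\ast$ the prescribed target distance $\zeta_n+2\Theta_n(\alpha)$ exceeds $\pi$, so tightness is impossible there. The paper needs a second principle, reciprocity $r_n(\alpha)\,r_n(F_n(\alpha))=1$ under the critical involution $F_n$, to define the gain on $(\alpha_n^\ast,\widehat\zeta_n)$. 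The target then reaches $x$ at $\widehat\zeta_n$, and an identity band follows.

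\emph{The case analysis.} You prove only one of the two inequalities in each anchor case, and both are needed. With a common anchor, targets lie in $M_i$, whose diameter exceeds $\zeta_n$ for $n\ge2$, and source distances can also exceed $\zeta_n$. So neither direction is automatic. A ``1-Lipschitz-like'' chord motion would give $d_{\mathrm{tar}}\le d_{\mathrm{src}}$, not the inequality $d_{\mathrm{tar}}\ge d_{\mathrm{src}}-\zeta_n$ you attribute to it. It is also false for the canonical gain: along the meridian through $-v_j\in M_i$, the displacement $\vartheta_n$ has left derivative $c_n/\rho_n=(n+1)c_n>1$ at $\widehat\zeta_n$. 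For $n\ge2$ the paper gets both directions at once from the isometric embedding $\iota_{v_i}$ and the pointwise bound $d_{n+1}(p_\alpha(x),\iota_{v_i}H_{\alpha,i}(x))\le\widehat\zeta_n/2$.

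For distinct anchors, both ranges are nontrivial.
\begin{itemize}
\item The range $Q\ge\rho_n$ requires $d_{\mathrm{tar}}\le d_{\mathrm{src}}+\zeta_n$. This is sharp on the swapped-anchor family. The paper proves it with Ptolemy's inequality, the two-level inequality $\vartheta_n(\alpha)+\vartheta_n(\beta)\le D_n^\times(\alpha,\beta)$, an exact polynomial factorization, and concavity arguments split into several regimes.
\item The range $Q\le-\rho_n$ requires $d_{\mathrm{tar}}\ge d_{\mathrm{src}}-\zeta_n$, which you never address. This is where the spherical-cap argument actually enters. The paper proves $R\le1+\rho_n+Q$ by reducing a maximization over two caps to the boundary $v_i\cdot x=\rho_n$, and then to a one-variable concave problem.
\end{itemize}

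Finally, your reduction to configurations with $x,x'$ on a common great circle through $e$ comes with no mechanism. The defect equals $\zeta_n$ along the off-meridian family $p_\alpha(-v_j),p_\alpha(-v_i)$ for every $0\le\alpha\le\alpha_n^\ast$. Any such reduction would have to be exactly tight there, which is the original difficulty.
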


All cases \(n\geq4\) are new. The cases \(n=1\) and \(n=2\) of \Cref{eq:main-result} were established
by Lim, M\'emoli and Smith
\cite[Propositions~1.16 and~1.19]{lim2021gromov}.  The case \(n=3\) was
subsequently proved by Rodr\'iguez Mart\'in
\cite[Theorem~1.5]{rodriguezmartin2024novel}.  Our construction works
uniformly for every \(n\geq1\).  It therefore supplies a single formula
for an optimal correspondence in all dimensions, including alternative
proofs in the three previously known cases \(n=1,2,3\).

\paragraph{Spherical joins and bounds for nonconsecutive spheres.}
The optimal correspondences in \Cref{thm:main} can also be combined to
obtain bounds for spheres of nonconsecutive dimensions.  The relevant operation is the
following join of correspondences, whose exact distortion formula is our
second main result.  We use the isometric realization
\[
 \Sp^{m_1}*\Sp^{m_2}
 =
 \left\{
  (x_1\cos t,x_2\sin t):
  x_i\in\Sp^{m_i}\ (i=1,2),\ 0\leq t\leq\tfrac{\pi}{2}
 \right\}
 \cong\Sp^{m_1+m_2+1}.
\]

\begin{definition}[Synchronized spherical join]
\label{def:synchronized-spherical-join}
Let
\[
 \mathcal R^{(i)}\subseteq\Sp^{m_i}\times\Sp^{n_i},
 \qquad i=1,2,
\]
be correspondences.  Their \emph{synchronized spherical join} is
\begin{equation}\label{eq:synchronized-join-definition}
 \mathcal R^{(1)}*\mathcal R^{(2)}
 :=\bigl\{((x_1\cos t,x_2\sin t),(y_1\cos t,y_2\sin t)):
 0\leq t\leq\tfrac{\pi}{2},\ (x_i,y_i)\in\mathcal R^{(i)}\ (i=1,2)\bigr\}.
\end{equation}
We call \(t\) the \emph{join parameter} of the displayed element.  Thus
\(\mathcal R^{(1)}*\mathcal R^{(2)}\) is a relation between
\(\Sp^{m_1+m_2+1}\) and \(\Sp^{n_1+n_2+1}\).
\end{definition}

The use of the same join parameter in the source and target is essential
for the exact distortion formula below.

A correspondence \(\mathcal R\subseteq\Sp^m\times\Sp^n\) is
\emph{antipodal} if \((x,y)\in\mathcal R\) implies
\((-x,-y)\in\mathcal R\).

\begin{maintheorem}[Exact distortion of spherical joins]
\label{thm:exact-join-distortion}
Let \(k\geq2\), and for \(1\leq i\leq k\), let
\(\mathcal R^{(i)}\subseteq\Sp^{m_i}\times\Sp^{n_i}\) be a
correspondence.  Then their iterated synchronized spherical join,
formed by successive application of
\Cref{def:synchronized-spherical-join}, is a correspondence and
\begin{equation}\label{eq:iterated-join-distortion}
 \dis(\mathcal R^{(1)}*\cdots*\mathcal R^{(k)})
 =\max_{1\leq i\leq k}\dis(\mathcal R^{(i)}).
\end{equation}
If every \(\mathcal R^{(i)}\) is antipodal, then so is their iterated
join.
\end{maintheorem}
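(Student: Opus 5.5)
By induction on \(k\), it suffices to treat the case \(k=2\), together with the antipodality statement. The iterated join is a join of two correspondences at each stage, and \(\max\) is associative. So fix \(\mathcal R^{(1)},\mathcal R^{(2)}\) and write \(D_i:=\dis(\mathcal R^{(i)})\) and \(D:=\max(D_1,D_2)\).

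\emph{Surjectivity and antipodality.} Given a point \((x_1\cos t,x_2\sin t)\) of the source join, choose \(y_i\) with \((x_i,y_i)\in\mathcal R^{(i)}\); this produces a related target point. The target side is symmetric. At \(t=0\) or \(t=\pi/2\) the coordinate that gets multiplied by \(0\) can be chosen arbitrarily, so these boundary points cause no trouble. For antipodality, replacing \((x_i,y_i)\) by \((-x_i,-y_i)\) sends a related pair \((p,q)\) to \((-p,-q)\).

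\emph{Lower bound.} Restricting to join parameter \(t=0\) gives pairs \(((x_1,0),(y_1,0))\), and \(d\) between two such points equals \(d_{m_1}(x_1,x_1')\), and likewise in the target. Hence \(\dis\ge D_1\). Taking \(t=\pi/2\) gives \(\dis\ge D_2\).

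\emph{Upper bound (main step).} Take two elements with parameters \(t,s\) and data \((x_i,y_i)\), \((x_i',y_i')\). Put \(\alpha_i=d(x_i,x_i')\) and \(\beta_i=d(y_i,y_i')\), so that \(|\alpha_i-\beta_i|\le D_i\le D\). The source distance is \(\Phi(\alpha_1,\alpha_2):=\arccos(\cos t\cos s\cos\alpha_1+\sin t\sin s\cos\alpha_2)\), and the target distance is \(\Phi(\beta_1,\beta_2)\) with the \emph{same} function \(\Phi\), which is exactly where synchronization is used. So it suffices to show that for fixed \(t,s\in[0,\pi/2]\) the map \(\Phi\colon[0,\pi]^2\to[0,\pi]\) is \(1\)-Lipschitz for the \(\ell^\infty\) metric:
\[
|\Phi(\alpha_1,\alpha_2)-\Phi(\beta_1,\beta_2)|\le\max_i|\alpha_i-\beta_i|.
\]
The cleanest route is geometric. \(\Phi(\alpha_1,\alpha_2)\) is the distance in \(\Sp^1*\Sp^1=\Sp^3\) between \((e^{i0}\cos t,e^{i0}\sin t)\) and \((e^{i\alpha_1}\cos s,e^{i\alpha_2}\sin s)\). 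Changing \((\alpha_1,\alpha_2)\) to \((\beta_1,\beta_2)\) moves the second point by the torus rotation \((e^{i(\beta_1-\alpha_1)},e^{i(\beta_2-\alpha_2)})\). The geodesic distance that such a rotation moves the point is
\[
\arccos\bigl(\cos^2 s\cos\delta_1+\sin^2 s\cos\delta_2\bigr)\le\max_i|\delta_i|,
\]
since the argument of \(\arccos\) is at least \(\cos(\max_i|\delta_i|)\). The triangle inequality in \(\Sp^3\) then gives the Lipschitz bound. One detail needs care: the \(\alpha_i\) lie in \([0,\pi]\), not on the circle. Realizing \(\alpha_i\) as angles in \(\Sp^1\) is still legitimate, because \(\cos\) is even and \(\Phi\) depends on the \(\alpha_i\) only through \(\cos\alpha_i\). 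If \(\cos s\) or \(\sin s\) vanishes, the formula still holds trivially.

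Combining the three steps gives \(\dis(\mathcal R^{(1)}*\mathcal R^{(2)})=D\). I expect the Lipschitz estimate to be the only nontrivial point; the reduction of the formula to a function of \(\alpha_1,\alpha_2\) alone and the boundary cases \(t,s\in\{0,\pi/2\}\) are routine.
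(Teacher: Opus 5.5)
Your proof is correct, and its skeleton is the paper's: induction down to \(k=2\), surjectivity via the block decomposition \((x_1\cos t,x_2\sin t)\), the lower bound from the slices \(t=0\) and \(t=\tfrac{\pi}{2}\), and antipodality by negating each component pair. The upper bound also rests on the same fact: source and target distances are both \(\psi(u_1,u_2)=\arccos(a\cos u_1+b\cos u_2)\), with the same \(a=\cos s\cos t\) and \(b=\sin s\sin t\), evaluated at the component distances.

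The genuine difference is how you prove that \(\psi\) is \(1\)-Lipschitz for \(\ell_\infty\) (\Cref{lem:join-distance-estimate}).

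The paper argues analytically. It bounds the \(\ell_1\)-norm of the gradient by \((a\sin u_1+b\sin u_2)/\sqrt{1-(a\cos u_1+b\cos u_2)^2}\le1\), which amounts to \(a^2+b^2+2ab\cos(u_1-u_2)\le(a+b)^2\le1\). Where \(\arccos\) fails to be differentiable, it rescales \((a,b)\) to \((1-\varepsilon)(a,b)\) and passes to a uniform limit. It then integrates along segments using \(\ell_1\)--\(\ell_\infty\) duality.

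You instead realize \(\psi\) as a distance in the model space \(\Sp^1*\Sp^1=\Sp^3\). With shared join parameters, the source and target distances become \(d(P,Q)\) and \(d(P,gQ)\) for one fixed point \(P\) and a torus element \(g\). The displacement is \(d(Q,gQ)=\arccos(\cos^2 s\cos\delta_1+\sin^2 s\cos\delta_2)\le\max_i|\delta_i|\), and the triangle inequality finishes.

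The restriction of all angles to \([0,\pi]\) is not needed to realize them on \(\Sp^1\). Its real role is to guarantee \(|\delta_i|\le\pi\), which your displacement bound uses through monotonicity of cosine.

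Your route needs no differentiability, regularization, or integration. It also shows concretely why synchronization matters: one isometry of a fixed model space carries the source configuration to the target configuration. The paper states its lemma as a standalone inequality for arbitrary \(a,b\ge0\) with \(a+b\le1\). Every such pair has the form \((\cos s\cos t,\sin s\sin t)\) with \(s,t\in[0,\tfrac{\pi}{2}]\), so nothing is lost in your formulation.
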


Thus spherical join acts as a maximum operation on distortion, whereas
composition of correspondences generally provides only the sum of the
distortion bounds.  Joining a correspondence with the identity
correspondence on \(\Sp^0\) defines its synchronized spherical suspension;
\Cref{thm:exact-join-distortion} shows that suspension preserves distortion.

\begin{remark}[Fixed gaps and infinite dimension]
\Cref{thm:main} gives the fixed-gap sequence for \(d=1\) exactly:
\[
 m\longmapsto \dgh(\Sp^m,\Sp^{m+1})
\]
is strictly decreasing, from
\(\dgh(\Sp^1,\Sp^2)=\tfrac{\pi}{3}\) to
\[
 \lim_{m\to\infty}\dgh(\Sp^m,\Sp^{m+1})=\frac{\pi}{4}.
\]
\Cref{thm:main,thm:exact-join-distortion} place this in a more general pattern.  For every fixed
\(d\geq1\), the sequence
\[
 m\longmapsto \dgh(\Sp^m,\Sp^{m+d})
\]
is nonincreasing and converges to \(\tfrac{\pi}{4}\); the same limit holds
when \(d=d(m)=o(m)\); see \Cref{cor:balanced-join-asymptotics}.

This finite-dimensional stabilization contrasts with the fact that every
finite-dimensional sphere remains at the maximal possible distance from the
infinite-dimensional sphere:
\[
 \dgh(\Sp^m,\Sp^\infty)=\frac{\pi}{2}
 \qquad(m<\infty),
\]
where \(\Sp^\infty\) denotes the unit sphere in \(\ell^2\) with its geodesic
metric \cite[Proposition~1.6]{lim2021gromov}.
\end{remark}

The proof and further consequences of
\Cref{thm:exact-join-distortion} are given in
\Cref{sec:spherical-join-consequences}.

\paragraph{Context and motivation.} The Gromov--Hausdorff distance, introduced by Edwards
\cite{edwards1975structure} and developed systematically by Gromov
\cite{gromov1999metric}, is one of the basic comparison tools of metric
geometry.  The topology that it induces underlies compactness and
convergence notions for families of Riemannian manifolds; see
\cite{gromov1999metric,burago2022course}. Exact
values are nevertheless rare, even for highly symmetric compact metric
spaces, which provide a natural testing
ground for exact calculations and for the interaction between metric
and topological obstructions: in the sphere problem studied here, proving
an exact value requires a quantitative topological obstruction and an
explicit geometric construction to meet sharply.

The Gromov--Hausdorff distance
also supplies a suitable notion of stability for persistence-based shape
signatures \cite{carlson-memoli-clustering,chazal2009gromov,lim2020vietoris,memoli2019persistent,zhou-cup-product}, while its computational aspects have
been studied in connection with non-rigid shape matching
\cite{memoli-sapiro,memoli-properties,schmiedl2017computational}.
Through its correspondence formulation, the Gromov--Hausdorff distance
is a prototypical \emph{minimax quadratic matching problem}: the cost is attached not
to an individual match \(x\leftrightarrow y\), but to pairs of matches
through the discrepancy
\(\bigl|d_X(x,x')-d_Y(y,y')\bigr|\).  Schmiedl proved that the problem of
computing the Gromov--Hausdorff distance between finite metric spaces
is NP-hard \cite{schmiedl2017computational}.  This provides another
motivation for the general research program: the matching problem
encoded by the Gromov--Hausdorff distance serves as
a basic model for many methods used in practice to compare metric data
and shapes: the quality of a matching is measured not pointwise, but by
how well it preserves pairwise distances.  Exact solutions for symmetric
spaces such as spheres can therefore provide ground-truth
\emph{benchmarks} for practical heuristics, relaxations, and
reference examples for related
pairwise-cost comparison distances such as the Gromov--Wasserstein
distance \cite{memoli-2011-GW,sturm-GW,arya-GW}.

Unit spheres with their geodesic metrics form a canonical symmetric
family in which this optimization problem combines topology with
extremal geometry.  
Lim, M\'emoli and Smith initiated the systematic study of the pairwise
Gromov--Hausdorff distances between spheres. Their lower-bound
obstruction mentioned above is closely related to the quantitative
Borsuk--Ulam theorem of Dubins and Schwarz
\cite{dubins1981equidiscontinuity}: topology forces every candidate
comparison to incur a definite metric error.  The same mechanism links
the sphere-distance problem to the topology of Vietoris--Rips
complexes, as developed further in the Polymath work of Adams et al.\
\cite{adams2022gromov} in order to provide improved lower bounds.

On the upper-bound side, Lim, M\'emoli and Smith introduced the
regular-simplex and hemisphere framework for constructing
correspondences, through which they established the cases \(n=1\) and
\(n=2\) of the conjecture
\cite[Propositions~1.16 and~1.19]{lim2021gromov}.  The Polymath
collaboration subsequently obtained the dimension-independent bound
\(\dgh(\Sp^n,\Sp^{n+1})\leq\tfrac{\pi}{3}\)
\cite[Theorem~1.2]{adams2022gromov}.  Within the same framework,
Rodr\'iguez Mart\'in introduced continuous radial motion and used it to
prove the case \(n=3\)
\cite[Theorem~1.5]{rodriguezmartin2024novel}.  Embedding--projection
correspondences provide a complementary approach
\cite{memoli2024embedding}; building partly on these ideas, Harrison
and Jeffs determined the exact distance from the circle to every
higher-dimensional sphere and obtained upper bounds for general pairs
of dimensions \cite{harrison2023quantitative}.
\Cref{sec:previous-work} compares these correspondence constructions in
greater detail.

The lower and upper bounds establishing \Cref{thm:main} arise from rather different sources.  The
lower bound is topological, whereas the upper bound is realized by an
explicit correspondence governed by regular-simplex geometry.  Among the
principal ingredients are a vertex-dependent isometric
embedding for pairs associated with the same simplex vertex, and an exact
dimension reduction that converts an optimization over two spherical caps
into a one-variable concave maximization. Within the normalized-chord ansatz, the interpolation is not chosen ad hoc: it is determined by sharpness on an extremal family of comparisons together with a reciprocal symmetry between radial levels.

\paragraph{Organization.}
\Cref{sec:anchored-chord-definition} introduces the normalized-chord
correspondence ansatz, explains how an extremal comparison selects its gain, and
defines the correspondence \(\mathcal R_n\).
\Cref{sec:proof-strategy} describes the strategy for proving that this
correspondence is optimal, and
\Cref{sec:self-contained-lower-bound} gives a self-contained concise proof of
the known lower bound.
\Cref{sec:spherical-join-consequences} proves the exact distortion formula
for synchronized spherical joins and derives bounds for spheres of arbitrary dimensions
from the sharp relations \(\mathcal R_n\) and from exact circle-to-sphere
correspondences.
\Cref{sec:previous-work} places the normalized-chord ansatz in the context
of earlier correspondence constructions.
\Cref{sec:canonical-anchored-chord-all-n} develops the geometry of the
construction, proves that its gain is uniquely determined by the stated
sharpness conditions, and establishes the properties needed in the
distortion estimates.
\Cref{sec:anchored-chord-estimates,sec:anchored-chord-upper-boundary}
prove that
\(\operatorname{dis}(\mathcal R_n)\leq\zeta_n\): the two sections
control the two possible ways in which the source and target distances
can differ.
Finally, \Cref{sec:anchored-chord-assembly} treats the remaining
endpoint configurations and the case \(n=1\), and completes the proof
of the main theorem.
The subsidiary algebraic calculations are collected in the appendices.

\subsection{The anchored--chord correspondence}
\label{sec:anchored-chord-definition}

Put
\phantomsection\label{def:zero-sum-model}
\phantomsection\label{def:unit-spheres}
\[
 \begin{aligned}
 W_{n+2}
 :=\left\{x\in\R^{n+2}:\sum_{k=1}^{n+2}x_k=0\right\} \quad\text{and}\quad
 \Sp(W_{n+2})
 :=\{x\in W_{n+2}:\lVert x\rVert=1\}.
 \end{aligned}
\]
We identify \(\Sp^n\) isometrically with \(\Sp(W_{n+2})\).
Define the regular-simplex vertices
\phantomsection\label{def:simplex-vertices}
\[
 v_i:=\sqrt{\frac{n+2}{n+1}}
 \left(e_i-\frac1{n+2}\mathbf 1\right),
 \qquad 1\leq i\leq n+2.
\]
We call \(v_1,\ldots,v_{n+2}\) the \emph{anchors} of the construction.
Put
\phantomsection\label{def:simplex-angular-constants}
\[
 \boxed{
 \rho_n:=\frac1{n+1}
 \qquad\text{and}\qquad
 \widehat{\zeta}_n:=\pi-\zeta_n=\arccos\rho_n.}
\]
Thus \(v_i\cdot v_j=-\rho_n\) whenever \(i\neq j\), and
for distinct \(i,j\),
\[
 d_n(v_i,-v_j)=\arccos\!\bigl(v_i\cdot(-v_j)\bigr)
 =\arccos\rho_n=\widehat{\zeta}_n.
\]
Thus \(\widehat{\zeta}_n\) is both the complement of the simplex edge
length and the distance from \(v_i\) to the point \(-v_j\) used in the
``swapped-anchor" configuration below.

For \(x\in\Sp(W_{n+2})\), define the set of \emph{nearest simplex vertices} and
the corresponding \emph{closed spherical Voronoi cells} by
\phantomsection\label{def:voronoi-cells}
\[
 \begin{aligned}
 I(x)&:=\{i:x_i=\max_kx_k\},\\
 M_i&:=\{x\in\Sp(W_{n+2}):i\in I(x)\}
      =\{x:x_i=\max_kx_k\}.
 \end{aligned}
\]
Indeed, the zero-sum condition gives
\(x\cdot v_i=\sqrt{(n+2)/(n+1)}\,x_i\), so maximizing \(x_i\) is
equivalent to minimizing \(d_n(x,v_i)\).  The sets \(M_i\) are closed
and cover \(\Sp(W_{n+2})\); a point on a Voronoi boundary belongs to
every cell whose index lies in \(I(x)\).

\phantomsection\label{def:north-pole}
Let \(e_0:=(0,\ldots,0,1)\in\R^{n+3}\), and put \(N:=e_0\), the north
pole.  We regard \(W_{n+2}\) as the subspace
\(W_{n+2}\times\{0\}\subset\R^{n+3}\), so that
\(e_0\perp W_{n+2}\), and identify \(\Sp^{n+1}\) isometrically with
\(\Sp(W_{n+2}\oplus\R e_0)\).

The upper hemisphere of \(\Sp^{n+1}\) is parametrized by
\phantomsection\label{def:hemisphere-parametrization}
\[
 \begin{aligned}
 p_\alpha\colon\Sp(W_{n+2})&\longrightarrow
 \Sp(W_{n+2}\oplus\R e_0)=\Sp^{n+1}\subset\R^{n+3},\\
 p_\alpha(x)&:=x\sin\alpha+e_0\cos\alpha\in\R^{n+3},
 \qquad 0\leq\alpha\leq\frac\pi2.
 \end{aligned}
\]
Here \(\alpha=d_{n+1}(e_0,p_\alpha(x))\) is the geodesic distance from
the north pole along the meridian through \(x\); thus \(\alpha=0\)
gives the north pole and \(\alpha=\tfrac{\pi}{2}\) gives the equatorial point
\(x\).

\paragraph{The normalized-chord construction determined by a gain.}
For the moment, let
\[
 g\colon[0,\widehat{\zeta}_n)\longrightarrow[0,\infty)
\]
be continuous and satisfy \(g(0)=0\).
For \(1\leq i\leq n+2\), \(x\in M_i\), and
\(0\leq\alpha<\widehat{\zeta}_n\), define
\[
 \begin{aligned}
 H_{\alpha,i}^{g}\colon M_i&\longrightarrow\Sp(W_{n+2}),\\
 H_{\alpha,i}^{g}(x)
 &:=
 \frac{v_i+g(\alpha)x}
      {\lVert v_i+g(\alpha)x\rVert}.
 \end{aligned}
\]
At the right endpoint, put
\(H_{\widehat{\zeta}_n,i}^{g}(x):=x\).
The denominator is nonzero.  Indeed, its vanishing would force
\(g(\alpha)=1\) and \(x=-v_i\), whereas \(-v_i\notin M_i\): for every
\(j\neq i\),
 $v_i\cdot(-v_i)=-1<\rho_n=v_j\cdot(-v_i).$
Moreover,
\[
 \frac{v_i+g(\alpha)x}{1+g(\alpha)}
 =
 \frac1{1+g(\alpha)}v_i
 +\frac{g(\alpha)}{1+g(\alpha)}x
\]
lies on the Euclidean chord \([v_i,x]\), and
\(H_{\alpha,i}^{g}(x)\) is its radial projection to
\(\Sp(W_{n+2})\).  When \(x\neq v_i\), the two parts of the chord have
length ratio
\[
 \frac{\left\lVert
       \frac{v_i+g(\alpha)x}{1+g(\alpha)}-v_i
       \right\rVert}
      {\left\lVert
       x-\frac{v_i+g(\alpha)x}{1+g(\alpha)}
       \right\rVert}
 =g(\alpha).
\]
Thus \(g(\alpha)\) is the \emph{gain}: the values \(0\) and \(1\)
correspond to the anchor \(v_i\) and the midpoint of \([v_i,x]\),
respectively. If \(g(\alpha)\to+\infty\) as
\(\alpha\uparrow\widehat{\zeta}_n\), then the chord point tends to the
endpoint \(x\).  The canonical gain defined below has this limiting
behavior.

\begin{definition}[Normalized-chord relation determined by a gain]
\label{def:gain-dependent-relation}
For a gain \(g\) as above, define the upper-hemisphere relation
\begingroup
\small
\[
 \mathcal R_n^{+}(g):={}
 \underbrace{\bigcup_{i=1}^{n+2}
 \{(p_\alpha(x),H_{\alpha,i}^{g}(x)):
 0\leq\alpha\leq\widehat{\zeta}_n,\ x\in M_i\}}
 _{\text{anchored--chord part}}
 \ \cup\
 \underbrace{\{(p_\alpha(x),x):
 \widehat{\zeta}_n\leq\alpha\leq\tfrac{\pi}{2},\ x\in\Sp(W_{n+2})\}}
 _{\text{identity band}},
\]
\endgroup
and extend it antipodally by
\[
 \mathcal R_n(g):=\mathcal R_n^{+}(g)
 \cup\{(-q,-z):(q,z)\in\mathcal R_n^{+}(g)\}.
\]
\end{definition}

The cells \(M_i\) cover \(\Sp(W_{n+2})\), so the first coordinate
projection of \(\mathcal R_n(g)\) is all of \(\Sp^{n+1}\).  The identity
band contains every \(x\in\Sp(W_{n+2})\) as a second coordinate, so the
second coordinate projection is all of \(\Sp^n\).  Hence \(\mathcal R_n(g)\) is a correspondence for every gain \(g\) as above.

For a pair \((q,z)\in\mathcal R_n(g)\), we call \(q\) its \emph{source
point} and \(z\) its \emph{target point}; these terms refer only to the
two coordinates and do not assert that the relation is a map.
In
\((p_\alpha(x),H_{\alpha,i}^{g}(x))\), the index \(i\) is the
\emph{anchor index} and \(v_i\) is the \emph{anchor}; for each
\(\alpha<\widehat{\zeta}_n\), the target \(H_{\alpha,i}^{g}(x)\) is the
radial projection of a point of \([v_i,x]\), and
\(H_{0,i}^{g}(x)=v_i\).  The remaining terminology is read directly from
\Cref{def:gain-dependent-relation}: the second part of
\(\mathcal R_n^+(g)\) is the \emph{identity band}, and the passage from
\(\mathcal R_n^+(g)\) to \(\mathcal R_n(g)\) is the \emph{antipodal
extension}.  Retaining every index in \(I(x)\) at a Voronoi tie makes
\(\mathcal R_n(g)\) a relation rather than, in general, the graph of a
function.  The construction is illustrated in
\Cref{fig:anchored-chord-schematic} after the gain is selected below.

The construction above leaves one essential choice: the gain \(g\).
To determine this choice, fix distinct indices
\(i,j\in\{1,\ldots,n+2\}\), and take the equatorial point
\(-v_j\) in the \(i\)-th cell and the equatorial point \(-v_i\) in the
\(j\)-th cell.
We call these two anchor--equatorial pairings the
\emph{swapped-anchor configuration}.

\paragraph{Equal radial levels and the switching level.}
At the same radial level \(\alpha\), the two source points
\[
 p_\alpha(-v_j),\qquad p_\alpha(-v_i)
\]
have inner product
\[
 \cos^2\alpha-\rho_n\sin^2\alpha.
\]
Introduce the \emph{half-angle constants}
\phantomsection\label{def:half-angle-constants}
\[
 \boxed{
 \begin{gathered}
  c_n:=\cos\frac{\widehat{\zeta}_n}{2}
      =\sqrt{\frac{1+\rho_n}{2}},\qquad
  s_n:=\sin\frac{\widehat{\zeta}_n}{2}
      =\sqrt{\frac{1-\rho_n}{2}}.
 \end{gathered}}
\]
The function
\phantomsection\label{def:theta-function}
\[
 \begin{aligned}
 \Theta_n\colon[0,\widehat{\zeta}_n]&\longrightarrow[0,\frac{\pi}{2}],\\
 \Theta_n(\alpha)&:=\arcsin\!\bigl(c_n\sin\alpha\bigr),
 \end{aligned}
\]
is chosen so that
\[
 \cos\bigl(2\Theta_n(\alpha)\bigr)
 =\cos^2\alpha-\rho_n\sin^2\alpha.
\]
Consequently,
\phantomsection\label{def:equal-level-source-distance}
\[
 D_n(\alpha)
 :=d_{n+1}\bigl(p_\alpha(-v_j),p_\alpha(-v_i)\bigr)
 =2\Theta_n(\alpha).
\]
These are the equal-level source points in this configuration.
Requiring this
comparison to attain the desired distortion bound prescribes the target
distance \(\zeta_n+2\Theta_n(\alpha)\). Regarded as an equation for the still-undetermined value
\(g(\alpha)\), this sharpness condition uniquely determines
\(g(\alpha)\) for \(0\leq\alpha\leq\alpha_n^\ast\).  These values form
the first branch of the canonical gain in \eqref{eq:ac-1}.

As \(\alpha\) increases, the prescribed target distance first reaches
\(\pi\) when \(2\Theta_n(\alpha)=\widehat{\zeta}_n\).  Since
\(\Theta_n\) is strictly increasing, this first permissible antipodal
event determines the \emph{switching level} \(\alpha_n^\ast\) as the
unique point satisfying
\phantomsection\label{def:switching-level}
\[
 \Theta_n(\alpha_n^\ast)=\frac{\widehat{\zeta}_n}{2}.
\]
Equivalently,
\[
 \alpha_n^\ast
 :=\arcsin\left(\frac{s_n}{c_n}\right)
 =\arcsin\sqrt{\frac{1-\rho_n}{1+\rho_n}}.
\]
At this level the source distance is \(\widehat{\zeta}_n\) and the
prescribed target distance is \(\pi\).

\paragraph{Unequal radial levels and the critical involution.}
For the equatorial points \(-v_j\in M_i\) and \(-v_i\in M_j\), the
normalized targets \(H_{\alpha,i}^{g}(-v_j)\) and
\(H_{\beta,j}^{g}(-v_i)\) are antipodal exactly when
\(g(\alpha)g(\beta)=1\); this elementary calculation is given in
\Cref{sec:critical-involution}.  The corresponding source points
\(p_\alpha(-v_j)\) and \(p_\beta(-v_i)\) are at distance
\(\widehat{\zeta}_n\) exactly when
\begin{equation}
 \cos\alpha\cos\beta
 -\rho_n\sin\alpha\sin\beta=\rho_n.
 \label{eq:ac-2}
\end{equation}
For each \(\alpha\in[0,\widehat{\zeta}_n]\), define
\(F_n(\alpha)\) to be the unique
\(\beta\in[0,\widehat{\zeta}_n]\) satisfying this equation.  Thus
\(F_n(\alpha)\) is the unique radial level for which these two source
points are at distance \(\widehat{\zeta}_n\).
Existence, uniqueness, and the involution property are proved in
\Cref{prop:ac-critical-involution}.  Solving the defining equation gives
the explicit formula
\begin{equation}
\label{eq:ac-F-explicit}
 \begin{aligned}
 F_n\colon[0,\widehat{\zeta}_n]&\longrightarrow
 [0,\widehat{\zeta}_n],\\
 F_n(\alpha)&:=\arctan\!\left(
 \frac{\sin\widehat{\zeta}_n-\sin\alpha}
      {\rho_n\cos\alpha}
 \right),
 \end{aligned}
\end{equation}
where \(\arctan\) takes values in \([0,\tfrac{\pi}{2})\).  The displayed implicit
equation \eqref{eq:ac-2} gives the geometric characterization of
\(F_n\), whereas \eqref{eq:ac-F-explicit} is the explicit formula used
to define the gain below.

A direct substitution in \eqref{eq:ac-F-explicit} gives
\[
 F_n(\alpha_n^\ast)=\alpha_n^\ast.
\]
Once the sharpness condition has determined \(g\) on
\([0,\alpha_n^\ast]\), reciprocal compatibility determines it on
\((\alpha_n^\ast,\widehat{\zeta}_n)\).  Indeed,
\Cref{prop:ac-critical-involution} shows that \(F_n\) maps this latter
interval into \((0,\alpha_n^\ast)\), and the reciprocal identity requires
\[
 g(\alpha)=\frac{1}{g(F_n(\alpha))}
 \qquad
 (\alpha_n^\ast<\alpha<\widehat{\zeta}_n).
\]

\paragraph{The two requirements on the gain.}
The preceding discussion leads to the following two requirements.
\begin{enumerate}[label=\textup{(\roman*)},nosep]
 \item For \(0\leq\alpha\leq\alpha_n^\ast\), the equal-level
 swapped-anchor comparison is sharp:
 \[
  d_n\bigl(H_{\alpha,i}^{g}(-v_j),
           H_{\alpha,j}^{g}(-v_i)\bigr)
  =\zeta_n+2\Theta_n(\alpha).
 \]
 \item For \(0<\alpha<\widehat{\zeta}_n\), the gain is compatible with
 the critical involution through the reciprocal identity
 \[
  g(\alpha)\,g(F_n(\alpha))=1.
 \]
\end{enumerate}
The detailed derivation of these requirements and the proof that they
characterize the gain are given in
\Cref{sec:swapped-anchor-configuration,sec:critical-involution}.

\paragraph{Definition of the \emph{canonical} gain.}

Define the \emph{canonical gain}
\[
 r_n\colon[0,\widehat{\zeta}_n]\longrightarrow[0,+\infty]
\]
by
\begin{equation}
 \boxed{
 \begin{aligned}
 r_n(\alpha)&:=
 \begin{cases}
 \dfrac{\sin\Theta_n(\alpha)}
       {\sin\!\bigl(\widehat{\zeta}_n-\Theta_n(\alpha)\bigr)},
 &0\leq\alpha\leq\alpha_n^\ast,\\[3mm]
       \dfrac{\sin\!\bigl(\widehat{\zeta}_n-
                 \Theta_n(F_n(\alpha))\bigr)}
       {\sin\Theta_n(F_n(\alpha))},
 &\alpha_n^\ast<\alpha<\widehat{\zeta}_n,
 \end{cases}\\[-1mm]
 r_n(\widehat{\zeta}_n)&:=+\infty.
 \end{aligned}}
 \label{eq:ac-1}
\end{equation}
\Cref{prop:canonical-gain-characterization} shows that \(r_n\) is the
unique continuous candidate
\(g\colon[0,\widehat{\zeta}_n)\to[0,\infty)\), with \(g(0)=0\), that
satisfies conditions \textup{(i)}--\textup{(ii)} above.  This is the
precise sense in which the gain is \emph{canonical}.
Both branch expressions are continuous on their respective intervals.
Since
\[
 \Theta_n(\alpha_n^\ast)
 =\Theta_n(F_n(\alpha_n^\ast))
 =\frac{\widehat{\zeta}_n}{2},
\]
both branch expressions tend to \(1\) as
\(\alpha\to\alpha_n^\ast\).  Thus \(r_n\) is continuous at
\(\alpha_n^\ast\).
For a first reading, the essential behavior of \(r_n\) is summarized by
\[
 r_n(0)=0,\qquad
 r_n(\alpha_n^\ast)=1,\qquad
 r_n(\alpha)\longrightarrow+\infty
 \quad\text{as }\alpha\uparrow\widehat{\zeta}_n.
\]
Accordingly, for \(0\leq\alpha<\widehat{\zeta}_n\), the Euclidean chord
point \((v_i+r_n(\alpha)x)/(1+r_n(\alpha))\) moves from \(v_i\), through
the midpoint of \([v_i,x]\), and toward \(x\).  A bounded visualization
of this motion appears in \Cref{fig:canonical-gain-profiles}.

\begin{definition}[Anchored--chord correspondence]
\label{def:anchored-chord-relation}
Specialize \Cref{def:gain-dependent-relation} to
\(r_n|_{[0,\widehat{\zeta}_n)}\), and abbreviate
\(H_{\alpha,i}^{r_n}\), \(\mathcal R_n^{+}(r_n)\), and
\(\mathcal R_n(r_n)\) by \(H_{\alpha,i}\), \(\mathcal R_n^{+}\), and
\(\mathcal R_n\), respectively.  Thus, for
\(1\leq i\leq n+2\) and
\(0\leq\alpha<\widehat{\zeta}_n\), define
\begin{equation}
 \begin{aligned}
 H_{\alpha,i}\colon M_i&\longrightarrow\Sp(W_{n+2}),\\
 H_{\alpha,i}(x)&:=H_{\alpha,i}^{r_n}(x)
 =
 \frac{v_i+r_n(\alpha)\,x}{\lVert v_i+r_n(\alpha)\,x\rVert}.
 \end{aligned}
 \label{eq:ac-7}
\end{equation}
At the right endpoint, put
\(H_{\widehat{\zeta}_n,i}(x):=x\).
Set \(\mathcal R_n^{+}:=\mathcal R_n^{+}(r_n)\).  Explicitly, the
\emph{upper-hemisphere relation} is
\begingroup
\small
\begin{equation}
 \mathcal R_n^{+}:={}
 \underbrace{\bigcup_{i=1}^{n+2}
 \{(p_\alpha(x),H_{\alpha,i}(x)):
 0\leq\alpha\leq\widehat{\zeta}_n,\ x\in M_i\}}
 _{\text{anchored--chord part}}
 \ \cup\
 \underbrace{\{(p_\alpha(x),x):
 \widehat{\zeta}_n\leq\alpha\leq\tfrac{\pi}{2},\ x\in\Sp(W_{n+2})\}}
 _{\text{identity band}}.
 \label{eq:ac-8}
\end{equation}
\endgroup
Finally, extend this relation antipodally:
\begin{equation}
 \mathcal R_n:=\mathcal R_n(r_n)
 =\mathcal R_n^{+}
 \cup\{(-q,-z):(q,z)\in\mathcal R_n^{+}\}.
 \label{eq:ac-9}
\end{equation}
\end{definition}

\Cref{fig:anchored-chord-schematic} separates the two operations in
\Cref{def:anchored-chord-relation}: for fixed \(i\) and \(x\in M_i\),
panel~\textup{(a)} records the target paired with \(p_\alpha(x)\) as
\(\alpha\) varies from \(0\) to \(\tfrac{\pi}{2}\), while panel~\textup{(b)} shows
the normalized Euclidean-chord interpolation defining
\(H_{\alpha,i}\).

\begin{figure}[htbp]
\centering
\begin{tikzpicture}[
  every node/.style={font=\footnotesize},
  coupling/.style={-{Latex[length=1.8mm]},black!55,densely dashed},
  schedule/.style={teal!65!black,semithick}
]
\begin{scope}[xshift=-1.0cm]
  \node[font=\small\bfseries] at (3.55,2.55)
    {(a) radial interpolation in one cell};
  \fill[green!12] (4.8,1.12) rectangle (7.2,1.78);
  \draw[-{Latex[length=2mm]},semithick] (-0.15,1.45)--(7.5,1.45)
    node[right] {$\alpha$};
  \node[font=\scriptsize\bfseries,anchor=east] at (-0.12,1.45)
    {$\Sp^{n+1}$};
  \coordinate (szero) at (0,1.45);
  \coordinate (salpha) at (2.0,1.45);
  \coordinate (sA) at (4.8,1.45);
  \coordinate (sequator) at (7.2,1.45);
  \foreach \p in {szero,salpha,sA,sequator}
    \fill (\p) circle[radius=1.8pt];
  \node[above=3pt] at (szero) {$N=p_0(x)$};
  \node[above=3pt] at (salpha) {$p_\alpha(x)$};
  \node[above=3pt] at (sA) {$p_{\widehat{\zeta}_n}(x)$};
  \node[above=3pt] at (sequator) {$p_{\tfrac{\pi}{2}}(x)=x$};
  \node[below=4pt] at (szero) {$0$};
  \node[below=4pt] at (salpha) {$\alpha$};
  \node[below=4pt] at (sA) {$\widehat{\zeta}_n$};
  \node[below=4pt] at (sequator) {$\tfrac{\pi}{2}$};
  \node[font=\scriptsize,green!35!black] at (6.0,1.17)
    {identity band};

  \coordinate (tzero) at (0,-0.65);
  \coordinate (tA) at (4.8,-0.65);
  \coordinate (tequator) at (7.2,-0.65);
  \path (tzero) .. controls (1.65,-1.55) and (3.4,-1.55) .. (tA)
    coordinate[pos=.42] (talpha);
  \draw[schedule]
    (tzero) .. controls (1.65,-1.55) and (3.4,-1.55) .. (tA)
    --(tequator);
  \node[font=\scriptsize\bfseries,anchor=east] at (-0.12,-0.65)
    {$\Sp^n$};
  \foreach \p in {tzero,talpha,tA,tequator}
    \fill[teal!65!black] (\p) circle[radius=2pt];
  \node[below left=1pt] at (tzero) {$v_i$};
  \node[below=3pt] at (talpha) {$H_{\alpha,i}(x)$};
  \node[below=3pt] at (tA) {$x$};
  \node[below=3pt] at (tequator) {$x$};
  \node[font=\scriptsize,teal!45!black] at (6.0,-0.45)
    {constant target $x$};
  \draw[coupling] (szero)--(tzero);
  \draw[coupling] (salpha)--(talpha);
  \draw[coupling] (sA)--(tA);
  \draw[coupling] (sequator)--(tequator);
  \end{scope}

\draw[black!25,dashed] (7.5,-2.80)--(7.5,2.0);
  \node[font=\small\bfseries] at (10.75,2.55)
    {(b) chord and radial normalization};
  \coordinate (O) at (10.55,0.15);
  \coordinate (vi) at ($(O)+(48:1.70)$);
  \coordinate (xx) at ($(O)+(-48:1.70)$);
  \coordinate (cc) at ($(vi)!.6666667!(xx)$);
  \coordinate (HH) at ($(O)!1.70cm!(cc)$);
  \draw[semithick] (O) circle[radius=1.70];
  \node[font=\scriptsize\bfseries,fill=white,inner sep=1pt]
    at ($(O)+(145:1.15)$) {$\Sp^n$};
  \draw[orange!75!black,thick] (vi)--(xx);
  \draw[schedule]
    ($(O)+(48:1.70)$)
    arc[start angle=48,end angle=-48,radius=1.70];
  \draw[black!40] (O)--(cc);
  \draw[-{Latex[length=1.8mm]},teal!65!black,semithick] (cc)--(HH);
  \fill (O) circle[radius=1.4pt] node[below left=-1pt] {$0$};
  \fill[orange!80!black] (vi) circle[radius=2.2pt]
    node[above right=1pt] {$v_i$};
  \fill[orange!80!black] (xx) circle[radius=2.2pt]
    node[below right=1pt] {$x$};
  \fill[black!65] (cc) circle[radius=1.8pt];
  \fill[teal!65!black] (HH) circle[radius=2.2pt]
    node[below right=1pt] {$H_{\alpha,i}(x)$};
  \node[font=\scriptsize,orange!75!black,align=left,anchor=west]
    at (12.08,0.67) {Euclidean chord\\[-1pt]$[v_i,x]$};
  \node[align=center] at (10.55,-2.45)
    {$H_{\alpha,i}(x):=\dfrac{v_i+r_n(\alpha)\,x}
      {\lVert v_i+r_n(\alpha)\,x\rVert}$};
\end{tikzpicture}
\caption{The two operations defining the anchored--chord correspondence
\(\mathcal R_n\) along a fixed closed spherical Voronoi cell \(M_i\).
For fixed \(x\in M_i\), panel~(a) records the target paired with
\(p_\alpha(x)\) as \(\alpha\) varies from \(0\) to \(\tfrac{\pi}{2}\): the target
moves from the anchor \(v_i\) to \(x\), reaches \(x\) at
\(\widehat{\zeta}_n\), and remains there throughout the identity band.
Panel~(b) shows the normalized-chord operation defining
\(H_{\alpha,i}\): a point on the Euclidean chord from \(v_i\) to \(x\)
is projected radially onto \(\Sp^n\).}
\label{fig:anchored-chord-schematic}
\end{figure}

For \(n=1\), \Cref{fig:anchored-chord-n-one} displays the entire
relation defined by \eqref{eq:ac-8}--\eqref{eq:ac-9}.

\begin{figure}[!ht]
\centering
\resizebox{1.00\textwidth}{!}{\begin{tikzpicture}[
  every node/.style={font=\footnotesize},
  hidden/.style={black!27,densely dashed},
  boundary/.style={black!48,dashed},
  coupling/.style={densely dashed,semithick},
  chordpath/.style={
    teal!70!black,
    very thick,
    line cap=round
  },
  identitypath/.style={
    purple!62!black,
    very thick,
    line cap=round
  }
]

  \def\Rs{2.60}
  \def\tilt{25}
  \def\Aone{60}
  \def\samplealpha{46.5149815}
  \def\xang{-40}
  \def\Hang{-56.1648801}
  \def\Rt{2.08}

  \pgfmathdeclarefunction{spx}{2}{\pgfmathparse{
      \Rs*sin(#1)*cos(#2)
    }}

  \pgfmathdeclarefunction{spy}{2}{\pgfmathparse{
      \Rs*(
        sin(\tilt)*sin(#1)*sin(#2)
        + cos(\tilt)*cos(#1)
      )
    }}

\pgfmathdeclarefunction{vistheta}{1}{\pgfmathparse{
      atan(tan(\tilt)/sin(#1))
    }}

\pgfmathdeclarefunction{vissouth}{1}{\pgfmathparse{
      180 + atan(tan(\tilt)/sin(#1))
    }}

\pgfmathsetmacro{\phicut}{
    asin(tan(\tilt)/tan(\Aone))
  }
  \pgfmathsetmacro{\phicutright}{
    180-\phicut
  }
  \pgfmathsetmacro{\phisouthleft}{
    -180+\phicut
  }
  \pgfmathsetmacro{\phisouthright}{
    -\phicut
  }
  \pgfmathsetmacro{\thetasouthcut}{
    180-\tilt
  }

  \newcommand{\MakeSpt}[3]{\coordinate (#1) at
      (
        {\Rs*sin(#2)*cos(#3)},
        {
          \Rs*(
            sin(\tilt)*sin(#2)*sin(#3)
            + cos(\tilt)*cos(#2)
          )
        }
      );
  }

  \coordinate (SO) at (0,0);

  \MakeSpt{spNorth}{0}{0}
  \MakeSpt{spSouth}{180}{0}
  \MakeSpt{Sq}{\samplealpha}{\xang}
  \MakeSpt{SqA}{\Aone}{\xang}
  \MakeSpt{Sqbeta}{78}{\xang}
  \MakeSpt{Sx}{90}{\xang}

\shade[
    inner color=white,
    outer color=blue!11
  ]
    (SO) circle[radius=\Rs];

  \path[
    fill=purple!7,
    draw=none
  ]
    plot[
      domain=-180:0,
      samples=61,
      variable=\p
    ]
      ({spx(\Aone,\p)},{spy(\Aone,\p)})
    --
    plot[
      domain=\Aone:120,
      samples=31,
      variable=\a
    ]
      ({spx(\a,0)},{spy(\a,0)})
    --
    plot[
      domain=0:-180,
      samples=61,
      variable=\p
    ]
      ({spx(120,\p)},{spy(120,\p)})
    --
    plot[
      domain=120:\Aone,
      samples=31,
      variable=\a
    ]
      ({spx(\a,-180)},{spy(\a,-180)})
    -- cycle;

  \path[
    fill=green!20,
    fill opacity=.25,
    draw=none
  ]
    plot[
      domain=\phicut:90,
      samples=40,
      variable=\p
    ]
      ({spx(\Aone,\p)},{spy(\Aone,\p)})
    --
    plot[
      domain=\Aone:\tilt,
      samples=25,
      variable=\a
    ]
      ({spx(\a,90)},{spy(\a,90)})
    --
    plot[
      domain=90:\phicut,
      samples=40,
      variable=\p
    ]
      (
        {spx(vistheta(\p),\p)},
        {spy(vistheta(\p),\p)}
      )
    -- cycle;

  \path[
    fill=blue!20,
    fill opacity=.25,
    draw=none
  ]
    plot[
      domain=\tilt:\Aone,
      samples=25,
      variable=\a
    ]
      ({spx(\a,90)},{spy(\a,90)})
    --
    plot[
      domain=90:\phicutright,
      samples=40,
      variable=\p
    ]
      ({spx(\Aone,\p)},{spy(\Aone,\p)})
    --
    plot[
      domain=\phicutright:90,
      samples=40,
      variable=\p
    ]
      (
        {spx(vistheta(\p),\p)},
        {spy(vistheta(\p),\p)}
      )
    -- cycle;

  \path[
    fill=orange!20,
    draw=none
  ]
    plot[
      domain=0:\Aone,
      samples=25,
      variable=\a
    ]
      ({spx(\a,-150)},{spy(\a,-150)})
    --
    plot[
      domain=-150:-30,
      samples=50,
      variable=\p
    ]
      ({spx(\Aone,\p)},{spy(\Aone,\p)})
    --
    plot[
      domain=\Aone:0,
      samples=25,
      variable=\a
    ]
      ({spx(\a,-30)},{spy(\a,-30)})
    -- cycle;

  \path[
    fill=green!20,
    draw=none
  ]
    plot[
      domain=0:\Aone,
      samples=25,
      variable=\a
    ]
      ({spx(\a,-30)},{spy(\a,-30)})
    --
    plot[
      domain=-30:\phicut,
      samples=30,
      variable=\p
    ]
      ({spx(\Aone,\p)},{spy(\Aone,\p)})
    --
    plot[
      domain=\phicut:90,
      samples=45,
      variable=\p
    ]
      (
        {spx(vistheta(\p),\p)},
        {spy(vistheta(\p),\p)}
      )
    --
    plot[
      domain=\tilt:0,
      samples=20,
      variable=\a
    ]
      ({spx(\a,90)},{spy(\a,90)})
    -- cycle;

  \path[
    fill=blue!20,
    draw=none
  ]
    plot[
      domain=0:\tilt,
      samples=20,
      variable=\a
    ]
      ({spx(\a,90)},{spy(\a,90)})
    --
    plot[
      domain=90:\phicutright,
      samples=45,
      variable=\p
    ]
      (
        {spx(vistheta(\p),\p)},
        {spy(vistheta(\p),\p)}
      )
    --
    plot[
      domain=\phicutright:210,
      samples=35,
      variable=\p
    ]
      ({spx(\Aone,\p)},{spy(\Aone,\p)})
    --
    plot[
      domain=\Aone:0,
      samples=25,
      variable=\a
    ]
      ({spx(\a,210)},{spy(\a,210)})
    -- cycle;

  \path[
    fill=orange!15,
    fill opacity=.22,
    draw=none
  ]
    plot[
      domain=180:120,
      samples=25,
      variable=\a
    ]
      ({spx(\a,30)},{spy(\a,30)})
    --
    plot[
      domain=30:150,
      samples=50,
      variable=\p
    ]
      ({spx(120,\p)},{spy(120,\p)})
    --
    plot[
      domain=120:180,
      samples=25,
      variable=\a
    ]
      ({spx(\a,150)},{spy(\a,150)})
    -- cycle;

  \path[
    fill=green!15,
    fill opacity=.22,
    draw=none
  ]
    plot[
      domain=180:120,
      samples=25,
      variable=\a
    ]
      ({spx(\a,150)},{spy(\a,150)})
    --
    plot[
      domain=150:180,
      samples=20,
      variable=\p
    ]
      ({spx(120,\p)},{spy(120,\p)})
    --
    plot[
      domain=120:180,
      samples=25,
      variable=\a
    ]
      ({spx(\a,180)},{spy(\a,180)})
    -- cycle;

  \path[
    fill=green!15,
    fill opacity=.22,
    draw=none
  ]
    plot[
      domain=180:120,
      samples=25,
      variable=\a
    ]
      ({spx(\a,-180)},{spy(\a,-180)})
    --
    plot[
      domain=-180:-90,
      samples=35,
      variable=\p
    ]
      ({spx(120,\p)},{spy(120,\p)})
    --
    plot[
      domain=120:180,
      samples=25,
      variable=\a
    ]
      ({spx(\a,-90)},{spy(\a,-90)})
    -- cycle;

  \path[
    fill=blue!15,
    fill opacity=.22,
    draw=none
  ]
    plot[
      domain=180:120,
      samples=25,
      variable=\a
    ]
      ({spx(\a,-90)},{spy(\a,-90)})
    --
    plot[
      domain=-90:30,
      samples=50,
      variable=\p
    ]
      ({spx(120,\p)},{spy(120,\p)})
    --
    plot[
      domain=120:180,
      samples=25,
      variable=\a
    ]
      ({spx(\a,30)},{spy(\a,30)})
    -- cycle;

  \path[
    fill=green!15,
    draw=none
  ]
    plot[
      domain=\phisouthleft:-90,
      samples=40,
      variable=\p
    ]
      ({spx(120,\p)},{spy(120,\p)})
    --
    plot[
      domain=120:\thetasouthcut,
      samples=25,
      variable=\a
    ]
      ({spx(\a,-90)},{spy(\a,-90)})
    --
    plot[
      domain=-90:\phisouthleft,
      samples=40,
      variable=\p
    ]
      (
        {spx(vissouth(\p),\p)},
        {spy(vissouth(\p),\p)}
      )
    -- cycle;

  \path[
    fill=blue!15,
    draw=none
  ]
    plot[
      domain=-90:\phisouthright,
      samples=40,
      variable=\p
    ]
      ({spx(120,\p)},{spy(120,\p)})
    --
    plot[
      domain=\phisouthright:-90,
      samples=40,
      variable=\p
    ]
      (
        {spx(vissouth(\p),\p)},
        {spy(vissouth(\p),\p)}
      )
    --
    plot[
      domain=\thetasouthcut:120,
      samples=25,
      variable=\a
    ]
      ({spx(\a,-90)},{spy(\a,-90)})
    -- cycle;

  \draw[hidden]
    plot[
      domain=\phicut:\phicutright,
      samples=61,
      variable=\p
    ]
      ({spx(60,\p)},{spy(60,\p)});

  \draw[
    black!58,
    semithick
  ]
    plot[
      domain=-180:\phicut,
      samples=61,
      variable=\p
    ]
      ({spx(60,\p)},{spy(60,\p)});

  \draw[
    black!58,
    semithick
  ]
    plot[
      domain=\phicutright:180,
      samples=31,
      variable=\p
    ]
      ({spx(60,\p)},{spy(60,\p)});

  \draw[hidden]
    plot[
      domain=0:180,
      samples=61,
      variable=\p
    ]
      ({spx(90,\p)},{spy(90,\p)});

  \draw[
    black!58,
    semithick
  ]
    plot[
      domain=-180:0,
      samples=61,
      variable=\p
    ]
      ({spx(90,\p)},{spy(90,\p)});

  \draw[hidden]
    plot[
      domain=-180:\phisouthleft,
      samples=20,
      variable=\p
    ]
      ({spx(120,\p)},{spy(120,\p)});

  \draw[
    black!58,
    semithick
  ]
    plot[
      domain=\phisouthleft:\phisouthright,
      samples=61,
      variable=\p
    ]
      ({spx(120,\p)},{spy(120,\p)});

  \draw[hidden]
    plot[
      domain=\phisouthright:180,
      samples=61,
      variable=\p
    ]
      ({spx(120,\p)},{spy(120,\p)});

\foreach \p in {-150,-30}{
    \draw[boundary]
      plot[
        domain=0:\Aone,
        samples=25,
        variable=\a
      ]
        ({spx(\a,\p)},{spy(\a,\p)});
  }

\draw[boundary]
    plot[
      domain=0:\tilt,
      samples=15,
      variable=\a
    ]
      ({spx(\a,90)},{spy(\a,90)});

  \draw[hidden]
    plot[
      domain=\tilt:\Aone,
      samples=20,
      variable=\a
    ]
      ({spx(\a,90)},{spy(\a,90)});

\draw[boundary]
    plot[
      domain=120:\thetasouthcut,
      samples=20,
      variable=\a
    ]
      ({spx(\a,-90)},{spy(\a,-90)});

  \draw[hidden]
    plot[
      domain=\thetasouthcut:180,
      samples=15,
      variable=\a
    ]
      ({spx(\a,-90)},{spy(\a,-90)});

\foreach \p in {30,150}{
    \draw[hidden]
      plot[
        domain=120:180,
        samples=25,
        variable=\a
      ]
        ({spx(\a,\p)},{spy(\a,\p)});
  }

  \draw[chordpath]
    plot[
      domain=0:\Aone,
      samples=35,
      variable=\a
    ]
      ({spx(\a,\xang)},{spy(\a,\xang)});

  \draw[identitypath]
    plot[
      domain=\Aone:120,
      samples=35,
      variable=\a
    ]
      ({spx(\a,\xang)},{spy(\a,\xang)});

\draw[
    black!72,
    semithick
  ]
    (SO) circle[radius=\Rs];

  \fill
    (spNorth)
    circle[radius=1.8pt]
    node[above left=3pt,fill=white,inner sep=1pt] {$N=p_0(x)$};

  \fill
    (spSouth)
    circle[radius=1.5pt]
    node[below left=3pt,fill=white,inner sep=1pt] {$S$};

  \fill[teal!70!black]
    (Sq)
    circle[radius=2.3pt];

  \draw[
    teal!70!black,
    fill=white
  ]
    (SqA)
    circle[radius=2pt]
    node[left=5pt,fill=white,inner sep=1pt]
      {$p_{\widehat{\zeta}_1}(x)$};

  \fill[purple!62!black]
    (Sqbeta)
    circle[radius=1.8pt];

  \fill[purple!62!black]
    (Sx)
    circle[radius=2.2pt]
    node[left=6pt,yshift=-5pt,fill=white,inner sep=1pt]
      {$p_{\tfrac{\pi}{2}}(x)$};

  \node[
    font=\small\bfseries
  ]
    at (0,3.3)
    {source $\Sp^2$};

  \node[
    orange!65!black
  ]
    at (-0.20,1.38)
    {Voronoi cell $M_2$};

  \node[
    purple!60!black,
    align=center,
    fill=white,
    inner sep=1pt
  ]
    at (-0.82,0.15)
    {identity band\\[-1pt]
     $p_\beta(x)\leftrightarrow x$};

  \node[
    black!45
  ]
    at (-0.55,-1.63)
    {odd copy};

  \node[
    anchor=east,
    align=right
  ]
    at (-2.82,1.08)
    {$\widehat{\zeta}_1=\tfrac{\pi}{3}$};

  \draw[black!35]
    (-2.72,1.00)--(-2.30,0.83);

  \node[
    anchor=east
  ]
    at (-2.78,-0.73)
    {equator};

  \draw[black!35]
    (-2.68,-0.67)--(-2.30,-0.54);

\begin{scope}[xshift=14.7cm,xscale=-1]

  \coordinate (TO) at (7.35,0);

  \coordinate (TvOne)
    at ($(TO)+(150:\Rt)$);

  \coordinate (TvTwo)
    at ($(TO)+(-90:\Rt)$);

  \coordinate (TvThree)
    at ($(TO)+(30:\Rt)$);

  \coordinate (Tx)
    at ($(TO)+(\xang:\Rt)$);

  \coordinate (TH)
    at ($(TO)+(\Hang:\Rt)$);

  \coordinate (Tc)
    at ($(TvTwo)!.6666667!(Tx)$);

  \coordinate (Tchordmid)
    at ($(TvTwo)!.48!(Tx)$);

  \fill[gray!3]
    (TO) circle[radius=\Rt];

  \draw[
    blue!65!black,
    line width=2.2pt
  ]
    ($(TO)+(90:\Rt)$)
    arc[
      start angle=90,
      end angle=210,
      radius=\Rt
    ];

  \draw[
    orange!80!black,
    line width=2.2pt
  ]
    ($(TO)+(-150:\Rt)$)
    arc[
      start angle=-150,
      end angle=-30,
      radius=\Rt
    ];

  \draw[
    green!52!black,
    line width=2.2pt
  ]
    ($(TO)+(-30:\Rt)$)
    arc[
      start angle=-30,
      end angle=90,
      radius=\Rt
    ];

  \draw[
    black!70,
    semithick
  ]
    (TO) circle[radius=\Rt];

  \draw[
    black!58,
    densely dotted,
    line width=1pt
  ]
    (TvOne)--(TvTwo)--(TvThree)--cycle;

  \foreach \p in {-150,-30,90}{
    \draw[black!48]
      ($(TO)+(\p:{\Rt-0.09})$)
      --
      ($(TO)+(\p:{\Rt+0.09})$);

    \draw[
      black!42,
      fill=white
    ]
      ($(TO)+(\p:\Rt)$)
      circle[radius=1.45pt];
  }

\draw[
    orange!78!black,
    thick
  ]
    (TvTwo)--(Tx);

  \draw[chordpath]
    ($(TO)+(-90:\Rt)$)
    arc[
      start angle=-90,
      end angle=\xang,
      radius=\Rt
    ];

  \draw[black!32]
    (TO)--(Tc);

\draw[
      teal!70!black,
      semithick
    ]
      (Tc)--(TH);

  \fill
    (TO)
    circle[radius=1.3pt]
    node[above left=-1pt] {$0$};

  \fill[blue!65!black]
    (TvOne)
    circle[radius=2.2pt]
    node[above right=-1pt] {$v_1$};

  \fill[orange!80!black]
    (TvTwo)
    circle[radius=2.2pt]
    node[below=2pt] {$v_2$};

  \fill[green!52!black]
    (TvThree)
    circle[radius=2.2pt]
    node[above left=-1pt] {$v_3$};

  \fill[black!58]
    (Tc)
    circle[radius=1.7pt];

  \fill[teal!70!black]
    (TH)
    circle[radius=2.3pt];

  \fill[orange!80!black]
    (Tx)
    circle[radius=2.3pt];

  \node[
    draw=teal!35,
    fill=white,
    rounded corners=1pt,
    inner sep=1.5pt
  ]
    (Hlabel)
    at (8.72,-2.19)
    {$H_{\alpha,2}(x)$};

  \draw[teal!45]
    (Hlabel)
    to[out=90,in=-70]
    (TH);

  \node[
    anchor=west
  ]
    at ($(Tx)+(0.10,0.10)$)
    {$x$};

  \node[
    orange!72!black,
    align=center,
    fill=white,
    font=\scriptsize,
    rounded corners=1pt,
    inner sep=1.5pt
  ]
    (chordlabel)
    at (4.85,-1.86)
    {Euclidean chord};

  \draw[orange!52]
    (chordlabel)
    to[out=-35,in=180]
    (Tchordmid);

  \node[
    font=\small\bfseries
  ]
    at (7.35,3.3)
    {target $\Sp^1$};

  \node[blue!65!black]
    at ($(TO)+(150:1.43)$)
    {$M_1$};

  \node[orange!80!black]
    at ($(TO)+(-90:1.43)$)
    {$M_2$};

  \node[green!52!black]
    at ($(TO)+(30:1.43)$)
    {$M_3$};
\end{scope}

  \node[draw=teal!35,fill=white,rounded corners=1pt,
    align=center,inner sep=1.5pt] (qlabel) at (2.08,1.88)
      {$q:=p_\alpha(x)$\\[-1pt]$r_1(\alpha)=2$};
  \draw[teal!58] (qlabel) to[out=-55,in=115] (Sq);

  \draw[
    coupling,
    teal!62!black
  ]
    (Sq)
    to[out=-4,in=175]
      node[
        pos=.53,
        above,
        sloped,
        fill=white,
        inner sep=1pt
      ]
        {paired in $\mathcal R_1$}
    (TH);

  \draw[
    coupling,
    purple!58!black
  ]
    (Sqbeta)
    to[out=-8,in=245]
      node[
        pos=.52,
        below,
        sloped,
        fill=white,
        inner sep=1pt
      ]
        {identity pairing}
    (Tx);

  \node[
    draw=orange!45,
    fill=orange!6,
    rounded corners=2pt,
    align=center,
    inner sep=3pt
  ]
    (Nfiber)
    at (3.92,2.88)
    {north-pole fiber\\
     $\mathcal R_1[N]=\{v_1,v_2,v_3\}$};

  \draw[
    orange!45,
    semithick
  ]
    (spNorth)
    to[out=8,in=180]
    (Nfiber);

  \node[
    black!58,
    align=center
  ]
    at (3.60,-2.82)
    {odd extension:
     $(q,z)\longmapsto(-q,-z)$};

\end{tikzpicture}
 }
\caption{The optimal correspondence
\(\mathcal R_1\subseteq\Sp^2\times\Sp^1\).  The latitude circles at
colatitudes \(\widehat{\zeta}_1=\tfrac{\pi}{3}\) and
\(\pi-\widehat{\zeta}_1=\tfrac{2\pi}{3}\) bound the identity band, where
\(p_\beta(x)\) is paired with the equatorial point \(x\).  Above the
band, the colored regions correspond to the closed Voronoi cells
\(M_1,M_2,M_3\), and the lower regions are obtained antipodally.  The
highlighted pair has \(x\in M_2\) and \(r_1(\alpha)=2\), so
\(H_{\alpha,2}(x)\) is the radial normalization of the point two-thirds
of the way along \([v_2,x]\).  At the north pole,
\(\mathcal R_1[N]=\{v_1,v_2,v_3\}\).}

\label{fig:anchored-chord-n-one}
\end{figure}
 
\subsection{Sharpness mechanism and proof strategy}
\label{sec:proof-strategy}

The north pole already explains the value of the final bound.  Since
\(p_0(x)\) is independent of \(x\) and \(r_n(0)=0\), its fiber is
\[
 \mathcal R_n[N]:=\{z:(N,z)\in\mathcal R_n\}
 =\{v_1,\ldots,v_{n+2}\}.
\]

The diameter of this regular simplex is \(\zeta_n\).  Thus the
construction cannot have distortion smaller than \(\zeta_n\), and the
upper-bound proof must show that no other pair has a larger defect.

As explained in \Cref{sec:anchored-chord-definition}, sharpness on the
equal-level swapped-anchor family and reciprocal compatibility under
\(F_n\) select the gain \(r_n\) within the normalized-chord ansatz;
\Cref{prop:canonical-gain-characterization} proves the corresponding
uniqueness statement.  The same family realizes the lower boundary of
the admissible \((Q,R)\)-region below.  To prove the global upper bound,
it remains to show that every other realized comparison also belongs to
that region.
Consider two anchored--chord pairs
\[
 (q,z)=(p_\alpha(x),H_{\alpha,i}(x)),\qquad
 (q',z')=(p_\beta(y),H_{\beta,j}(y)).
\]
Define their \emph{source inner product} and \emph{target inner product} by
\phantomsection\label{def:source-target-inner-products}
\[
 Q:=q\cdot q',\qquad R:=z\cdot z'.
\]
The corresponding distances are \(\arccos Q\) and \(\arccos R\).
We call \((Q,R)\) \emph{admissible} if the associated source and target
distances have defect at most \(\zeta_n\), and write
\phantomsection\label{def:admissible-region}
\[
 \mathcal A_n
 :=\bigl\{(Q,R)\in[-1,1]\times[-1,1]:
       \big|\arccos Q-\arccos R\big|\leq\zeta_n\bigr\}
\]
for the admissible \((Q,R)\)-region.  Thus, for the two anchored--chord
pairs under consideration, the upper-bound problem is to show that their
realized \((Q,R)\)-pair belongs to \(\mathcal A_n\).

The geometry of \(\mathcal A_n\) divides the proof into three
\(Q\)-ranges.
Using \(\cos\zeta_n=-\rho_n\), define its upper and lower boundary
functions by
\begin{equation}\label{eq:admissible-boundary-functions}
 U_n(Q):=-\rho_nQ+\sin\zeta_n\sqrt{1-Q^2},\qquad
 L_n(Q):=-\rho_nQ-\sin\zeta_n\sqrt{1-Q^2}.
\end{equation}
Then
\[
 \boxed{
 \begin{gathered}
 (Q,R)\in\mathcal A_n
 \quad\Longleftrightarrow\quad
 \left\{
 \begin{array}{r@{\;}c@{\;}c@{\;}c@{\;}l@{\qquad}r@{\;}c@{\;}c@{\;}c@{\;}l}
 {}     &{}   &R&\leq&U_n(Q),&-1     &\leq&Q&\leq&-\rho_n,\\[2pt]
 -1     &\leq&R&\leq&1,     &-\rho_n&\leq&Q&\leq& \rho_n,\\[2pt]
 L_n(Q) &\leq&R,&{}  &{}    & \rho_n&\leq&Q&\leq&1.
 \end{array}
 \right.
 \end{gathered}}
\]
Indeed, in the middle strip the source distance lies between
\(\widehat{\zeta}_n\) and \(\zeta_n\), so every target distance in
\([0,\pi]\) gives an admissible pair.  The other two ranges have one
nontrivial boundary each.  These boundaries, together with the smaller
region defined by the bounds proved below for all realized
distinct-anchor \((Q,R)\)-pairs, are shown in
\Cref{fig:qr-distortion-region}.

\begin{figure}[!ht]
\centering
\begingroup
\def\qrlensrho{0.3333333333}\resizebox{0.88\textwidth}{!}{\providecommand{\qrlensrho}{0.3333333333}
\pgfmathsetmacro{\qrlenss}{sqrt(1-\qrlensrho*\qrlensrho)}

\begin{tikzpicture}[
  x=2.45cm,
  y=2.45cm,
  line cap=round,
  line join=round,
  every node/.style={font=\small},
  axis/.style={black!70,thin},
  threshold/.style={black!45,densely dotted},
  exact/.style={blue!65!black,very thick},
  stronger/.style={orange!85!black,very thick},
  exact ghost/.style={blue!65!black,thick,dashed},
  region/.style={fill=blue!10},
  proved region/.style={fill=red!9!white},
  middle hatch/.style={draw=black!55,opacity=.16,line width=.3pt}
]

\newcommand{\qrlensaxes}{\draw[axis,-{Latex[length=1.7mm]}] (-1.08,0)--(1.12,0)
    node[below right=-1pt] {$Q$};
  \draw[axis,-{Latex[length=1.7mm]}] (0,-1.08)--(0,1.12)
    node[above left=-1pt] {$R$};
  \foreach \t/\lab in {-1/$-1$,1/$1$}{
    \draw[axis] (\t,0.018)--(\t,-0.018) node[below=2pt] {\lab};
    \draw[axis] (0.018,\t)--(-0.018,\t) node[left=2pt] {\lab};
  }
  \draw[threshold] (-\qrlensrho,-1)--(-\qrlensrho,1);
  \draw[threshold] ( \qrlensrho,-1)--( \qrlensrho,1);
  \node[below=3pt,fill=white,inner sep=1pt] at (-\qrlensrho,0)
    {$-\rho_n$};
  \node[below=3pt,fill=white,inner sep=1pt] at ( \qrlensrho,0)
    {$\rho_n$};
}

\newcommand{\qrlensmiddlehatch}{\begin{scope}
    \clip (-\qrlensrho,-1) rectangle (\qrlensrho,1);
    \foreach \h in {-2.4,-2.3,...,2.4}{
      \draw[middle hatch]
        (-1.2,{\h-1.2})--(1.2,{\h+1.2});
    }
  \end{scope}
}

\node[
  anchor=south west,
  align=left,
  font=\footnotesize,
  draw=black!25,
  fill=white,
  rounded corners=1pt,
  inner xsep=4pt,
  inner ysep=3pt
] at (-1.08,1.72)
  {$U_n(Q):=-\rho_nQ+\sin\zeta_n\sqrt{1-Q^2}$\qquad\qquad
   $L_n(Q):=-\rho_nQ-\sin\zeta_n\sqrt{1-Q^2}$};

\begin{scope}
  \path[region]
    (-1,-1)--(\qrlensrho,-1)
    --plot[domain=\qrlensrho:1,samples=100]
      (\x,{-\qrlensrho*\x-\qrlenss*sqrt(1-\x*\x)})
    --(1,1)--(-\qrlensrho,1)
    --plot[domain=-\qrlensrho:-1,samples=100]
      (\x,{-\qrlensrho*\x+\qrlenss*sqrt(1-\x*\x)})
    --cycle;
  \qrlensmiddlehatch
  \draw[exact,domain=\qrlensrho:1,samples=100]
    plot (\x,{-\qrlensrho*\x-\qrlenss*sqrt(1-\x*\x)});
  \draw[exact,domain=-1:-\qrlensrho,samples=100]
    plot (\x,{-\qrlensrho*\x+\qrlenss*sqrt(1-\x*\x)});
  \qrlensaxes
  \node[align=center] at (0,1.34) {admissible region $\mathcal A_n$};
  \node[blue!65!black,align=center,font=\scriptsize]
    at (0.55,-0.47) {$R\geq L_n(Q)$};
  \node[blue!65!black,align=center,font=\scriptsize]
    at (-0.55,0.47) {$R\leq U_n(Q)$};
  \node at (0,-1.29) {\textup{(a)}};
\end{scope}

\begin{scope}[xshift=6cm]
  \path[proved region]
    (-1,-1)--(\qrlensrho,-1)
    --plot[domain=\qrlensrho:1,samples=100]
      (\x,{-\qrlensrho*\x-\qrlenss*sqrt(1-\x*\x)})
    --(1,1)--(-\qrlensrho,1)--(-1,\qrlensrho)--cycle;
  \qrlensmiddlehatch
  \draw[exact,domain=\qrlensrho:1,samples=100]
    plot (\x,{-\qrlensrho*\x-\qrlenss*sqrt(1-\x*\x)});
  \draw[exact ghost,domain=-1:-\qrlensrho,samples=100]
    plot (\x,{-\qrlensrho*\x+\qrlenss*sqrt(1-\x*\x)});
  \draw[stronger] (-1,\qrlensrho)--(-\qrlensrho,1);
  \qrlensaxes
  \node[align=center] at (0,1.50)
    {enclosure of realized $(Q,R)$-pairs\\[-1pt]
     with distinct anchors};
  \node[blue!65!black,font=\scriptsize,align=center,anchor=south]
    at (-0.80,1.00)
    {\Cref{lem:ac-far}\\[-1pt]$R\leq U_n(Q)$};
  \node[font=\scriptsize,align=center,anchor=north]
    at (-0.45,0.48)
    {\Cref{prop:ac-far-affine}\\[-1pt]$R\leq 1+\rho_n+Q$};
  \node[font=\scriptsize,align=center] at (0,-0.72)
    {no additional\\[-1pt]constraint on \(R\)};
  \node[font=\scriptsize,align=center]
    at (0.57,-0.43)
    {\Cref{lem:ac-near}\\[-1pt]$R\geq L_n(Q)$};
  \node at (0,-1.29) {\textup{(b)}};
\end{scope}

\end{tikzpicture}
 }
\endgroup
\caption{The admissible \((Q,R)\)-region and the bounds satisfied by
realized distinct-anchor pairs, illustrated for \(n=2\), so
\(\rho_n=1/3\).  Here \(Q\) and
\(R\) are the source and target inner products.  Panel~\textup{(a)}
shows the complete admissible region \(\mathcal A_n\), with its middle strip lightly
hatched; the legend defines its boundary functions \(U_n\) and \(L_n\).
In panel~\textup{(b)}, \Cref{lem:ac-near} proves the lower-boundary
inequality \(R\geq L_n(Q)\); equality is attained for every
\(Q\in[\rho_n,1]\) by the equal-level swapped-anchor family from
\Cref{sec:swapped-anchor-configuration}.  On the other side,
\Cref{prop:ac-far-affine} proves the stronger straight-line bound
\(R\leq1+\rho_n+Q\), which implies the upper-boundary inequality
\(R\leq U_n(Q)\) in \Cref{lem:ac-far}; the dashed curve is this upper
boundary of the full admissible region.  The spherical-cap optimization
underlying this upper estimate is reduced exactly to a one-variable
concave maximization in \Cref{prop:ac-cap-dimension-reduction}.
\label{fig:qr-distortion-region}}
\end{figure}
\FloatBarrier

For the remainder of this overview, assume \(n\geq2\); the circle case is
treated separately in \Cref{prop:ac-n-one}.

\begin{itemize}
 \item If \(i=j\), the vertex-dependent isometric embedding of
 \Cref{lem:ac-common-anchor-pointwise} places each embedded target within
 \(\widehat{\zeta}_n/2\) of its source.  The triangle inequality then
 gives the same-anchor estimate in \Cref{lem:ac-same-anchor}.

 \item Suppose \(i\neq j\).  The three ranges are shown in
 panel~\textup{(b)} of \Cref{fig:qr-distortion-region}.  In the middle
 range \(-\rho_n<Q<\rho_n\), admissibility is automatic.  For
 \(Q\geq\rho_n\), Ptolemy's inequality and a direct comparison of
 spherical distances give the lower-boundary result
 \Cref{lem:ac-near}.  For \(Q\leq-\rho_n\), the exact reduction of an
 optimization over two spherical caps to a one-variable concave
 maximization gives the stronger affine estimate in
 \Cref{prop:ac-far-affine}, and hence the upper-boundary result
 \Cref{lem:ac-far}.
\end{itemize}
Thus every realized \((Q,R)\)-pair belongs to \(\mathcal A_n\).  The
critical involution of \Cref{prop:ac-critical-involution} and the
reciprocal identity for \(r_n\) connect the low- and high-radial ranges
used in the distinct-anchor estimates.

Finally, \Cref{lem:ac-identity-band} handles comparisons involving the
identity band, while \Cref{lem:relation-helmet-trick} shows that
antipodal extension preserves distortion.  The case \(n=1\) is completed separately in
\Cref{prop:ac-n-one}, using the lower-boundary estimate of
\Cref{lem:ac-near}; for \(n\geq2\), all estimates are assembled in
\Cref{sec:anchored-chord-assembly} to prove \Cref{thm:main}.

\subsection{A self-contained proof of the lower bound}
\label{sec:self-contained-lower-bound}

The lower bound is an instance of what is now called a
\emph{quantitative Borsuk--Ulam theorem}: rather than merely ruling out a
continuous odd map, it quantifies how discontinuous an arbitrary odd map
must be.  The adjacent-dimensional result at the core of this viewpoint
was first established by Dubins and Schwarz
\cite{dubins1981equidiscontinuity}.  Lim, M\'emoli and Smith adapted this
quantitative obstruction to the Gromov--Hausdorff distance; see \cite[Section~5 and Appendix~A]{lim2021gromov} for
the broader context and  \cite{adams2022gromov} for an extension.

To keep the paper self-contained, we give a direct proof of the known lower
bound, different in organization from the argument in
\cite{lim2021gromov}.

\begin{proposition}[Adjacent-sphere lower bound]
\label{prop:adjacent-lower-bound}
For every integer \(n\geq1\) and every function
\(f\colon\Sp^{n+1}\to\Sp^n\),
\begin{equation}\label{eq:function-lower-bound}
 \dis(f)\geq\zeta_n.
\end{equation}
Consequently, every correspondence \(\mathcal R\) between \(\Sp^n\) and
\(\Sp^{n+1}\) satisfies \(\dis(\mathcal R)\geq\zeta_n\), and therefore
\begin{equation}\label{eq:gh-lower-bound}
 2\dgh(\Sp^n,\Sp^{n+1})\geq\zeta_n.
\end{equation}
More generally, for every \(d\geq1\), every correspondence
\(\mathcal R\subseteq\Sp^{n+d}\times\Sp^n\) satisfies
\(\dis(\mathcal R)\geq\zeta_n\).  Consequently,
\begin{equation}\label{eq:higher-codimension-lower-bound}
 \dgh(\Sp^n,\Sp^{n+d})\geq\frac{\zeta_n}{2}.
\end{equation}
\end{proposition}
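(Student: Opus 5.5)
The plan is to argue by contradiction. Assuming \(\dis(f)<\zeta_n\), I will manufacture from \(f\) a continuous odd map \(\Sp^{n+1}\to\Sp^n\), which the Borsuk--Ulam theorem forbids. The corollaries then follow formally.

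\emph{Step 1: a convex-hull lemma.} I first show the following. If \(y_1,\dots,y_m\in\Sp^n\) satisfy \(d_n(y_k,y_l)<\zeta_n\), i.e.\ \(y_k\cdot y_l>-\rho_n\), for all \(k\neq l\), then \(0\) is not in their convex hull. By Carath\'eodory in \(\R^{n+1}\), it suffices to treat \(m\leq n+2\). Suppose \(0=\sum\lambda_ky_k\) with \(\lambda_k\geq0\) and \(\sum\lambda_k=1\). Then
\[
0=\Bigl\lVert\sum_k\lambda_ky_k\Bigr\rVert^2>\sum_k\lambda_k^2-\rho_n\Bigl(1-\sum_k\lambda_k^2\Bigr)\geq\frac{1+\rho_n}{m}-\rho_n\geq\frac{1+\rho_n}{n+2}-\rho_n=0 .
\]
This is a contradiction. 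When \(m=1\) the lemma is trivial. The inequality is strict because at least two \(\lambda_k\) are positive.

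\emph{Step 2: an odd simplicial approximation.} Suppose \(\dis(f)<\zeta_n\), and fix \(\delta>0\) with \(\dis(f)+\delta<\zeta_n\). Take an antipodally symmetric triangulation of \(\Sp^{n+1}\) whose simplices have geodesic diameter at most \(\delta\); for example, subdivide the boundary of the cross-polytope and project radially. Choose one representative \(v\) from each antipodal pair of vertices. Define \(h(v):=f(v)\) and \(h(-v):=-f(v)\), so \(h\) is odd on vertices.

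For vertices \(u,w\) of a common simplex, I claim \(d_n(h(u),h(w))\leq\dis(f)+\delta\).
\begin{itemize}
\item If both are representatives, this is immediate from the definition of distortion.
\item If \(u\) is a representative and \(w=-w'\) with \(w'\) a representative, then \(d_{n+1}(u,w')\geq\pi-\delta\). Hence \(d_n(f(u),f(w'))\geq\pi-\delta-\dis(f)\), and so \(d_n(h(u),h(w))=\pi-d_n(f(u),f(w'))\leq\dis(f)+\delta\).
\item If both are negatives of representatives, the first case applies after negating.
\end{itemize}
Thus every pair of vertex images in a simplex lies within distance \(<\zeta_n\). By Step~1, the affine extension of \(h\) over each simplex never vanishes, so its radial projection \(\hat h\colon\Sp^{n+1}\to\Sp^n\) is well defined and continuous. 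It is odd because the triangulation and the vertex values are antipodally symmetric. This contradicts Borsuk--Ulam, which proves \eqref{eq:function-lower-bound}.

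\emph{Step 3: correspondences and higher codimension.} Given a correspondence \(\mathcal R\subseteq\Sp^{n+d}\times\Sp^n\), restrict to a great subsphere \(\Sp^{n+1}\subseteq\Sp^{n+d}\). Its intrinsic geodesic metric agrees with the restricted one, since in both cases the distance is \(\arccos\) of the inner product. Because the first projection of \(\mathcal R\) is surjective, choose for each \(q\in\Sp^{n+1}\) some \(f(q)\) with \((q,f(q))\in\mathcal R\). Then \(\dis(f)\leq\dis(\mathcal R)\), so \(\dis(\mathcal R)\geq\zeta_n\). Formula \eqref{eq:gh-correspondence-formula} then yields \eqref{eq:gh-lower-bound} and \eqref{eq:higher-codimension-lower-bound}.

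The main obstacle is Step~1, specifically getting the sharp constant \(\rho_n\). The Carath\'eodory reduction to at most \(n+2\) points together with the Cauchy--Schwarz bound \(\sum\lambda_k^2\geq1/m\) is exactly what makes the threshold come out as \(\zeta_n\). The strict inequality assumed on \(\dis(f)\), absorbed by the choice of \(\delta\), handles the boundary case.
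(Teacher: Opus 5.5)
Your proposal is correct and follows essentially the same route as the paper. Both use the same sharp Jung-type convex-hull lemma, an antipodally symmetric fine triangulation with odd vertex values, piecewise-linear extension and normalization contradicting Borsuk--Ulam, and then restriction to an equatorial $\Sp^{n+1}$ for correspondences. The differences are cosmetic: you make $f$ odd only on the vertex set (your case analysis is the helmet trick's complement-to-$\pi$ argument, inlined) instead of first passing to a globally odd $\widehat f$ with $\dis(\widehat f)\leq\dis(f)$, and you use Carath\'eodory instead of the bound of $n+2$ vertices per simplex.
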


Its directness comes from isolating two ingredients---the helmet trick and a
sharp finite open-hemisphere Jung-type lemma (see
\cite{dekster1995jung} and \cite[Lemma~A.1]{lim2021gromov})---and then
applying the classical Borsuk--Ulam theorem, without passing through the more
general Dubins--Schwarz formalism used in \cite{lim2021gromov}.
Although the principal contribution of the paper is the matching
upper-bound construction, this argument provides a streamlined and
transparent proof of the lower-bound mechanism.

The function-valued helmet trick appears in
\cite[Lemma~5.7 and Corollary~5.8]{lim2021gromov}; its relation-valued
extension is \cite[Proposition~2.2]{rodriguezmartin2024novel}.  We
reproduce both short arguments for self-containedness.
Recall that an antipodal correspondence is invariant under simultaneous
negation of its two coordinates.

\begin{lemma}[Helmet trick for relations]
\label{lem:relation-helmet-trick}
Let \(m,n\geq1\).  For a nonempty relation
\(\mathcal R\subseteq\Sp^m\times\Sp^n\), put
\[
 -\mathcal R:=\{(-x,-y):(x,y)\in\mathcal R\}.
\]
Then \(\mathcal R\cup(-\mathcal R)\) is antipodal; if
\(\mathcal R\) is a correspondence, so is
\(\mathcal R\cup(-\mathcal R)\).  Moreover,
\[
 \dis(\mathcal R\cup-\mathcal R)=\dis(\mathcal R).
\]
\end{lemma}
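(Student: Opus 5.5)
The plan splits into three checks: antipodality, surjectivity of projections, and equality of distortions. The only geometric input is that the antipodal map \(x\mapsto -x\) is an isometry of each round sphere, together with the identity \(d(-x,x')=\pi-d(x,x')\).

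\emph{Antipodality.} We have \(-(-\mathcal R)=\mathcal R\). Hence if \((x,y)\in\mathcal R\cup(-\mathcal R)\), then \((-x,-y)\) lies in \((-\mathcal R)\cup\mathcal R\), which is the same set.

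\emph{Correspondence property.} Suppose \(\mathcal R\) is a correspondence. Every \(x\in\Sp^m\) already appears as a first coordinate in \(\mathcal R\subseteq\mathcal R\cup(-\mathcal R)\), and likewise every \(y\in\Sp^n\) appears as a second coordinate. So both projections of the union remain surjective.

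\emph{Distortion, lower bound.} The inequality \(\dis(\mathcal R\cup-\mathcal R)\geq\dis(\mathcal R)\) is immediate, since the supremum is taken over a larger set of pairs.

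\emph{Distortion, upper bound.} We take two elements of the union and split into cases according to which part each comes from.
\begin{itemize}
\item If both lie in \(\mathcal R\), the defect is at most \(\dis(\mathcal R)\) by definition.
\item If both lie in \(-\mathcal R\), write them as \((-x,-y)\) and \((-x',-y')\) with \((x,y),(x',y')\in\mathcal R\). Negation is an isometry, so \(d(-x,-x')=d(x,x')\) and \(d(-y,-y')=d(y,y')\). The defect is therefore unchanged.
\item In the mixed case, one pair is \((x,y)\in\mathcal R\) and the other is \((-x',-y')\) with \((x',y')\in\mathcal R\). Since \(d(x,-x')=\pi-d(x,x')\) and \(d(y,-y')=\pi-d(y,y')\), we get
\[
\bigl|d(x,-x')-d(y,-y')\bigr|=\bigl|d(x,x')-d(y,y')\bigr|\leq\dis(\mathcal R).
\]
\end{itemize}
The identity \(d(u,-w)=\pi-d(u,w)\) holds for the geodesic metric on any unit sphere \(\Sp^k\), because \(\arccos(-t)=\pi-\arccos t\); this is where both spheres having geodesic diameter \(\pi\) is used. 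Taking the supremum over all cases gives the reverse inequality, and hence equality.

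I expect no genuine obstacle. The only point needing care is the mixed case, where it matters that both spheres carry the geodesic metric normalized to diameter \(\pi\).
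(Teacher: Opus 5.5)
Your proposal is correct and matches the paper's proof. The paper also treats antipodality and surjectivity as immediate, and for distortion compares \((\varepsilon x,\varepsilon y)\) with \((\delta x',\delta y')\), \(\varepsilon,\delta\in\{\pm1\}\): equal signs leave both distances unchanged, while opposite signs replace both by their complements to \(\pi\), so every defect of \(\mathcal R\cup(-\mathcal R)\) is already a defect of \(\mathcal R\). Your three-case split is an unpacked version of this sign argument.
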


\begin{proof}
The first two assertions follow directly from the definitions.
Compare \((\varepsilon x,\varepsilon y)\) and
\((\delta x',\delta y')\), where
\((x,y),(x',y')\in\mathcal R\) and
\(\varepsilon,\delta\in\{\pm1\}\).  Equal signs leave both distances
unchanged, while opposite signs replace both by their complements to
\(\pi\).  Thus every defect in \(\mathcal R\cup-\mathcal R\) is a
defect in \(\mathcal R\); the reverse inequality follows from
\(\mathcal R\subseteq\mathcal R\cup-\mathcal R\).
\end{proof}

\begin{corollary}[Helmet trick for functions]
\label{lem:helmet-trick}
For every function \(f\colon\Sp^{n+1}\to\Sp^n\), there is an odd function
\(\widehat f\colon\Sp^{n+1}\to\Sp^n\) such that
\(\dis(\widehat f)\leq\dis(f)\).
\end{corollary}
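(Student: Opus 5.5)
We apply \Cref{lem:relation-helmet-trick} to the graph of \(f\) restricted to a closed hemisphere, and then select an odd function inside the resulting antipodal correspondence. Fix a point \(N\in\Sp^{n+1}\) and let \(H_+:=\{q\in\Sp^{n+1}:q\cdot N>0\}\) be the open upper hemisphere. Let \(E:=\{q:q\cdot N=0\}\cong\Sp^n\) be the equator. We need a subset \(A\subseteq\Sp^{n+1}\) that contains exactly one point of each antipodal pair \(\{q,-q\}\). Take \(A:=H_+\cup A_E\), where \(A_E\subseteq E\) is any such selection on the equator; one exists by choosing one representative from each pair, for instance via a lexicographic order on coordinates. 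Then \(\Sp^{n+1}\) is the disjoint union of \(A\) and \(-A\).

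Define \(\widehat f(q):=f(q)\) for \(q\in A\) and \(\widehat f(q):=-f(-q)\) for \(q\in -A\). This is well defined because \(A\cap(-A)=\emptyset\), and it is odd by construction. Its graph equals \(\mathcal G\cup(-\mathcal G)\), where \(\mathcal G:=\{(q,f(q)):q\in A\}\) is the graph of \(f|_A\).

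\Cref{lem:relation-helmet-trick} is stated for relations in \(\Sp^m\times\Sp^n\) and needs only nonemptiness for the distortion identity, so applying it to the nonempty relation \(\mathcal G\) gives
\[
 \dis(\widehat f)=\dis\bigl(\mathcal G\cup(-\mathcal G)\bigr)=\dis(\mathcal G)\leq\dis(f).
\]
The final inequality holds because \(\mathcal G\) is a subset of the graph of \(f\), and distortion is a supremum over pairs, hence monotone under inclusion.

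None of these steps is a real obstacle. The only point needing care is that \(A\) must meet every antipodal pair exactly once, including on the equator. This is what makes \(\widehat f\) a genuine function rather than a relation, and the equatorial selection above handles it. The helmet shape of \(A\) plays no role in the inequality itself.
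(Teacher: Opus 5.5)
Your proof is correct and is essentially the paper's argument: restrict \(f\) to a set meeting each antipodal pair exactly once, extend oddly, and apply \Cref{lem:relation-helmet-trick} together with monotonicity of distortion under inclusion. The only difference is cosmetic. The paper uses the explicit helmet \(\mathcal H_{n+1}=\{x\in\Sp^{n+1}:x_{\ell(x)}>0\}\), whereas you take an open hemisphere plus an equatorial selection; as you note, any such selection works.
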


\begin{proof}
For \(x\in\Sp^k\subset\R^{k+1}\), let
\(\ell(x):=\max\{j:x_j\neq0\}\), and define the \emph{helmet}
\[
 \mathcal H_k:=\{x\in\Sp^k:x_{\ell(x)}>0\}.
\]
Thus \(\mathcal H_k\cap(-\mathcal H_k)=\varnothing\) and
\(\mathcal H_k\cup(-\mathcal H_k)=\Sp^k\).  This is a concrete form of
the recursively defined helmet of Lim--M\'emoli--Smith.  Apply
\Cref{lem:relation-helmet-trick} to
\(\mathcal R:=\operatorname{graph}(f|_{\mathcal H_{n+1}})\).
Its antipodal extension is the graph of the odd function determined by
\(\widehat f(u):=f(u)\) and
\(\widehat f(-u):=-f(u)\) for \(u\in\mathcal H_{n+1}\).  Hence
\(\dis(\widehat f)=\dis(f|_{\mathcal H_{n+1}})\leq\dis(f)\).
\end{proof}

\begin{lemma}[Spherical Jung theorem]
\label{lem:spherical-jung}
Let \(q_1,\ldots,q_r\in\Sp^n\), where \(1\leq r\leq n+2\).  If
\[
 d_n(q_i,q_j)<\zeta_n
 \qquad(i\neq j),
\]
then \(q_1,\ldots,q_r\) lie in a common open hemisphere of \(\Sp^n\).
Equivalently,
\[
 0\notin\operatorname{conv}\{q_1,\ldots,q_r\}.
\]
\end{lemma}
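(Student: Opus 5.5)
We prove the contrapositive via the convex-hull formulation. Suppose \(0\in\operatorname{conv}\{q_1,\ldots,q_r\}\), so that \(\sum_k\lambda_kq_k=0\) for some weights \(\lambda_k\geq0\) with \(\sum_k\lambda_k=1\). We will show that some pair satisfies \(q_i\cdot q_j\leq-\rho_n=-\tfrac1{n+1}\), that is, \(d_n(q_i,q_j)\geq\zeta_n\). The two formulations in the statement are equivalent by the standard separation argument. If \(0\notin\operatorname{conv}\), a separating hyperplane gives a vector \(u\) with \(u\cdot q_k>0\) for all \(k\), which is an open hemisphere. Conversely, points \(q_k\) with \(u\cdot q_k>0\) for all \(k\) have convex hull avoiding \(0\).

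The first step is to discard points with zero weight and relabel. We may then assume all \(\lambda_k>0\) on an index set of size \(s\), where \(2\leq s\leq r\leq n+2\). Here \(s\geq2\) because a single unit vector cannot be \(0\).

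Next, expand \(0=\bigl\lVert\sum_k\lambda_kq_k\bigr\rVert^2\) to get
\[
 \sum_k\lambda_k^2+\sum_{k\neq l}\lambda_k\lambda_l\,q_k\cdot q_l=0.
\]
Let \(m:=\min_{k\neq l}q_k\cdot q_l\). Bounding each cross term below by \(m\) gives
\[
 0\geq\sum_k\lambda_k^2+m\Bigl(1-\sum_k\lambda_k^2\Bigr).
\]
Note that \(1-\sum_k\lambda_k^2>0\) because \(s\geq2\). Hence
\[
 m\leq-\frac{\sum_k\lambda_k^2}{1-\sum_k\lambda_k^2}.
\]
By Cauchy--Schwarz, \(\sum_k\lambda_k^2\geq1/s\). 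The function \(t\mapsto t/(1-t)\) is increasing, so
\[
 m\leq-\frac{1/s}{1-1/s}=-\frac1{s-1}\leq-\frac1{n+1},
\]
where the last inequality uses \(s\leq n+2\). Therefore some pair \(k\neq l\) has \(q_k\cdot q_l\leq-\rho_n\), i.e. \(d_n(q_k,q_l)\geq\arccos(-\rho_n)=\zeta_n\). This contradicts the hypothesis, so \(0\notin\operatorname{conv}\{q_1,\ldots,q_r\}\).

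No step is a serious obstacle. The points that need care are the following:
\begin{itemize}
\item The bound \(m\) must be applied only to cross terms with \(k\neq l\), and it is used with a positive coefficient \(1-\sum_k\lambda_k^2\).
\item The restriction \(r\leq n+2\) enters only through \(s\leq n+2\). This restriction is what makes the regular simplex with \(n+2\) vertices the extremal case.
\item Strictness is preserved: the hypothesis is strict (\(<\zeta_n\)) and the conclusion derived is non-strict (\(\geq\zeta_n\)), so the contradiction is genuine.
\end{itemize}
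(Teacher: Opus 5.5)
Your proof is correct and follows essentially the same route as the paper's: discard zero weights, expand \(\bigl\lVert\sum_k\lambda_kq_k\bigr\rVert^2=0\), use \(\sum_k\lambda_k^2\geq 1/r\geq 1/(n+2)\), and bound the cross terms by the pairwise inner-product threshold. The only difference is cosmetic. You argue through the minimum inner product \(m\) and obtain the slightly sharper bound \(m\leq-1/(s-1)\), whereas the paper inserts the strict hypothesis \(q_i\cdot q_j>-\rho_n\) directly to reach the contradiction \(0>\bigl((n+2)S-1\bigr)/(n+1)\geq0\).
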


\begin{proof}
For a finite subset of a sphere, containment in an open hemisphere is
equivalent, by strict hyperplane separation, to exclusion of the origin
from its Euclidean convex hull.  Suppose instead that
\[
 0=\sum_{i=1}^r\lambda_iq_i,
 \qquad
 \lambda_i\geq0,
 \qquad
 \sum_{i=1}^r\lambda_i=1.
\]
Discard zero coefficients and relabel.  The case \(r=1\) is impossible,
so \(r\geq2\).  Put \(S:=\sum_i\lambda_i^2\).  Since
\(\sum_i\lambda_i=1\), one has
\(2\sum_{i<j}\lambda_i\lambda_j=1-S\).  Therefore
\(q_i\cdot q_j>\cos\zeta_n=-1/(n+1)\) for \(i\neq j\) gives
\[
 0=\left\|\sum_i\lambda_iq_i\right\|^2
 >S-\frac{1-S}{n+1}
 =\frac{(n+2)S-1}{n+1}\geq0.
\]
The last inequality follows from
\(S\geq1/r\geq1/(n+2)\), a contradiction.
\end{proof}

\begin{proof}[Proof of Proposition \ref{prop:adjacent-lower-bound}]
Suppose that \(\dis(f)<\zeta_n\).  By \Cref{lem:helmet-trick}, there is
an odd function \(\widehat f\colon\Sp^{n+1}\to\Sp^n\) with
\(\dis(\widehat f)\leq\dis(f)\).  Choose \(\varepsilon>0\) such that
\[
 \dis(f)+\varepsilon<\zeta_n.
\]
Choose an antipodally symmetric triangulation \(K_\varepsilon\) of
\(\Sp^{n+1}\)
for which the antipodal map is simplicial and
\[
 \operatorname{mesh}(K_\varepsilon)
 :=\max_{\sigma\in K_\varepsilon}
   \operatorname{diam}_{d_{n+1}}(\sigma)
 <\varepsilon.
\]
Such triangulations are obtained, for example, from sufficiently many
antipodal barycentric subdivisions of the boundary of the cross-polytope,
realized equivariantly on the sphere.

Let \(\sigma\in K_\varepsilon\).  For any two vertices \(v,w\) of
\(\sigma\),
\[
 d_n(\widehat f(v),\widehat f(w))
 \leq d_{n+1}(v,w)+\dis(\widehat f)
 <\varepsilon+\dis(f)<\zeta_n.
\]
Because \(K_\varepsilon\) has dimension \(n+1\), the simplex
\(\sigma\) has at most
\(n+2\) vertices.  Hence \Cref{lem:spherical-jung} gives
\begin{equation}\label{eq:simplex-convex-hull-avoids-zero}
 0\notin\operatorname{conv}
 \{\widehat f(v):v\in\operatorname{Vert}(\sigma)\}.
\end{equation}

Extend the vertex map piecewise linearly.  Namely, if \(x\in\sigma\) has
barycentric coordinates \((\lambda_v)_{v\in\operatorname{Vert}(\sigma)}\),
put
\[
 G(x):=\sum_{v\in\operatorname{Vert}(\sigma)}
       \lambda_v\widehat f(v)\in\R^{n+1}.
\]
The formulas agree on common faces, so
\(G\colon\Sp^{n+1}\to\R^{n+1}\) is continuous.
Equation~\eqref{eq:simplex-convex-hull-avoids-zero} shows that
\(G(x)\neq0\) everywhere.  Since \(K_\varepsilon\) is antipodally
simplicial and
the vertex map is odd, \(G(-x)=-G(x)\).  Therefore
\[
 F(x):=\frac{G(x)}{\lVert G(x)\rVert}
\]
is a continuous odd map \(F\colon\Sp^{n+1}\to\Sp^n\), contradicting the
Borsuk--Ulam theorem.  This proves \eqref{eq:function-lower-bound}.

Finally, let
\(\mathcal R\subseteq\Sp^n\times\Sp^{n+1}\) be a correspondence.  For
each \(x\in\Sp^{n+1}\), choose \(f(x)\in\Sp^n\) with
\((f(x),x)\in\mathcal R\).  Then
\(\dis(f)\leq\dis(\mathcal R)\), so
\eqref{eq:function-lower-bound} yields
\(\dis(\mathcal R)\geq\zeta_n\).  Taking the infimum over
correspondences in \eqref{eq:gh-correspondence-formula} proves
\eqref{eq:gh-lower-bound}.

For the final assertion, let \(d\geq1\), restrict the source projection
of \(\mathcal R\subseteq\Sp^{n+d}\times\Sp^n\) to an equatorial
\(\Sp^{n+1}\subseteq\Sp^{n+d}\), and choose \(f(x)\in\Sp^n\) with
\((x,f(x))\in\mathcal R\).  Then
\(\operatorname{graph}(f)\subseteq\mathcal R\), so
\(\dis(\mathcal R)\geq\dis(f)\geq\zeta_n\) by
\eqref{eq:function-lower-bound}.  Taking the infimum over
correspondences gives \eqref{eq:higher-codimension-lower-bound}.
\end{proof}

\section{\texorpdfstring{Spherical joins of correspondences and bounds for nonconsecutive spheres}{Spherical joins of correspondences and bounds for nonconsecutive spheres}}
\label{sec:spherical-join-consequences}

The family \((\mathcal R_n)_{n\geq1}\) of sharp relations from
\Cref{thm:main} supplies the inputs for the applications of
\Cref{thm:exact-join-distortion}.  We first prove that theorem and then use
it in three ways: suspension stabilizes a correspondence without changing
its distortion, balanced joins combine {sharp
consecutive-sphere correspondences} \(\mathcal R_n\),
and joins of exact circle correspondences give bounds in every
codimension.  Throughout this section, subscripts on \(O\) and \(\Omega\)
record the parameters on which the implicit constants may depend; the
remaining variable tends to infinity.  Thus \(O_d(m^{-2})\) is taken as
\(m\to\infty\) with \(d\) fixed, while \(O_m(n^{-2})\) and
\(\Omega_m(n^{-1/2})\) are taken as \(n\to\infty\) with \(m\) fixed.  As
usual, \(d=o(m)\) means \(d/m\to0\).

\subsection{Synchronized spherical joins of correspondences}

We begin the proof of \Cref{thm:exact-join-distortion} with the following
distance estimate.

\begin{lemma}[A join-distance estimate]
\label{lem:join-distance-estimate}
Let \(a,b\geq0\) and \(a+b\leq1\).  Then
\(\psi(u_1,u_2):=\arccos(a\cos u_1+b\cos u_2)\) is \(1\)-Lipschitz
on \([0,\pi]^2\) for the \(\ell_\infty\)-metric.
\end{lemma}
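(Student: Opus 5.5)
The plan is to bound the gradient of \(\psi\) in the \(\ell_1\)-norm, since \(\ell_1\) is the dual of \(\ell_\infty\), and then integrate along straight segments in the convex square \([0,\pi]^2\). Write \(c(u):=a\cos u_1+b\cos u_2\in[-1,1]\) and \(\theta:=\psi(u)=\arccos c(u)\).

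\textbf{Key inequality.} For all \(u\in[0,\pi]^2\) I expect
\[
 a\sin u_1+b\sin u_2\;\leq\;\sqrt{1-c(u)^2}=\sin\theta .
\]
The left side is nonnegative because \(\sin u_i\geq0\) on \([0,\pi]\). Expanding the squares gives
\[
 (a\sin u_1+b\sin u_2)^2+c(u)^2=a^2+b^2+2ab\cos(u_1-u_2)\leq(a+b)^2\leq1 .
\]
Subtracting \(c(u)^2\) and taking square roots yields the claim.

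\textbf{Gradient bound away from \(c=\pm1\).} On the open set \(\{u:|c(u)|<1\}\), the function \(\psi\) is smooth, with
\[
 \partial_{u_i}\psi=\frac{a_i\sin u_i}{\sin\theta},\qquad a_1=a,\ a_2=b .
\]
Both partial derivatives are nonnegative, and by the key inequality their sum is at most \(1\). So the \(\ell_1\)-norm of \(\nabla\psi\) is at most \(1\). For a \(C^1\) segment lying in this set, this gives \(|\psi(u')-\psi(u)|\leq\lVert\nabla\psi\rVert_1\,\lVert u'-u\rVert_\infty\leq\lVert u'-u\rVert_\infty\).

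\textbf{Singular points (main obstacle).} The only technical point is the set where \(|c|=1\), where \(\arccos\) is not differentiable. Since \(|c|\leq a|\cos u_1|+b|\cos u_2|\leq a+b\), this set is empty unless \(a+b=1\). In that case \(|c|=1\) forces \(|\cos u_1|=1\) whenever \(a>0\), and \(|\cos u_2|=1\) whenever \(b>0\), with matching signs. Hence the singular set is either finite (a subset of the corners) or, if \(a=0\) or \(b=0\), a union of lines \(u_i\in\{0,\pi\}\). In the degenerate case \(a=0\) or \(b=0\), \(\psi\) is \(\arccos(a\cos u_1)\) or \(\arccos(b\cos u_2)\), and I would check it directly: if the coefficient is \(1\), \(\psi\) is simply \(u_1\) or \(u_2\), which is \(1\)-Lipschitz; otherwise there is no singularity.

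In the remaining case the singular set is finite. A segment \([u,u']\) meets it in at most finitely many points, or else the segment joins two corners. I would split the segment at those points, apply the gradient bound on each open piece, and pass to the endpoints using continuity of \(\psi\). Summing the pieces gives \(|\psi(u')-\psi(u)|\leq\lVert u'-u\rVert_\infty\). If the whole segment is a diagonal between corners, I would perturb an endpoint slightly and use continuity again.
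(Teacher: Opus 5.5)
Your proof is correct and follows the paper's route: you bound the $\ell_1$-norm of $\nabla\psi$ by $1$ and integrate along segments using $\ell_1$--$\ell_\infty$ duality. Your identity $(a\sin u_1+b\sin u_2)^2+c(u)^2=a^2+b^2+2ab\cos(u_1-u_2)\leq(a+b)^2$ also supplies the justification of the gradient bound, which the paper only asserts.

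The one difference is at the singular points $|c|=1$. The paper sidesteps them by replacing $(a,b)$ with $(1-\varepsilon)(a,b)$ and passing to the uniform limit. You instead locate the singular set explicitly: at most the corners $(0,0),(\pi,\pi)$, or the trivial case $\psi=u_i$. You then split segments at those points, which also works, and the diagonal case needs no perturbation because its interior is regular.
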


\begin{proof}
At every point where the derivatives are finite, the $\ell_1$ norm of the gradient of $\psi$ is
\[
 \frac{a\sin u_1+b\sin u_2}
 {\sqrt{1-(a\cos u_1+b\cos u_2)^2}} \leq 1.
\]
If the denominator
vanishes, replace \((a,b)\) by
\((1-\varepsilon)(a,b)\), apply the preceding estimate, and pass to the
uniform limit as \(\varepsilon\downarrow0\). This proves the global
Lipschitz estimate.  The fundamental theorem of calculus along
the segment joining \(u,v\in[0,\pi]^2\), together with
\(\ell_1\)--\(\ell_\infty\) duality, gives the required estimate.
\end{proof}

\begin{proof}[Proof of \Cref{thm:exact-join-distortion}]
We first prove the result for \(k=2\).
Every point of \(\Sp^{m_1+m_2+1}\) has a block decomposition
\((x_1\cos t,x_2\sin t)\), with arbitrary choices for a component
whose coefficient vanishes.  Choose \(y_i\) with
\((x_i,y_i)\in\mathcal R^{(i)}\); this gives a joined pair above the chosen
source point.  Applying the same argument to the target blocks proves
surjectivity of the other coordinate projection.

For \(i=1,2\), choose
\((x_i,y_i),(x_i',y_i')\in\mathcal R^{(i)}\), and form the two
elements of the joined relation given by
\eqref{eq:synchronized-join-definition} with join parameters \(s,t\).

Write \(D_i^{\mathrm{src}}:=d_{m_i}(x_i,x_i')\) and
\(D_i^{\mathrm{tar}}:=d_{n_i}(y_i,y_i')\), \(i=1,2\), for their
component distances, and put \(a:=\cos s\cos t\) and
\(b:=\sin s\sin t\).  Then
\(a,b\geq0\) and \(a+b=\cos(s-t)\leq1\).

Indeed, the source inner product is
\(a(x_1\cdot x_1')+b(x_2\cdot x_2')
=a\cos D_1^{\mathrm{src}}+b\cos D_2^{\mathrm{src}}\), and
the target inner product is
\(a(y_1\cdot y_1')+b(y_2\cdot y_2')
=a\cos D_1^{\mathrm{tar}}+b\cos D_2^{\mathrm{tar}}\).

The source and target distances between the two joined pairs are
\(\psi(D_1^{\mathrm{src}},D_2^{\mathrm{src}})\) and
\(\psi(D_1^{\mathrm{tar}},D_2^{\mathrm{tar}})\), respectively, where
\(\psi\) is the function defined in
\Cref{lem:join-distance-estimate}.
By \Cref{lem:join-distance-estimate},
\[
 \left|
 \psi(D_1^{\mathrm{src}},D_2^{\mathrm{src}})
 -
 \psi(D_1^{\mathrm{tar}},D_2^{\mathrm{tar}})
 \right|
 \leq
 \max_{i=1,2}|D_i^{\mathrm{src}}-D_i^{\mathrm{tar}}|.
\]
Taking the supremum gives
\(\dis(\mathcal R^{(1)}*\mathcal R^{(2)})
\leq\max\{\dis(\mathcal R^{(1)}),\dis(\mathcal R^{(2)})\}\).

At the endpoint \(t=0\), the joined relation contains an isometric copy
of \(\mathcal R^{(1)}\); at \(t=\tfrac{\pi}{2}\), it contains an isometric copy of
\(\mathcal R^{(2)}\).  These two slices give the reverse inequality and prove
the distortion formula for \(k=2\).  If both factors are antipodal, then
their join is antipodal: replace each component pair \((x_i,y_i)\) by
\((-x_i,-y_i)\).  Applying the two-factor result successively proves all
assertions by induction on \(k\).
\end{proof}

\Cref{thm:exact-join-distortion} turns the construction of joined correspondences with prescribed
dimensions into a finite minimax allocation problem: within a chosen family
of factors, one minimizes the largest factor distortion.  For joins of
{sharp consecutive-sphere correspondences}
\(\mathcal R_n\), a factor
with target \(\Sp^{a_i}\) has
distortion \(\zeta_{a_i}\); since \(\zeta_j\) decreases with \(j\), the
integers \(a_i+1\) should be distributed as evenly as possible.  For joins of
exact circle correspondences, a factor receiving codimension \(d_i\) has
distortion \(\delta(\lceil \tfrac{d_i}{2}\rceil)\), with \(\delta\) defined in
\eqref{eq:circle-delta}; since \(\delta\) increases, the
codimensions \(d_i\) should likewise be distributed as evenly as the integral
constraints permit.  {These ideas lead,
respectively, to a construction from balanced joins of sharp
consecutive-sphere correspondences and to a construction from joins of
exact circle correspondences.}

\subsection{Suspension of correspondences and stabilization}

We regard \(\Sp^0=\{-1,1\}\) as the unit zero-sphere, whose two points
are at geodesic distance \(\pi\).

\begin{definition}[Synchronized spherical suspension]
\label{def:synchronized-spherical-suspension}
Let \(\mathcal R\subseteq\Sp^m\times\Sp^n\) be a correspondence, and let
\(\Delta_{\Sp^0}\subseteq\Sp^0\times\Sp^0\) be the identity
correspondence.  The \emph{synchronized spherical suspension} of
\(\mathcal R\) is the relation
\(\Sigma\mathcal R\subseteq\Sp^{m+1}\times\Sp^{n+1}\) defined by
\[
 \Sigma\mathcal R:=\mathcal R*\Delta_{\Sp^0}
 =\left\{\bigl((x\sin t,\cos t),(y\sin t,\cos t)\bigr):
 (x,y)\in\mathcal R,\ 0\leq t\leq\pi\right\}.
\]
\end{definition}

\begin{corollary}[Suspension preserves distortion]
\label{cor:suspension-isometric}
For every correspondence \(\mathcal R\),
\begin{equation}\label{eq:suspension-isometric}
 \dis(\Sigma\mathcal R)=\dis(\mathcal R).
\end{equation}
Consequently,
\begin{equation}\label{eq:diagonal-gh-monotonicity}
 \dgh(\Sp^{m+1},\Sp^{n+1})
 \leq\dgh(\Sp^m,\Sp^n).
\end{equation}
In particular, for every fixed \(d\geq1\), the sequence
\[
 m\longmapsto\dgh(\Sp^m,\Sp^{m+d})
\]
is nonincreasing.
\end{corollary}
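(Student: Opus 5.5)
The plan is to read \eqref{eq:suspension-isometric} off \Cref{thm:exact-join-distortion} with \(k=2\), applied to the factors \(\mathcal R^{(1)}=\mathcal R\) and \(\mathcal R^{(2)}=\Delta_{\Sp^0}\). The first step is to reconcile the two parametrizations. In \Cref{def:synchronized-spherical-join} the second block is \(x_2\sin t\) with \(x_2\in\Sp^0=\{\pm1\}\) and \(0\leq t\leq\tfrac{\pi}{2}\). Using the identity correspondence, the second source and target blocks coincide, so the join consists of the pairs \(((x\cos t,\varepsilon\sin t),(y\cos t,\varepsilon\sin t))\) with \(\varepsilon=\pm1\). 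The substitution \(t\mapsto \tfrac{\pi}{2}-t\), together with absorbing the sign \(\varepsilon\), turns \(t\in[0,\tfrac{\pi}{2}]\) with \(\varepsilon=\pm1\) into a single parameter ranging over \([0,\pi]\). This reproduces exactly the set displayed in \Cref{def:synchronized-spherical-suspension}, up to the fixed isometric coordinate identification. Since both coordinates are changed by the same isometry, distortion is unaffected.

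The second step is the distortion count. \(\Sp^0\) with the geodesic metric has distances \(0\) and \(\pi\), and the identity correspondence preserves them, so \(\dis(\Delta_{\Sp^0})=0\). \Cref{thm:exact-join-distortion} then gives \(\dis(\Sigma\mathcal R)=\max\{\dis(\mathcal R),0\}=\dis(\mathcal R)\). The theorem also guarantees that \(\Sigma\mathcal R\) is a correspondence between \(\Sp^{m+1}\) and \(\Sp^{n+1}\). For \eqref{eq:diagonal-gh-monotonicity}, I take any correspondence \(\mathcal R\) between \(\Sp^m\) and \(\Sp^n\) and note that \(\Sigma\mathcal R\) is a correspondence between \(\Sp^{m+1}\) and \(\Sp^{n+1}\) with the same distortion. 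Taking the infimum in \eqref{eq:gh-correspondence-formula} gives the inequality. Applying this with \(n=m+d\) shows that \(m\mapsto\dgh(\Sp^m,\Sp^{m+d})\) is nonincreasing.

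The only real obstacle is bookkeeping: checking that the suspension, written with \(t\in[0,\pi]\) and last coordinate \(\cos t\), is literally the join of \Cref{def:synchronized-spherical-join} up to a coordinate isometry. Since the join parameter is synchronized, the same reparametrization acts on both coordinates. If that bookkeeping felt delicate, I would instead redo the two-line argument directly. For parameters \(s,t\in[0,\pi]\), the source inner product is \(\sin s\sin t\cos D^{\mathrm{src}}+\cos s\cos t\), and the target inner product is the same expression with \(D^{\mathrm{tar}}\). Here the second term is a common constant \(c\) with \(|c|+\sin s\sin t\leq1\). A one-variable version of \Cref{lem:join-distance-estimate} then shows that the map \(u\mapsto\arccos(a\cos u+c)\) is \(1\)-Lipschitz, which yields the upper bound. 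The slice \(t=\tfrac{\pi}{2}\) gives the lower bound.
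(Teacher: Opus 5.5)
Your proposal is correct and is the paper's own argument: since \(\dis(\Delta_{\Sp^0})=0\), \eqref{eq:suspension-isometric} is the case \(k=2\) of \Cref{thm:exact-join-distortion}, and \eqref{eq:diagonal-gh-monotonicity} follows by suspending an arbitrary correspondence and taking the infimum in \eqref{eq:gh-correspondence-formula}. Your reparametrization \(t'=\tfrac{\pi}{2}-\varepsilon t\) correctly verifies the identification \(\mathcal R*\Delta_{\Sp^0}=\Sigma\mathcal R\), which the paper simply asserts in \Cref{def:synchronized-spherical-suspension}.
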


\begin{proof}
The identity relation of \(\Sp^0\) has distortion zero, so
\eqref{eq:suspension-isometric} is
\Cref{thm:exact-join-distortion}.  Applying suspension to an arbitrary
correspondence and then taking the infimum in
\eqref{eq:gh-correspondence-formula} gives
\eqref{eq:diagonal-gh-monotonicity}.
\end{proof}

\begin{remark}[Earlier use of spherical suspension]
\label{rem:earlier-map-suspension}
Spherical suspension was used for maps in
\cite[Lemma~4.3 and Corollary~4.4]{lim2021gromov} to construct
continuous, surjective, antipode-preserving maps between spheres of
arbitrary dimensions; these maps have distortion strictly less than
\(\pi\), as observed in \cite[the proof of Theorem~A]{lim2021gromov}.
\Cref{def:synchronized-spherical-suspension}
extends the same operation from graphs of maps to arbitrary
correspondences.
\end{remark}

The suspension operation gives a direct all-dimensional stabilization of
the first codimension-two example.  Following the
orientation used for \(\mathcal R_n\), we transpose correspondences from
the literature when necessary so that the higher-dimensional sphere
appears in the first factor.  Transposition preserves both the
correspondence property and distortion.  Let
 $\mathcal L_{3,1}\subseteq\Sp^3\times\Sp^1$
be the Lim--M\'emoli--Smith correspondence constructed in
\cite[Proposition~1.18 and Section~7]{lim2021gromov}.  It is based on the
rotation procedure they describe as reminiscent of the Hopf fibration.
Their construction and lower bound give $\dis(\mathcal L_{3,1})=\tfrac{2\pi}{3}.$

\begin{corollary}[Suspended Lim--M\'emoli--Smith relations]
\label{cor:suspended-lms}
For every \(m\geq1\), the correspondence
\[
 \mathcal L_{m+2,m}
 :=
 \Sigma^{m-1}\mathcal L_{3,1}
 \subseteq\Sp^{m+2}\times\Sp^m
\]
has distortion \(\tfrac{2\pi}{3}\).  Consequently,
\begin{equation}\label{eq:uniform-gap-two-bound}
 \frac{\zeta_m}{2}
 \leq\dgh(\Sp^m,\Sp^{m+2})
 \leq\frac\pi3.
\end{equation}
The two bounds agree when \(m=1\).
\end{corollary}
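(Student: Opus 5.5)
The plan is to obtain the distortion of \(\mathcal L_{m+2,m}\) from \Cref{cor:suspension-isometric} by induction on \(m\), and then read off the two bounds in \eqref{eq:uniform-gap-two-bound}.

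\textbf{Step 1: dimensions and distortion.} I first check the dimension bookkeeping. Since \(\Sigma\) sends a relation in \(\Sp^a\times\Sp^b\) to one in \(\Sp^{a+1}\times\Sp^{b+1}\), applying it \(m-1\) times to \(\mathcal L_{3,1}\subseteq\Sp^3\times\Sp^1\) produces a relation in \(\Sp^{m+2}\times\Sp^m\). By \Cref{thm:exact-join-distortion}, each suspension \(\mathcal R*\Delta_{\Sp^0}\) is again a correspondence. By \eqref{eq:suspension-isometric}, each step preserves distortion. Induction on \(m\) therefore gives
\[
 \dis(\mathcal L_{m+2,m})=\dis(\mathcal L_{3,1})=\tfrac{2\pi}{3}.
\]
The base case \(m=1\) is the cited value \(\dis(\mathcal L_{3,1})=\tfrac{2\pi}{3}\), which I take from \cite{lim2021gromov}.

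\textbf{Step 2: the two bounds.} For the upper bound, insert \(\mathcal L_{m+2,m}\) into \eqref{eq:gh-correspondence-formula}, after transposing it; transposition changes neither the correspondence property nor the distortion. This gives \(\dgh(\Sp^m,\Sp^{m+2})\leq\tfrac12\cdot\tfrac{2\pi}{3}=\tfrac{\pi}{3}\). For the lower bound, apply \eqref{eq:higher-codimension-lower-bound} of \Cref{prop:adjacent-lower-bound} with \(d=2\), which gives \(\dgh(\Sp^m,\Sp^{m+2})\geq\zeta_m/2\).

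\textbf{Step 3: agreement at \(m=1\).} Here \(\zeta_1=\arccos(-\tfrac12)=\tfrac{2\pi}{3}\), so \(\zeta_1/2=\tfrac{\pi}{3}\) and the two bounds coincide.

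There is no substantive obstacle in this argument. The only points needing care are that the suspension in \Cref{def:synchronized-spherical-suspension}, parametrized by \(t\in[0,\pi]\), is the same object as the join \(\mathcal R*\Delta_{\Sp^0}\), and that the distortion value for \(\mathcal L_{3,1}\) is imported rather than re-proved. The identification holds because \(\Sp^0\) has two points: the join parameters \(t\) with \(\pm1\) in the second block combine into a single angle in \([0,\pi]\), up to relabeling the sine and cosine factors, which is an isometry.
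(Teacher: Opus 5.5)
Your proposal is correct and follows the paper's own argument: iterate \Cref{cor:suspension-isometric} to get \(\dis(\mathcal L_{m+2,m})=\tfrac{2\pi}{3}\), use \eqref{eq:gh-correspondence-formula} for the upper bound and \eqref{eq:higher-codimension-lower-bound} with \(d=2\) for the lower bound, and note \(\zeta_1=\tfrac{2\pi}{3}\). Your remark identifying the \(t\in[0,\pi]\) form of the suspension with \(\mathcal R*\Delta_{\Sp^0}\) is a harmless extra check; the paper takes that identification as part of \Cref{def:synchronized-spherical-suspension}.
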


\begin{proof}
The distortion statement follows by iterating
\Cref{cor:suspension-isometric}.  The upper bound then follows from
\eqref{eq:gh-correspondence-formula}, and the lower bound is
\eqref{eq:higher-codimension-lower-bound}.  For \(m=1\), one has
\(\zeta_1=\tfrac{2\pi}{3}\), so equality follows.
\end{proof}

Suspension does not, however, propagate optimal consecutive-sphere
relations.  It preserves the old distortion while the sharp constant
strictly decreases.
We use below the elementary asymptotic formula
\(\zeta_j=\tfrac{\pi}{2}+1/(j+1)+O(j^{-3})\), which follows from the Taylor
expansion of \(\arccos(-x)\) at \(x=0\).

\begin{remark}[Suspension does not preserve optimality]
\label{cor:suspension-not-optimal}
Let \(n,k\geq1\).  The \(k\)-fold suspension of the sharp relation
 \(\mathcal R_n\) from \Cref{thm:main} satisfies
\[
 \Sigma^k\mathcal R_n
 \subseteq\Sp^{n+k+1}\times\Sp^{n+k},
 \qquad
 \dis(\Sigma^k\mathcal R_n)=\zeta_n.
\]
It is not optimal, because
\[
 \dgh(\Sp^{n+k},\Sp^{n+k+1})
 =\frac{\zeta_{n+k}}{2}<\frac{\zeta_n}{2}.
\]
For fixed \(k\), the excess of its half-distortion over the optimal
Gromov--Hausdorff value is
\begin{equation}\label{eq:suspension-excess}
 \frac12\dis(\Sigma^k\mathcal R_n)
 -\dgh(\Sp^{n+k},\Sp^{n+k+1})
 =
 \frac{k}{2n^2}+O_k(n^{-3}).
\end{equation}
The last identity follows immediately from the preceding asymptotic
formula.
\end{remark}

\subsection{\texorpdfstring{{Balanced joins of sharp
consecutive-sphere correspondences}}{Balanced joins of sharp
consecutive-sphere correspondences}}

Suspension raises both sphere dimensions by one while preserving the
distortion of the original correspondence. For nonconsecutive spheres, balanced joins instead
distribute the dimension gap among several sharp consecutive-sphere correspondences; by
Theorem~B, the resulting distortion is the maximum of their distortions.
For \(m,d\geq1\), put
\[
 b(m,d):=\left\lfloor\frac{m+1}{d}\right\rfloor.
\]

\begin{proposition}[{Correspondences from balanced joins}]
\label{thm:balanced-join-bound}
Let \(m,d\geq1\) satisfy \(2d\leq m+1\).
Write \(m+1=d\,b(m,d)+r\), where \(0\leq r<d\), and define
\begin{equation}\label{eq:balanced-join-relation}
 \mathcal J_{m,d}
 :=
 \Sigma^r\bigl(\mathcal R_{b(m,d)-1}^{*d}\bigr)
 \subseteq\Sp^{m+d}\times\Sp^m.
\end{equation}
Here \(\Sigma^0\) means that no suspension is applied, and a one-fold
join is interpreted as the relation itself.  Then
\(\mathcal J_{m,d}\) is a correspondence and
\begin{equation}\label{eq:balanced-join-distortion}
 \dis(\mathcal J_{m,d})
 =
 \zeta_{b(m,d)-1}
 =
 \arccos\left(-\frac1{b(m,d)}\right).
\end{equation}
Consequently,
\begin{equation}\label{eq:balanced-gh-bracket}
 \frac{\zeta_m}{2}
 \leq \dgh(\Sp^m,\Sp^{m+d})
 \leq \frac{\zeta_{b(m,d)-1}}{2}.
\end{equation}
Moreover, \(\mathcal J_{m,d}\) has the least distortion among all
relations of the form
\(
 \Sigma^h(\mathcal R_{a_1}*\cdots*\mathcal R_{a_d})
 \subseteq\Sp^{m+d}\times\Sp^m,
\)
where \(h\geq0\) and \(a_1,\ldots,a_d\geq1\).
\end{proposition}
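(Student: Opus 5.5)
The plan has four parts: check dimensions, compute the distortion, deduce the bracket, and prove the minimality claim.

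\textbf{Dimensions.} Write $b:=b(m,d)$. The hypothesis $2d\leq m+1$ gives $b\geq2$, so $b-1\geq1$ and $\mathcal R_{b-1}\subseteq\Sp^{b}\times\Sp^{b-1}$ is defined by \Cref{thm:main}. The join of spheres satisfies $\Sp^{p_1}*\cdots*\Sp^{p_d}\cong\Sp^{\sum p_i+d-1}$. Hence the $d$-fold join $\mathcal R_{b-1}^{*d}$ is a relation between $\Sp^{db+d-1}$ and $\Sp^{d(b-1)+d-1}=\Sp^{db-1}$. Applying $\Sigma^r$ raises both dimensions by $r$. The target becomes $\Sp^{db+r-1}=\Sp^m$ and the source becomes $\Sp^{m+d}$, as required.

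\textbf{Distortion.} By \Cref{thm:exact-join-distortion}, the iterated join is a correspondence with distortion $\max_i\dis(\mathcal R_{b-1})=\zeta_{b-1}$. For $d=1$ the join is trivial and the relation is $\mathcal R_{b-1}$ itself. By \Cref{cor:suspension-isometric}, $\Sigma^r$ preserves both the correspondence property and the distortion. This gives \eqref{eq:balanced-join-distortion}, using $\zeta_{b-1}=\arccos(-1/b)$. The upper bound in \eqref{eq:balanced-gh-bracket} then follows from \eqref{eq:gh-correspondence-formula}. The lower bound is \eqref{eq:higher-codimension-lower-bound} of \Cref{prop:adjacent-lower-bound}.

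\textbf{Minimality.} Consider a relation $\Sigma^h(\mathcal R_{a_1}*\cdots*\mathcal R_{a_d})\subseteq\Sp^{m+d}\times\Sp^m$. Its distortion is $\max_i\zeta_{a_i}=\zeta_{\min_i a_i}$, again by \Cref{thm:exact-join-distortion} and \Cref{cor:suspension-isometric}. The target dimension constraint reads $\sum_i(a_i+1)+h=m+1$. The source constraint is then automatic, since each factor adds exactly one to the dimension gap and $d$ factors give gap $d$.

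Since $h\geq0$, we get $d\,(\min_i a_i+1)\leq\sum_i(a_i+1)\leq m+1$. Hence $\min_i a_i+1\leq\lfloor (m+1)/d\rfloor=b$, that is, $\min_i a_i\leq b-1$. The function $j\mapsto\zeta_j=\arccos(-1/(j+1))$ is strictly decreasing, because $-1/(j+1)$ increases and $\arccos$ decreases. Therefore $\zeta_{\min a_i}\geq\zeta_{b-1}=\dis(\mathcal J_{m,d})$. Equality is attained by $\mathcal J_{m,d}$, which corresponds to $a_i=b-1$ for all $i$ and $h=r$.

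\textbf{Main obstacle.} The only delicate point is the dimension bookkeeping: confirming that the join-dimension formula forces the constraint exactly as stated, and that every factor contributes a gap of precisely one. Everything else is a direct invocation of \Cref{thm:exact-join-distortion}, \Cref{cor:suspension-isometric} and the monotonicity of $\zeta_j$.
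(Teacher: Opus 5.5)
Your proposal is correct and matches the paper's proof step for step. It uses the same dimension count, \Cref{thm:exact-join-distortion} together with suspension invariance for the distortion and the upper bound, the higher-codimension Borsuk--Ulam lower bound, and the pigeonhole estimate $\min_i(a_i+1)\le b(m,d)$ from $\sum_i(a_i+1)+h=m+1$ combined with monotonicity of $\zeta_j$. One citation is slightly off: the correspondence property of $\Sigma^r(\cdot)$ comes from \Cref{thm:exact-join-distortion} applied to $\mathcal R*\Delta_{\Sp^0}$, not from \Cref{cor:suspension-isometric} itself, which only records the distortion.
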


\begin{proof}
The hypothesis \(2d\leq m+1\) gives \(b(m,d)\geq2\), so the sharp
relation \(\mathcal R_{b(m,d)-1}\) from \Cref{thm:main} is available.
Before suspension, the target and source dimensions are
\(d\,b(m,d)-1\) and \(d(b(m,d)+1)-1\), respectively.  After \(r\)
suspensions they become \(m\) and \(m+d\).  By
\eqref{eq:iterated-join-distortion} and
\eqref{eq:suspension-isometric},
\(\dis(\mathcal J_{m,d})=\zeta_{b(m,d)-1}\), which proves
\eqref{eq:balanced-join-distortion} and the upper bound in
\eqref{eq:balanced-gh-bracket}.

This value is best among constructions obtained by joining \(d\) sharp
adjacent relations and then applying suspensions.  Indeed, if
\(h\geq0\), \(a_i\geq1\), and
\(\Sigma^h(\mathcal R_{a_1}*\cdots*\mathcal R_{a_d})\) has target
\(\Sp^m\), then
\(\sum_i(a_i+1)+h=m+1\).  Hence
\(\min_i(a_i+1)\leq b(m,d)\), and therefore
\(\max_i\zeta_{a_i}\geq\zeta_{b(m,d)-1}\).

The lower bound is \eqref{eq:higher-codimension-lower-bound}.
\end{proof}

\begin{corollary}[Fixed and sublinear codimension]
\label{cor:balanced-join-asymptotics}
For each fixed \(d\geq1\), as \(m\to\infty\),
\begin{equation}\label{eq:fixed-gap-error}
 0\leq
 \dgh(\Sp^m,\Sp^{m+d})-\frac{\zeta_m}{2}
 \leq
 \frac{d-1}{2(m+1)}+O_d(m^{-2}).
\end{equation}
In particular,
\[
 \lim_{m\to\infty}\dgh(\Sp^m,\Sp^{m+d})=\frac\pi4.
\]
Together with \Cref{cor:suspension-isometric}, this shows that the
fixed-gap sequence decreases to \(\tfrac{\pi}{4}\).
The same limit holds whenever \(d=d(m)=o(m)\).
\end{corollary}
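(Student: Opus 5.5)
The plan is to combine the upper bound \eqref{eq:balanced-gh-bracket} of \Cref{thm:balanced-join-bound} with the lower bound \eqref{eq:higher-codimension-lower-bound}, and then do elementary asymptotics. The lower inequality \(0\leq\dgh(\Sp^m,\Sp^{m+d})-\zeta_m/2\) is exactly \eqref{eq:higher-codimension-lower-bound}. For the upper inequality, fix \(d\) and take \(m\) large enough that \(2d\leq m+1\); this ensures \(b:=b(m,d)\geq2\). \Cref{thm:balanced-join-bound} then gives \(\dgh(\Sp^m,\Sp^{m+d})\leq\zeta_{b-1}/2\). It therefore suffices to bound \(\zeta_{b-1}-\zeta_m\).

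For this, use the expansion \(\zeta_j=\tfrac{\pi}{2}+\tfrac1{j+1}+O(j^{-3})\) quoted before \Cref{cor:suspension-not-optimal}. It gives
\[
 \zeta_{b-1}-\zeta_m=\frac1b-\frac1{m+1}+O(b^{-3}).
\]
Since \(b\geq (m+1)/d-1+1/d=(m+2-d)/d\), we have
\[
 \frac1b\leq \frac{d}{m+2-d}=\frac{d}{m+1}+O_d(m^{-2}),
\]
so
\[
 \frac12(\zeta_{b-1}-\zeta_m)\leq\frac{d-1}{2(m+1)}+O_d(m^{-2}).
\]
This is \eqref{eq:fixed-gap-error}. (If exact constants are preferred, one can use that \(\arccos(-x)\) has derivative \(1/\sqrt{1-x^2}\), bounded near \(0\), instead of quoting the expansion.) Letting \(m\to\infty\) and using \(\zeta_m\to\pi/2\) gives the limit \(\pi/4\). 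Monotonicity in \(m\) comes from \Cref{cor:suspension-isometric}, so the fixed-gap sequence is nonincreasing with limit \(\pi/4\).

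For the sublinear case \(d=d(m)=o(m)\), the bracket \eqref{eq:balanced-gh-bracket} still applies as soon as \(2d(m)\leq m+1\), which holds for large \(m\). We also have \(b(m,d(m))\geq (m+1)/d(m)-1\to\infty\). Since \(\zeta_j\to\pi/2\) as \(j\to\infty\), both \(\zeta_m/2\) and \(\zeta_{b-1}/2\) tend to \(\pi/4\), and squeezing gives the limit.

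The main point requiring care is the uniformity of the error term. The \(O(j^{-3})\) term must be applied with \(j=b-1\), which is of order \(m/d\), so \(O(b^{-3})=O_d(m^{-3})\). The floor in \(b\) contributes only an \(O_d(m^{-2})\) correction via \(1/b\leq d/(m+2-d)\). Everything else is direct substitution into earlier results.
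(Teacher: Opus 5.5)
Your proposal is correct and follows essentially the same route as the paper: you combine the bracket \eqref{eq:balanced-gh-bracket} with the expansion \(\zeta_j=\tfrac{\pi}{2}+\tfrac1{j+1}+O(j^{-3})\) and an estimate on \(b(m,d)^{-1}\). The paper states the two-sided estimate \(b(m,d)^{-1}=d/(m+1)+O_d(m^{-2})\), while your one-sided bound \(1/b\leq d/(m+2-d)\) suffices for the upper inequality; in the case \(d=o(m)\) you likewise use \(b(m,d)\to\infty\) to squeeze both ends of the bracket to \(\pi/4\).
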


\begin{proof}
For fixed \(d\), one has
\(b(m,d)^{-1}=d/(m+1)+O_d(m^{-2})\); hence the preceding asymptotic
formula, applied to \eqref{eq:balanced-gh-bracket}, gives
\eqref{eq:fixed-gap-error}.  If \(d=o(m)\), then \(b(m,d)\to\infty\),
so both endpoints of \eqref{eq:balanced-gh-bracket} converge to
\(\tfrac{\pi}{4}\).
\end{proof}

The fixed-gap limit in \Cref{cor:balanced-join-asymptotics} answers the
question posed in \cite[Question~7.1]{harrison2023quantitative}.  It also
substantially improves the general estimates in
\cite[Theorems~1.2 and~1.3]{harrison2023quantitative} in this regime:
 {the upper bound obtained from balanced joins for
 $\dgh(\Sp^m,\Sp^{m+d})$ is}
\(\tfrac{\zeta_{b(m,d)-1}}{2}
=\tfrac{\pi}{4}+\tfrac{d}{2(m+1)}+O_d(m^{-2})\),
whereas the upper bounds in those two theorems
both tend to \(\tfrac{\pi}{2}\) as \(m\to\infty\) with \(d\) fixed.

\subsection{The gap-2 family}

For \(m=1,2\), \Cref{cor:suspended-lms} gives the upper bound
\(\tfrac{\pi}{3}\).  For \(m\geq3\),
\Cref{thm:balanced-join-bound} applies: \(b(m,2)=2\) when \(m=3,4\),
while \(b(m,2)\geq3\) when \(m\geq5\).  Together with
\eqref{eq:higher-codimension-lower-bound}, these observations give the
following bounds.

\begin{corollary}[Gap-2]
\label{cor:gap-two-family}
For every \(m\geq1\),
\begin{equation}\label{eq:gap-two-piecewise}
 \frac{\zeta_m}{2}
 \leq\dgh(\Sp^m,\Sp^{m+2})
 \leq
 \begin{cases}
  \displaystyle\frac{\pi}{3},
    &1\leq m\leq4,\\[1.2ex]
  \displaystyle\frac{\zeta_{b(m,2)-1}}2,
    &m\geq5.
 \end{cases}
\end{equation}

\end{corollary}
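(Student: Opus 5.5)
The lower bound in \eqref{eq:gap-two-piecewise} holds for every \(m\geq1\) and is exactly \eqref{eq:higher-codimension-lower-bound} of \Cref{prop:adjacent-lower-bound} with \(d=2\). No further work is needed there, so the proof reduces to the upper bound, which I would split according to the range of \(m\).

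For \(m=1,2\), I would cite \Cref{cor:suspended-lms} directly. The correspondence \(\mathcal L_{m+2,m}=\Sigma^{m-1}\mathcal L_{3,1}\) has distortion \(\tfrac{2\pi}{3}\), so \eqref{eq:gh-correspondence-formula} gives \(\dgh(\Sp^m,\Sp^{m+2})\leq\tfrac{\pi}{3}\). In fact \eqref{eq:uniform-gap-two-bound} already covers every \(m\geq1\), so it also supplies \(\tfrac{\pi}{3}\) for \(m=3,4\).

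For \(m=3,4\), I would also check that \Cref{thm:balanced-join-bound} gives the same value, to make the case split transparent. With \(d=2\), the hypothesis \(2d\leq m+1\) reads \(m\geq3\), so the proposition applies. We have \(b(3,2)=\lfloor 4/2\rfloor=2\) and \(b(4,2)=\lfloor 5/2\rfloor=2\). Hence the upper bound in \eqref{eq:balanced-gh-bracket} is \(\tfrac{\zeta_1}{2}=\tfrac12\arccos(-\tfrac12)=\tfrac{\pi}{3}\), which agrees with the displayed value.

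For \(m\geq5\), \Cref{thm:balanced-join-bound} applies again, since \(m+1\geq6\geq4\). Its bound \eqref{eq:balanced-gh-bracket} is exactly \(\tfrac{\zeta_{b(m,2)-1}}{2}\). As a remark I would note that \(b(m,2)\geq3\) here, so \(\zeta_{b(m,2)-1}\leq\zeta_2<\zeta_1\) because \(\zeta_j\) is decreasing in \(j\). This explains why the case split sits at \(m=5\): from that point on the balanced join strictly beats \(\tfrac{\pi}{3}\).

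I expect no real obstacle. The only care needed is the floor arithmetic, namely checking that \(b(m,2)=2\) precisely for \(m\in\{3,4\}\) and \(b(m,2)\geq3\) for \(m\geq5\), together with confirming that the hypothesis \(2d\leq m+1\) holds in each case where \Cref{thm:balanced-join-bound} is invoked.
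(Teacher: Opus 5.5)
Your proposal is correct and follows the paper's argument. The lower bound is \eqref{eq:higher-codimension-lower-bound} with \(d=2\). The cases \(m=1,2\) come from \Cref{cor:suspended-lms}. For \(m\geq3\), \Cref{thm:balanced-join-bound} applies, with \(b(m,2)=2\) for \(m=3,4\) (giving \(\zeta_1/2=\pi/3\)) and \(b(m,2)\geq3\) for \(m\geq5\); your side remark that \eqref{eq:uniform-gap-two-bound} alone already yields \(\pi/3\) for every \(m\) is also correct.
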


{The correspondences \(\mathcal J_{m,2}\) used above for
\(m\geq3\) are the following two specializations of
\eqref{eq:balanced-join-relation}.}  If
\(m=2\ell+1\geq3\), then \(b(m,2)=\ell+1\) and \(r=0\), so
\[
 \mathcal J_{m,2}
 =\mathcal R_\ell*\mathcal R_\ell
 \subseteq\Sp^{m+2}\times\Sp^m,
 \qquad
 \dis(\mathcal J_{m,2})=\zeta_\ell.
\]
If \(m=2\ell\geq4\), then \(b(m,2)=\ell\) and \(r=1\), so
\[
 \mathcal J_{m,2}
 =\Sigma(\mathcal R_{\ell-1}*\mathcal R_{\ell-1})
 \subseteq\Sp^{m+2}\times\Sp^m,
 \qquad
 \dis(\mathcal J_{m,2})=\zeta_{\ell-1}.
\]

\subsection{\texorpdfstring{{All-pairs bounds from
exact circle correspondences}}{All-pairs bounds from exact circle
correspondences}}

For \(1\leq m<n\), put \(d:=n-m\).  {The construction
from balanced joins is}
strongest near the diagonal but requires \(2d\leq m+1\).  The join
theorem also combines naturally with the
exact circle correspondences of Harrison and Jeffs and thereby gives a
bound for every \(1\leq m<n\). For every integer \(k\geq0\), put
\begin{equation}\label{eq:circle-delta}
 \delta(k):=\frac{2\pi k}{2k+1},
\end{equation}
For \(k\geq1\), this is the geodesic diameter of the vertices of a
regular \((2k+1)\)-gon in \(\Sp^1\).  The Polymath
collaboration proved that every correspondence
\(\mathcal R\subseteq\Sp^{d+1}\times\Sp^1\) satisfies
\(\dis(\mathcal R)\geq\delta(\lceil \tfrac{d}{2}\rceil)\)
\cite[Theorem~5.1]{adams2022gromov}; Harrison and Jeffs constructed, for
every \(d\geq1\), a correspondence \(\mathcal P_d\) attaining this lower
bound \cite[Theorem~1.1]{harrison2023quantitative}.  Thus
\[
 \mathcal P_d\subseteq\Sp^{d+1}\times\Sp^1,
 \qquad
 \dis(\mathcal P_d)=\delta(\lceil d/2\rceil).
\]
See also the alternative constructions in
\cite[Theorems~1.3 and~1.4, Sections~3 and~4]
{rodriguezmartin2024novel}.
We also put \(\mathcal P_0:=\Delta_{\Sp^1}\), so that the same formula
holds for \(d=0\).

For fixed \(d\geq1\), repeatedly suspending the 
correspondence \(\mathcal P_d\) gives, for every \(m\geq1\), a
correspondence between \(\Sp^{m+d}\) and \(\Sp^m\) with the same
distortion.  {The stronger all-pairs construction in
\Cref{thm:circle-join-all-pairs} joins several of these exact circle
correspondences and distributes the codimension among them.}

For \(m\geq1\), define the \emph{band width}
\[
 L_m:=2b(m,2)
 =2\left\lfloor\frac{m+1}{2}\right\rfloor
 =
 \begin{cases}
  m,&m\ \text{even},\\
  m+1,&m\ \text{odd}.
 \end{cases}
\]
{For \(1\leq m<n\), put
\[
 K(m,n):=\left\lceil\frac{n-m}{L_m}\right\rceil.
\]}

\begin{proposition}[{Correspondences from circle factors}]
\label{thm:circle-join-all-pairs}
Let \(1\leq m<n\), and write
\(m+1=2b(m,2)+s\), where \(s\in\{0,1\}\).
Choose integers \(d_1,\ldots,d_{b(m,2)}\geq0\) satisfying
\[
 \sum_{i=1}^{b(m,2)}d_i=n-m,
 \qquad
 d_i\leq2{K(m,n)},
\]
and define
\begin{equation}\label{eq:circle-join-relation}
 \mathcal C_{m,n}
 :=
 \mathcal P_{d_1}*\cdots*\mathcal P_{d_{b(m,2)}}
 *(\Delta_{\Sp^0})^{*s}
 \subseteq\Sp^n\times\Sp^m.
\end{equation}
Here the subscripts \(m<n\) record the dimensions
in increasing order, although the higher-dimensional sphere is the first
factor.
Then \(\mathcal C_{m,n}\) is a correspondence with
\begin{equation}\label{eq:circle-join-distortion}
 \dis(\mathcal C_{m,n})
 ={\delta\bigl(K(m,n)\bigr)}
 ={\frac{2\pi K(m,n)}{2K(m,n)+1}}.
\end{equation}
Consequently,
\begin{equation}\label{eq:circle-join-gh-bound}
 \dgh(\Sp^m,\Sp^n)
 \leq
 {\frac{\pi K(m,n)}{2K(m,n)+1}}.
\end{equation}
\end{proposition}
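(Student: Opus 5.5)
The plan is to verify three things: feasibility of the integer allocation, the dimension count of the join, and the distortion via \Cref{thm:exact-join-distortion}; the Gromov--Hausdorff bound then follows from \eqref{eq:gh-correspondence-formula}.

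\emph{Feasibility.} Put \(b:=b(m,2)\), \(K:=K(m,n)\), \(d:=n-m\). By the definition of \(L_m=2b\) we have \(d\le KL_m=2bK\). Hence integers \(0\le d_i\le 2K\) with \(\sum_i d_i=d\) exist; for instance, take each \(d_i\) equal to \(\lfloor d/b\rfloor\) or \(\lceil d/b\rceil\), noting \(\lceil d/b\rceil\le 2K\).

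\emph{Dimensions.} The factor \(\mathcal P_{d_i}\) relates \(\Sp^{d_i+1}\) to \(\Sp^1\), and \(\Delta_{\Sp^0}\) relates \(\Sp^0\) to \(\Sp^0\). A join of spheres of dimensions \(k_j\) has dimension \(\sum_j(k_j+1)-1\). The target dimension is therefore \(2b+s-1=m\), and the source dimension is \(\sum_i(d_i+2)+s-1=d+m=n\). By \Cref{thm:exact-join-distortion} the iterated join is a correspondence. The theorem requires \(k\ge2\) factors; if \(b=1\) and \(s=0\) (so \(m=1\)), then \(\mathcal C_{1,n}=\mathcal P_{n-1}\) and nothing needs to be proved.

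\emph{Distortion.} By \eqref{eq:iterated-join-distortion} and \(\dis(\Delta_{\Sp^0})=0\),
\[
\dis(\mathcal C_{m,n})=\max_i\delta\bigl(\lceil d_i/2\rceil\bigr)=\delta\Bigl(\max_i\lceil d_i/2\rceil\Bigr),
\]
since \(\delta\) is increasing (\(\delta(k)=\pi-\pi/(2k+1)\)). Each \(d_i\le 2K\) gives \(\lceil d_i/2\rceil\le K\), so the distortion is at most \(\delta(K)\).

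The main obstacle is the matching equality, which requires some \(d_i\ge 2K-1\). As stated, the constraints \(d_i\le 2K\) and \(\sum_i d_i=d\) permit choices where all \(d_i\) are small. For the lower bound I would use pigeonhole: \(\max_i d_i\ge\lceil d/b\rceil\). Since \(K=\lceil d/(2b)\rceil\), we have \(d>2b(K-1)\), so \(\max_i d_i\ge 2K-1\) and \(\lceil\max_i d_i/2\rceil=K\). This gives \(\dis(\mathcal C_{m,n})=\delta(K)\) for every admissible allocation. The argument needs \(d\ge1\) so that \(K\ge1\), which holds since \(m<n\).

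Finally, \eqref{eq:circle-join-gh-bound} is half of \eqref{eq:circle-join-distortion} by \eqref{eq:gh-correspondence-formula}.
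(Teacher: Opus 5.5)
Your proposal is correct and follows the paper's proof essentially step for step. It has the same structure: existence of the allocation from \(2bK\ge n-m\), the dimension count for the join, the upper bound \(\lceil d_i/2\rceil\le K\) via \eqref{eq:iterated-join-distortion}, and the pigeonhole observation \(n-m>2b(K-1)\), which forces some \(d_i\ge 2K-1\) and hence equality. You also note that the single-factor case \(m=1\), where \(\mathcal C_{1,n}=\mathcal P_{n-1}\), falls outside the \(k\ge2\) hypothesis of \Cref{thm:exact-join-distortion}; this is a small point of care that the paper leaves implicit.
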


\begin{proof}
Such a choice exists because
{\(2K(m,n)b(m,2)\geq n-m\)}.

According to \Cref{def:synchronized-spherical-join}, the dimension of
the resulting target sphere is
\(b(m,2)+(b(m,2)+s-1)=m\), while the dimension of the resulting source
sphere is
\(\sum_{i=1}^{b(m,2)}(1+d_i)+(b(m,2)+s-1)=n\).
Hence \(\mathcal C_{m,n}\) is a correspondence between \(\Sp^n\) and
\(\Sp^m\).

By \eqref{eq:iterated-join-distortion},
\[
 \dis(\mathcal C_{m,n})
 =
 \max_i\delta(\lceil d_i/2\rceil)
 \leq{\delta\bigl(K(m,n)\bigr)}.
\]
{The definition of \(K(m,n)\) gives
\(n-m>2b(m,2)(K(m,n)-1)\), so at least one \(d_i\)
is greater than \(2(K(m,n)-1)\).  For that index,
\(\lceil d_i/2\rceil=K(m,n)\),} proving equality in
\eqref{eq:circle-join-distortion}.  The Gromov--Hausdorff bound follows
from \eqref{eq:gh-correspondence-formula}.
\end{proof}

\begin{corollary}[Staircase bound]
\label{cor:circle-join-two-pi-thirds}
Let \(1\leq m<n\) and \(k\geq1\).  If \(n-m\leq kL_m\), then
\[
 \dgh(\Sp^m,\Sp^n)\leq\frac{\pi k}{2k+1}.
\]
More precisely, if
\((k-1)L_m<n-m\leq kL_m\), then the correspondence
\(\mathcal C_{m,n}\) of \Cref{thm:circle-join-all-pairs} has distortion
exactly \(2\pi k/(2k+1)\).
\end{corollary}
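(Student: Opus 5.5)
The plan is to read off both assertions directly from \Cref{thm:circle-join-all-pairs}, applied to the pair \(m<n\).

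First, from the hypothesis \((k-1)L_m<n-m\leq kL_m\) we get \(k-1<(n-m)/L_m\leq k\). Hence \(K(m,n)=\lceil (n-m)/L_m\rceil=k\).

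\Cref{thm:circle-join-all-pairs} then gives a correspondence \(\mathcal C_{m,n}\) with
\[
 \dis(\mathcal C_{m,n})=\delta\bigl(K(m,n)\bigr)=\delta(k)=\frac{2\pi k}{2k+1}.
\]
This is the ``more precisely'' assertion. The admissible choice of the integers \(d_i\) is already guaranteed inside that proposition, since \(2K(m,n)\,b(m,2)=K(m,n)L_m\geq n-m\).

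For the first assertion, assume only \(1\leq n-m\leq kL_m\), and put \(k':=K(m,n)\). We have \(k'\geq1\) because \(n>m\), and \(k'\leq k\) because \((n-m)/L_m\leq k\) and \(k\) is an integer. By \eqref{eq:circle-join-gh-bound},
\[
 \dgh(\Sp^m,\Sp^n)\leq\frac{\pi k'}{2k'+1}.
\]
The map \(t\mapsto \pi t/(2t+1)=\tfrac{\pi}{2}\bigl(1-\tfrac1{2t+1}\bigr)\) is increasing in \(t\), so this is at most \(\pi k/(2k+1)\).

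The only step needing care is the ceiling computation together with this monotonicity. No step presents a genuine obstacle, because all the geometric content is already contained in \Cref{thm:circle-join-all-pairs} and \Cref{thm:exact-join-distortion}.
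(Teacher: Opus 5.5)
Your proposal is correct and follows the paper's own argument. Both reduce the statement to computing \(K(m,n)\) (exactly \(k\) under the two-sided hypothesis, at most \(k\) under the one-sided one) and then apply \Cref{thm:circle-join-all-pairs} together with the monotonicity of \(j\mapsto \pi j/(2j+1)\).
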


\begin{proof}
The first hypothesis gives {\(K(m,n)\leq k\)}, and the function
\(j\mapsto\pi j/(2j+1)\) is increasing.  The two inequalities in the
second assertion are equivalent to {\(K(m,n)=k\)}, so both claims follow
from \Cref{thm:circle-join-all-pairs}.
\end{proof}

{
The upper bounds furnished by \(\mathcal J_{m,n-m}\) and
\(\mathcal C_{m,n}\) can be compared explicitly wherever the former is
defined.  Put \(d:=n-m\).  If \(2d\leq m+1\), then
\(K(m,m+d)=1\), so \(\mathcal C_{m,n}\) gives the upper bound
\(\tfrac{\pi}{3}\), whereas \(\mathcal J_{m,d}\) gives
\(\zeta_{b(m,d)-1}/2\).  The latter is strictly smaller when
\(b(m,d)\geq3\), or equivalently
\(m<n\leq\lfloor(4m+1)/3\rfloor\), and agrees with
\(\tfrac{\pi}{3}\) when \(b(m,d)=2\).  For
\(n>\lfloor(3m+1)/2\rfloor\), \(\mathcal J_{m,d}\) is unavailable,
whereas \(\mathcal C_{m,n}\) continues to apply.  Thus the upper bound
furnished by \(\mathcal J_{m,d}\) is never weaker where both
correspondences are available; the advantage of \(\mathcal C_{m,n}\)
is that it applies to every \(m<n\).
\Cref{fig:circle-join-staircase,fig:balanced-circle-comparison}
display these two complementary aspects.
}
\hypersetup{linkcolor=blue!55!black}

\begin{figure}[ht]
\centering

\begin{tikzpicture}[x=0.27cm,y=0.27cm]

\foreach \m in {1,...,35}{
    \pgfmathtruncatemacro{\Lm}{2*floor((\m+1)/2)}

    \foreach \n in {\m,...,35}{
        \ifnum\n=\m
            \filldraw[
                fill=gray!15,
                draw=gray!45,
                line width=0.25pt
            ]
            (\n-1,-\m+1) rectangle (\n,-\m);

            \node[font=\tiny] at (\n-.5,-\m+.5) {$0$};
        \else
            \pgfmathtruncatemacro{\gap}{\n-\m}
            \pgfmathtruncatemacro{\Kcell}{ceil(\gap/\Lm)}
            \pgfmathtruncatemacro{\cellshade}
                {round(10+60*(\Kcell-1)/16)}

            \filldraw[
                fill=red!\cellshade,
                draw=gray!45,
                line width=0.25pt
            ]
            (\n-1,-\m+1) rectangle (\n,-\m);

            \node[font=\tiny] at (\n-.5,-\m+.5) {$\Kcell$};
        \fi
    }
}

\foreach \m in {1,...,35}{
    \pgfmathtruncatemacro{\Lm}{2*floor((\m+1)/2)}

    \foreach \n in {\m,...,34}{

\ifnum\n=\m
            \pgfmathtruncatemacro{\Kleft}{0}
        \else
            \pgfmathtruncatemacro{\gapleft}{\n-\m}
            \pgfmathtruncatemacro{\Kleft}{ceil(\gapleft/\Lm)}
        \fi

\pgfmathtruncatemacro{\gapright}{\n+1-\m}
        \pgfmathtruncatemacro{\Kright}{ceil(\gapright/\Lm)}

\ifnum\Kleft=\Kright
        \else
            \draw[
                black,
                line width=0.6pt
            ]
            (\n,-\m+1) -- (\n,-\m);
        \fi
    }
}

\foreach \m in {1,...,34}{
    \pgfmathtruncatemacro{\Lm}{2*floor((\m+1)/2)}
    \pgfmathtruncatemacro{\Lnext}{2*floor((\m+2)/2)}
    \pgfmathtruncatemacro{\mstart}{\m+1}

    \foreach \n in {\mstart,...,35}{

\pgfmathtruncatemacro{\gapupper}{\n-\m}
        \pgfmathtruncatemacro{\Kupper}{ceil(\gapupper/\Lm)}

\ifnum\n=\mstart
            \pgfmathtruncatemacro{\Klower}{0}
        \else
            \pgfmathtruncatemacro{\gaplower}{\n-\m-1}
            \pgfmathtruncatemacro{\Klower}{ceil(\gaplower/\Lnext)}
        \fi

\ifnum\Kupper=\Klower
        \else
            \draw[
                black,
                line width=0.6pt
            ]
            (\n-1,-\m) -- (\n,-\m);
        \fi
    }
}

\foreach \n in {1,...,35}
    \node[font=\tiny] at (\n-.5,0.45) {$\n$};

\foreach \m in {1,...,35}
    \node[font=\tiny] at (-.5,-\m+.5) {$\m$};

\node[font=\scriptsize] at (17.5,1.85) {$n$};
\node[font=\scriptsize] at (-2.1,-17.5) {$m$};

\node at (36.02,0.45) {$\cdots$};
\node at (-.5,-35.7) {$\vdots$};
\node at (35.75,-35.75) {$\ddots$};

\begin{scope}[xshift=3mm]

\node[anchor=west,font=\small] at (37,.45)
    {$k:=K(m,n)$};

\node[anchor=west,font=\scriptsize] at (37,-1.2)
    {$\delta(k):=\tfrac{2\pi k}{2k+1}$};

\node[anchor=west,font=\scriptsize] at (37,-3)
    {$\dgh(\Sp^m,\Sp^n)\leq\tfrac12\delta(k)=\tfrac{\pi k}{2k+1}$};

\foreach \k in {1,...,17}{
    \pgfmathtruncatemacro{\shade}
        {round(10+60*(\k-1)/16)}
    \pgfmathsetmacro{\ytop}{-4.6-0.8*(\k-1)}
    \pgfmathsetmacro{\ybot}{\ytop-0.8}

    \filldraw[
        fill=red!\shade,
        draw=gray!45,
        line width=0.25pt
    ]
    (37,\ytop) rectangle (38.8,\ybot);
}

\node[anchor=west,font=\scriptsize] at (39.25,-5) {$k=1$};
\node[anchor=west,font=\scriptsize] at (39.25,-8.2) {$k=5$};
\node[anchor=west,font=\scriptsize] at (39.25,-11.4) {$k=9$};
\node[anchor=west,font=\scriptsize] at (39.25,-14.6) {$k=13$};
\node[anchor=west,font=\scriptsize] at (39.25,-17.8) {$k=17$};

\end{scope}

\end{tikzpicture}
 \caption{The staircase regions determined by {the
correspondences \(\mathcal C_{m,n}\subseteq\Sp^n\times\Sp^m\) obtained
from circle factors}, whose
distortion is computed in \Cref{thm:circle-join-all-pairs}; see also
\Cref{cor:circle-join-two-pi-thirds}.  In the displayed finite window, the
cell \((m,n)\), \(m<n\), is labeled by {\(k=K(m,n)\)}; its color records
the bound \(\dgh(\Sp^m,\Sp^n)\leq\pi k/(2k+1)\).  The diagonal entries
are zero.  Since \(L_{2r-1}=L_{2r}=2r\), consecutive pairs of rows have
the same band width.  {The sharper bounds obtained
from balanced joins nearer the diagonal are not shown; see \Cref{fig:balanced-circle-comparison}.}}
\label{fig:circle-join-staircase}
\end{figure}

\begin{figure}[ht]
\centering
\begin{tikzpicture}[x=0.27cm,y=0.27cm]

\foreach \m in {1,...,35}{
  \pgfmathtruncatemacro{\strictend}{floor((4*\m+1)/3)}
  \pgfmathtruncatemacro{\balancedend}{floor((3*\m+1)/2)}
  \pgfmathtruncatemacro{\nexact}{\m+1}
  \foreach \n in {\m,...,35}{
    \ifnum\n=\m
      \def\cellcolor{gray!15}
    \else
      \ifnum\n=\nexact
        \def\cellcolor{white}
      \else
        \ifnum\n>\balancedend
          \def\cellcolor{gray!10}
        \else
          \ifnum\n>\strictend
            \def\cellcolor{orange!38}
          \else
            \def\cellcolor{blue!32}
          \fi
        \fi
      \fi
    \fi
    \filldraw[
      fill=\cellcolor,
      draw=gray!45,
      line width=0.25pt
    ]
      (\n-1,-\m+1) rectangle (\n,-\m);
    \ifnum\n=\m
      \node[font=\tiny] at (\n-.5,-\m+.5) {\(0\)};
    \fi
  }
}

\begin{scope}
\clip (0,-35) rectangle (35,0);
\foreach \m in {1,...,35}{
  \pgfmathtruncatemacro{\strictend}{floor((4*\m+1)/3)}
  \draw[black,line width=0.6pt]
    (\strictend,-\m+1)--(\strictend,-\m);
}
\foreach \m in {1,...,34}{
  \pgfmathtruncatemacro{\strictend}{floor((4*\m+1)/3)}
  \pgfmathtruncatemacro{\strictnext}{floor((4*(\m+1)+1)/3)}
  \draw[black,line width=0.6pt]
    (\strictend,-\m)--(\strictnext,-\m);
}

\foreach \m in {1,...,35}{
  \pgfmathtruncatemacro{\balancedend}{floor((3*\m+1)/2)}
  \draw[black,line width=0.6pt]
    (\balancedend,-\m+1)--(\balancedend,-\m);
}
\foreach \m in {1,...,34}{
  \pgfmathtruncatemacro{\balancedend}{floor((3*\m+1)/2)}
  \pgfmathtruncatemacro{\balancednext}{floor((3*(\m+1)+1)/2)}
  \draw[black,line width=0.6pt]
    (\balancedend,-\m)--(\balancednext,-\m);
}
\end{scope}

\foreach \m in {1,...,34}{
  \pgfmathtruncatemacro{\nexact}{\m+1}
  \draw[blue!80!black,line width=0.8pt]
    (\nexact-1,-\m+1) rectangle (\nexact,-\m);
}

\foreach \n in {1,...,35}
  \node[font=\tiny] at (\n-.5,0.45) {\(\n\)};
\foreach \m in {1,...,35}
  \node[font=\tiny] at (-.5,-\m+.5) {\(\m\)};
\node[font=\scriptsize] at (17.5,1.85) {\(n\)};
\node[font=\scriptsize] at (-2.1,-17.5) {\(m\)};
\node at (36.02,0.45) {\(\cdots\)};
\node at (-.5,-35.7) {\(\vdots\)};
\node at (35.75,-35.75) {\(\ddots\)};

\begin{scope}[xshift=3mm]
  \filldraw[fill=blue!32,draw=gray!45,line width=0.25pt]
    (37,0.4) rectangle (38.8,-2.8);
  \node[anchor=west,font=\scriptsize] at (39.25,-0.1)
    {\(\mathcal J_{m,n-m}\) gives the smaller bound};
  \node[anchor=west,font=\scriptsize] at (39.25,-2.4)
    {\(m<n\leq\lfloor(4m+1)/3\rfloor\)};

  \filldraw[fill=orange!38,draw=gray!45,line width=0.25pt]
    (37,-5.0) rectangle (38.8,-8.2);
  \node[anchor=west,font=\scriptsize] at (39.25,-5.5)
    {both upper bounds equal \(\tfrac{\pi}{3}\)};
  \node[anchor=west,font=\scriptsize] at (39.25,-7.8)
    {\(\lfloor(4m+1)/3\rfloor<n\leq\lfloor(3m+1)/2\rfloor\)};

  \filldraw[fill=gray!10,draw=gray!45,line width=0.25pt]
    (37,-10.4) rectangle (38.8,-13.6);
  \node[anchor=west,font=\scriptsize] at (39.25,-10.9)
    {\(\mathcal J_{m,n-m}\) is unavailable};
  \node[anchor=west,font=\scriptsize] at (39.25,-13.2)
    {\(n>\lfloor(3m+1)/2\rfloor\)};

  \draw[blue!80!black,line width=0.8pt]
    (37,-15.8) rectangle (38.8,-19.0);
  \node[anchor=west,font=\scriptsize] at (39.25,-16.3)
    {exact consecutive-sphere value};
  \node[anchor=west,font=\scriptsize] at (39.25,-18.6)
    {\(n=m+1\)};
\end{scope}

\end{tikzpicture}
 {
\caption{Comparison of the upper bounds furnished by the
correspondences
\(\mathcal J_{m,n-m}\subseteq\Sp^n\times\Sp^m\), obtained from
balanced joins and studied in \Cref{thm:balanced-join-bound}, and
\(\mathcal C_{m,n}\subseteq\Sp^n\times\Sp^m\), obtained from circle
factors and studied in \Cref{thm:circle-join-all-pairs}.  Those
propositions compute their distortions.  The axes follow the convention
of \Cref{fig:circle-join-staircase}.  In the blue region,
\(b(m,n-m)\geq3\), and the upper bound furnished by
\(\mathcal J_{m,n-m}\), namely \(\zeta_{b(m,n-m)-1}/2\), is strictly
smaller than the upper bound \(\tfrac{\pi}{3}\) furnished by
\(\mathcal C_{m,n}\).  In the orange region, \(b(m,n-m)=2\), and the
two upper bounds agree at \(\tfrac{\pi}{3}\).  The correspondence
\(\mathcal J_{m,n-m}\) is unavailable in the gray region, while
\(\mathcal C_{m,n}\) continues to apply.  The outlined first
superdiagonal records the exact value realized by the anchored--chord
correspondences \(\mathcal R_m\subseteq\Sp^{m+1}\times\Sp^m\) of
\Cref{thm:main}.}
\label{fig:balanced-circle-comparison}
}
\hypersetup{linkcolor=blue!55!black}
\end{figure}
\FloatBarrier

The case \(k=1\) gives the broad strip
\(n-m\leq L_m\), on which \(\dgh(\Sp^m,\Sp^n)\leq\tfrac{\pi}{3}\).
For \(n=m+1\), this recovers the dimension-independent bound of the
Polymath collaboration \cite[Theorem~1.2]{adams2022gromov}.
Thus the same bound holds throughout the larger region
\(1\leq n-m\leq L_m=2\lfloor(m+1)/2\rfloor\).

The general bound in \cite[Theorem~1.2]{harrison2023quantitative} is
\(\dgh(\Sp^m,\Sp^n)\leq\pi n/[2(n+1)]\).  {The
correspondence \(\mathcal C_{m,n}\) obtained from circle factors in
\Cref{thm:circle-join-all-pairs} gives}
\[
 \dgh(\Sp^m,\Sp^n)
 \leq{\frac{\pi K(m,n)}{2K(m,n)+1}}.
\]
Since {\(2K(m,n)\leq n\)}, this upper bound is at most the preceding
Harrison--Jeffs bound:
{\(\tfrac{\pi K(m,n)}{2K(m,n)+1}
\leq\tfrac{\pi n}{2(n+1)}\)}.
The inequality is strict for \(m\geq2\), while for \(m=1\) our
construction gives the exact circle value of
\cite[Theorem~1.1]{harrison2023quantitative}.
There is no global ordering with the refined packing bound in
\cite[Theorem~1.3]{harrison2023quantitative}.  For example, when
\((m,n)=(2,4)\), our upper bound is \(\tfrac{\pi}{3}\), whereas their
upper bound is \(\tfrac12\arccos(-3/5)>\tfrac{\pi}{3}\).  In the opposite
direction, for fixed \(m\geq2\) and \(n\to\infty\), the difference between
\(\tfrac{\pi}{2}\) and our upper bound is
{\(\tfrac{\pi}{2}-\tfrac{\pi K(m,n)}{2K(m,n)+1}
=\tfrac{\pi L_m}{4n}+O_m(n^{-2})\)}, whereas the upper bound obtained in the
proof of \cite[Corollary~1.4]{harrison2023quantitative} lies below
\(\tfrac{\pi}{2}\) by \(\Omega_m(n^{-1/2})\).  Their refined upper bound is
therefore eventually stronger.
 
\section{Comparison with previous correspondence constructions}
\label{sec:previous-work}

The anchored--chord correspondence \(\mathcal R_n\), defined in
\Cref{def:anchored-chord-relation}, belongs to the regular-simplex and
hemisphere framework introduced by Lim, M\'emoli and Smith.  This
section compares the successive rules used to pair
points in the upper hemisphere of \(\Sp^{n+1}\) with points of
\(\Sp^n\): the fixed-vertex construction, its discrete refinement, the
introduction of an identity band, continuous geodesic interpolation,
and the normalized-chord interpolation used here.  We use the notation
\(v_i\), \(M_i\), and \(p_\alpha(x)\) from
\Cref{sec:anchored-chord-definition}.

\smallskip
\noindent\emph{The fixed-vertex construction.}
For the \(\Sp^1\)-versus-\(\Sp^2\) case,
\cite[Proposition~1.16]{lim2021gromov} inscribes a regular triangle in
the equator of \(\Sp^2\), maps each open upper-hemisphere cone to its
simplex vertex, uses the identity on the equator, and extends oddly
over the lower hemisphere.  The all-dimensional version in
\cite[Proposition~1.20 and Section~6.2]{lim2021gromov} uses a regular
\((n+1)\)-simplex \(v_1,\ldots,v_{n+2}\subset\Sp^n\) and the same rule.
Its distortion \(\eta_n\) is the diameter of a closed spherical
Voronoi cell of the simplex
\cite[Remarks~6.4--6.6]{lim2021gromov}; the diameter computation follows
from \cite[Lemma~1]{santalo1946convex}.  Thus
\(\eta_1=\zeta_1\), whereas \(\eta_n>\zeta_n\) for \(n\geq2\);
the direct all-dimensional extension is not sharp.

\medskip
\noindent\emph{The discrete refinement for
\(\Sp^2\) versus \(\Sp^3\).}
The sharp construction of \cite[Proposition~1.19 and
Section~8]{lim2021gromov} retains the tetrahedral vertices \(v_i\)
near the north pole but subdivides each Voronoi region closer to the
equator.  A point in the subcell indexed by \(j\neq i\) is assigned to
an auxiliary point \(v_{ij}\) on the minimizing geodesic from \(v_i\)
to \(-v_j\).  This discrete change of target with the radial level,
followed by the identity on the equator and odd extension, gives the
sharp \(n=2\) correspondence.

\medskip
\noindent\emph{A fixed-anchor correspondence with an identity band.}
The proof of \cite[Theorem~1.2 and Section~6]{adams2022gromov}
constructs, for every \(n\geq1\), a correspondence between the closed
upper hemisphere of \(\Sp^{n+1}\) and its equatorial \(\Sp^n\), with
distortion at most \(\tfrac{2\pi}{3}\).  After relabeling the antipodes of the
regular-simplex vertices used there as \(v_i\), the corresponding
spherical facet---the geodesic convex hull of
\(\{-v_j:j\neq i\}\)---is precisely the closed Voronoi cell \(M_i\)
associated with \(v_i\).  In our notation, their relation consists of
\[
 \begin{aligned}
 \bigl\{(p_\alpha(x),v_i):
       0\leq\alpha\leq\tfrac{\pi}{3},\ x\in M_i,\
       1\leq i\leq n+2\bigr\}\cup
 \bigl\{(p_\alpha(x),x):
       \tfrac{\pi}{3}<\alpha\leq\tfrac{\pi}{2},\ x\in\Sp^n\bigr\}.
 \end{aligned}
\]
Thus this construction introduces an identity band of positive width
near the equator in every dimension.  Unlike the constructions below,
however, it does not interpolate between \(v_i\) and \(x\): the target
changes abruptly at \(\alpha=\tfrac{\pi}{3}\).  The resulting upper bound
\(\tfrac{2\pi}{3}\) is sharp for \(n=1\), but is strictly larger than
\(\zeta_n\) for every \(n\geq2\).

\medskip
\noindent\emph{Continuous geodesic interpolation.}
Rodr\'iguez Mart\'in's family was inspired by the proof of
\cite[Proposition~1.20]{lim2021gromov}.  It retains the regular-simplex
cells, the upper- and lower-hemisphere decomposition, and the odd
extension, but replaces the fixed target within a cell by continuous
motion from \(v_i\) toward \(x\).  Denoting his map by
\(\Phi_n\), one has, for \(x\) in the open cell associated with \(v_i\),
that \(\Phi_n(p_\alpha(x))\) lies a fraction
\(\max\{0,\alpha+1-\tfrac{\pi}{2}\}\) of the way along the minimizing geodesic
from \(v_i\) to \(x\).
This proves the \(n=3\) case and also recovers \(n=1,2\)
\cite[Theorem~1.5 and Section~5]{rodriguezmartin2024novel}.  The same
family does not settle \(n=4,5,6\), and its distortion is strictly
larger than \(\zeta_n\) for \(n\geq7\)
\cite[Section~5.4]{rodriguezmartin2024novel}.

\medskip
\noindent\emph{The anchored--chord relation.}
Our construction retains the regular-simplex and hemisphere framework
and includes an identity band near the equator.  On the remaining part
of the upper hemisphere, it replaces the preceding interpolation rules
by normalized Euclidean-chord motion.  Its dimension-dependent gain is
determined by sharpness on the equal-level swapped-anchor family and by
reciprocal compatibility under the involution \(F_n\); see
\Cref{sec:swapped-anchor-configuration,sec:critical-involution}.
Within the normalized-chord ansatz, these requirements determine the
gain uniquely.  The subsequent estimates prove that the resulting
relation has distortion exactly \(\zeta_n\) in every dimension.

\providecommand{\acConstructionHeading}{\paragraph{A sharp anchored--chord correspondence.}\label{sec:canonical-anchored-chord-all-n}}
\providecommand{\acEstimatesHeading}{}
\providecommand{\acUpperBoundaryHeading}{}
\providecommand{\acAssemblyHeading}{}
\providecommand{\acSharpDistortionEquation}{\eqref{eq:ac-109}}
\providecommand{\acMainResultEquation}{\eqref{eq:ac-110}}
\newcommand{\acEquationTag}[1]{\refstepcounter{equation}\tag{\theequation}\label{#1}}
\providecommand{\acLowerBoundArgument}{The reverse inequality is \cite[Theorem~B]{lim2021gromov}, which states
  that
  \[
   2\dgh(\Sp^n,\Sp^{n+1})
   \geq\arccos\left(-\frac1{n+1}\right).
  \]}

\acConstructionHeading

Within the class of continuous normalized-chord gains starting at the
anchor, we now derive the two sharpness requirements that
\emph{canonically} determine \(r_n\), prove its characterization, and
establish the basic properties of the resulting correspondence.

\subsection{Chord and Voronoi geometry}
\label{sec:chord-voronoi-preliminaries}

We first isolate two elementary facts used throughout the construction
and its distortion estimates: the normalized-chord computation and the
geometry of the simplex Voronoi cells.

\begin{lemma}[Chord geometry]
\label{lem:ac-chord-geometry}
Let \(u,v\) be unit vectors in a real inner-product space, and let
\(a\geq0\).
\begin{enumerate}[label=\textup{(\roman*)},nosep]
\item If \(u\cdot v\geq\rho_n\), then
\begin{equation}
 \lVert u+av\rVert
 \geq\sqrt{1+2\rho_n a+a^2}.
 \label{eq:chord-norm-lower-bound}
\end{equation}
For \(a>0\), equality holds if and only if \(u\cdot v=\rho_n\).

\item Assume that \(u\cdot v=\rho_n\), and put
\[
 z:=\frac{u+av}{\lVert u+av\rVert},
 \qquad
 \theta:=\arccos(u\cdot z).
\]
Then
\begin{equation}
 \lVert u+av\rVert
 =\sqrt{1+2\rho_n a+a^2}
 =\frac{\sin\widehat{\zeta}_n}
 {\sin(\widehat{\zeta}_n-\theta)},
 \qquad
 a=\frac{\sin\theta}
 {\sin(\widehat{\zeta}_n-\theta)}.
 \label{eq:chord-geometry}
\end{equation}

The corresponding component identities are
\[
 \cos\theta
 =\frac{1+\rho_na}{\sqrt{1+2\rho_n a+a^2}},
 \qquad
 \sin\theta
 =\frac{a\sin\widehat{\zeta}_n}
 {\sqrt{1+2\rho_n a+a^2}}.
\]

\item Under the assumptions and notation of \textup{(ii)}, if
\[
 v=\rho_nu+\xi\sin\widehat{\zeta}_n,
 \qquad \xi\perp u,\quad \lVert\xi\rVert=1,
\]
then
\begin{equation}
 z=u\cos\theta+\xi\sin\theta.
 \label{eq:chord-angular-form}
\end{equation}
\end{enumerate}
\end{lemma}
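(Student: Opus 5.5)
The whole lemma comes from expanding \(\lVert u+av\rVert^2\) and then doing the planar trigonometry in the two-dimensional span of \(u\) and \(v\). Throughout we use \(\cos\widehat{\zeta}_n=\rho_n\) and \(\sin\widehat{\zeta}_n=\sqrt{1-\rho_n^2}\).

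For (i), write \(\lVert u+av\rVert^2=1+2a\,(u\cdot v)+a^2\). Since \(a\geq0\) and \(u\cdot v\geq\rho_n\), this is at least \(1+2\rho_n a+a^2\). When \(a>0\), equality holds exactly when \(u\cdot v=\rho_n\).

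For (iii), assume \(u\cdot v=\rho_n\), so \(v\) decomposes as \(v=\rho_n u+\xi\sin\widehat{\zeta}_n\) with \(\xi\) a unit vector orthogonal to \(u\). Then
\[
 u+av=(1+\rho_n a)\,u+a\sin\widehat{\zeta}_n\,\xi .
\]
Both coefficients are nonnegative and the sum of their squares is \(1+2\rho_n a+a^2\). So after normalizing, \(z=u\cos\phi+\xi\sin\phi\) with \(\phi\in[0,\pi/2]\) and
\[
 \cos\phi=\frac{1+\rho_n a}{\sqrt{1+2\rho_n a+a^2}},\qquad
 \sin\phi=\frac{a\sin\widehat{\zeta}_n}{\sqrt{1+2\rho_n a+a^2}}.
\]
Taking the inner product with \(u\) gives \(u\cdot z=\cos\phi\). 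Since \(\phi\in[0,\pi]\), this forces \(\phi=\arccos(u\cdot z)=\theta\). This proves the component identities of (ii) and the angular form of (iii) at once.

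For the closed forms in (ii), expand \(\sin(\widehat{\zeta}_n-\theta)=\sin\widehat{\zeta}_n\cos\theta-\rho_n\sin\theta\) and substitute the component identities. With \(D:=\sqrt{1+2\rho_n a+a^2}\),
\[
 \sin(\widehat{\zeta}_n-\theta)=\frac{\sin\widehat{\zeta}_n\,\bigl[(1+\rho_n a)-\rho_n a\bigr]}{D}=\frac{\sin\widehat{\zeta}_n}{D}.
\]
This is positive, so \(\theta<\widehat{\zeta}_n\) and dividing by it is legitimate. It yields \(D=\sin\widehat{\zeta}_n/\sin(\widehat{\zeta}_n-\theta)\). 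Dividing the formula for \(\sin\theta\) by this expression gives \(a=\sin\theta/\sin(\widehat{\zeta}_n-\theta)\).

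There is no real obstacle here. The only points needing care are identifying \(\phi\) with \(\theta\) (both lie in \([0,\pi]\) and have the same cosine) and checking that \(\sin(\widehat{\zeta}_n-\theta)\) is nonzero, which the computation itself shows.
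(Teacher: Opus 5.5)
Your proof is correct, and it reaches the result by a slightly different route from the paper's. The paper obtains the closed forms in \eqref{eq:chord-geometry} first, from the law of sines in the Euclidean triangle with vertices $0$, $u$, $u+av$. It then computes $\cos\theta$ from $u\cdot z$, and $\sin\theta$ from the norm of the component $a(v-\rho_nu)/\lVert u+av\rVert$ of $z$ orthogonal to $u$. Finally it obtains \eqref{eq:chord-angular-form} geometrically: $z$ lies in $\operatorname{span}\{u,\xi\}$, on the side of $u$ toward $v$, at angle $\theta$ from $u$.

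You reverse the order. The orthogonal decomposition $u+av=(1+\rho_n a)\,u+a\sin\widehat{\zeta}_n\,\xi$ gives the component identities and \eqref{eq:chord-angular-form} at once. The subtraction formula for $\sin(\widehat{\zeta}_n-\theta)$ then recovers the closed forms.

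In part (ii) the vector $\xi$ is not given, so say explicitly that $\xi:=(v-\rho_nu)/\sin\widehat{\zeta}_n$. It is a unit vector because $\lVert v-\rho_nu\rVert^2=1-\rho_n^2>0$.

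What your version buys: the ``same side'' step becomes explicit through the nonnegative coefficients, and $\sin(\widehat{\zeta}_n-\theta)>0$ is shown directly. The case $a=0$ and nondegeneracy of the triangle need no separate treatment. What the paper's sine-rule picture buys: it explains why the angle $\widehat{\zeta}_n-\theta$ appears at all, namely as the angle of that triangle at $u+av$.
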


\begin{proof}
Expanding the squared norm gives
\[
 \lVert u+av\rVert^2
 =1+2a(u\cdot v)+a^2
 \geq1+2\rho_n a+a^2.
\]
This proves \eqref{eq:chord-norm-lower-bound}; when \(a>0\), equality
holds precisely when \(u\cdot v=\rho_n\).

Assume now that \(u\cdot v=\rho_n\).  The norm identity in
\eqref{eq:chord-geometry} follows from the displayed expansion.  If
\(a=0\), the remaining assertions are immediate.  Suppose \(a>0\),
and consider the Euclidean triangle with vertices \(0,u,u+av\).  Its
sides of lengths \(a\), \(1\), and \(\lVert u+av\rVert\) are opposite
the angles \(\theta\), \(\widehat{\zeta}_n-\theta\), and \(\zeta_n\),
respectively.  Since
\(\sin\zeta_n=\sin\widehat{\zeta}_n\), the sine rule gives
\[
 \frac{a}{\sin\theta}
 =\frac{1}{\sin(\widehat{\zeta}_n-\theta)}
 =\frac{\lVert u+av\rVert}{\sin\widehat{\zeta}_n}.
\]
Furthermore,
\[
 u\cdot z
 =\frac{1+a(u\cdot v)}{\lVert u+av\rVert}
 =\frac{1+\rho_na}{\sqrt{1+2\rho_n a+a^2}},
\]
which proves the formula for \(\cos\theta\).  The component of \(z\)
orthogonal to \(u\) is
\[
 \frac{a(v-\rho_nu)}{\lVert u+av\rVert}.
\]
Its norm is \(\sin\theta\), while
\(\lVert v-\rho_nu\rVert=\sin\widehat{\zeta}_n\); this proves the
formula for \(\sin\theta\).
Finally, \(z\) lies in the plane spanned by \(u\) and \(\xi\), on the
side of \(u\) toward \(v\), and makes angle \(\theta\) with \(u\).
This proves \eqref{eq:chord-angular-form}.
\end{proof}

\begin{lemma}[The Voronoi-cell bound and its equality case]
\label{lem:ac-simplex-cell}
For every \(1\leq i\leq n+2\),
\[
 x\in M_i\quad\Longrightarrow\quad
 v_i\cdot x\geq\rho_n,                                      \acEquationTag{eq:ac-10}
\]
and equality holds in \eqref{eq:ac-10} precisely for
\[
 \{x\in M_i:v_i\cdot x=\rho_n\}
 =\{-v_j:j\neq i\}.
\]
\end{lemma}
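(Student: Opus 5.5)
We work in the zero-sum coordinates, where the computation becomes linear. Since \(x\in\Sp(W_{n+2})\), one has \(\sum_k x_k=0\), and as noted after the definition of the Voronoi cells,
\[
 v_i\cdot x=\sqrt{\tfrac{n+2}{n+1}}\,x_i .
\]
The claim \(v_i\cdot x\geq\rho_n\) is therefore equivalent to
\[
 x_i\geq \frac{1}{\sqrt{(n+1)(n+2)}}
\]
for every unit zero-sum vector whose \(i\)-th coordinate is maximal. This is a lower bound on the largest coordinate of a unit zero-sum vector, which we obtain from a direct variance-type inequality.

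Put \(m:=x_i=\max_k x_k\), and write \(x_k=m-t_k\) with \(t_k\geq0\) and \(t_i=0\).

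\begin{enumerate}[label=\textup{(\arabic*)}]
\item The zero-sum condition gives \(\sum_{k\neq i}t_k=(n+2)m\). In particular \(m\geq0\), and \(m>0\) because \(x\neq0\).
\item For the negative coordinates \(x_k<0\) we have \(|x_k|\leq t_k\). For the nonnegative coordinates we have \(0\leq x_k\leq m\).
\item A cleaner route is to bound \(\sum_k x_k^2\) subject to the constraints \(x_k\leq m\) and \(\sum_k x_k=0\). The function \(x\mapsto\sum_k x_k^2\) is convex on the polytope
\[
 P_m:=\Bigl\{x:\ x_k\leq m\ \text{for all }k,\ \textstyle\sum_k x_k=0\Bigr\},
\]
so we should maximize at vertices, which take the form \(x_k=m\) for all \(k\neq \ell\) and \(x_\ell=-(n+1)m\). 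However, \(P_m\) is unbounded below, so this convexity argument needs the extra constraint \(x_k\geq -1\). The simpler plan is therefore the following.
\end{enumerate}

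Since \(m-x_k\geq0\) for every \(k\),
\[
 \sum_k x_k^2\leq \sum_k x_k^2+\sum_{k\neq \ell}(m-x_k)(\cdots),
\]
and this direction is awkward. Instead use the Euclidean identity
\[
 \bigl\lVert x - m\mathbf 1\bigr\rVert^2=\sum_k (m-x_k)^2\leq\Bigl(\sum_k (m-x_k)\Bigr)^2=\bigl((n+2)m\bigr)^2,
\]
where the inequality holds because all the terms \(m-x_k\) are nonnegative. By orthogonality, since \(x\perp\mathbf 1\),
\[
 \lVert x-m\mathbf 1\rVert^2=1+(n+2)m^2 .
\]
Hence
\[
 1\leq(n+2)^2m^2-(n+2)m^2=(n+1)(n+2)m^2,
\]
which is exactly the required bound \eqref{eq:ac-10}.

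For the equality case, equality in \(\sum_k s_k^2\leq(\sum_k s_k)^2\) with \(s_k\geq0\) holds iff at most one \(s_k\) is nonzero. Thus \(x_k=m\) for all \(k\) except one index \(\ell\), and \(\ell\neq i\) is forced whenever \(m>0\) and the sum vanishes. Solving the normalization then gives \(x=-v_\ell\). Conversely, for \(j\neq i\), the point \(-v_j\) has all coordinates other than the \(j\)-th equal and maximal, so \(-v_j\in M_i\), and \(v_i\cdot(-v_j)=\rho_n\).

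The only delicate step is choosing the sum-of-squares versus square-of-sum inequality so that the equality case is transparent; everything else is routine.
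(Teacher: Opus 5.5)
Your proof is correct, but it uses a different key inequality from the paper.

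\emph{The paper's route.} The paper works with $a_k:=v_k\cdot x$, which are just your coordinates rescaled by $\sqrt{(n+2)/(n+1)}$. It bounds the positive terms by $a_k^2\le a_{\max}a_k$ and the nonpositive terms by $a_k^2\le -a_k$, using $a_k\ge-1$. It then counts at most $n+1$ positive terms, which gives $\sum_k a_k^2\le(n+1)a_{\max}(1+a_{\max})$. Comparing with $\tfrac{n+2}{n+1}=(n+1)\rho_n(1+\rho_n)$ and using monotonicity yields $a_{\max}\ge\rho_n$. The equality case needs a separate identity: $\sum_k(\rho_n-a_k)(1+a_k)=0$ has nonnegative summands, which forces each $a_k\in\{\rho_n,-1\}$.

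\emph{Your route.} You apply the single inequality $\sum_k s_k^2\le\bigl(\sum_k s_k\bigr)^2$ to the nonnegative gaps $s_k=m-x_k$. You combine it with the exact evaluations $\sum_k s_k=(n+2)m$ and $\sum_k s_k^2=1+(n+2)m^2$, the latter from $x\perp\mathbf 1$.

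\emph{Comparison.} Your argument needs neither the bound $a_k\ge-1$ nor the count of positive coordinates. The equality case also falls out of the same inequality: at most one gap is nonzero, it cannot be at index $i$, and normalization gives $x=-v_\ell$. So your argument is shorter and the equality locus is more transparent. The paper's version has only the mild advantage of being phrased through the inner products $v_k\cdot x$ rather than a coordinate model, though here the two are proportional.

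\emph{Write-up.} Delete the abandoned fragments: steps (2)--(3) and the display containing ``$(\cdots)$''. As written, that display is not a meaningful statement. The argument starting from $\lVert x-m\mathbf 1\rVert^2$ is self-contained and complete without them.
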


\begin{proof}
Fix \(x\in M_i\), and put \(a_k:=v_k\cdot x\).  Then
\[
 a_i=\max_k a_k,\qquad
 \sum_ka_k=0,\qquad
 \sum_ka_k^2=\frac{n+2}{n+1},\qquad
 -1\leq a_k\leq1.
\]
Set
\[
 a_{\max}:=\max_k a_k,
 \qquad
 a_+:=\sum_{a_k>0}a_k.
\]
For the positive terms, \(a_k^2\leq a_{\max}a_k\), whereas for the
nonpositive terms, \(a_k^2\leq-a_k\).  Since
\(\sum_{a_k\leq0}(-a_k)=a_+\), and since at most \(n+1\) of the
\(a_k\) are positive, it follows that
\[
 \sum_ka_k^2
 \leq a_{\max}a_++a_+
 \leq(n+1)a_{\max}(1+a_{\max}).
\]
The rightmost expression is increasing for \(a_{\max}\geq0\).
Comparison with
\[
 \frac{n+2}{n+1}
 =(n+1)\rho_n(1+\rho_n)
\]
therefore gives \(a_i=a_{\max}\geq\rho_n\), proving
\eqref{eq:ac-10}.

Suppose now that \(a_i=\rho_n\).  Then \(a_k\leq\rho_n\) for every
\(k\), and the identities displayed at the beginning of the proof give
\[
 \sum_{k=1}^{n+2}(\rho_n-a_k)(1+a_k)
 =(n+2)\rho_n-\sum_{k=1}^{n+2}a_k^2=0.
\]
Every summand is nonnegative.  Hence each \(a_k\) equals either
\(\rho_n\) or \(-1\).  The identity \(\sum_ka_k=0\) then shows that
exactly one index \(j\) satisfies \(a_j=-1\).  Since \(x\) and \(v_j\)
are unit vectors, \(x\cdot v_j=-1\) implies \(x=-v_j\); moreover,
\(j\neq i\).

Conversely, if \(j\neq i\), then
\[
 v_j\cdot(-v_j)=-1,
 \qquad
 v_k\cdot(-v_j)=\rho_n\quad(k\neq j).
\]
Thus \(-v_j\in M_i\) and \(v_i\cdot(-v_j)=\rho_n\), proving the
asserted equality locus.
\end{proof}

\subsection{The swapped-anchor configuration}
\label{sec:swapped-anchor-configuration}

The formula for the gain is dictated by an explicit family of pairs.
Fix distinct indices \(i,j\).  For every \(k\neq j\),
\(v_k\cdot(-v_j)=\rho_n\), whereas \(v_j\cdot(-v_j)=-1\).  Therefore
\[
 d_n(-v_j,v_k)=\widehat{\zeta}_n\quad(k\neq j),
 \qquad
 d_n(-v_j,v_j)=\pi.
\]
Thus every \(v_k\) with \(k\neq j\) is a simplex vertex nearest to
\(-v_j\).  Equivalently, the equality case in
\Cref{lem:ac-simplex-cell} gives \(-v_j\in M_i\).  Interchanging
\(i\) and \(j\) gives \(-v_i\in M_j\).  For this fixed pair of indices
and for \(r,t\geq0\),
define the \emph{normalized target points}
\phantomsection\label{def:swapped-anchor-targets}
\[
 z_i(r):=\frac{v_i-rv_j}{\lVert v_i-rv_j\rVert},
 \qquad
 z_j(t):=\frac{v_j-tv_i}{\lVert v_j-tv_i\rVert},
\]
as shown in \Cref{fig:swapped-anchor-pairings-n1}.

\begin{figure}[!ht]
    \centering
    \resizebox{1.0\textwidth}{!}{\definecolor{anchori}{RGB}{205,103,0}
\definecolor{anchorj}{RGB}{20,66,180}
\definecolor{guidecol}{RGB}{105,110,118}

\tikzset{
  sphere line/.style={line width=.85pt,draw=black!68},
  guide/.style={line width=.55pt,draw=guidecol!70},
  point/.style={circle,inner sep=0pt,minimum size=5.2pt},
  open point/.style={point,fill=white,line width=.85pt},
  pairing/.style={line width=1.2pt,-{Stealth[length=2.0mm]}}
}

\begin{tikzpicture}[line cap=round,line join=round,font=\small]

\begin{scope}[xshift=-3.55cm]
  \def\Rs{2.05}
  \def\flat{0.44}   \def\alph{38}     \def\bet{58}      \def\pLabelScale{1.30}
  \def\minusLabelScale{1.24}
  \pgfmathsetmacro{\up}{sqrt(\Rs*\Rs-\flat*\flat)}

  \coordinate (O) at (0,0);
  \coordinate (N) at (0,\up);

\coordinate (minusvTwo) at
    ({\Rs*cos(330)},{\flat*sin(330)});
  \coordinate (minusvOne) at
    ({\Rs*cos(210)},{\flat*sin(210)});
  \coordinate (qOne) at
    ({\Rs*sin(\alph)*cos(330)},
     {\up*cos(\alph)+\flat*sin(\alph)*sin(330)});
  \coordinate (qTwo) at
    ({\Rs*sin(\bet)*cos(210)},
     {\up*cos(\bet)+\flat*sin(\bet)*sin(210)});

  \pgfmathsetmacro{\latyA}{\up*cos(\alph)}
  \pgfmathsetmacro{\latxA}{\Rs*sin(\alph)}
  \pgfmathsetmacro{\latflatA}{\flat*sin(\alph)}
  \pgfmathsetmacro{\latyB}{\up*cos(\bet)}
  \pgfmathsetmacro{\latxB}{\Rs*sin(\bet)}
  \pgfmathsetmacro{\latflatB}{\flat*sin(\bet)}

  \fill[blue!3] (O) circle[radius=\Rs];
  \draw[sphere line] (O) circle[radius=\Rs];

\draw[guide,densely dashed]
    (\Rs,0) arc[start angle=0,end angle=180,
                 x radius=\Rs,y radius=\flat];
  \draw[guide]
    (-\Rs,0) arc[start angle=180,end angle=360,
                  x radius=\Rs,y radius=\flat];

\draw[draw=anchori!38,line width=.55pt,densely dotted]
    (0,\latyA) ellipse[x radius=\latxA,y radius=\latflatA];
  \draw[draw=anchorj!38,line width=.55pt,densely dotted]
    (0,\latyB) ellipse[x radius=\latxB,y radius=\latflatB];

\draw[anchori!45,line width=.75pt]
    plot[domain=0:90,samples=45,variable=\t]
      ({\Rs*sin(\t)*cos(330)},
       {\up*cos(\t)+\flat*sin(\t)*sin(330)});
  \draw[anchorj!45,line width=.75pt]
    plot[domain=0:90,samples=45,variable=\t]
      ({\Rs*sin(\t)*cos(210)},
       {\up*cos(\t)+\flat*sin(\t)*sin(210)});
  \draw[anchori,pairing]
    plot[domain=3:\alph,samples=32,variable=\t]
      ({\Rs*sin(\t)*cos(330)},
       {\up*cos(\t)+\flat*sin(\t)*sin(330)});
  \draw[anchorj,pairing]
    plot[domain=3:\bet,samples=32,variable=\t]
      ({\Rs*sin(\t)*cos(210)},
       {\up*cos(\t)+\flat*sin(\t)*sin(210)});

  \node[point,fill=anchori] at (qOne) {};
  \node[
    fill=white,
    draw=none,
    text=anchori,
    inner sep=0.4pt,
    anchor=west
  ] at ($(O)!\pLabelScale!(qOne)$) {$p_\alpha(-v_2)$};

  \node[point,fill=anchorj] at (qTwo) {};
  \node[
    fill=white,
    draw=none,
    text=anchorj,
    inner sep=0.4pt,
    anchor=east
  ] at ($(O)!\pLabelScale!(qTwo)$) {$p_\beta(-v_1)$};

  \node[fill=white,inner sep=1.2pt,open point,draw=anchori]
    at (minusvTwo) {};
  \node[
    fill=white,
    draw=none,
    text=anchori,
    inner sep=0.4pt,
    anchor=north west
  ] at ($(O)!\minusLabelScale!(minusvTwo)$) {$-v_2$};

  \node[fill=white,inner sep=1.2pt,open point,draw=anchorj]
    at (minusvOne) {};
  \node[
    fill=white,
    draw=none,
    text=anchorj,
    inner sep=0.4pt,
    anchor=north east
  ] at ($(O)!\minusLabelScale!(minusvOne)$) {$-v_1$};

  \node[point,fill=black!72] at (N) {};
  \node[black!72,anchor=east] at (-.14,2.30) {\(N=p_0(x)\)};
  \node[font=\bfseries] at (0,2.82) {(a) source \(\mathbb S^2\)};
\node[anchori] at (.41,1.67) {\(\alpha\)};
  \node[anchorj] at (-.81,1.38) {\(\beta\)};
\end{scope}

\draw[black!45,line width=.85pt,-{Stealth[length=2.0mm]}]
  (-1.02,.12) -- (1.08,.12);

\begin{scope}[xshift=3.55cm]
  \def\Rt{1.76}
  \def\minusTargetLabelScale{1.18}
  \coordinate (vOne) at (30:\Rt);
  \coordinate (vTwo) at (150:\Rt);
  \coordinate (vThree) at (270:\Rt);
  \coordinate (minusvTwoT) at (-30:\Rt);
  \coordinate (minusvOneT) at (210:\Rt);
  \coordinate (zOne) at (10:\Rt);
  \coordinate (zTwo) at (185:\Rt);

  \draw[sphere line] (0,0) circle[radius=\Rt];
  \draw[guide,densely dashed] (vOne)--(vTwo)--(vThree)--cycle;

\draw[anchori,pairing]
    (vOne) arc[start angle=30,end angle=-30,radius=\Rt];
  \draw[anchorj,pairing]
    (vTwo) arc[start angle=150,end angle=210,radius=\Rt];

  \node[point,fill=anchori,label={[anchori]above right=1pt:\(v_1\)}]
    at (vOne) {};
  \node[point,fill=anchorj,label={[anchorj]above left=1pt:\(v_2\)}]
    at (vTwo) {};
  \node[point,fill=black!55,label={[black!55]below:\(v_3\)}]
    at (vThree) {};

  \node[open point,draw=anchori] at (minusvTwoT) {};
  \node[text=anchori,anchor=north west]
    at ($(0,0)!\minusTargetLabelScale!(minusvTwoT)$) {$-v_2$};
  \node[open point,draw=anchorj] at (minusvOneT) {};
  \node[text=anchorj,anchor=north east]
    at ($(0,0)!\minusTargetLabelScale!(minusvOneT)$) {$-v_1$};

  \node[point,fill=anchori,label={[anchori]right=3pt:\(z_1(r)\)}]
    at (zOne) {};
  \node[point,fill=anchorj,label={[anchorj]left=3pt:\(z_2(t)\)}]
    at (zTwo) {};

  \node[font=\bfseries] at (0,2.82) {(b) target \(\mathbb S^1\)};
\end{scope}

\node[
  draw=black!18,
  fill=black!1,
  rounded corners=2pt,
  inner xsep=8pt,
  inner ysep=4pt
] at (0,-2.62) {\(
  \color{anchori}{p_\alpha(-v_2)\longleftrightarrow z_1(r)}
  \qquad
  \color{anchorj}{p_\beta(-v_1)\longleftrightarrow z_2(t)}
\)};

\end{tikzpicture} }
  \caption{The swapped-anchor pairings for \(n=1\).  The source points
\(p_\alpha(-v_2)\) and \(p_\beta(-v_1)\) are paired with the target
points \(z_1(r)\) and \(z_2(t)\), which move from \(v_1\) toward
\(-v_2\) and from \(v_2\) toward \(-v_1\), respectively.}
    \label{fig:swapped-anchor-pairings-n1}
\end{figure}

Thus the equatorial point paired with the anchor \(v_i\) is \(-v_j\),
while the point paired with \(v_j\) is \(-v_i\).  We call
the two pairings
\[
 \bigl(p_\alpha(-v_j),z_i(r)\bigr),\qquad
 \bigl(p_\beta(-v_i),z_j(t)\bigr)
\]
the \emph{swapped-anchor configuration}.  This configuration is
extremal in the sense relevant to the construction: within the
normalized-chord ansatz, making the desired distortion bound sharp on
its equal-level subfamily determines the first branch of \(r_n\), while
its antipodal threshold determines the radial pairing \(F_n\) and the
reciprocal second branch.  This forcing is made precise in
\Cref{prop:canonical-gain-characterization}; the subsequent distortion
estimates show that no other realized configuration produces a larger
defect.

First give the two source points the same radial parameter \(\alpha\).
By direct calculation, the source-distance function
introduced in \Cref{sec:anchored-chord-definition} satisfies
\[
 \begin{aligned}
 D_n(\alpha)
 &=d_{n+1}\bigl(p_\alpha(-v_j),p_\alpha(-v_i)\bigr)\\
 &=\arccos\bigl(\cos^2\alpha-\rho_n\sin^2\alpha\bigr).
 \end{aligned}
\]
When \(0\leq r\leq1\), the point \(z_i(r)\) moves along the shorter
great-circle arc from \(v_i\) toward \(-v_j\).  Put
\(\theta:=d_n(v_i,z_i(r))\).  The point \(z_j(r)\) makes the symmetric
motion from \(v_j\) toward \(-v_i\).  The four endpoints
\(v_i,-v_j,v_j,-v_i\), together with the two moving points
\(z_i(r),z_j(r)\), lie on the great circle
\(\Sp(\operatorname{span}\{v_i,v_j\})\), as shown in
\Cref{fig:swapped-anchor-great-circle}.  Consequently,
\[
 d_n\bigl(z_i(r),z_j(r)\bigr)=\zeta_n+2\theta.
\]

\begin{figure}[htbp]
\centering
\begin{tikzpicture}[line cap=round,line join=round]
\begin{scope}[xshift=-3.45cm]
 \def\R{2.02}
 \coordinate (avi)  at (20:\R);
 \coordinate (avj)  at (135:\R);
 \coordinate (anvi) at (200:\R);
 \coordinate (anvj) at (315:\R);
 \coordinate (azi)  at (-2:\R);
 \coordinate (azj)  at (157:\R);

 \node[font=\bfseries] at (0,3.02) {(a) \(0<r<1\)};
 \draw[semithick,black!70] (0,0) circle[radius=\R];

\draw[line width=1.35pt,orange!85!black,-{Stealth[length=2.1mm]}]
   (avi) arc[start angle=20,end angle=-45,radius=\R];
 \draw[line width=1.35pt,blue!70!black,-{Stealth[length=2.1mm]}]
   (avj) arc[start angle=135,end angle=200,radius=\R];

\draw[semithick,black!65]
   (20:2.31) arc[start angle=20,end angle=135,radius=2.31];
 \node[fill=white,inner sep=1pt] at (77.5:2.48) {\(\zeta_n\)};

 \draw[semithick,orange!85!black]
   (20:1.68) arc[start angle=20,end angle=-2,radius=1.68];
 \node[fill=white,inner sep=0.6pt] at (9:1.50) {\(\theta\)};
 \draw[semithick,blue!70!black]
   (135:1.68) arc[start angle=135,end angle=157,radius=1.68];
 \node[fill=white,inner sep=0.6pt] at (146:1.50) {\(\theta\)};

 \draw[line width=1pt,purple!70!black,{Stealth[length=1.8mm]}-{Stealth[length=1.8mm]}]
   (-2:1.15) arc[start angle=-2,end angle=157,radius=1.15];
 \node[fill=white,inner sep=1.2pt,align=center] at (77.5:0.89)
   {\(\zeta_n+2\theta\)};

 \node[fill=white,inner sep=0.8pt] at (-20:2.55) {\(\widehat{\zeta}_n\)};
 \node[fill=white,inner sep=0.8pt] at (182:2.55) {\(\widehat{\zeta}_n\)};

\fill[orange!85!black] (avi) circle[radius=2.2pt]
   node[fill=white,inner sep=1.2pt,above right=2pt] {\(v_i\)};
 \fill[blue!70!black] (avj) circle[radius=2.2pt]
   node[fill=white,inner sep=1.2pt,above left=2pt] {\(v_j\)};
 \filldraw[fill=white,draw=blue!70!black,line width=.8pt]
   (anvi) circle[radius=2.2pt] node[below left=1pt] {\(-v_i\)};
 \filldraw[fill=white,draw=orange!85!black,line width=.8pt]
   (anvj) circle[radius=2.2pt] node[below right=1pt] {\(-v_j\)};
 \fill[purple!75!black] (azi) circle[radius=2.5pt]
   node[right=1pt] {\(z_i(r)\)};
 \fill[purple!75!black] (azj) circle[radius=2.5pt]
   node[left=1pt] {\(z_j(r)\)};
\end{scope}

\begin{scope}[xshift=3.35cm]
 \def\R{2.02}
 \coordinate (bvi)  at (20:\R);
 \coordinate (bvj)  at (135:\R);
 \coordinate (bnvi) at (200:\R);
 \coordinate (bnvj) at (315:\R);
 \coordinate (bzi)  at (-12.5:\R);
 \coordinate (bzj)  at (167.5:\R);

 \node[font=\bfseries] at (0,3.02) {(b) \(r=1\)};
 \draw[semithick,black!70] (0,0) circle[radius=\R];
 \draw[line width=1.35pt,orange!85!black,-{Stealth[length=2.1mm]}]
   (bvi) arc[start angle=20,end angle=-45,radius=\R];
 \draw[line width=1.35pt,blue!70!black,-{Stealth[length=2.1mm]}]
   (bvj) arc[start angle=135,end angle=200,radius=\R];
 \draw[densely dashed,black!55] (bzi)--(bzj);

 \draw[semithick,orange!85!black]
   (20:1.68) arc[start angle=20,end angle=-12.5,radius=1.68];
 \draw[semithick,blue!70!black]
   (135:1.68) arc[start angle=135,end angle=167.5,radius=1.68];
 \node[fill=white,inner sep=.7pt] at (3.75:1.39) {\(\widehat{\zeta}_n/2\)};
 \node[fill=white,inner sep=.7pt] at (151.25:1.39) {\(\widehat{\zeta}_n/2\)};

 \fill[orange!85!black] (bvi) circle[radius=2.2pt]
   node[above right=1pt] {\(v_i\)};
 \fill[blue!70!black] (bvj) circle[radius=2.2pt]
   node[above left=1pt] {\(v_j\)};
 \filldraw[fill=white,draw=blue!70!black,line width=.8pt]
   (bnvi) circle[radius=2.2pt] node[below left=1pt] {\(-v_i\)};
 \filldraw[fill=white,draw=orange!85!black,line width=.8pt]
   (bnvj) circle[radius=2.2pt] node[below right=1pt] {\(-v_j\)};
 \fill[purple!75!black] (bzi) circle[radius=2.5pt]
   node[below right=1pt] {\(z_i(1)\)};
 \fill[purple!75!black] (bzj) circle[radius=2.5pt]
   node[above left=1pt] {\(z_j(1)\)};

 \node[align=center] at (0,-2.72)
   {\(z_i(1)=-z_j(1)\)\\[+2pt]
    \(\theta=\widehat{\zeta}_n/2\)};
\end{scope}
\end{tikzpicture}
\caption{The swapped-anchor configuration in the great circle
\(\Sp(\operatorname{span}\{v_i,v_j\})\).
In panel~(a), the two targets move symmetrically through angle
\(\theta\), so their distance is \(\zeta_n+2\theta\).
In panel~(b), \(r=1\): \(z_i(1)\) is the midpoint of the arc from
\(v_i\) to \(-v_j\), \(z_j(1)\) is the midpoint of the arc from
\(v_j\) to \(-v_i\), and the two targets are antipodal.  This first
occurs at \(\alpha=\alpha_n^\ast\), where
\(D_n(\alpha_n^\ast)=\widehat{\zeta}_n\).}
\label{fig:swapped-anchor-great-circle}
\end{figure}

For \(0\leq\alpha\leq\alpha_n^\ast\), make the desired distortion
bound sharp on this family by requiring
\[
 d_n\bigl(z_i(r_n(\alpha)),z_j(r_n(\alpha))\bigr)
 =\zeta_n+D_n(\alpha).
\]
This forces \(\theta=D_n(\alpha)/2\).  The identity
\[
 \Theta_n(\alpha)=\frac{D_n(\alpha)}2
 =\arcsin(c_n\sin\alpha)
\]
shows that \(\Theta_n(\alpha)\) has a direct meaning: it is half the source
distance in the equal-level swapped-anchor configuration.  On the lower
radial range \(0\leq\alpha\leq\alpha_n^\ast\), it also equals the
geodesic displacement of the target from its anchor:
\[
 d_n\bigl(v_i,z_i(r_n(\alpha))\bigr)=\Theta_n(\alpha).
\]

\paragraph{Regularity of \(\Theta_n\).}
For \(0\leq\alpha\leq\widehat{\zeta}_n\), one has
\(\cos\Theta_n(\alpha)>0\).  Direct differentiation gives
\begin{equation}
\begin{gathered}
 \Theta_n'(\alpha)
 =\frac{c_n\cos\alpha}{\cos\Theta_n(\alpha)},\qquad
 1-\Theta_n'(\alpha)^2
 =\frac{s_n^2}{\cos^2\Theta_n(\alpha)}\geq0,\\
 \Theta_n''(\alpha)
 =-\frac{s_n^2\sin\Theta_n(\alpha)}
 {\cos^3\Theta_n(\alpha)}\leq0.
\end{gathered}
\label{eq:theta-derivatives}
\end{equation}
Thus \(\Theta_n\) is increasing, concave, and \(1\)-Lipschitz on
\([0,\widehat{\zeta}_n]\).

Apply \Cref{lem:ac-chord-geometry} with
\(u:=v_i\), \(v:=-v_j\), and \(a:=r\).  Since
\(v_i\cdot(-v_j)=\rho_n\), it gives
\[
 r=\frac{\sin\theta}{\sin(\widehat{\zeta}_n-\theta)}.
\]
The underlying Euclidean triangle is shown in
\Cref{fig:chord-triangle-geometry}.
Substituting \(\theta=\Theta_n(\alpha)\) gives exactly the first branch
of \eqref{eq:ac-1}.  Thus the gain is not an arbitrary interpolation:
it makes the distortion exactly \(\zeta_n\) on the equal-level
swapped-anchor family.

As \(r\) increases from \(0\) to \(1\), the two targets
\(z_i(r),z_j(r)\) move from distance \(\zeta_n\) to distance \(\pi\).
Indeed,
\[
 z_i(1)=-z_j(1).
\]
Distortion at most \(\zeta_n\) therefore requires the source distance
\(D_n(\alpha)\) to satisfy
\(\pi-D_n(\alpha)\leq\zeta_n\), equivalently
\(D_n(\alpha)\geq\widehat{\zeta}_n\).  Hence the first radial level at
which the swapped-anchor targets may become antipodal is determined by
\[
 D_n(\alpha)=\widehat{\zeta}_n.
\]

Its unique solution is \(\alpha=\alpha_n^\ast\), because
\[
 \sin^2\alpha_n^\ast
 =\frac{1-\rho_n}{1+\rho_n}=\frac{s_n^2}{c_n^2}.
\]
Thus
\[
 D_n(\alpha_n^\ast)=\widehat{\zeta}_n,\qquad
 \Theta_n(\alpha_n^\ast)=\frac{\widehat{\zeta}_n}{2},\qquad
 r_n(\alpha_n^\ast)=1.
\]
In particular, \(\alpha_n^\ast\) is the first radial level at which the
critical swapped-anchor targets may be antipodal; at this level each
target is also the spherical midpoint of the arc from \(v_i\) to
\(-v_j\), or from \(v_j\) to \(-v_i\), respectively.

\subsection{The critical involution and characterization of the gain}
\label{sec:critical-involution}

Unequal radial levels force the rest of the gain. A simple calculation shows that the two targets
\(z_i(r)\) and \(z_j(t)\) are antipodal exactly when \(rt=1\). Similarly, a direct calculation shows that, on the
source side, the points \(p_\alpha(-v_j)\) and \(p_\beta(-v_i)\) are
exactly \(\widehat{\zeta}_n\) apart precisely when
\[
 \cos\alpha\cos\beta-\rho_n\sin\alpha\sin\beta=\rho_n.
\]
For each \(\alpha\), let \(F_n(\alpha)\) be the unique value of \(\beta\)
on this equality locus.  Requiring the targets to become antipodal
exactly on that locus gives
\[
 r_n(\alpha)\,r_n(F_n(\alpha))=1
 \qquad(0<\alpha<\widehat{\zeta}_n).
\]
This is the origin of both the radial involution and the reciprocal
definition of the second part of the gain.

We call
\phantomsection\label{def:critical-source-distance}
\[
\begin{aligned}
 D_n^\times\colon
 [0,\widehat{\zeta}_n]\times[0,\widehat{\zeta}_n]
 &\longrightarrow[0,\pi],\\
 D_n^\times(\alpha,\beta)
 &:=
 \arccos\bigl(
 \cos\alpha\cos\beta-\rho_n\sin\alpha\sin\beta
 \bigr)
\end{aligned}
\]
the \emph{critical source-distance function}.

\begin{proposition}[The critical involution]
\label{prop:ac-critical-involution}
The function \(F_n\) defined in \eqref{eq:ac-F-explicit} is the unique
function from \([0,\widehat{\zeta}_n]\) to itself satisfying
\begin{equation}
 \cos\alpha\cos F_n(\alpha)
 -\rho_n\sin\alpha\sin F_n(\alpha)=\rho_n.
 \label{eq:critical-involution-level}
\end{equation}
It is continuous, strictly decreasing, and involutive.  Its endpoint
values and unique fixed point are
\[
 F_n(0)=\widehat{\zeta}_n,\qquad
 F_n(\widehat{\zeta}_n)=0,\qquad
 F_n(\alpha_n^\ast)=\alpha_n^\ast.
\]
Moreover, for \(0\leq\alpha,\beta\leq\widehat{\zeta}_n\),
\begin{equation}
 D_n^\times(\alpha,\beta)\leq\widehat{\zeta}_n
 \quad\Longleftrightarrow\quad
 \beta\leq F_n(\alpha),
 \label{eq:critical-involution-sublevel}
\end{equation}
and
\begin{equation}
 D_n^\times(\alpha,\beta)\geq\widehat{\zeta}_n
 \quad\Longleftrightarrow\quad
 \beta\geq F_n(\alpha).
 \label{eq:critical-involution-superlevel}
\end{equation}
\end{proposition}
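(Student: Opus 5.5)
The plan is to work with the function
\[
 \Phi(\alpha,\beta):=\cos\alpha\cos\beta-\rho_n\sin\alpha\sin\beta
 =\cos D_n^\times(\alpha,\beta),
\]
and to show that for each fixed \(\alpha\in[0,\widehat{\zeta}_n]\) the map \(\beta\mapsto\Phi(\alpha,\beta)\) is strictly decreasing on \([0,\widehat{\zeta}_n]\). The derivative is
\[
 \partial_\beta\Phi=-\cos\alpha\sin\beta-\rho_n\sin\alpha\cos\beta .
\]
Since \(\widehat{\zeta}_n<\pi/2\), all sines and cosines are nonnegative on the square. Hence \(\partial_\beta\Phi\leq0\), with equality only at \(\alpha=\beta=0\), so \(\Phi(\alpha,\cdot)\) is strictly decreasing.

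At the endpoints we have \(\Phi(\alpha,0)=\cos\alpha\geq\cos\widehat{\zeta}_n=\rho_n\). Likewise
\[
 \Phi(\alpha,\widehat{\zeta}_n)=\rho_n\cos\alpha-\rho_n\sin\alpha\sin\widehat{\zeta}_n\leq\rho_n,
\]
using \(\cos\widehat{\zeta}_n=\rho_n\). By the intermediate value theorem and strict monotonicity, there is a unique \(\beta\in[0,\widehat{\zeta}_n]\) with \(\Phi(\alpha,\beta)=\rho_n\). This proves the uniqueness assertion for \eqref{eq:critical-involution-level}.

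Because \(\arccos\) is decreasing, \(D_n^\times(\alpha,\beta)\leq\widehat{\zeta}_n\) is equivalent to \(\Phi\geq\rho_n\), which by monotonicity is equivalent to \(\beta\leq F_n(\alpha)\). The same reasoning gives the superlevel version. This yields \eqref{eq:critical-involution-sublevel} and \eqref{eq:critical-involution-superlevel}.

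Next I will check that the explicit formula \eqref{eq:ac-F-explicit} is this root. Solving \(\cos\alpha\cos\beta-\rho_n\sin\alpha\sin\beta=\rho_n\) for \(\beta\) is awkward because the equation is not directly linear in \(\tan\beta\). The cleaner route is to substitute \(\beta=\arctan T\) with \(T:=(\sin\widehat{\zeta}_n-\sin\alpha)/(\rho_n\cos\alpha)\) and verify the identity. For \(\alpha<\widehat{\zeta}_n\) we have \(T\geq0\), \(\cos\beta=1/\sqrt{1+T^2}\) and \(\sin\beta=T/\sqrt{1+T^2}\). The identity then becomes
\[
 \cos\alpha-\rho_n T\sin\alpha=\rho_n\sqrt{1+T^2}.
\]
I would square it and simplify using \(\rho_n^2=1-\sin^2\widehat{\zeta}_n\). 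I would also confirm the sign condition \(\cos\alpha-\rho_nT\sin\alpha\geq0\), which reduces to \(1-\sin\alpha\sin\widehat{\zeta}_n\geq0\) after multiplying by \(\cos\alpha\). This sign check is the step most prone to error, so it is where I would be most careful.

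Alternatively, writing \(\cos\beta=\rho_n\) at \(\alpha=0\) shows directly that the root is \(\widehat{\zeta}_n\), consistent with the formula since \(T=\sin\widehat{\zeta}_n/\rho_n=\tan\widehat{\zeta}_n\). Then \(F_n(\widehat{\zeta}_n)=0\) follows from \(T=0\). The range \([0,\widehat{\zeta}_n]\) is automatic from uniqueness of the root in that interval, or can be seen from \(T\leq\tan\widehat{\zeta}_n\).

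The remaining properties follow from the structure of \(\Phi\).

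\textbf{Continuity.} This is visible from the explicit formula, or from the implicit function theorem since \(\partial_\beta\Phi<0\) away from the origin.

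\textbf{Involution.} \(\Phi\) is symmetric in \(\alpha\) and \(\beta\). So \(\Phi(\alpha,F_n(\alpha))=\rho_n\) also reads \(\Phi(F_n(\alpha),\alpha)=\rho_n\), and uniqueness gives \(F_n(F_n(\alpha))=\alpha\).

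\textbf{Strict decrease.} If \(\alpha<\alpha'\), then \(\Phi(\alpha',F_n(\alpha))<\Phi(\alpha,F_n(\alpha))=\rho_n\), since \(\Phi\) is also strictly decreasing in its first variable by symmetry. The superlevel characterization applied at \(\alpha'\) then gives \(F_n(\alpha)>F_n(\alpha')\).

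\textbf{Fixed point.} A fixed point solves \(\cos^2\alpha-\rho_n\sin^2\alpha=\rho_n\), that is, \(\sin^2\alpha=(1-\rho_n)/(1+\rho_n)\). This has the unique solution \(\alpha_n^\ast\) in the range, which matches \(D_n(\alpha_n^\ast)=\widehat{\zeta}_n\). Uniqueness of the fixed point also follows from strict monotonicity of \(F_n\).
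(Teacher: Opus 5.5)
Your proposal is correct and follows essentially the same route as the paper. Both arguments use strict monotonicity of \(\beta\mapsto\cos\alpha\cos\beta-\rho_n\sin\alpha\sin\beta\) together with its endpoint values, symmetry for the involution and the monotonicity of \(F_n\), the algebraic fixed-point equation, and the \(\tan\)-substitution with squaring and a sign check; the one difference is that you verify the candidate \(T\) where the paper solves the quadratic, which is equivalent. The one step to tighten is that \(\arctan T\in[0,\widehat{\zeta}_n]\): either check \(T\le\tan\widehat{\zeta}_n\), which reduces to \(\sin\widehat{\zeta}_n\tan(\alpha/2)\le1\), or note that \(\Phi(\alpha,\cdot)\) remains strictly decreasing on \([0,\pi/2)\), so that interval contains only one root.
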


\begin{proof}
Put
\[
 \begin{aligned}
 G_n\colon[0,\widehat{\zeta}_n]\times[0,\widehat{\zeta}_n]&\longrightarrow\R,\\
 G_n(\alpha,\beta)&:=
 \cos\alpha\cos\beta-{\rho_n}\sin\alpha\sin\beta.
 \end{aligned}
\]
For fixed \(\alpha\), the function \(\beta\mapsto G_n(\alpha,\beta)\)
is strictly decreasing on \([0,\widehat{\zeta}_n]\).  Indeed,
\[
 \partial_\beta G_n(\alpha,\beta)
 =-\cos\alpha\sin\beta-{\rho_n}\sin\alpha\cos\beta\leq0,
\]
Because \(\rho_n\), \(\cos\alpha\), and \(\cos\beta\) are positive on
the parameter square, equality forces
\(\sin\alpha=\sin\beta=0\).  Thus the derivative vanishes only at the
corner \((\alpha,\beta)=(0,0)\).  Moreover,
\[
 G_n(\alpha,0)=\cos\alpha\geq{\rho_n},\qquad
 G_n(\alpha,\widehat{\zeta}_n)
 ={\rho_n}(\cos\alpha-\sin \widehat{\zeta}_n\sin\alpha)\leq{\rho_n}.
\]

Consequently, for every \(\alpha\in[0,\widehat{\zeta}_n]\), there is a unique
\(F_n(\alpha)\in[0,\widehat{\zeta}_n]\) satisfying
\[
 \cos\alpha\cos F_n(\alpha)
 -\rho_n\sin\alpha\sin F_n(\alpha)=\rho_n.
 \tag{\ref{eq:critical-involution-level}}
\]

To recover the formula \eqref{eq:ac-F-explicit}, put
\(u_\alpha:=\tan F_n(\alpha)\).  Since
\(0\leq F_n(\alpha)\leq\widehat{\zeta}_n<\tfrac{\pi}{2}\), its cosine is
positive and
\[
 \sec F_n(\alpha)
 =\sqrt{1+\tan^2F_n(\alpha)}
 =\sqrt{1+u_\alpha^2}.
\]

Dividing both sides of \eqref{eq:critical-involution-level} by
\(\cos F_n(\alpha)\) therefore gives
\begin{equation}\label{eq:ac-2-1}
     \cos\alpha-\rho_n\sin\alpha\,u_\alpha
 =\rho_n\sqrt{1+u_\alpha^2}.
\end{equation}
Squaring, moving all terms to one side, and dividing by
\(-\cos^2\alpha\) gives
\[
 \rho_n^2u_\alpha^2+2\rho_n\tan\alpha\,u_\alpha
 +\rho_n^2\sec^2\alpha-1=0.
\]
The nonnegative root is
\[
 \tan F_n(\alpha)
 =\frac{\sqrt{1-\rho_n^2}-\sin\alpha}
        {\rho_n\cos\alpha}
 =\frac{\sin\widehat{\zeta}_n-\sin\alpha}
        {\rho_n\cos\alpha}.
\]

Squaring could have introduced a solution on the negative square-root
branch.  To exclude this possibility, insert the candidate root into
the left-hand side of the unsquared equation:
\[
 \cos\alpha-\rho_n\sin\alpha\,u_\alpha
 =\frac{1-\sin\alpha\sin\widehat{\zeta}_n}{\cos\alpha}.
\]
Since \(0\leq\alpha\leq\widehat{\zeta}_n<\tfrac{\pi}{2}\), its denominator is
positive and its numerator satisfies
\[
 1-\sin\alpha\sin\widehat{\zeta}_n
 \geq 1-\sin^2\widehat{\zeta}_n
 =\rho_n^2>0.
\]
Thus both sides of \eqref{eq:ac-2-1} are positive, and hence the
candidate satisfies the original unsquared equation.  This proves the
formula \eqref{eq:ac-F-explicit}; in particular, \(F_n\) is continuous.

Since \(G_n\) is symmetric and strictly decreasing in either variable,
\(F_n\) is strictly decreasing and uniqueness gives
\(F_n(F_n(\alpha))=\alpha\).  Thus \(F_n\) is a decreasing involution.
In particular,
\[
 F_n(0)=\widehat{\zeta}_n,\qquad F_n(\widehat{\zeta}_n)=0.
\]
The geometric derivation above predicts that \(\alpha_n^\ast\) is the
fixed point of \(F_n\).  Algebraically, \(F_n(\alpha)=\alpha\) is
equivalent to
\[
 1-(1+{\rho_n})\sin^2\alpha={\rho_n},
 \qquad
 \sin^2\alpha=\frac{1-{\rho_n}}{1+{\rho_n}}
 =\frac{s_n^2}{c_n^2}.
\]
Since
\((1-{\rho_n})/(1+{\rho_n})<1-\rho_n^2=\sin^2\widehat{\zeta}_n\), this solution lies
in \((0,\widehat{\zeta}_n)\).  Hence the unique fixed point is
\[
 F_n(\alpha_n^\ast)=\alpha_n^\ast,
 \qquad
 \alpha_n^\ast=\arcsin\!\left(\frac{s_n}{c_n}\right).
\]

Finally,
\[
 D_n^\times(\alpha,\beta)\leq\widehat{\zeta}_n
 \quad\Longleftrightarrow\quad
 G_n(\alpha,\beta)\geq\rho_n.
\]
Since \(\beta\mapsto G_n(\alpha,\beta)\) is strictly decreasing and
equals \(\rho_n\) precisely at \(\beta=F_n(\alpha)\), this is equivalent
to \(\beta\leq F_n(\alpha)\), proving
\eqref{eq:critical-involution-sublevel}.  The same strict monotonicity
gives
\[
 G_n(\alpha,\beta)\leq\rho_n
 \quad\Longleftrightarrow\quad
 \beta\geq F_n(\alpha),
\]
which is equivalent to
\eqref{eq:critical-involution-superlevel}.
\end{proof}

The reciprocal condition now gives
\[
 r_n(\alpha)=\frac1{r_n(F_n(\alpha))}
 \qquad(\alpha_n^\ast\leq\alpha<\widehat{\zeta}_n),
\]
which is precisely the second branch of \eqref{eq:ac-1}.  It also gives
\(r_n(\alpha_n^\ast)=1\) and
\(r_n(\alpha)\to+\infty\) as
\(\alpha\uparrow\widehat{\zeta}_n\).

The preceding two requirements characterize the gain among continuous
normalized-chord gains starting at the anchor that satisfy the stated
equal-level sharpness and reciprocal compatibility conditions.

\begin{proposition}[Characterization of the canonical gain]
\label{prop:canonical-gain-characterization}
Let
\[
 g\colon[0,\widehat{\zeta}_n)\longrightarrow[0,\infty)
\]
be continuous, with \(g(0)=0\).  Fix distinct indices
\(i,j\in\{1,\ldots,n+2\}\) and assume:
\begin{enumerate}[label=\textup{(\roman*)},nosep]
 \item For every \(0\leq\alpha\leq\alpha_n^\ast\), the equal-level
 swapped-anchor comparison is sharp:
 \[
  d_n\bigl(z_i(g(\alpha)),z_j(g(\alpha))\bigr)
  =\zeta_n+D_n(\alpha).
 \]
 \item For every \(0<\alpha<\widehat{\zeta}_n\), the reciprocal
 compatibility condition holds:
 \[
  g(\alpha)\,g(F_n(\alpha))=1.
 \]
\end{enumerate}
Then
\[
 g(\alpha)=r_n(\alpha)
 \qquad(0\leq\alpha<\widehat{\zeta}_n).
\]
In particular, \(g(\alpha)\to+\infty\) as
\(\alpha\uparrow\widehat{\zeta}_n\).
\end{proposition}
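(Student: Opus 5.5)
The proof has two stages. First, condition (i) determines \(g\) on \([0,\alpha_n^\ast]\). Then the involution \(F_n\) of \Cref{prop:ac-critical-involution} transports these values to \((\alpha_n^\ast,\widehat{\zeta}_n)\) through condition (ii).

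\emph{First stage.} For \(0\leq\alpha\leq\alpha_n^\ast\), the sharpness condition (i) together with \(D_n(\alpha)=2\Theta_n(\alpha)\leq\widehat{\zeta}_n\) shows that the required target distance \(\zeta_n+2\Theta_n(\alpha)\) lies in \([\zeta_n,\pi]\).

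The main obstacle is to invert the map \(r\mapsto d_n(z_i(r),z_j(r))\) uniquely. The great-circle picture in \Cref{fig:swapped-anchor-great-circle} only treats \(0\leq r\leq1\), and we must exclude a second solution with \(r>1\). I would handle this by computing everything in the plane \(\operatorname{span}\{v_i,v_j\}\).

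Apply \Cref{lem:ac-chord-geometry}(ii)--(iii) with \(u=v_i\) and \(v=-v_j\). This shows that \(z_i(r)\) sits at angle \(\theta(r)\) from \(v_i\) along the arc toward \(-v_j\), where \(\theta\) is the inverse of the strictly increasing map \(\theta\mapsto\sin\theta/\sin(\widehat{\zeta}_n-\theta)\). This map is a bijection from \([0,\widehat{\zeta}_n)\) onto \([0,\infty)\). By symmetry, \(z_j(r)\) sits at the same angle \(\theta(r)\) from \(v_j\) toward \(-v_i\).

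On the great circle, the arc from \(z_i(r)\) to \(z_j(r)\) passing through \(v_i\) and \(v_j\) has length \(\zeta_n+2\theta(r)\). Hence the geodesic distance is
\[
 d_n\bigl(z_i(r),z_j(r)\bigr)=\min\bigl\{\zeta_n+2\theta(r),\,2\pi-\zeta_n-2\theta(r)\bigr\}.
\]
This quantity equals \(\zeta_n+2\theta\) exactly when \(\theta\leq\widehat{\zeta}_n/2\), that is, when \(r\leq1\). For \(r>1\) it equals \(\zeta_n+2\theta'\) with \(\theta'=\widehat{\zeta}_n-\theta(r)\).

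So condition (i) forces either \(\theta(g(\alpha))=\Theta_n(\alpha)\), or \(\theta(g(\alpha))=\widehat{\zeta}_n-\Theta_n(\alpha)\), which corresponds to the reciprocal value \(1/r_n(\alpha)\). The ambiguity is therefore only between the two branches \(g(\alpha)\in\{r_n(\alpha),1/r_n(\alpha)\}\). The two agree only where \(\Theta_n(\alpha)=\widehat{\zeta}_n/2\), which happens only at \(\alpha=\alpha_n^\ast\).

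To select the branch I would use continuity and \(g(0)=0\). The function \(g\) is continuous with values in \(\{r_n,1/r_n\}\) on \([0,\alpha_n^\ast)\). The set where \(g=r_n\) is closed, and so is the set where \(g=1/r_n\) away from \(\alpha=0\), where \(1/r_n\) blows up. Since \(r_n\neq1/r_n\) on \((0,\alpha_n^\ast)\), connectedness gives either \(g\equiv r_n\) or \(g\equiv1/r_n\) there. Because \(1/r_n(\alpha)\to\infty\) as \(\alpha\downarrow0\), the condition \(g(0)=0\) excludes the second branch. Hence \(g=r_n\) on \([0,\alpha_n^\ast]\), with the endpoint \(\alpha_n^\ast\) following by continuity or directly.

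\emph{Second stage.} For \(\alpha_n^\ast<\alpha<\widehat{\zeta}_n\), \Cref{prop:ac-critical-involution} gives \(F_n(\alpha)\in(0,\alpha_n^\ast)\), since \(F_n\) is strictly decreasing, involutive, has fixed point \(\alpha_n^\ast\), and satisfies \(F_n(\widehat{\zeta}_n)=0\). Condition (ii) then gives
\[
 g(\alpha)=\frac{1}{g(F_n(\alpha))}=\frac{1}{r_n(F_n(\alpha))},
\]
and the first branch of \eqref{eq:ac-1} shows that this is exactly the second branch of \(r_n(\alpha)\). The denominator is nonzero because \(\Theta_n>0\) on \((0,\alpha_n^\ast)\).

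Finally, as \(\alpha\uparrow\widehat{\zeta}_n\) we have \(F_n(\alpha)\downarrow0\), so \(r_n(F_n(\alpha))\to0\) and therefore \(g(\alpha)\to+\infty\).
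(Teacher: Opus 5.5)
Your proposal is correct and follows the paper's proof essentially step for step. Condition (i) leaves only the two candidates \(r_n(\alpha)\) and \(1/r_n(\alpha)\). Continuity and \(g(0)=0\) select \(r_n\) on \([0,\alpha_n^\ast]\), and condition (ii) with the involution \(F_n\) transports this to \((\alpha_n^\ast,\widehat{\zeta}_n)\). The only difference is cosmetic. You get the reciprocal ambiguity from the great-circle distance \(\min\{\zeta_n+2\theta(r),\,2\pi-\zeta_n-2\theta(r)\}\), which holds for all \(r\geq0\). The paper instead reads off \(T(r)=T(1/r)\) from the explicit inner product \(z_i(r)\cdot z_j(r)\) and uses uniqueness on \((0,1]\).
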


\begin{proof}
For \(r\geq0\), direct calculation gives
\begin{equation}\label{eq:ac-3-2-1}
     z_i(r)\cdot z_j(r)
 =\frac{-\rho_n(1+r^2)-2r}
        {1+2\rho_n r+r^2}.
\end{equation}

On
\(0\leq r\leq1\), put
\(\theta(r):=d_n(v_i,z_i(r))\).

The great-circle calculation in
\Cref{sec:swapped-anchor-configuration} and
\Cref{lem:ac-chord-geometry} give
\[
 d_n(z_i(r),z_j(r))=\zeta_n+2\theta(r),
 \qquad
 r=\frac{\sin\theta(r)}
         {\sin(\widehat{\zeta}_n-\theta(r))}.
\]
The right-hand side is strictly increasing for
\(0\leq\theta\leq\widehat{\zeta}_n/2\).  We now determine all positive
solutions of condition~\textup{(i)} for
\(0<\alpha<\alpha_n^\ast\).

Define
\[
 T(r):=d_n(z_i(r),z_j(r))\qquad(r>0).
\]
\Cref{eq:ac-3-2-1} shows that \(T(r)=T(1/r)\).  On
\(0<r\leq1\), \Cref{lem:ac-chord-geometry} gives a bijection between
\(r\) and
\(0<\theta\leq\widehat{\zeta}_n/2\), because
\[
 r=\frac{\sin\theta}{\sin(\widehat{\zeta}_n-\theta)}
\]
is strictly increasing in \(\theta\).  Condition~\textup{(i)} requires
\[
 T(g(\alpha))=\zeta_n+D_n(\alpha)
 =\zeta_n+2\Theta_n(\alpha).
\]

The equation
\(T(r)=\zeta_n+2\Theta_n(\alpha)\) therefore has a unique solution in
\(0<r\leq1\), namely
\[
 a(\alpha)
 :=\frac{\sin\Theta_n(\alpha)}
         {\sin(\widehat{\zeta}_n-\Theta_n(\alpha))}.
\]
Reciprocal symmetry supplies the second solution \(a(\alpha)^{-1}\).
Conversely, if \(r>0\) is any solution, then
\(\min\{r,r^{-1}\}\in(0,1]\) is also a solution, so uniqueness on that
interval forces \(\min\{r,r^{-1}\}=a(\alpha)\).  These are therefore
the only two positive solutions.
Here \(0<a(\alpha)<1<a(\alpha)^{-1}\).  Since \(g\) is continuous and
\(g(0)=0\), it must select the first solution throughout
\((0,\alpha_n^\ast)\).  Indeed, the two continuous solution branches
\(a(\alpha)<1<a(\alpha)^{-1}\) are disjoint there, so continuity and
\(g(0)=0\) prevent a switch between them.  At
\(\alpha=\alpha_n^\ast\), condition~\textup{(i)} requires the target
distance to be \(\pi\), which occurs exactly when
\(g(\alpha_n^\ast)=1\).  Together with the endpoint \(g(0)=0\), this gives
\[
 g(\alpha)=a(\alpha)=r_n(\alpha)
 \qquad(0\leq\alpha\leq\alpha_n^\ast).
\]

Now let \(\alpha_n^\ast\leq\alpha<\widehat{\zeta}_n\) and put
\(\beta:=F_n(\alpha)\).  By
\Cref{prop:ac-critical-involution}, one has
\(0<\beta\leq\alpha_n^\ast\).  Condition
\textup{(ii)} and the first part give
\[
 g(\alpha)
 =\frac1{g(\beta)}
 =\frac1{r_n(\beta)}
 =r_n(\alpha),
\]
where the last equality is the second branch of \eqref{eq:ac-1}.
Finally, \(F_n(\alpha)\to0\) as
\(\alpha\uparrow\widehat{\zeta}_n\), so the same identity gives
\(g(\alpha)\to+\infty\).
\end{proof}

\subsection{Chord displacement and endpoint continuity}
\label{sec:chord-displacement-endpoints}

We now express the normalized-chord motion in terms of its geodesic
displacement and introduce the bounded chord fraction used to treat
the endpoint \(\alpha=\widehat{\zeta}_n\).

\paragraph{The \emph{maximal-displacement function}
\(\vartheta_n\).}
We encode the angular displacement of the normalized chord by \phantomsection\label{def:maximal-displacement-function}
\[
 \vartheta_n\colon[0,\widehat{\zeta}_n]
 \longrightarrow[0,\widehat{\zeta}_n],\qquad
 \vartheta_n(\alpha):=
 \begin{cases}
 \Theta_n(\alpha),&0\leq\alpha\leq\alpha_n^\ast,\\
 \widehat{\zeta}_n-\Theta_n(F_n(\alpha)),
 &\alpha_n^\ast\leq\alpha\leq\widehat{\zeta}_n.
 \end{cases}                                               \acEquationTag{eq:ac-5}
\]

The definitions of \(r_n\) and \(\vartheta_n\) give
\[
 \begin{gathered}
 r_n(\alpha)
 =\frac{\sin\vartheta_n(\alpha)}
        {\sin(\widehat{\zeta}_n-\vartheta_n(\alpha))}
 \qquad(0\leq\alpha<\widehat{\zeta}_n),\\
 \vartheta_n(F_n(\alpha))
 =\widehat{\zeta}_n-\vartheta_n(\alpha)
 \qquad(0\leq\alpha\leq\widehat{\zeta}_n).
 \end{gathered}                                             \acEquationTag{eq:ac-6}
\]
The endpoint in the first identity is the following precise
extended-real statement:
\[
 r_n(\widehat{\zeta}_n)=+\infty
 =\lim_{\alpha\uparrow\widehat{\zeta}_n}
 \frac{\sin\vartheta_n(\alpha)}
      {\sin(\widehat{\zeta}_n-\vartheta_n(\alpha))}.
\]
Indeed, \(\vartheta_n(\alpha)\to\widehat{\zeta}_n\), so the numerator tends
to \(\sin\widehat{\zeta}_n>0\), whereas the denominator tends to
\(0\) through positive values.

For the second identity, use the decreasing-involution and fixed-point
properties in \Cref{prop:ac-critical-involution}.  If
\(0\leq\alpha\leq\alpha_n^\ast\), then
\(F_n(\alpha)\geq\alpha_n^\ast\), and the two branches in
\eqref{eq:ac-5} give
\[
 \vartheta_n(F_n(\alpha))
 =\widehat{\zeta}_n-\Theta_n(F_n(F_n(\alpha)))
 =\widehat{\zeta}_n-\Theta_n(\alpha)
 =\widehat{\zeta}_n-\vartheta_n(\alpha).
\]
If \(\alpha_n^\ast\leq\alpha\leq\widehat{\zeta}_n\), then
\(F_n(\alpha)\leq\alpha_n^\ast\), and instead
\[
 \vartheta_n(F_n(\alpha))
 =\Theta_n(F_n(\alpha))
 =\widehat{\zeta}_n-\vartheta_n(\alpha),
\]
again by \eqref{eq:ac-5}.

The functions \(\Theta_n\) and \(\vartheta_n\) have different geometric
roles.  The value \(\Theta_n(\alpha)\) is half the source distance in
the equal-level swapped-anchor configuration; it prescribes the target
motion only on \(0\leq\alpha\leq\alpha_n^\ast\).

The relationship between the Euclidean chord ratio \(r_n(\alpha)\)
and the corresponding spherical displacement \(\vartheta_n(\alpha)\)
is depicted in \Cref{fig:chord-triangle-geometry}.

\begin{figure}[!t]
\centering
\begin{tikzpicture}[
  scale=1.18,
  line cap=round,
  line join=round,
  every node/.style={font=\small}
]
\coordinate (O) at (0,0);
\coordinate (V) at (55:2.35);
\coordinate (X) at (-25:2.35);
\coordinate (C) at ($(V)!.5454545!(X)$);
\coordinate (H) at ($(O)!2.35cm!(C)$);
\coordinate (W) at ($(O)!2.2!(C)$);

\draw[semithick] (O) circle[radius=2.35];
\draw[teal!70!black,very thick]
  (55:2.35) arc[start angle=55,end angle=10.62,radius=2.35];
\draw[blue!65!black,very thick]
  (10.62:2.35) arc[start angle=10.62,end angle=-25,radius=2.35];

\draw[black!32] (O)--(V);
\draw[black!32,dashed] (O)--(X);
\draw[orange!80!black,very thick] (V)--(X);
\draw[black!35] (O)--(C);
\draw[black!35,dashed] (H)--(W);
\draw[-{Stealth[length=2.0mm]},teal!70!black,thick] (C)--(H);

\draw[-{Stealth[length=2.0mm]},blue!65!black,semithick]
  (V)--(W)
  node[pos=.58,above right=-1pt]
    {\(r_n(\alpha)x\)};
\draw[-{Stealth[length=2.0mm]},red!70!black,semithick]
  (O)--(W);

\draw[teal!70!black,semithick]
  (55:.72) arc[start angle=55,end angle=10.62,radius=.72];
\node[teal!70!black] at (33:1.08)
  {\(\vartheta_n(\alpha)\)};
\draw[blue!65!black,semithick]
  (10.62:.48) arc[start angle=10.62,end angle=-25,radius=.48];
\node[blue!65!black,align=center] at (-16:1.28)
  {\(\widehat{\zeta}_n-\vartheta_n(\alpha)\)};

\fill (O) circle[radius=1.35pt]
  node[below left=2pt] {\(O\)};
\fill[orange!80!black] (V) circle[radius=2.0pt]
  node[above=2pt] {\(v_i\)};
\fill[orange!80!black] (X) circle[radius=2.0pt]
  node[below right=2pt] {\(x\)};
\fill[black!65] (C) circle[radius=1.8pt]
  node[above right=1pt] {\(c\)};
\fill[teal!70!black] (H) circle[radius=2.1pt]
  node[below right=2pt] {\(H_{\alpha,i}(x)\)};
\fill[red!70!black] (W) circle[radius=1.8pt]
  node[above right=1pt] {\(v_i+r_n(\alpha)x\)};

\node[align=center,anchor=north] at (1.70,-2.72)
  {Euclidean chord ratio\\[2pt]
   \(\displaystyle
     \frac{\lVert c-v_i\rVert}{\lVert x-c\rVert}=r_n(\alpha)\)};
\node[align=center,anchor=south] at (-1.55,2.73)
  {geodesic displacement\\[2pt]
   \(d_n(v_i,H_{\alpha,i}(x))=\vartheta_n(\alpha)\)};
\end{tikzpicture}
 \caption{The Euclidean gain and spherical displacement for
\(0\leq\alpha<\widehat{\zeta}_n\) and a point \(x\in M_i\) satisfying
\(v_i\cdot x=\rho_n\), equivalently
\(d_n(v_i,x)=\widehat{\zeta}_n\).  Here \(c\) is the unique point of
\([v_i,x]\) satisfying
\(\lVert c-v_i\rVert/\lVert x-c\rVert=r_n(\alpha)\).
Radial normalization sends \(c\) to
\(H_{\alpha,i}(x)=c/\lVert c\rVert\), whose geodesic displacement from
\(v_i\) is \(\vartheta_n(\alpha)\).  The vector triangle ending at
\(v_i+r_n(\alpha)x\) gives the sine-rule relation between these two
parameters. }
\label{fig:chord-triangle-geometry}
\end{figure}

\FloatBarrier

\begin{proposition}[Maximal and complementary displacement]
\label{prop:ac-maximal-displacement}
For every \(1\leq i\leq n+2\) and every
\(0\leq\alpha\leq\widehat{\zeta}_n\),

\[
 \begin{aligned}
 \vartheta_n(\alpha)
 &=\max_{x\in M_i}d_n\bigl(v_i,H_{\alpha,i}(x)\bigr),\\
 \widehat{\zeta}_n-\vartheta_n(\alpha)
 &=\max_{x\in M_i}d_n\bigl(H_{\alpha,i}(x),x\bigr).
 \end{aligned}
\]
In addition,
\[
 0\leq\vartheta_n(\alpha)\leq\alpha
 \qquad(0\leq\alpha\leq\widehat{\zeta}_n).
\]
The maximizers in the first formula are all of \(M_i\) when
\(\alpha=0\), and exactly the points
\(\{-v_j:j\neq i\}\) when
\(0<\alpha\leq\widehat{\zeta}_n\).  The maximizers in the second
formula are exactly the points \(\{-v_j:j\neq i\}\) when
\(0\leq\alpha<\widehat{\zeta}_n\), and all of \(M_i\) when
\(\alpha=\widehat{\zeta}_n\).
\end{proposition}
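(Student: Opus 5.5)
The plan is to reduce both maxima to a one-variable monotonicity statement in the inner product \(c:=v_i\cdot x\). By \Cref{lem:ac-simplex-cell}, \(c\in[\rho_n,1]\) on \(M_i\), with \(c=\rho_n\) exactly at \(\{-v_j:j\neq i\}\). Fix \(0<\alpha<\widehat{\zeta}_n\) and put \(a:=r_n(\alpha)\in(0,\infty)\). The endpoints are handled separately. At \(\alpha=0\) the target is \(v_i\) for every \(x\in M_i\), and \(\vartheta_n(0)=0\); so the first maximum is \(0\), attained everywhere. The second maximum is \(\max_{M_i}d_n(v_i,x)=\widehat{\zeta}_n\), attained only where \(c=\rho_n\). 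At \(\alpha=\widehat{\zeta}_n\) the target is \(x\); the displacement maximum is then \(\widehat{\zeta}_n=\vartheta_n(\widehat{\zeta}_n)\) at the same points, and the complementary maximum is \(0\), attained on all of \(M_i\).

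For interior \(\alpha\), the computation in the proof of \Cref{lem:ac-chord-geometry} gives
\[
 \cos d_n(v_i,H_{\alpha,i}(x))=\frac{1+ac}{\sqrt{1+2ac+a^2}},\qquad
 \cos d_n(H_{\alpha,i}(x),x)=\frac{c+a}{\sqrt{1+2ac+a^2}}.
\]
These are functions of \(c\) alone. I will check that each is strictly increasing in \(c\) on \([\rho_n,1]\). For the first, the derivative of \(\log\) equals \(a/(1+ac)-a/(1+2ac+a^2)\), which is \(>0\) because \(1+2ac+a^2>1+ac\) when \(a,c>0\). For the second, the derivative of \(\log\) equals \(1/(c+a)-a/(1+2ac+a^2)\), and its sign is that of \(1+ac>0\). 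Both distances are therefore strictly decreasing in \(c\), so both are maximized exactly where \(c=\rho_n\), that is, on \(\{-v_j:j\neq i\}\).

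At \(c=\rho_n\), \Cref{lem:ac-chord-geometry}(ii) identifies the first distance \(\theta\) through \(a=\sin\theta/\sin(\widehat{\zeta}_n-\theta)\), with \(\theta\in[0,\widehat{\zeta}_n)\). This map is strictly increasing, and \eqref{eq:ac-6} gives \(r_n(\alpha)=\sin\vartheta_n(\alpha)/\sin(\widehat{\zeta}_n-\vartheta_n(\alpha))\), so \(\theta=\vartheta_n(\alpha)\). Along the same great circle, \(H_{\alpha,i}(x)\) lies on the arc from \(v_i\) to \(x\) by \eqref{eq:chord-angular-form}, which has length \(\widehat{\zeta}_n\). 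Hence the second distance is \(\widehat{\zeta}_n-\vartheta_n(\alpha)\).

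It remains to show \(\vartheta_n(\alpha)\leq\alpha\); I expect this to be the main obstacle. On \([0,\alpha_n^\ast]\) it follows because \(\Theta_n\) is \(1\)-Lipschitz with \(\Theta_n(0)=0\), by \eqref{eq:theta-derivatives}. On \([\alpha_n^\ast,\widehat{\zeta}_n]\), write \(\beta=F_n(\alpha)\). The claim becomes \(\widehat{\zeta}_n-\Theta_n(\beta)\leq F_n(\beta)\) for \(\beta\in[0,\alpha_n^\ast]\). Equality holds at \(\beta=\alpha_n^\ast\), where both sides are \(\widehat{\zeta}_n/2\), and at \(\beta=0\), where both are \(\widehat{\zeta}_n\).

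My first attempt will be a convexity argument. The function \(\phi(\beta):=F_n(\beta)+\Theta_n(\beta)-\widehat{\zeta}_n\) vanishes at both endpoints, so \(\phi\geq0\) on \([0,\alpha_n^\ast]\) would follow from concavity of \(\phi\). The function \(\Theta_n\) is concave by \eqref{eq:theta-derivatives}. For \(F_n\), differentiate \eqref{eq:critical-involution-level} implicitly, or use the explicit formula \eqref{eq:ac-F-explicit}, to get the sign of \(F_n''\). If \(F_n\) is not concave, the fallback is to compare derivatives directly, \(F_n'\geq-\Theta_n'\), after rewriting the inequality through the cosine identity defining \(F_n\). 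The alternative is to check the equivalent trigonometric inequality \(\cos\alpha\cos(\widehat{\zeta}_n-\Theta_n(\beta))-\rho_n\sin\alpha\sin(\cdots)\geq\rho_n\) algebraically, using \eqref{eq:critical-involution-sublevel}.
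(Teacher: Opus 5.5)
Your treatment of the two maxima and their maximizers follows the paper's argument. Both distances depend only on $c=v_i\cdot x$ and are strictly decreasing on $[\rho_n,1]$. They are evaluated at $c=\rho_n$ via \Cref{lem:ac-chord-geometry} and \eqref{eq:ac-6}. The range $\alpha\le\alpha_n^\ast$ of $\vartheta_n(\alpha)\le\alpha$ is also handled as in the paper.

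The upper range is where your proposal is incomplete. Everything rests on the concavity of $F_n$, which you only propose, with fallbacks in case it fails. Your endpoint claim at $\beta=\alpha_n^\ast$ is also wrong. There the right side is $F_n(\alpha_n^\ast)=\alpha_n^\ast$, not $\widehat\zeta_n/2$; you conflated $\alpha_n^\ast$ with $\Theta_n(\alpha_n^\ast)$. Since $\sin\alpha_n^\ast=s_n/c_n>s_n=\sin(\widehat\zeta_n/2)$, in fact $\phi(\alpha_n^\ast)=\alpha_n^\ast-\widehat\zeta_n/2>0$. The slip is harmless, because concavity together with $\phi(0)=0\le\phi(\alpha_n^\ast)$ still gives $\phi\ge0$.

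The missing fact is true, so your first attempt goes through. Use the identity $\cos\alpha\cos\beta-\rho_n\sin\alpha\sin\beta=s_n^2\cos(\alpha-\beta)+c_n^2\cos(\alpha+\beta)$ and the coordinates $w=\alpha-\beta$, $u=\alpha+\beta$. The graph of $F_n$ then becomes $u=U(w):=\arccos h(w)$, where $h(w)=1-A(1+\cos w)$ and $A:=s_n^2/c_n^2<1$; note that $|h|<1$. A direct computation gives
\[
 h''(1-h^2)+h\,(h')^2=A^2(1-A)(1+\cos w)^2>0,
\]
so $U''=-\bigl(h''(1-h^2)+h(h')^2\bigr)/(1-h^2)^{3/2}<0$. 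Since $U''=2F_n''/(1-F_n')^3$ and $1-F_n'>0$, $F_n$ is strictly concave. Hence $\phi=F_n+\Theta_n-\widehat\zeta_n$ is concave by \eqref{eq:theta-derivatives}, which completes your argument.

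The paper avoids calculus at this step. With $\widetilde\alpha=F_n(\alpha)$, it compares the source points $p_{\widetilde\alpha}(-v_j)$, $p_{\widetilde\alpha}(-v_i)$ and $p_\alpha(-v_i)$. The triangle inequality gives $\widehat\zeta_n\le2\Theta_n(\widetilde\alpha)+\alpha-\widetilde\alpha$, and the path through $N$ gives $\widehat\zeta_n\le\alpha+\widetilde\alpha$. Adding these yields $\Theta_n(\widetilde\alpha)\ge\widehat\zeta_n-\alpha$, which is exactly your inequality $\phi(\widetilde\alpha)\ge0$. The metric argument is shorter. Your route needs a second-derivative computation, but it also shows that the critical involution is concave.
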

\begin{proof}
First suppose \(0\leq\alpha<\widehat{\zeta}_n\), put
\(r:=r_n(\alpha)\), and, for \(x\in M_i\), put
\(a:=v_i\cdot x\).  Then
\[
 \begin{aligned}
 \cos d_n\bigl(v_i,H_{\alpha,i}(x)\bigr)
 &=v_i\cdot H_{\alpha,i}(x)
 =\frac{1+ra}{\sqrt{1+2ra+r^2}},\\
 \cos d_n\bigl(H_{\alpha,i}(x),x\bigr)
 &=x\cdot H_{\alpha,i}(x)
 =\frac{a+r}{\sqrt{1+2ra+r^2}}.
 \end{aligned}
\]
The Voronoi-cell bound \eqref{eq:ac-10} gives
\(\rho_n\leq a\leq1\), and
\[
 \begin{aligned}
 \frac{d}{da}\left(\frac{1+ra}{\sqrt{1+2ra+r^2}}\right)
 &=\frac{r^2(a+r)}{(1+2ra+r^2)^{3/2}}\geq0,\\
 \frac{d}{da}\left(\frac{a+r}{\sqrt{1+2ra+r^2}}\right)
 &=\frac{1+ra}{(1+2ra+r^2)^{3/2}}>0.
 \end{aligned}
\]

Because \(\arccos\) is decreasing, both displayed distances are largest
when \(a=\rho_n\), except that the first distance is identically zero
when \(r=0\).  By \Cref{lem:ac-simplex-cell}, the equality
\(a=\rho_n\) holds precisely at the points
\(\{-v_k:k\neq i\}\).

At any such boundary point,
\(d_n(v_i,x)=\widehat{\zeta}_n\), and
\(H_{\alpha,i}(x)\) lies on the shorter great-circle arc from \(v_i\)
to \(x\).  Moreover,
\Cref{lem:ac-chord-geometry}, together with \eqref{eq:ac-6}, gives
\[
 d_n\bigl(v_i,H_{\alpha,i}(x)\bigr)=\vartheta_n(\alpha).
\]
Therefore
\[
 d_n\bigl(H_{\alpha,i}(x),x\bigr)
 =\widehat{\zeta}_n-\vartheta_n(\alpha).
\]
The derivative calculations and the boundary description prove both
maximum formulas and their equality statements for
\(\alpha<\widehat{\zeta}_n\).

At the endpoint \(\alpha=\widehat{\zeta}_n\), one has
\(H_{\widehat{\zeta}_n,i}(x)=x\) and
\(\vartheta_n(\widehat{\zeta}_n)=\widehat{\zeta}_n\).  The first maximum
formula follows from \(v_i\cdot x\geq\rho_n\), with equality exactly
at the points specified in \Cref{lem:ac-simplex-cell}.  The second distance is
identically zero, so every point of \(M_i\) is a maximizer.  This
completes the endpoint equality statements.

It remains to prove the comparison between the displacement and the
radial level.  Nonnegativity follows directly from the definition of
\(\vartheta_n\).  If
\(0\leq\alpha\leq\alpha_n^\ast\), then
\[
 \vartheta_n(\alpha)=\Theta_n(\alpha)\leq\alpha,
\]
because \(\Theta_n(0)=0\) and \(\Theta_n\) is \(1\)-Lipschitz by
\eqref{eq:theta-derivatives}.  Now suppose
\(\alpha_n^\ast\leq\alpha\leq\widehat{\zeta}_n\), and put
\(\widetilde{\alpha}:=F_n(\alpha)\).  Then
\(0\leq\widetilde{\alpha}\leq\alpha_n^\ast\), and
\Cref{prop:ac-critical-involution} gives
\[
 D_n^\times(\widetilde{\alpha},\alpha)=\widehat{\zeta}_n.
\]
Choose \(j\neq i\).  Consider the three source points
\[
 p_{\widetilde{\alpha}}(-v_j),\qquad
 p_{\widetilde{\alpha}}(-v_i),\qquad
 p_\alpha(-v_i).
\]
The first two points are
\[
D_n^\times(\widetilde{\alpha},\widetilde{\alpha})
=2\Theta_n(\widetilde{\alpha})
\]
apart.  The last two points lie on the same meridian and are
\(\alpha-\widetilde{\alpha}\) apart.  The distance between the first and third
points is
\(\widehat{\zeta}_n\).  The triangle inequality therefore gives
\[
 2\Theta_n(\widetilde{\alpha})
 \geq\widehat{\zeta}_n-\alpha+\widetilde{\alpha}.
\]
The path between the first and third points through the north pole has
length \(\alpha+\widetilde{\alpha}\), so
\(\widetilde{\alpha}\geq\widehat{\zeta}_n-\alpha\).  Consequently,
\[
 \Theta_n(\widetilde{\alpha})\geq\widehat{\zeta}_n-\alpha.
\]
The second branch of \eqref{eq:ac-5} now yields
\[
 \vartheta_n(\alpha)
 =\widehat{\zeta}_n-\Theta_n(\widetilde{\alpha})\leq\alpha,
\]
as required.
\end{proof}

The function \(\vartheta_n\) starts at \(0\), reaches
\(\widehat{\zeta}_n/2\) at \(\alpha_n^\ast\), and reaches
\(\widehat{\zeta}_n\) when \(\alpha=\widehat{\zeta}_n\), the identity-band threshold.

More precisely, fix a point \(x\in M_i\) satisfying
\(v_i\cdot x=\rho_n\).  For
\(0\leq\alpha\leq\widehat{\zeta}_n\), the points
\(H_{\alpha,i}(x)\) and
\(H_{F_n(\alpha),i}(x)\) lie on the same minimizing geodesic segment
from \(v_i\) to \(x\), whose length is \(\widehat{\zeta}_n\), and

\[
\begin{aligned}
 d_n\bigl(v_i,H_{\alpha,i}(x)\bigr)
 &=\vartheta_n(\alpha),\\
 d_n\bigl(v_i,H_{F_n(\alpha),i}(x)\bigr)
 &=\widehat{\zeta}_n-\vartheta_n(\alpha).
\end{aligned}
\]
Thus reflection across the midpoint of this geodesic segment exchanges
these two target points.

\paragraph{Application to the canonical gain.}

For \(0\leq\alpha<\widehat{\zeta}_n\) and
\(x\in M_i\) satisfying \(v_i\cdot x=\rho_n\),
\Cref{lem:ac-chord-geometry} and \eqref{eq:ac-6} relate the Euclidean
chord ratio \(r_n(\alpha)\) to the geodesic displacement
\(\vartheta_n(\alpha)\) produced by radial normalization through the
sine-rule identity.  Both quantities are depicted in
\Cref{fig:chord-triangle-geometry}.

\paragraph{The bounded chord fraction \(\lambda_n\).}
The geometrically relevant bounded form of the gain is
\phantomsection\label{def:bounded-chord-fraction}
\[
 \lambda_n\colon[0,\frac{\pi}{2}]\longrightarrow[0,1],\qquad
 \lambda_n(\alpha):=
 \begin{cases}
 \displaystyle\frac{r_n(\alpha)}{1+r_n(\alpha)},
   &0\leq\alpha<\widehat{\zeta}_n,\\[6pt]
 1,&\widehat{\zeta}_n\leq\alpha\leq\frac{\pi}{2}.
 \end{cases}
\]
The role of \(\lambda_n\) is twofold.  First, it replaces the extended gain
\(r_n\in[0,+\infty]\) by a bounded chord parameter in \([0,1]\);
the value \(r_n=+\infty\) corresponds to \(\lambda_n=1\).
Second, it removes the unbounded quantity \(r_n\) from the
formula for the target.  For
\(0\leq\alpha<\widehat{\zeta}_n\), multiplication of the vector being
normalized by the positive scalar \(1/(1+r_n(\alpha))\) gives

\[
\begin{aligned}
 H_{\alpha,i}(x)
 &=\frac{v_i+r_n(\alpha)x}
 {\lVert v_i+r_n(\alpha)x\rVert}\\
 &=\frac{(1-\lambda_n(\alpha))v_i+\lambda_n(\alpha)x}
 {\lVert(1-\lambda_n(\alpha))v_i+\lambda_n(\alpha)x\rVert}.
\end{aligned}
\]
The rightmost expression involves only the bounded parameter
\(\lambda_n(\alpha)\), and it remains meaningful when
\(\lambda_n(\alpha)=1\), where its value is \(x\).  This observation is
the basis of the endpoint-continuity statement below.

The following lemma shows that the endpoint convention in
\Cref{def:anchored-chord-relation} is the continuous extension of the
finite-gain formula.  It will allow the endpoint cases in
\Cref{lem:ac-near,prop:ac-far-affine} to be recovered from the finite-gain
cases.

\begin{lemma}[Endpoint continuity of the chord targets]
\label{lem:ac-endpoint-continuity}
The function \(\lambda_n\) is continuous on \([0,\tfrac{\pi}{2}]\).  For every
\(1\leq i\leq n+2\), the map
\[
 [0,\widehat{\zeta}_n]\times M_i\longrightarrow\Sp(W_{n+2}),
 \qquad
 (\alpha,x)\longmapsto H_{\alpha,i}(x),
\]
is continuous when \(H_{\widehat{\zeta}_n,i}(x):=x\).  In particular,
\[
 H_{\alpha,i}(x)\longrightarrow x
 \qquad\text{uniformly for \(x\in M_i\) as }
 \alpha\uparrow\widehat{\zeta}_n.
\]
\end{lemma}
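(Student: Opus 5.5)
The plan has three steps: establish continuity of \(\lambda_n\) directly from the definition of \(r_n\), then pass to the targets through the bounded-parameter formula for \(H_{\alpha,i}\), and finally obtain uniform convergence from compactness.

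\textbf{Continuity of \(\lambda_n\).} On \([0,\widehat{\zeta}_n)\), the gain \(r_n\) is continuous with values in \([0,\infty)\). Each branch of \eqref{eq:ac-1} is continuous because \(\Theta_n\) and \(F_n\) are continuous (\Cref{prop:ac-critical-involution}), and the two branches agree at \(\alpha_n^\ast\), as noted after \eqref{eq:ac-1}. The denominators do not vanish: \(\sin(\widehat{\zeta}_n-\Theta_n(\alpha))>0\) for \(\Theta_n(\alpha)\leq\widehat{\zeta}_n/2\), and \(\sin\Theta_n(F_n(\alpha))>0\) because \(F_n(\alpha)>0\) for \(\alpha<\widehat{\zeta}_n\). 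Since \(t\mapsto t/(1+t)\) is continuous on \([0,\infty)\), \(\lambda_n\) is continuous on \([0,\widehat{\zeta}_n)\). On \([\widehat{\zeta}_n,\tfrac{\pi}{2}]\) it is constant. At the junction, \(r_n(\alpha)\to+\infty\) as \(\alpha\uparrow\widehat{\zeta}_n\), which was established after \eqref{eq:ac-6}: \(\vartheta_n(\alpha)\to\widehat{\zeta}_n\), so the denominator \(\sin(\widehat{\zeta}_n-\vartheta_n(\alpha))\) tends to \(0^+\). Hence \(\lambda_n(\alpha)\to1=\lambda_n(\widehat{\zeta}_n)\).

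\textbf{Continuity of the targets.} I will write \(H_{\alpha,i}(x)=N\bigl((1-\lambda)v_i+\lambda x\bigr)\) with \(\lambda=\lambda_n(\alpha)\) and \(N(w)=w/\lVert w\rVert\). This formula holds for \(\alpha<\widehat{\zeta}_n\), as displayed before the lemma, and also at \(\alpha=\widehat{\zeta}_n\), where \(\lambda=1\) and it returns \(x\), matching the endpoint convention. The map \((\lambda,x)\mapsto(1-\lambda)v_i+\lambda x\) is continuous on \([0,1]\times M_i\), and \(N\) is continuous away from \(0\).

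The key point, and the only mild obstacle, is to show that the vector being normalized never vanishes. I plan to bound its norm below by taking the inner product with \(v_i\):
\[
 v_i\cdot\bigl((1-\lambda)v_i+\lambda x\bigr)=(1-\lambda)+\lambda\,(v_i\cdot x)\geq(1-\lambda)+\lambda\rho_n\geq\rho_n>0,
\]
using \Cref{lem:ac-simplex-cell}. So the norm is at least \(\rho_n\) uniformly, and the map \((\alpha,x)\mapsto H_{\alpha,i}(x)\) is a composition of continuous maps on \([0,\widehat{\zeta}_n]\times M_i\).

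\textbf{Uniform convergence.} I will deduce this from uniform continuity on the compact set \([0,\widehat{\zeta}_n]\times M_i\), together with \(H_{\widehat{\zeta}_n,i}(x)=x\). A more explicit alternative is also available. Since \(N\) is \((1/\rho_n)\)-Lipschitz in the appropriate sense on vectors of norm at least \(\rho_n\),
\[
 \lVert H_{\alpha,i}(x)-x\rVert\leq\frac{2}{\rho_n}\,\bigl\lVert(1-\lambda)(v_i-x)\bigr\rVert\leq\frac{4(1-\lambda_n(\alpha))}{\rho_n}.
\]
This bound is independent of \(x\) and tends to \(0\) as \(\alpha\uparrow\widehat{\zeta}_n\), by the continuity of \(\lambda_n\) proved in the first step.
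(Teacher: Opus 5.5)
Your proposal is correct and follows essentially the paper's proof: continuity of \(\lambda_n\) from \(r_n(\alpha)\to+\infty\), the bounded-parameter formula for \(H_{\alpha,i}\) with the normalized vector kept uniformly away from the origin, and uniform convergence from uniform continuity of radial normalization. The only difference is cosmetic: you bound the norm below by \(\rho_n\) using the inner product with \(v_i\), whereas the paper expands the squared norm to get the dimension-free bound \(1/\sqrt2\); either bound suffices.
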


\begin{proof}
Since \(r_n(\alpha)\to+\infty\) as
\(\alpha\uparrow\widehat{\zeta}_n\), one has
\(\lambda_n(\alpha)\to1=\lambda_n(\widehat{\zeta}_n)\).
Thus \(\lambda_n\) is continuous at the only point requiring
verification, and it is constant on
\([\widehat{\zeta}_n,\tfrac{\pi}{2}]\).

If \(x\in M_i\), then \eqref{eq:ac-10} gives
\(v_i\cdot x\geq\rho_n>0\).  Therefore, for
\(0\leq\lambda\leq1\),
\[
\begin{aligned}
 \lVert(1-\lambda)v_i+\lambda x\rVert^2
 &=(1-\lambda)^2+\lambda^2
   +2\lambda(1-\lambda)(v_i\cdot x)\\
 &\geq(1-\lambda)^2+\lambda^2\geq\frac12.
\end{aligned}
\]
The vector being normalized is consequently bounded uniformly away
from the origin.  Hence the bounded target formula above defines a
jointly continuous map through
\(\alpha=\widehat{\zeta}_n\), where it equals \(x\).
Moreover,
\[
 \lVert(1-\lambda_n(\alpha))v_i+\lambda_n(\alpha)x-x\rVert
 \leq2(1-\lambda_n(\alpha))
\]
uniformly in \(x\).  Radial normalization is uniformly continuous on
the region where the norm is at least \(1/\sqrt2\), proving the final
assertion.
\end{proof}

\begin{corollary}[Finite-gain closure]
\label{cor:ac-finite-gain-closure}
Let \(g\colon[-1,1]\to\R\) be continuous and let \(c\in(-1,1)\).

Fix \(1\leq i,j\leq n+2\), \(x\in M_i\), and \(y\in M_j\).  For
\(0\leq\alpha,\beta\leq\widehat{\zeta}_n\), put
\[
 Q:=p_\alpha(x)\cdot p_\beta(y),
 \qquad
 R:=H_{\alpha,i}(x)\cdot H_{\beta,j}(y).
\]
The following two closure statements hold.
\begin{enumerate}[label=\textup{(\roman*)},nosep]
 \item If \(R\geq g(Q)\) for every
 \(0\leq\alpha,\beta<\widehat{\zeta}_n\) for which \(Q>c\), then the same
 estimate holds when endpoint radial levels are allowed, provided
 \(Q>c\).
 \item If \(R\leq g(Q)\) for every
 \(0\leq\alpha,\beta<\widehat{\zeta}_n\) for which \(Q<c\), then the same
 estimate holds when endpoint radial levels are allowed, provided
 \(Q<c\).
\end{enumerate}
\end{corollary}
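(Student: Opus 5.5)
The plan is a limiting argument built on \Cref{lem:ac-endpoint-continuity}.

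Fix \(i,j\), \(x\in M_i\), \(y\in M_j\), and view \(Q\) and \(R\) as functions of \((\alpha,\beta)\in[0,\widehat{\zeta}_n]^2\). The function \(Q(\alpha,\beta)=p_\alpha(x)\cdot p_\beta(y)\) is visibly continuous, since \(p_\alpha(x)=x\sin\alpha+e_0\cos\alpha\). By \Cref{lem:ac-endpoint-continuity}, the maps \(\alpha\mapsto H_{\alpha,i}(x)\) and \(\beta\mapsto H_{\beta,j}(y)\) are continuous on the closed interval \([0,\widehat{\zeta}_n]\), including the endpoint convention \(H_{\widehat{\zeta}_n,i}(x)=x\). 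Hence \(R(\alpha,\beta)\) is continuous on the closed square, and so is the function \(R-g(Q)\), because \(g\) is continuous on \([-1,1]\) and \(Q\) takes values there.

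For (i), take \((\alpha,\beta)\) with at least one coordinate equal to \(\widehat{\zeta}_n\) and with \(Q(\alpha,\beta)>c\).
\begin{itemize}
\item Choose sequences \(\alpha_k\to\alpha\) and \(\beta_k\to\beta\) with \(\alpha_k,\beta_k<\widehat{\zeta}_n\): take \(\alpha_k:=\alpha-1/k\) when \(\alpha=\widehat{\zeta}_n\), and \(\alpha_k:=\alpha\) otherwise, and likewise for \(\beta_k\).
\item The set \(\{Q>c\}\) is open, so \(Q(\alpha_k,\beta_k)>c\) for all large \(k\).
\item For those \(k\) the hypothesis gives \(R(\alpha_k,\beta_k)\geq g(Q(\alpha_k,\beta_k))\).
\item Passing to the limit by continuity yields \(R(\alpha,\beta)\geq g(Q(\alpha,\beta))\).
\end{itemize}
Part (ii) is identical, with the open set \(\{Q<c\}\) and the reversed inequality, which is likewise preserved under limits.

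There is essentially no obstacle here. The only point needing care is that the strict inequality \(Q>c\) (resp. \(Q<c\)) at the endpoint propagates to nearby finite-gain levels. This is exactly why the statement uses strict inequalities and an open condition. The continuity of the targets through \(\alpha=\widehat{\zeta}_n\), where \(r_n\) blows up, is already supplied by \Cref{lem:ac-endpoint-continuity} via the bounded chord fraction \(\lambda_n\). The hypothesis \(c\in(-1,1)\) is not needed for the argument beyond ensuring the conditions are meaningful.
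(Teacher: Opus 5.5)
Your proposal is correct and matches the paper's proof. The paper also fixes the non-endpoint levels and lets each endpoint level increase to \(\widehat{\zeta}_n\). It then uses \Cref{lem:ac-endpoint-continuity} for convergence of \(R\), notes that the strict inequality on \(Q\) persists for large indices, and passes to the limit using continuity of \(g\).
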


\begin{proof}
Only configurations for which at least one radial level equals
\(\widehat{\zeta}_n\) require consideration.  Hold \(x,y\) and each
non-endpoint radial level fixed, and replace every endpoint radial level
by a sequence increasing to \(\widehat{\zeta}_n\).  The corresponding
source inner products converge to \(Q\), while
\Cref{lem:ac-endpoint-continuity} gives convergence of the target inner
products to \(R\).  The strict inequality \(Q>c\), respectively
\(Q<c\), therefore persists along the approximating sequence for all
sufficiently large indices.  Applying the finite-gain estimate and
passing to the limit proves the assertion, because \(g\) is continuous.
\end{proof}

\subsection{Basic properties of the correspondence}
\label{sec:basic-correspondence-properties}
\label{sec:anchored-chord-relation}

The Voronoi-cell bound \eqref{eq:ac-10} was established in
\Cref{lem:ac-simplex-cell}.  We now check that the maps in
\eqref{eq:ac-7} are well-defined and then prove that the formulas in
\Cref{def:anchored-chord-relation} define a correspondence.
If \(x\in M_i\) and \(0\leq\alpha<\widehat{\zeta}_n\), then
\(v_i\cdot x\geq\rho_n>0\), and hence
\(v_i+r_n(\alpha)\,x\neq0\).
For \(\alpha<\widehat{\zeta}_n\), \(H_{\alpha,i}(x)\) is the radial normalization of
the point \((v_i+r_n(\alpha)\,x)/(1+r_n(\alpha))\) on the Euclidean chord
\([v_i,x]\).  Since \(r_n(0)=0\),
\[
 H_{0,i}(x)=v_i\qquad(x\in M_i).
\]
This is the precise meaning of \emph{anchor}: for every \(x\in M_i\),
the path \(\alpha\mapsto H_{\alpha,i}(x)\) begins at \(v_i\).

\begin{proposition}[Basic properties of the anchored--chord correspondence]
\label{prop:ac-basic}
The relation \(\mathcal R_n\) is a correspondence between
\(\Sp^{n+1}\) and \(\Sp^n\).

For \(x\in M_i\),
\[
 x\cdot H_{\alpha,i}(x)>0
 \qquad(0\leq\alpha\leq\widehat{\zeta}_n).                            \acEquationTag{eq:ac-11}
\]
If \(i,j\in I(x)\), then
\[
 d_n(H_{\alpha,i}(x),H_{\alpha,j}(x))\leq\zeta_n.        \acEquationTag{eq:ac-12}
\]
for every \(0\leq\alpha\leq\widehat{\zeta}_n\).

At the north pole,
\[
 \mathcal R_n[N]=\{v_1,\ldots,v_{n+2}\},
\]
and therefore this fiber has diameter exactly \(\zeta_n\).

\end{proposition}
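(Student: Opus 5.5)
The four assertions of \Cref{prop:ac-basic} are independent and mostly follow from results already established; I will take them in order.

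\emph{Correspondence.} This was already observed after \Cref{def:gain-dependent-relation}. The cells \(M_i\) cover \(\Sp(W_{n+2})\), and every point of the closed upper hemisphere has the form \(p_\alpha(x)\) with \(0\leq\alpha\leq\tfrac{\pi}{2}\). Hence the first projection of \(\mathcal R_n^+\) is the closed upper hemisphere, and the antipodal extension covers the lower one. The identity band gives every \(x\in\Sp(W_{n+2})\) as a target. Well-definedness of \(H_{\alpha,i}\) was checked just before the statement, using \(v_i\cdot x\geq\rho_n>0\) from \eqref{eq:ac-10}.

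\emph{Positivity \eqref{eq:ac-11}.} For \(\alpha<\widehat{\zeta}_n\) and \(a:=v_i\cdot x\geq\rho_n\), the computation in the proof of \Cref{prop:ac-maximal-displacement} gives
\[
 x\cdot H_{\alpha,i}(x)=\frac{a+r_n(\alpha)}{\sqrt{1+2r_n(\alpha)a+r_n(\alpha)^2}}>0 .
\]
At \(\alpha=\widehat{\zeta}_n\) the inner product is \(x\cdot x=1\). Alternatively, \Cref{prop:ac-maximal-displacement} gives \(d_n(H_{\alpha,i}(x),x)\leq\widehat{\zeta}_n-\vartheta_n(\alpha)\leq\widehat{\zeta}_n<\tfrac{\pi}{2}\).

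\emph{Tie estimate \eqref{eq:ac-12}.} This is the only step needing an argument. If \(i,j\in I(x)\), the first formula of \Cref{prop:ac-maximal-displacement} gives
\[
 d_n(v_i,H_{\alpha,i}(x))\leq\vartheta_n(\alpha),\qquad d_n(v_j,H_{\alpha,j}(x))\leq\vartheta_n(\alpha).
\]
However, routing through the anchors yields only \(\zeta_n+2\vartheta_n\), which is too weak. Instead I route through \(x\):
\[
 d_n(H_{\alpha,i}(x),H_{\alpha,j}(x))\leq 2\bigl(\widehat{\zeta}_n-\vartheta_n(\alpha)\bigr).
\]
This suffices when \(\vartheta_n(\alpha)\geq\widehat{\zeta}_n-\zeta_n/2\). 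For small \(\alpha\) I use a direct computation instead. Set \(a:=x\cdot v_i=x\cdot v_j\geq\rho_n\) and \(r:=r_n(\alpha)\). Then
\[
 H_{\alpha,i}(x)\cdot H_{\alpha,j}(x)=\frac{-\rho_n+2ra+r^2}{1+2ra+r^2}.
\]
I must show this is at least \(-\rho_n\), that is,
\[
 -\rho_n+2ra+r^2\geq-\rho_n(1+2ra+r^2).
\]
This reduces to \(2ra(1+\rho_n)+r^2(1+\rho_n)\geq0\), which holds for all \(r\geq0\). The direct computation therefore settles \eqref{eq:ac-12} for all \(\alpha<\widehat{\zeta}_n\) at once. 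At \(\alpha=\widehat{\zeta}_n\) both targets equal \(x\), so the distance is \(0\).

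\emph{North-pole fiber.} Since \(p_0(x)=N\) for every \(x\), and \(H_{0,i}(x)=v_i\) because \(r_n(0)=0\), every \(v_i\) lies in the fiber, and the identity band does not meet \(N\). The antipodal part contains \(N\) only as \(-q\) with \(q=-N\), the south pole, which does not occur in \(\mathcal R_n^+\). So \(\mathcal R_n[N]=\{v_1,\ldots,v_{n+2}\}\). Its diameter is \(\arccos(-\rho_n)=\zeta_n\).
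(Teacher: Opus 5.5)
Your proof is correct and follows essentially the same route as the paper: the same covering argument for the two projections, the same formula $x\cdot H_{\alpha,i}(x)=(a+r)/\sqrt{1+2ra+r^2}$, the same inner-product computation $(-\rho_n+2ra+r^2)/(1+2ra+r^2)\geq-\rho_n$ for the tie estimate, and the same north-pole argument. The preliminary routing through $x$ is superfluous, since, as you observe, the direct computation already covers every $\alpha<\widehat{\zeta}_n$; the case $i=j$, which your formula tacitly excludes, is trivial because the two targets then coincide.
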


\begin{proof}
\Cref{lem:ac-simplex-cell} gives
\(v_i\cdot x\geq\rho_n\).  For
\(0\leq\alpha<\widehat{\zeta}_n\), put
\(r:=r_n(\alpha)\in[0,\infty)\).  Then
\[
 \|v_i+r x\|^2=1+2r(v_i\cdot x)+r^2>0,
\qquad
 x\cdot H_{\alpha,i}(x)
 =\frac{v_i\cdot x+r}{\sqrt{1+2r(v_i\cdot x)+r^2}}>0.
\]

This proves \eqref{eq:ac-11} for
\(0\leq\alpha<\widehat{\zeta}_n\).  At
\(\alpha=\widehat{\zeta}_n\), the endpoint definition gives
\[
 x\cdot H_{\widehat{\zeta}_n,i}(x)=x\cdot x=1>0,
\]
which completes the proof of \eqref{eq:ac-11}.  If \(i=j\),
\eqref{eq:ac-12} is trivial.  Suppose that \(i\neq j\) and
\(i,j\in I(x)\).  First let
\(0\leq\alpha<\widehat{\zeta}_n\), put
\(r:=r_n(\alpha)\), and write
\(a:=v_i\cdot x=v_j\cdot x\geq {\rho_n}\).  The inner product of the
two targets is
\[
 \frac{-{\rho_n}+2ra+r^2}{1+2ra+r^2}\geq-{\rho_n},
\]
because adding \({\rho_n}\) to the fraction produces the nonnegative numerator
\((1+{\rho_n})r(2a+r)\).  Thus the target inner product is at least
\(-\rho_n=\cos\zeta_n\), which proves \eqref{eq:ac-12} for
\(0\leq\alpha<\widehat{\zeta}_n\).

At \(\alpha=\widehat{\zeta}_n\), one has
\[
 H_{\widehat{\zeta}_n,i}(x)
 =x
 =H_{\widehat{\zeta}_n,j}(x),
\]
so the distance in \eqref{eq:ac-12} is zero.  Thus
\eqref{eq:ac-12} holds throughout its stated radial range.
\Cref{lem:ac-endpoint-continuity} also gives
\(H_{\alpha,i}(x)\to x\) uniformly for \(x\in M_i\) as
\(\alpha\uparrow\widehat{\zeta}_n\).  Hence the two pieces of
\eqref{eq:ac-8} agree at
\(\alpha=\widehat{\zeta}_n\).

To verify surjectivity of the first-coordinate projection, fix an
upper-hemisphere point \(p_\alpha(x)\).  If
\(0\leq\alpha\leq\widehat{\zeta}_n\), choose \(i\in I(x)\); then
\(x\in M_i\) and
\[
 \bigl(p_\alpha(x),H_{\alpha,i}(x)\bigr)\in\mathcal R_n^+.
\]
If \(\widehat{\zeta}_n\leq\alpha\leq\tfrac{\pi}{2}\), then
\[
 \bigl(p_\alpha(x),x\bigr)\in\mathcal R_n^+
\]
by the identity-band definition.  Thus the first-coordinate projection
of \(\mathcal R_n^+\) is the entire closed upper hemisphere of
\(\Sp^{n+1}\).  Antipodal extension therefore makes the first-coordinate projection of
\(\mathcal R_n\) all of \(\Sp^{n+1}\).  The identity band makes its
second-coordinate projection all of \(\Sp^n\).

Finally, \(p_0(x)=N\) is independent of \(x\), and
\(H_{0,i}(x)=v_i\).  Thus the anchored--chord part contributes exactly
the targets \(v_1,\ldots,v_{n+2}\) over \(N\).  The identity band does
not meet \(N\), and the antipodal extension contributes no additional
pair over \(N\).  Hence
\[
 \mathcal R_n[N]=\{v_1,\ldots,v_{n+2}\},
\]
as asserted.
\end{proof}

\subsection{Asymptotic profile of the gain}
\label{sec:asymptotic-gain-profile}

For each fixed \(0\leq\alpha<\tfrac{\pi}{2}\), the value
\(r_n(\alpha)\) is defined for all sufficiently large \(n\), and along
this tail one has
\[
 r_n(\alpha)\longrightarrow
 r_\infty(\alpha):=\frac{\sin\alpha}
 {\sqrt{2-\sin^2\alpha}},
 \qquad
 \lambda_n(\alpha)\longrightarrow
 \lambda_\infty(\alpha):=
 \frac{\sin\alpha}{\sin\alpha+\sqrt{2-\sin^2\alpha}}.
\]
Indeed, \(\widehat{\zeta}_n\uparrow\tfrac{\pi}{2}\) and
\(\alpha_n^\ast\uparrow\tfrac{\pi}{2}\).  Hence, for all sufficiently large
\(n\), one has
\(\alpha\leq\alpha_n^\ast<\widehat{\zeta}_n\), so the first formula
defining \(r_n\) applies.  Since \({\rho_n}\to0\) and
\(c_n\to1/\sqrt2\), that formula gives the stated limit for
\(r_n(\alpha)\); the limit for
\(\lambda_n(\alpha)=r_n(\alpha)/(1+r_n(\alpha))\) follows.  The
convergence is not uniform near
\(\tfrac{\pi}{2}\): every finite-dimensional profile reaches \(1\) at \(\widehat{\zeta}_n\)
and remains there, whereas
\(\lambda_\infty(\alpha)\to1/2\) as
\(\alpha\uparrow\tfrac{\pi}{2}\).  This nonuniform behavior near \(\tfrac{\pi}{2}\) is visible in
\Cref{fig:canonical-gain-profiles}.

\begin{figure}[htbp]
\centering
\begin{tikzpicture}
\begin{axis}[
  width=0.91\textwidth,
  height=6.4cm,
  xmin=0,
  xmax=1.5707963268,
  ymin=-0.015,
  ymax=1.04,
  xlabel={radial angle \(\alpha\)},
  ylabel={chord fraction \(\lambda_n(\alpha)\)},
  xtick={0,0.5235987756,1.0471975512,1.5707963268},
  xticklabels={\(0\),\(\tfrac{\pi}{6}\),\(\tfrac{\pi}{3}\),\(\tfrac{\pi}{2}\)},
  ytick={0,0.25,0.5,0.75,1},
  grid=major,
  grid style={black!12},
  axis line style={black!65},
  tick style={black!65},
  legend style={
    at={(0.02,0.98)},
    anchor=north west,
    draw=black!25,
    fill=white,
    fill opacity=0.94,
    text opacity=1,
    legend columns=3,
    font=\footnotesize
  }
]
\addplot[blue!70!black,very thick]
 table[x=alpha,y=lambda]{figures/canonical-gain-n1.dat};
\addlegendentry{\(n=1\)}
\addplot[orange!90!black,very thick]
 table[x=alpha,y=lambda]{figures/canonical-gain-n2.dat};
\addlegendentry{\(n=2\)}
\addplot[green!45!black,very thick]
 table[x=alpha,y=lambda]{figures/canonical-gain-n3.dat};
\addlegendentry{\(n=3\)}
\addplot[magenta!70!black,very thick]
 table[x=alpha,y=lambda]{figures/canonical-gain-n10.dat};
\addlegendentry{\(n=10\)}
\addplot[black,thick,dashed]
 table[x=alpha,y=lambda]{figures/canonical-gain-limit.dat};
\addlegendentry{fixed-\(\alpha\) limit}
\addplot[only marks,mark=o,mark size=2.2pt,black,fill=white]
 coordinates {(1.5707963268,0.5)};
\end{axis}
\end{tikzpicture}
\caption{The canonical gain represented by its bounded Euclidean chord
fraction: \(\lambda_n(\alpha)=r_n(\alpha)/(1+r_n(\alpha))\) for
\(0\leq\alpha<\widehat{\zeta}_n\), and \(\lambda_n(\alpha)=1\) for
\(\widehat{\zeta}_n\leq\alpha\leq\tfrac{\pi}{2}\).  Each finite-dimensional
curve reaches \(1\) at \(\widehat{\zeta}_n\) and remains equal to \(1\)
throughout the identity band.  The dashed curve is the fixed-\(\alpha\) limit as
\(n\to\infty\); the open circle records its limiting value
\(1/2\) as \(\alpha\uparrow\tfrac{\pi}{2}\).}
\label{fig:canonical-gain-profiles}
\end{figure}
\FloatBarrier

\acEstimatesHeading

The organization of this section follows the admissible \((Q,R)\)-region
described in \Cref{sec:proof-strategy,fig:qr-distortion-region}.

\Cref{lem:ac-crossed-schedule} proves that
\[
 D_n^\times(\alpha,\beta)\leq\widehat{\zeta}_n
 \quad\Longrightarrow\quad
 \vartheta_n(\alpha)+\vartheta_n(\beta)
 \leq D_n^\times(\alpha,\beta).
\]
For pairs with a common anchor,
\Cref{lem:ac-common-anchor-pointwise} places each target in a single
isometric copy of \(\Sp^n\), with a pointwise distance bound proved by the
one-variable calculation in \Cref{lem:ac-common-anchor-scalar};
\Cref{lem:ac-same-anchor} then deduces the distortion estimate by the
triangle inequality.  For distinct anchors, the middle range
\(-\rho_n\leq Q\leq\rho_n\) requires no additional estimate: its source
distance lies in
\([\widehat{\zeta}_n,\zeta_n]\), so every target distance in \([0,\pi]\)
has defect at most \(\zeta_n\).
\Cref{lem:ac-near} handles \(Q\geq\rho_n\), while
\Cref{prop:ac-cap-dimension-reduction} reduces the cap optimization needed for
the remaining range to one variable and \Cref{prop:ac-far-affine} handles
\(Q\leq-\rho_n\).

\subsection{Chord geometry and radial compatibility}
\label{sec:ac-radial-compatibility}

For \(0\leq\alpha<\widehat{\zeta}_n\), the normalized-chord construction
is described by the Euclidean gain \(r_n(\alpha)\) and the maximal
geodesic displacement \(\vartheta_n(\alpha)\).  If \(x\in M_i\)
satisfies \(v_i\cdot x=\rho_n\), then
\(
 d_n\bigl(v_i,H_{\alpha,i}(x)\bigr)=\vartheta_n(\alpha),
\)
and \Cref{lem:ac-chord-geometry,fig:chord-triangle-geometry} give the
conversion between \(r_n(\alpha)\) and
\(\vartheta_n(\alpha)\).  At
\(\alpha=\widehat{\zeta}_n\), the endpoint convention gives
\(H_{\widehat{\zeta}_n,i}(x)=x\), and the corresponding displacement is
\(\vartheta_n(\widehat{\zeta}_n)=\widehat{\zeta}_n\).
The maximality assertion is \Cref{prop:ac-maximal-displacement}.  The
purpose of this subsection is to prove the two-level inequality in
\Cref{lem:ac-crossed-schedule}.

\begin{lemma}[Two-level radial inequality]
\label{lem:ac-crossed-schedule}
Let \(0\leq\alpha,\beta\leq\widehat{\zeta}_n\).  Recall from
\Cref{prop:ac-critical-involution} that
\[
 D_n^\times(\alpha,\beta)
 =
 \arccos\!\left(
 \cos\alpha\cos\beta-\rho_n\sin\alpha\sin\beta
 \right).
\]
In particular,
\(D_n(\alpha)=D_n^\times(\alpha,\alpha)\). Whenever \(D_n^\times(\alpha,\beta)\leq \widehat{\zeta}_n\), one has
\[
 \vartheta_n(\alpha)+\vartheta_n(\beta)
 \leq D_n^\times(\alpha,\beta).                         \acEquationTag{eq:ac-14}
\]
\end{lemma}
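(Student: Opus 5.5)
By \eqref{eq:critical-involution-sublevel}, the hypothesis \(D_n^\times(\alpha,\beta)\leq\widehat{\zeta}_n\) is equivalent to \(\beta\leq F_n(\alpha)\), and symmetrically to \(\alpha\leq F_n(\beta)\). Since \(F_n\) is a decreasing involution with fixed point \(\alpha_n^\ast\), at most one of \(\alpha,\beta\) exceeds \(\alpha_n^\ast\). I would therefore split into two cases.

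\emph{Case 1: \(\alpha,\beta\leq\alpha_n^\ast\).} Here \(\vartheta_n=\Theta_n\), so the claim is \(\Theta_n(\alpha)+\Theta_n(\beta)\leq D_n^\times(\alpha,\beta)\). This has a geometric reading. Take distinct \(i,j\) and the source points \(P:=p_\alpha(-v_j)\) and \(Q':=p_\beta(-v_i)\), which are \(D_n^\times(\alpha,\beta)\) apart. The midpoint-type comparison suggests the following route. By concavity of \(\Theta_n\) with \(\Theta_n(0)=0\), one has \(\Theta_n(\alpha)+\Theta_n(\beta)\leq 2\Theta_n\bigl(\tfrac{\alpha+\beta}{2}\bigr)=D_n\bigl(\tfrac{\alpha+\beta}{2}\bigr)\). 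It then suffices to show
\[
D_n^\times\bigl(\tfrac{\alpha+\beta}{2},\tfrac{\alpha+\beta}{2}\bigr)\leq D_n^\times(\alpha,\beta),
\]
that is, \(G_n(m,m)\geq G_n(\alpha,\beta)\) with \(m=\tfrac{\alpha+\beta}{2}\). Writing \(\alpha=m+h\) and \(\beta=m-h\),
\[
\cos\alpha\cos\beta=\tfrac12\bigl(\cos 2m+\cos 2h\bigr),\qquad \sin\alpha\sin\beta=\tfrac12\bigl(\cos 2h-\cos 2m\bigr).
\]
Hence \(G_n(\alpha,\beta)=\tfrac{1+\rho_n}{2}\cos2m+\tfrac{1-\rho_n}{2}\cos 2h\). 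This is maximized in \(h\) at \(h=0\), which gives the wrong direction. So the midpoint reduction fails: equal levels minimize the distance for fixed \(\alpha+\beta\).

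I would instead fix \(\alpha\) and view both sides as functions of \(\beta\in[0,\beta_{\max}]\), where \(\beta_{\max}=\min(\alpha_n^\ast,F_n(\alpha))\). At \(\beta=0\), the claim reads \(\Theta_n(\alpha)\leq\alpha=D_n^\times(\alpha,0)\), which holds by the \(1\)-Lipschitz property. I would then compare derivatives in \(\beta\). First, \(\Theta_n'(\beta)=c_n\cos\beta/\cos\Theta_n(\beta)\). Second,
\[
\partial_\beta D_n^\times=\frac{\cos\alpha\sin\beta+\rho_n\sin\alpha\cos\beta}{\sin D_n^\times}.
\]
The goal is to show that \(\partial_\beta D_n^\times-\Theta_n'(\beta)\) changes sign at most once, or else to establish the inequality at the endpoint \(\beta=\beta_{\max}\) together with a convexity-type argument. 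The endpoint is the diagonal \(\beta=\alpha\), where equality holds since \(D_n(\alpha)=2\Theta_n(\alpha)\). Since the gap is \(\geq0\) at \(\beta=0\) and \(=0\) at \(\beta=\alpha\), a single-crossing property of the derivative difference would close the interval \([0,\alpha]\). For \(\beta\geq\alpha\), I would use the symmetry of the statement in \((\alpha,\beta)\). This reduces Case 1 to \(\beta\leq\alpha\).

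\emph{Case 2: \(\alpha>\alpha_n^\ast\geq\beta\), so \(\beta\leq F_n(\alpha)=:\widetilde\alpha\leq\alpha_n^\ast\).} By \eqref{eq:ac-6}, \(\vartheta_n(\alpha)=\widehat{\zeta}_n-\Theta_n(\widetilde\alpha)\), and the claim becomes
\[
\Theta_n(\beta)-\Theta_n(\widetilde\alpha)\leq D_n^\times(\alpha,\beta)-\widehat{\zeta}_n.
\]
At \(\beta=\widetilde\alpha\), both sides vanish, because \(D_n^\times(\alpha,F_n(\alpha))=\widehat{\zeta}_n\). So it suffices to show that, as \(\beta\) decreases from \(\widetilde\alpha\), the right side decreases at least as fast as the left. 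This amounts to
\[
\partial_\beta D_n^\times(\alpha,\beta)\geq\Theta_n'(\beta)\quad\text{for }\beta\leq\widetilde\alpha.
\]
Again this is a one-variable trigonometric inequality. Using \(\cos\Theta_n(\beta)=\sqrt{1-c_n^2\sin^2\beta}\) and squaring, I expect it to reduce to a polynomial inequality in \(\sin\beta,\cos\beta\) that can be checked on the range where \(G_n(\alpha,\beta)\geq\rho_n\).

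\textbf{Main obstacle.} I expect the main obstacle to be these derivative comparisons, especially Case 1 away from the diagonal. Squaring introduces sign issues, and the region constraint \(G_n\geq\rho_n\) must be used. If the direct calculation becomes unwieldy, I would try a fallback based on the spherical triangle \(P\), \(N\), \(Q'\). The angle at \(N\) between the meridians through \(-v_j\) and \(-v_i\) is \(\widehat{\zeta}_n\). The aim would be to realize \(\Theta_n(\alpha)\) as a distance from \(P\) to a suitable point on the bisecting meridian, and then apply the triangle inequality.
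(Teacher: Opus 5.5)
Your Case 2 follows the paper's mixed-level strategy: anchor at $\beta=\widetilde\alpha:=F_n(\alpha)$, where $D_n^\times(\alpha,\widetilde\alpha)=\widehat{\zeta}_n$, and compare derivatives in the low variable. However, the derivative inequality you set out to prove points the wrong way. Both sides of $\Theta_n(\beta)-\Theta_n(\widetilde\alpha)\leq D_n^\times(\alpha,\beta)-\widehat{\zeta}_n$ vanish at $\beta=\widetilde\alpha$. So the inequality for $\beta<\widetilde\alpha$ requires $\beta\mapsto D_n^\times(\alpha,\beta)-\Theta_n(\beta)$ to be nonincreasing, i.e.\ $\partial_\beta D_n^\times(\alpha,\beta)\leq\Theta_n'(\beta)$: the left side must drop faster than the right as $\beta$ decreases.

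The inequality $\partial_\beta D_n^\times\geq\Theta_n'$ that you propose to check is false. For $n=2$ and $\alpha=1$ one has $\widetilde\alpha\approx0.51$, $\partial_\beta D_n^\times(\alpha,\widetilde\alpha)\approx0.54$ and $\Theta_n'(\widetilde\alpha)\approx0.78$. Even if it held, it would give the opposite conclusion. With the correct direction, both numerators are positive, so you may square, and the difference of squares factors exactly:
\[
 c_n^2\cos^2\beta\,\sin^2D_n^\times(\alpha,\beta)
 -\bigl(\cos\alpha\sin\beta+\rho_n\sin\alpha\cos\beta\bigr)^2\cos^2\Theta_n(\beta)
 =\frac{(1-\rho_n)(\tan\alpha-\tan\beta)\bigl(\tan\beta+(1+2\rho_n)\tan\alpha\bigr)}
 {2(1+\tan^2\alpha)(1+\tan^2\beta)}.
\]
This is nonnegative simply because $\beta<\alpha$; the constraint $G_n\geq\rho_n$ plays no role. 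This is how the paper handles the mixed case.

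In Case 1 you discarded a correct argument because of a sign misreading. Your identity $G_n(\alpha,\beta)=\tfrac{1+\rho_n}{2}\cos2m+\tfrac{1-\rho_n}{2}\cos2h\leq G_n(m,m)$ says that $D_n^\times(\alpha,\beta)\geq D_n^\times(m,m)$. ``Equal levels minimize the distance'' is exactly the inequality you needed. Hence, by Jensen for the concave $\Theta_n$,
\[
 \Theta_n(\alpha)+\Theta_n(\beta)\leq2\Theta_n(m)=D_n(m)=D_n^\times(m,m)\leq D_n^\times(\alpha,\beta).
\]
This holds for all $\alpha,\beta\in[0,\widehat{\zeta}_n]$, even without the hypothesis. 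It is a clean alternative to the paper's low--low argument, which applies Cauchy--Schwarz to $(\cos\alpha,s_n\sin\alpha)$ and $(\cos\beta,s_n\sin\beta)$, whose norms are $\cos\Theta_n(\alpha)$ and $\cos\Theta_n(\beta)$.

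The replacement route you adopted, a single-crossing property of $\partial_\beta D_n^\times-\Theta_n'$ on $[0,\alpha]$, is left unproved and is unnecessary. As written, then, neither case is complete. Restore the midpoint argument in Case 1, and reverse and verify the derivative inequality in Case 2; with those two changes the proof goes through.
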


\begin{proof}

By symmetry assume \(\alpha\leq\beta\).  By
\eqref{eq:critical-involution-sublevel}, the hypothesis is equivalent
to \(\beta\leq F_n(\alpha)\).

\smallskip
\noindent\emph{Low--low levels.}
Suppose first that
\(\beta\leq\alpha_n^\ast\).  From
\(c_n^2+s_n^2=1\) and the identities
\(
 \sin\Theta_n(\alpha)=c_n\sin\alpha,\qquad
 \sin\Theta_n(\beta)=c_n\sin\beta,
\)
one obtains
\[
\begin{aligned}
 \bigl\|(\cos\alpha,s_n\sin\alpha)\bigr\|^2
 &=\cos^2\alpha+s_n^2\sin^2\alpha\\
 &=1-c_n^2\sin^2\alpha
 =\cos^2\Theta_n(\alpha),\\
 \bigl\|(\cos\beta,s_n\sin\beta)\bigr\|^2
 &=\cos^2\beta+s_n^2\sin^2\beta\\
 &=1-c_n^2\sin^2\beta
 =\cos^2\Theta_n(\beta).
\end{aligned}
\]

Since
\[
 0\leq\Theta_n(\alpha),\Theta_n(\beta)
 \leq\Theta_n(\alpha_n^\ast)
 =\frac{\widehat{\zeta}_n}{2}<\frac{\pi}{2},
\]
the two norms above are
\(\cos\Theta_n(\alpha)\) and \(\cos\Theta_n(\beta)\), respectively.
Cauchy--Schwarz
therefore gives
\[
 \cos\alpha\cos\beta+s_n^2\sin\alpha\sin\beta
 \leq
 \cos\Theta_n(\alpha)\cos\Theta_n(\beta).
\]
Using the two displayed sine identities and
\(s_n^2-c_n^2=-\rho_n\), we obtain
\[
 \begin{aligned}
 \cos\bigl(\Theta_n(\alpha)+\Theta_n(\beta)\bigr)
 &=\cos\Theta_n(\alpha)\cos\Theta_n(\beta)
   -c_n^2\sin\alpha\sin\beta\\
 &\geq\cos\alpha\cos\beta
   +(s_n^2-c_n^2)\sin\alpha\sin\beta\\
 &=\cos\alpha\cos\beta-\rho_n\sin\alpha\sin\beta\\
 &=\cos D_n^\times(\alpha,\beta).
 \end{aligned}
\]
Both \(\Theta_n(\alpha)+\Theta_n(\beta)\) and
\(D_n^\times(\alpha,\beta)\) belong to \([0,\pi]\).  Since cosine is
decreasing on this interval,
\[
 \vartheta_n(\alpha)+\vartheta_n(\beta)
 =\Theta_n(\alpha)+\Theta_n(\beta)
 \leq D_n^\times(\alpha,\beta),
\]
which proves \eqref{eq:ac-14} in the low--low case.

\smallskip
\noindent\emph{Mixed levels.}
Suppose next that \(\alpha<\alpha_n^\ast<\beta\), and put
\(\widetilde{\beta}:=F_n(\beta)\).  Since
\(\beta\leq F_n(\alpha)\), the properties in
\Cref{prop:ac-critical-involution} give
\(
 \widetilde{\beta}=F_n(\beta)
 \geq F_n(F_n(\alpha))=\alpha.
\)
Moreover, \(\beta>\alpha_n^\ast\) and
\Cref{prop:ac-critical-involution} gives
\(\widetilde{\beta}<\alpha_n^\ast\).  Thus
\(\alpha\leq\widetilde{\beta}<\alpha_n^\ast\).  The defining relation for
\(F_n\) and the second branch of \eqref{eq:ac-5} now give
\[
 D_n^\times(\widetilde{\beta},\beta)=\widehat{\zeta}_n,
 \qquad
 \vartheta_n(\beta)
 =\widehat{\zeta}_n-\Theta_n(\widetilde{\beta}).
\]

If \(\alpha=\widetilde{\beta}\), then
\[
 D_n^\times(\alpha,\beta)=\widehat{\zeta}_n,\qquad
 \vartheta_n(\alpha)+\vartheta_n(\beta)
 =\Theta_n(\alpha)+\widehat{\zeta}_n-\Theta_n(\widetilde{\beta})
 =\widehat{\zeta}_n,
\]
so \eqref{eq:ac-14} is an equality.  Suppose henceforth that
\(\alpha<\widetilde{\beta}\).  For
\(\tau\in[\alpha,\widetilde{\beta}]\), direct
differentiation gives
\[
 \partial_\tau D_n^\times(\tau,\beta)
 =\frac{\sin\tau\cos\beta+\rho_n\cos\tau\sin\beta}
 {\sin D_n^\times(\tau,\beta)}.
\]

Here \(0\leq\tau\leq\widetilde{\beta}<\beta<\tfrac{\pi}{2}\).  The quantity inside the
arccosine defining \(D_n^\times(\tau,\beta)\) satisfies
\[
 -1<-\rho_n
 \leq\cos\tau\cos\beta-\rho_n\sin\tau\sin\beta
 \leq\cos(\beta-\tau)<1.
\]
The last strict inequality follows from \(0<\beta-\tau<\tfrac{\pi}{2}\).
Thus \(0<D_n^\times(\tau,\beta)<\pi\), and consequently
\(
 \sin D_n^\times(\tau,\beta)>0,\qquad
 \cos\Theta_n(\tau)>0.
\)
The numerators of
\[
 \partial_\tau D_n^\times(\tau,\beta)
 \quad\text{and}\quad
 \Theta_n'(\tau)=\frac{c_n\cos\tau}{\cos\Theta_n(\tau)}
\]
are positive.  Cross multiplication and squaring therefore preserve
the desired inequality.  The resulting difference of squares is
\[
\begin{aligned}
&c_n^2\cos^2\tau\,
 \sin^2D_n^\times(\tau,\beta)-
 \bigl(\sin\tau\cos\beta+\rho_n\cos\tau\sin\beta\bigr)^2
 \cos^2\Theta_n(\tau)\\
&=
\frac{(1-\rho_n)(\tan\beta-\tan\tau)
\bigl(\tan\tau+(1+2\rho_n)\tan\beta\bigr)}
{2(1+\tan^2\tau)(1+\tan^2\beta)}
\geq0.
\end{aligned}
\]

The final inequality follows from \(0<\rho_n<1\) and
\(0\leq\tau<\beta<\tfrac{\pi}{2}\): both tangent factors in the numerator are
positive, as is the denominator.  Hence
 \(
 \partial_\tau D_n^\times(\tau,\beta)\leq\Theta_n'(\tau)
 \qquad(\alpha\leq\tau\leq\widetilde{\beta}).
 \)
After integration,
\[
 \widehat{\zeta}_n-D_n^\times(\alpha,\beta)
 =D_n^\times(\widetilde{\beta},\beta)-D_n^\times(\alpha,\beta)
 \leq\Theta_n(\widetilde{\beta})-\Theta_n(\alpha).
\]
Since
\[
 \vartheta_n(\alpha)=\Theta_n(\alpha),\qquad
 \vartheta_n(\beta)
 =\widehat{\zeta}_n-\Theta_n(\widetilde{\beta}),
\]
this inequality is equivalent to \eqref{eq:ac-14}.

\smallskip
\noindent\emph{The fixed-point case.}
If \(\alpha,\beta\geq\alpha_n^\ast\), the condition
\(\beta\leq F_n(\alpha)\) forces
\(\alpha=\beta=\alpha_n^\ast\); at this single point
\eqref{eq:ac-14} is an equality.

\end{proof}

\subsection{Pairs with a common anchor}
\label{sec:ac-common-anchor}

The common anchor permits a direct comparison between the source and
target geometries.  For each \(i\), define the linear map
\phantomsection\label{def:common-anchor-embedding}

\[
 \iota_{v_i}\colon W_{n+2}\longrightarrow W_{n+2}\oplus\R e_0,
 \qquad
 \iota_{v_i}(z)
 :=(z\cdot v_i)(c_ne_0+s_nv_i)+z-(z\cdot v_i)v_i.
\]
Equivalently, every \(z\in W_{n+2}\) has the unique orthogonal
decomposition
\[
 z=a v_i+w,\qquad
 a=z\cdot v_i,\qquad
 w=z-(z\cdot v_i)v_i\in W_{n+2}\cap v_i^\perp,
\]
and in these coordinates
\(
 \iota_{v_i}(a v_i+w)=a(c_ne_0+s_nv_i)+w.
\)

\smallskip
\noindent
\emph{Geometric meaning of the embedding.}
The formula shows that \(\iota_{v_i}\) fixes
\(W_{n+2}\cap v_i^\perp\) pointwise and sends \(v_i\) to
\[
 c_ne_0+s_nv_i
 =p_{\widehat{\zeta}_n/2}(v_i),
\]
the midpoint of the minimizing geodesic from
\(N=p_0(v_i)\) to \(p_{\widehat{\zeta}_n}(v_i)\).
Thus the half-angle constants in \(\iota_{v_i}\) center the comparison
on this source geodesic and make the scale
\(\widehat{\zeta}_n/2\) in \eqref{eq:ac-18} geometrically natural.
\par
\Cref{lem:ac-common-anchor-pointwise} proves that this map restricts to
an isometric embedding and satisfies the pointwise estimate used in
the same-anchor comparison.

\begin{proposition}[Pointwise comparison under the common-anchor embedding]
\label{lem:ac-common-anchor-pointwise}
Assume \(n\geq2\), and fix \(i\).  The linear map \(\iota_{v_i}\) defined
above restricts to an isometric embedding
\(\Sp^n\to\Sp^{n+1}\), and, for every \(x\in M_i\) and
\(0\leq\alpha\leq\widehat{\zeta}_n\),
\[
 d_{n+1}\bigl(p_\alpha(x),\iota_{v_i}H_{\alpha,i}(x)\bigr)
 \leq\frac{\widehat{\zeta}_n}{2}.                                      \acEquationTag{eq:ac-18}
\]
\end{proposition}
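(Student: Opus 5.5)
The plan has two parts. First I show that \(\iota_{v_i}\) is a linear isometry. Then I reduce \eqref{eq:ac-18} to a two-parameter scalar inequality in the radial level \(\alpha\) and the Voronoi coordinate \(a:=v_i\cdot x\).

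\emph{Isometry.} By construction \(\iota_{v_i}\) is the identity on \(W_{n+2}\cap v_i^\perp\) and sends \(v_i\) to \(c_ne_0+s_nv_i\). The image of \(v_i\) is a unit vector, because \(c_n^2+s_n^2=1\). It is also orthogonal to \(W_{n+2}\cap v_i^\perp\), because \(e_0\perp W_{n+2}\). Hence \(\iota_{v_i}\) carries an orthonormal basis to an orthonormal set, so it is a linear isometry into \(W_{n+2}\oplus\R e_0\). In particular it restricts to an isometric embedding \(\Sp(W_{n+2})\to\Sp^{n+1}\) for the geodesic metrics, since those are \(\arccos\) of inner products.

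\emph{Reduction to a scalar inequality.} Fix \(x\in M_i\) and decompose \(x=av_i+w\) with \(w\perp v_i\) and \(\lVert w\rVert^2=1-a^2\). By \Cref{lem:ac-simplex-cell}, \(\rho_n\leq a\leq1\). For \(0\leq\alpha<\widehat{\zeta}_n\) put \(r:=r_n(\alpha)\). Then
\[
 H_{\alpha,i}(x)=\frac{(1+ra)v_i+rw}{S},\qquad S:=\sqrt{1+2ra+r^2},
\]
\[
 \iota_{v_i}H_{\alpha,i}(x)=\frac{(1+ra)(c_ne_0+s_nv_i)+rw}{S}.
\]
Pairing this with \(p_\alpha(x)=\sin\alpha\,(av_i+w)+\cos\alpha\,e_0\) gives
\[
 \Phi(a,\alpha):=p_\alpha(x)\cdot\iota_{v_i}H_{\alpha,i}(x)
 =\frac{(1+ra)(c_n\cos\alpha+s_na\sin\alpha)+r(1-a^2)\sin\alpha}{\sqrt{1+2ra+r^2}}.
\]
Thus \eqref{eq:ac-18} is equivalent to \(\Phi(a,\alpha)\geq c_n=\cos(\widehat{\zeta}_n/2)\) for all \(a\in[\rho_n,1]\) and \(\alpha\in[0,\widehat{\zeta}_n]\). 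The endpoint \(\alpha=\widehat{\zeta}_n\) follows by continuity, using \Cref{lem:ac-endpoint-continuity}, or directly with \(H=x\).

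\emph{Easy checks.}
\begin{itemize}[nosep]
 \item At \(a=1\) we have \(x=v_i\) and \(H=v_i\). The distance is then \(|\alpha-\widehat{\zeta}_n/2|\leq\widehat{\zeta}_n/2\).
 \item At \(\alpha=0\) we have \(r=0\) and \(\Phi=c_n\), which is an equality.
 \item At \(a=\rho_n\), that is \(x=-v_j\), the target lies on the geodesic from \(v_i\) toward \(-v_j\) at distance \(\vartheta_n(\alpha)\leq\alpha\) from \(v_i\) (\Cref{prop:ac-maximal-displacement}). I would write \(r=\sin\vartheta/\sin(\widehat{\zeta}_n-\vartheta)\) using \Cref{lem:ac-chord-geometry}. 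The inequality then becomes a trigonometric inequality in \(\alpha\) and \(\vartheta=\vartheta_n(\alpha)\). I would prove it separately on the two branches of \eqref{eq:ac-5}, using \(\sin\Theta_n=c_n\sin\alpha\) on the low branch and the involution \(F_n\) on the high branch.
\end{itemize}

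\emph{Main obstacle.} The hard step is the intermediate range \(\rho_n<a<1\) for general \(\alpha\). This is the one-variable calculation the paper defers to \Cref{lem:ac-common-anchor-scalar}. My first attempt would fix \(r\), regarding \(r\) as a free parameter in \([0,\infty)\) tied to \(\alpha\) only through \(\vartheta_n(\alpha)\leq\alpha\). I would then study \(a\mapsto\Phi\) and show it is quasi-concave, or that \(\Phi-c_n\) has a sign-definite numerator after clearing \(S\). That would push the minimum to the endpoints \(a\in\{\rho_n,1\}\) already handled. Concretely, I would multiply through by \(S\) and square, checking first that both sides are positive. The result is a polynomial inequality in \(a\) of degree at most four, which I would factor, expecting a factor \((a-\rho_n)\) or \((1-a)\).

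The hypothesis \(n\geq2\) presumably enters here. It should control the sign of the leading coefficient, since \(\rho_n\leq1/3\) makes \(s_n\) large enough, so I would track where \(\rho_n\leq1/3\) is used. If monotonicity in \(a\) fails, my fallback is to minimize jointly: first in \(\alpha\) for fixed \(r\), where \(\Phi\) is a sinusoid \(A\cos\alpha+B\sin\alpha\) in \(\alpha\), then in \(a\).
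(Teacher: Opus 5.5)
Your outline follows the paper's route: the same isometry check, the same reduction of \eqref{eq:ac-18} to $\Phi(a,\alpha)\geq c_n$ on $[\rho_n,1]$ (your $\Phi$ is exactly the paper's $E_\alpha(a)$), and the same plan of excluding interior minima in $a$ so that only $a\in\{\rho_n,1\}$ matter. But both substantive steps are left as intentions, and you have placed the difficulty in the wrong step.

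The interior step is short. It needs neither $n\geq2$ nor any link between $r$ and $\alpha$. The derivative $\partial_a\Phi$ has the sign of the quadratic $\Gamma(a)=g_0+g_1a-3r^2\sin\alpha(1-s_n)a^2$, where $g_0=r^3c_n\cos\alpha+\sin\alpha\bigl(s_n-r^2(1-s_n)\bigr)$. If $g_0\geq0$, the roots have nonpositive product, so on $a>0$ the sign can only pass from $+$ to $-$. If $g_0<0$, then $s_n<r^2(1-s_n)$, which forces $g_1<0$ as well, so $\Phi$ is decreasing. Your squaring-and-factoring plan would not produce the factor you expect. At $a=1$ the cleared, squared difference equals $(1+r)^2\bigl(\cos^2(\alpha-\widehat{\zeta}_n/2)-c_n^2\bigr)$, which is strictly positive for $0<\alpha<\widehat{\zeta}_n$. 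Nor does $n\geq2$ control a leading coefficient there: the numerator's $a^2$-coefficient $-r\sin\alpha(1-s_n)$ is negative for every $n$.

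The genuine gap is the endpoint $a=\rho_n$, which you list as an easy check but do not carry out; the content of the proposition lies here. At this endpoint
\[
\Phi=(c_n\cos\alpha+s_n\rho_n\sin\alpha)\cos\theta+\sin\widehat{\zeta}_n\sin\alpha\sin\theta,\qquad \theta=\vartheta_n(\alpha),
\]
and the bound needs a \emph{lower} control of $\theta$. The constraint $\vartheta_n(\alpha)\leq\alpha$ does not suffice: $\theta=0$ with $\alpha=\widehat{\zeta}_n$ gives $c_n\rho_n(2-\rho_n)<c_n$. The low branch is quick, using $\sin\theta=c_n\sin\alpha$, $\cos\theta\geq\max\{c_n,\cos\alpha\}$, and $1-\sin\widehat{\zeta}_n\leq\rho_n^2\leq s_n\rho_n$.

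For the high branch the paper does not use $F_n$. It uses only $\theta\in[\widehat{\zeta}_n/2,\widehat{\zeta}_n]$, treated independently of $\alpha$. The expression is a positive combination of $\cos\theta$ and $\sin\theta$, hence concave in $\theta$, so it reduces to $\theta\in\{\widehat{\zeta}_n/2,\widehat{\zeta}_n\}$. The same concavity in $\alpha$ then reduces to the four corners $\alpha\in\{\alpha_n^\ast,\widehat{\zeta}_n\}$. Those corner values require the inequalities of \Cref{lem:ac-half-angle-inequalities}, for instance $s_n(1+\sqrt{\rho_n})\geq c_n$ and $\rho_n\sqrt{\rho_n}+(1-\rho_n)\sin\widehat{\zeta}_n\geq c_n$. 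Both fail at $\rho_n=\tfrac12$, and that is where $n\geq2$ actually enters. Until this endpoint analysis, or a substitute for it, is supplied, \eqref{eq:ac-18} is not proved.
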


\begin{figure}[!ht]
    \centering
    \resizebox{1.00\textwidth}{!}{\colorlet{anchorone}{blue!70!black}
\colorlet{anchortwo}{orange!85!black}
\colorlet{construction}{teal!70!black}
\colorlet{resultcolor}{purple!70!black}
\colorlet{auxiliary}{black!35}
\colorlet{outline}{black!70}

\tikzset{
  >={Latex[length=2mm]},
  line cap=round,
  line join=round,
  every node/.style={font=\small},
  point/.style={circle,fill,inner sep=0pt,minimum size=4.6pt},
  comparison/.style={densely dashed,black!55,line width=.8pt},
  proofbox/.style={
    draw=black!28,
    rounded corners=2pt,
    fill=black!2,
    inner sep=6pt,
    align=center,
    text width=4.15cm
  },
  endpointbox/.style={
    draw=construction!55!black,
    rounded corners=2pt,
    fill=construction!6,
    inner sep=6pt,
    align=center,
    text width=4.25cm
  },
  implication/.style={-{Latex[length=2mm]},black!55,semithick}
}

\begin{tikzpicture}[every node/.style={font=\small}]

\begin{scope}[xshift=-4.65cm]
  \draw[outline,semithick] (0,0) circle[radius=2.35cm];
  \draw[construction,very thick,rotate=18]
    (0,0) ellipse[x radius=2.10cm,y radius=.66cm];

\draw[auxiliary,densely dashed]
  (2.25,0) arc[start angle=0,end angle=180,
               x radius=2.25,y radius=0.5];

\draw[auxiliary]
  (-2.25,0) arc[start angle=180,end angle=360,
                x radius=2.25,y radius=0.5];

  \coordinate (P) at (-.95,1.34);
  \coordinate (Hp) at (-.55,.50);
  \coordinate (Iv) at (1.80,.82);

  \fill[anchorone] (P) circle[radius=2.4pt]
    node[above left=2pt,anchor=south east,fill=white,inner sep=1.5pt]
      {\(p_\alpha(x)\)};
  \fill[construction] (Hp) circle[radius=2.4pt]
    node[below left=3pt,align=right,fill=white,inner sep=1.5pt]
      {\(\iota_{v_i}H_{\alpha,i}(x)\)};
  \fill[construction] (Iv) circle[radius=2.1pt]
    node[right=4pt,align=left,fill=white,inner sep=1.5pt]
      {\(\iota_{v_i}(v_i)\)\\[-2pt]
       \(=c_n e_0+s_n v_i\)};

  \draw[comparison]
    (P) to[bend right=12]
    node[midway,right=4pt,fill=white,inner sep=1.5pt]
      {\(\leq\widehat{\zeta}_n/2\)}
    (Hp);

  \node[construction,font=\footnotesize,fill=white,inner sep=1.5pt]
    at (.10,-.82)
    {\(\iota_{v_i}(\mathbb S^n)\subset\mathbb S^{n+1}\)};
  \node[outline,font=\footnotesize] at (0,2.66)
    {\(\mathbb S^{n+1}\)};
  \node[font=\scriptsize,black!58,align=center] at (0,-2.83)
    {schematic projection in
     \(\operatorname{span}\{e_0,v_i,w\}\)};
  \node[font=\footnotesize] at (0,-3.85)
    {\textup{(a)} one pointwise comparison};
\end{scope}

\begin{scope}[xshift=4.55cm]
  \coordinate (A)  at (-2.55,1.40);
  \coordinate (B)  at ( 2.55,1.40);
  \coordinate (Ap) at (-2.00,-1.15);
  \coordinate (Bp) at ( 2.00,-1.15);

\path[fill=construction!7,draw=construction!45,rounded corners=9pt]
    (-2.85,-1.70) rectangle (2.85,-.52);
  \node[construction,font=\footnotesize,anchor=west]
    at (-2.65,-1.52)
    {\(\iota_{v_i}(\mathbb S^n)\)};

  \draw[anchorone,very thick]
    (A) to[bend left=8]
    node[midway,above=4pt]
      {\(d_s=d_{n+1}(A,B)\)}
    (B);
  \draw[construction,very thick]
    (Ap) to[bend right=5]
    node[midway,below=11pt,align=center]
      {\(d_{n+1}(A',B')\)\\[0pt]
       \(=d_n(H_{\alpha,i}(x),H_{\beta,i}(y))=:d_t\)}
    (Bp);

  \draw[comparison]
    (A)--node[midway,left=5pt,align=right]
      {\(d_{n+1}(A,A')\)\\ \(\leq\widehat{\zeta}_n/2\)}
    (Ap);
  \draw[comparison]
    (B)--node[midway,right=5pt,align=left]
      {\(d_{n+1}(B,B')\)\\ \(\leq\widehat{\zeta}_n/2\)}
    (Bp);

  \node[point,fill=anchorone,label={[anchorone]above left:
    \(A=p_\alpha(x)\)}] at (A) {};
  \node[point,fill=anchorone,label={[anchorone]above right:
    \(B=p_\beta(y)\)}] at (B) {};
  \node[point,fill=construction,label={[construction]above left:
    \(A'=\iota_{v_i}H_{\alpha,i}(x)\)}] at (Ap) {};
  \node[point,fill=construction,label={[construction]above right:
    \(B'=\iota_{v_i}H_{\beta,i}(y)\)}] at (Bp) {};

  \node[
    draw=black!25,
    fill=white,
    rounded corners=2pt,
    inner sep=5pt,
    align=center,
    text width=7.1cm
  ] at (0,-3.10)
  {\(
    |d_s-d_t|
    \leq d_{n+1}(A,A')+d_{n+1}(B,B')
    \leq\widehat{\zeta}_n .
   \)};
  \node[font=\footnotesize] at (0,-3.85)
    {\textup{(b)} the metric comparison used in \Cref{lem:ac-same-anchor}};
\end{scope}

\end{tikzpicture}
 }
    \caption{The common-anchor comparison inside the source sphere. Panel
\textup{(a)} shows one source point and its corresponding target after
the isometric embedding \(\iota_{v_i}\); it is only a schematic
projection and does not assert that the displayed objects lie on one
great circle. Panel \textup{(b)} records the abstract four-point metric
comparison. The lower edge is the target distance because
\(\iota_{v_i}\) is an isometry, and the two dashed comparison distances
are at most \(\widehat{\zeta}_n/2\).}
    \label{fig:prop_4_2_fig}
\end{figure}

\paragraph{Analytic verification.}
Its proof
uses the auxiliary half-angle inequalities in
\Cref{lem:ac-half-angle-inequalities} and the one-variable endpoint
reduction in \Cref{lem:ac-common-anchor-scalar}.  Applying the pointwise
comparison to two pairs and using the triangle inequality then gives
the same-anchor distortion bound in \Cref{lem:ac-same-anchor}.

The auxiliary half-angle inequalities used in the analytic verification
are stated and proved in
Appendix~\ref{app:ac-half-angle-inequalities}.

The next lemma proves that the minimum of \(E_\alpha\) on
\([\rho_n,1]\) occurs at \(a=\rho_n\) or \(a=1\).  In the pointwise
common-anchor comparison, the scalar is \(a=v_i\cdot x\) and
\(
 E_\alpha(a)
 =p_\alpha(x)\cdot \iota_{v_i}H_{\alpha,i}(x).
\)
This identity is derived explicitly in the proof of
\Cref{lem:ac-common-anchor-pointwise}.

\begin{lemma}[Endpoint reduction for the comparison inner product]
\label{lem:ac-common-anchor-scalar}
Assume \(n\geq2\) and \(0<\alpha<\widehat{\zeta}_n\).  Put
\(r_\alpha:=r_n(\alpha)\), and, for \(a\in[\rho_n,1]\), define
\[
 E_\alpha(a)
 :=
 \frac{
 c_n\cos\alpha+r_\alpha\sin\alpha
 +(r_\alpha c_n\cos\alpha+s_n\sin\alpha)a
 -r_\alpha\sin\alpha(1-s_n)a^2}
 {\sqrt{1+2r_\alpha a+r_\alpha^2}}.
\]
Then
\[
 \min_{\rho_n\leq a\leq1}E_\alpha(a)
 =\min\{E_\alpha(\rho_n),E_\alpha(1)\}
 \geq c_n.
\]
\end{lemma}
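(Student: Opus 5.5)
The plan is to treat \(E_\alpha\) as a ratio \(E_\alpha=N/\sqrt{P}\), where
\[
 N(a):=A+Ba-Ca^2,\qquad A:=c_n\cos\alpha+r_\alpha\sin\alpha,\quad
 B:=r_\alpha c_n\cos\alpha+s_n\sin\alpha,\quad C:=r_\alpha\sin\alpha(1-s_n),
\]
and \(P(a):=1+2r_\alpha a+r_\alpha^2\). Since \(0<\alpha<\widehat{\zeta}_n\), we have \(r_\alpha>0\) and hence \(C>0\), so \(N\) is a concave quadratic and \(P\) is affine and increasing. I would first show that \(E_\alpha\) has no interior local minimum on \([\rho_n,1]\), and then evaluate the two endpoints geometrically.

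\emph{Shape of \(E_\alpha\).} Differentiating gives
\[
 E_\alpha'(a)=\frac{q(a)}{P(a)^{3/2}},\qquad q(a):=N'(a)P(a)-r_\alpha N(a).
\]
Expanding, the \(a^2\)-coefficient of \(q\) is \(-4r_\alpha C+r_\alpha C=-3r_\alpha C<0\), so \(q\) is a concave quadratic. If \(q(\rho_n)\geq0\), concavity means \(q\) can change sign on \([\rho_n,1]\) at most once, and only from \(+\) to \(-\). Hence \(E_\alpha\) increases and then decreases on \([\rho_n,1]\), and its minimum is \(\min\{E_\alpha(\rho_n),E_\alpha(1)\}\). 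The reduction therefore comes down to the single scalar inequality \(q(\rho_n)\geq0\). If this failed for some \(\alpha\), I would instead try \(q(1)\leq0\) together with a check at \(\rho_n\) in the other direction.

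\emph{Endpoint values.} At \(a=1\) the numerator factors: \(N(1)=(1+r_\alpha)(c_n\cos\alpha+s_n\sin\alpha)\) and \(\sqrt{P(1)}=1+r_\alpha\). Thus
\[
 E_\alpha(1)=\cos\bigl(\alpha-\tfrac{\widehat{\zeta}_n}{2}\bigr)\geq\cos\tfrac{\widehat{\zeta}_n}{2}=c_n,
\]
since \(|\alpha-\widehat{\zeta}_n/2|\leq\widehat{\zeta}_n/2\). This matches the geometric picture \(x=v_i\), \(H_{\alpha,i}(v_i)=v_i\). At \(a=\rho_n\) the point is \(x=-v_j\). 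By \Cref{lem:ac-chord-geometry} we may write \(H_{\alpha,i}(x)=v_i\cos\vartheta+\xi\sin\vartheta\) with \(\vartheta=\vartheta_n(\alpha)\) and \(x=\rho_nv_i+\xi\sin\widehat{\zeta}_n\). Substituting into \(p_\alpha(x)\cdot\iota_{v_i}H_{\alpha,i}(x)\) gives the closed form
\[
 E_\alpha(\rho_n)=c_n\cos\alpha\cos\vartheta+\sin\alpha\bigl(s_n\rho_n\cos\vartheta+\sin\widehat{\zeta}_n\sin\vartheta\bigr).
\]
The claim \(E_\alpha(\rho_n)\geq c_n\) then becomes a two-variable trigonometric inequality in \((\alpha,\vartheta_n(\alpha))\). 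I would handle it separately on the two branches \(\alpha\leq\alpha_n^\ast\) (where \(\sin\vartheta=c_n\sin\alpha\)) and \(\alpha\geq\alpha_n^\ast\) (where \(\vartheta=\widehat{\zeta}_n-\Theta_n(F_n(\alpha))\)), using \(\vartheta_n(\alpha)\leq\alpha\) from \Cref{prop:ac-maximal-displacement} and the appendix half-angle inequalities of \Cref{lem:ac-half-angle-inequalities}.

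\emph{Main obstacle.} I expect the hardest step to be the sign condition \(q(\rho_n)\geq0\), together with the endpoint bound at \(\rho_n\), on the upper branch \(\alpha>\alpha_n^\ast\), where \(r_\alpha\) is defined only implicitly through \(F_n\). The first thing I would try is to rewrite everything in terms of \(\vartheta=\vartheta_n(\alpha)\) via \(r_\alpha=\sin\vartheta/\sin(\widehat{\zeta}_n-\vartheta)\) from \eqref{eq:ac-6}, so that \(P(\rho_n)=\sin^2\widehat{\zeta}_n/\sin^2(\widehat{\zeta}_n-\vartheta)\). I would then clear the denominators and reduce to an inequality that is polynomial in \(\sin\alpha,\cos\alpha,\sin\vartheta,\cos\vartheta\), controlled by \(\Theta_n\)'s concavity and the relation \(D_n^\times(F_n(\alpha),\alpha)=\widehat{\zeta}_n\). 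The assumption \(n\geq2\), i.e. \(\rho_n\leq1/3\), presumably enters exactly at this point to fix the sign.
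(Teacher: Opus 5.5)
Your overall structure matches the paper's. Your \(q=N'P-r_\alpha N\) is exactly the paper's \(\Gamma_\alpha\), with leading coefficient \(-3r_\alpha^2\sin\alpha(1-s_n)<0\). Your endpoint formulas \(E_\alpha(1)=\cos(\alpha-\widehat{\zeta}_n/2)\) and \(E_\alpha(\rho_n)=(c_n\cos\alpha+s_n\rho_n\sin\alpha)\cos\vartheta+\sin\widehat{\zeta}_n\sin\alpha\sin\vartheta\) coincide with \eqref{eq:ac-26}.

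\textbf{The gap.} The step to which you reduce the shape argument is false: \(q(\rho_n)\geq0\) does not hold in general. Take \(n=100\) and the level \(\alpha\in(\alpha_n^\ast,\widehat{\zeta}_n)\) with \(r_n(\alpha)=3\), so \(\cos\alpha\approx0.016\). Then \(q(0)\approx-1.66\) and \(q(\rho_n)\approx-1.81\). In this regime \(E_\alpha\) is strictly decreasing on \([\rho_n,1]\), so the lemma still holds, but your criterion cannot show it.

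Your fallback does not repair this as stated. For a concave quadratic, \(q(\rho_n)\leq0\) and \(q(1)\leq0\) do not exclude a positive bump in between. Such a bump is precisely an interior local minimum of \(E_\alpha\), at the first root of \(q\).

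\textbf{The missing idea.} You need a slope statement, and the paper obtains it by working at \(a=0\) rather than \(a=\rho_n\). If \(q(0)=r_\alpha^3c_n\cos\alpha+\sin\alpha\,(s_n-r_\alpha^2(1-s_n))\geq0\), concavity allows at most one sign change on \([0,\infty)\), from \(+\) to \(-\). If \(q(0)<0\), then necessarily \(r_\alpha^2(1-s_n)>s_n\) and \(r_\alpha c_n\cos\alpha<\sin\alpha\,(1-s_n-s_n/r_\alpha^2)\). This forces \(q'(0)/r_\alpha+\sin\alpha<\sin\alpha\,(2s_n-s_n/r_\alpha^2-2r_\alpha^2(1-s_n))<0\). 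All three coefficients of \(q\) are then negative, so \(E_\alpha\) is decreasing on \([\rho_n,1]\). This analysis is dimension-free, so \(n\geq2\) does not enter where you guessed it would.

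\textbf{The endpoint \(a=\rho_n\) on the upper branch.} This part is left open in your proposal. The route you sketch (rewriting through \(r_\alpha\), clearing denominators, invoking \(D_n^\times(F_n(\alpha),\alpha)=\widehat{\zeta}_n\)) is heavier than needed. The paper never uses the exact relation between \(\vartheta_n(\alpha)\) and \(\alpha\) there; it decouples the two variables:
\begin{enumerate}
\item \(E_\alpha(\rho_n)\) has the form \(K_1\cos\vartheta+K_2\sin\vartheta\) with \(K_1,K_2>0\), so it is concave in \(\vartheta\in[\widehat{\zeta}_n/2,\widehat{\zeta}_n]\). Only \(\vartheta=\widehat{\zeta}_n/2\) and \(\vartheta=\widehat{\zeta}_n\) matter.
\item Each of the two resulting expressions is again a positive combination of \(\cos\alpha\) and \(\sin\alpha\), hence concave on \([\alpha_n^\ast,\widehat{\zeta}_n]\). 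Only \(\alpha=\alpha_n^\ast\) and \(\alpha=\widehat{\zeta}_n\) matter, the latter by continuous extension.
\item The four corner values are handled exactly by the four inequalities of \Cref{lem:ac-half-angle-inequalities}.
\end{enumerate}
That step, not the sign of \(q\), is where \(n\geq2\) is used.
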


The proof is given in
Appendix~\ref{app:ac-common-anchor-endpoint-reduction}.

\paragraph{Completion of the pointwise comparison.}
We now return to \(\iota_{v_i}\) and use the preceding analytic
estimates to prove \Cref{lem:ac-common-anchor-pointwise}.
\begin{proof}[Proof of \Cref{lem:ac-common-anchor-pointwise}]
The orthogonal decomposition
\(
 W_{n+2}=\R v_i\oplus\bigl(W_{n+2}\cap v_i^\perp\bigr)
\)
makes the definition of \(\iota_{v_i}\) unambiguous.  The vector
\(c_ne_0+s_nv_i\) has unit norm and is perpendicular to
\(W_{n+2}\cap v_i^\perp\).  Hence \(\iota_{v_i}\) preserves inner
products and restricts to an isometric embedding
\(
 \Sp(W_{n+2})\longrightarrow\Sp(W_{n+2}\oplus\R e_0).
\)
By the identification in \Cref{sec:anchored-chord-definition}, the
latter sphere is \(\Sp^{n+1}\).

Suppose first that \(0<\alpha<\widehat{\zeta}_n\), and put
\[
 a:=v_i\cdot x,\qquad
 x=av_i+w,\qquad
 w\in W_{n+2}\cap v_i^\perp.
\]
By the cell bound \eqref{eq:ac-10}, \(x\in M_i\) implies
\(a\in[\rho_n,1]\).  Since \(x\) is a unit vector,
\(
 \lVert w\rVert^2=1-a^2.
\)

Put \(r_\alpha:=r_n(\alpha)\).  The orthogonality of \(v_i\) and \(w\)
gives
\[
 \lVert v_i+r_\alpha x\rVert^2
 =(1+r_\alpha a)^2+r_\alpha^2\lVert w\rVert^2
 =1+2r_\alpha a+r_\alpha^2.
\]
It follows from the definitions of \(H_{\alpha,i}\) and \(\iota_{v_i}\)
that
\[
 \iota_{v_i}H_{\alpha,i}(x)
 =
 \frac{(1+r_\alpha a)(c_ne_0+s_nv_i)+r_\alpha w}
 {\sqrt{1+2r_\alpha a+r_\alpha^2}}.
\]
Taking the inner product with
\(
 p_\alpha(x)=e_0\cos\alpha+(av_i+w)\sin\alpha
\)
and using \(\lVert w\rVert^2=1-a^2\) gives
\[
 \begin{aligned}
 p_\alpha(x)\cdot \iota_{v_i}H_{\alpha,i}(x) = 
 \frac{
 c_n\cos\alpha+r_\alpha\sin\alpha
 +(r_\alpha c_n\cos\alpha+s_n\sin\alpha)a
 -r_\alpha\sin\alpha(1-s_n)a^2}
 {\sqrt{1+2r_\alpha a+r_\alpha^2}}
 =E_\alpha(a).
 \end{aligned}
\]
\Cref{lem:ac-common-anchor-scalar} now yields
\[
 p_\alpha(x)\cdot \iota_{v_i}H_{\alpha,i}(x)
 \geq c_n
 =\cos\!\left(\frac{\widehat{\zeta}_n}{2}\right).
\]

Since geodesic distance is the arccosine of the inner product,
\eqref{eq:ac-18} follows for
\(0<\alpha<\widehat{\zeta}_n\).

At \(\alpha=0\), one has \(p_0(x)=e_0\), \(H_{0,i}(x)=v_i\), and
\(
 p_0(x)\cdot \iota_{v_i}H_{0,i}(x)=e_0\cdot(c_ne_0+s_nv_i)=c_n,
\)
so \eqref{eq:ac-18} holds with equality.

At \(\alpha=\widehat{\zeta}_n\), let
\(\tau\uparrow\widehat{\zeta}_n\).  The source points
\(p_\tau(x)\) converge to \(p_{\widehat{\zeta}_n}(x)\), while
\Cref{lem:ac-endpoint-continuity} gives
\(
 H_{\tau,i}(x)\longrightarrow x
 =H_{\widehat{\zeta}_n,i}(x).
\)
Passing to the limit in the already proved pointwise inequality gives
\eqref{eq:ac-18} at the right endpoint and completes the proof on
\([0,\widehat{\zeta}_n]\).
\end{proof}

\begin{corollary}[Same-anchor distortion estimate]
\label{lem:ac-same-anchor}
Assume \(n\geq2\).  Fix \(i\), \(x,y\in M_i\), and
\(0\leq\alpha,\beta\leq \widehat{\zeta}_n\).  Then
\[
\left|
d_{n+1}(p_\alpha(x),p_\beta(y))
-d_n(H_{\alpha,i}(x),H_{\beta,i}(y))
\right|\leq \widehat{\zeta}_n<\zeta_n.                              \acEquationTag{eq:ac-16}
\]
\end{corollary}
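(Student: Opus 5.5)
The estimate follows directly from \Cref{lem:ac-common-anchor-pointwise} together with the triangle inequality in \(\Sp^{n+1}\), as in panel~(b) of \Cref{fig:prop_4_2_fig}. Put
\[
 A:=p_\alpha(x),\quad B:=p_\beta(y),\quad
 A':=\iota_{v_i}H_{\alpha,i}(x),\quad B':=\iota_{v_i}H_{\beta,i}(y).
\]
The anchor index \(i\) is shared, so both targets are carried into \(\Sp^{n+1}\) by the \emph{same} map \(\iota_{v_i}\). Since \(\iota_{v_i}\) restricts to an isometric embedding \(\Sp^n\to\Sp^{n+1}\), we have
\[
 d_{n+1}(A',B')=d_n(H_{\alpha,i}(x),H_{\beta,i}(y)).
\]
This identity is the only point where the common anchor is used. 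With distinct anchors the two embeddings would differ, and no such identification would be available.

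Next, \eqref{eq:ac-18} applies to \((\alpha,x)\) and to \((\beta,y)\) separately, since both radial levels lie in \([0,\widehat{\zeta}_n]\) and \(x,y\in M_i\). It gives \(d_{n+1}(A,A')\leq\widehat{\zeta}_n/2\) and \(d_{n+1}(B,B')\leq\widehat{\zeta}_n/2\). The four-point triangle inequality then yields
\[
 \bigl|d_{n+1}(A,B)-d_{n+1}(A',B')\bigr|
 \leq d_{n+1}(A,A')+d_{n+1}(B,B')\leq\widehat{\zeta}_n ,
\]
which is the first inequality in \eqref{eq:ac-16}. The endpoint radial levels \(\alpha\) or \(\beta\) equal to \(\widehat{\zeta}_n\) need no separate treatment, because \eqref{eq:ac-18} already covers them.

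For the strict inequality \(\widehat{\zeta}_n<\zeta_n\), recall that \(\widehat{\zeta}_n=\pi-\zeta_n\) and \(\cos\zeta_n=-\rho_n<0\). Hence \(\zeta_n>\pi/2>\widehat{\zeta}_n\).

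There is no real obstacle left at this stage. All the analytic difficulty was in the pointwise bound \eqref{eq:ac-18}, which rests on the endpoint reduction of \Cref{lem:ac-common-anchor-scalar}. The one thing to check is that the hypothesis \(n\geq2\) is carried over from \Cref{lem:ac-common-anchor-pointwise}; the corollary assumes it, so this is automatic.
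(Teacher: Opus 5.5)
Your proposal is correct and follows essentially the same route as the paper. The paper likewise transports both targets by the single isometric embedding \(\iota_{v_i}\), applies the pointwise bound \eqref{eq:ac-18} to \((\alpha,x)\) and \((\beta,y)\), and concludes with the four-point triangle inequality, and your justification of \(\widehat{\zeta}_n<\zeta_n\) via \(\cos\zeta_n=-\rho_n<0\) is fine.
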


\begin{proof}
Let \(\iota_{v_i}\) be the isometric embedding from
\Cref{lem:ac-common-anchor-pointwise}.  Applying the pointwise estimate
there to \((\alpha,x)\) and \((\beta,y)\), and then using the triangle
inequality twice, gives
\[
 \begin{aligned}
 &\left|
 d_{n+1}(p_\alpha(x),p_\beta(y))
 -d_n(H_{\alpha,i}(x),H_{\beta,i}(y))
 \right|\\
 &\quad=\left|
 d_{n+1}(p_\alpha(x),p_\beta(y))
 -d_{n+1}(\iota_{v_i}H_{\alpha,i}(x),\iota_{v_i}H_{\beta,i}(y))
 \right|\\
 &\quad\leq
 d_{n+1}(p_\alpha(x),\iota_{v_i}H_{\alpha,i}(x))
 +d_{n+1}(p_\beta(y),\iota_{v_i}H_{\beta,i}(y))
 \leq \widehat{\zeta}_n,
 \end{aligned}
\]
which is \eqref{eq:ac-16}.
\end{proof}

\subsection{The lower boundary for distinct anchors:
\texorpdfstring{\(Q\geq\rho_n\)}{Q >= rho\_n}}
\label{sec:ac-distinct-lower-boundary}

The lower boundary in the next proposition is attained by the equal-level
swapped-anchor family of \Cref{sec:swapped-anchor-configuration}; the
proof shows that all other distinct-anchor pairs lie above that same
boundary in the \((Q,R)\)-plane.  This boundary is the graph
\(R=L_n(Q)\), \(\rho_n\leq Q\leq1\), shown in
panel~\textup{(b)} of \Cref{fig:qr-distortion-region}.

\paragraph{Common notation for distinct-anchor comparisons.}
The lower- and upper-boundary arguments use the following
notation, but not the same dimension range.  For \(n\geq1\), fix
\(i\neq j\), \(x\in M_i\), \(y\in M_j\), and
\(0\leq\alpha,\beta\leq\widehat{\zeta}_n\), and put
\[
 Q:=\cos\alpha\cos\beta
 +\sin\alpha\sin\beta\,(x\cdot y),
 \qquad
 R:=H_{\alpha,i}(x)\cdot H_{\beta,j}(y).
\]

\begin{proposition}[Lower boundary for distinct anchors]
\label{lem:ac-near}
Assume \(n\geq1\).  In the common notation above,
\[
 Q\geq {\rho_n}\quad\Longrightarrow\quad
 R\geq L_n(Q)
 =-{\rho_n}Q-{\sin \zeta_n}\sqrt{1-Q^2}.                              \acEquationTag{eq:ac-33}
\]
\end{proposition}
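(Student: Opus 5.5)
The plan is to first rewrite \eqref{eq:ac-33} as a distance inequality.
Put \(d_s:=\arccos Q=d_{n+1}(p_\alpha(x),p_\beta(y))\) and \(d_t:=\arccos R\).
The hypothesis \(Q\geq\rho_n\) means \(d_s\leq\widehat{\zeta}_n\), so \(d_s+\zeta_n\leq\pi\).
Since \(L_n(Q)=\cos(\arccos Q+\zeta_n)\) on this range, the claim is equivalent to
\[
 d_n\bigl(H_{\alpha,i}(x),H_{\beta,j}(y)\bigr)\leq d_s+\zeta_n .
\]
By \Cref{cor:ac-finite-gain-closure}\textup{(i)}, with \(g=L_n\) and \(c\) slightly below \(\rho_n\) (handling \(Q=\rho_n\) by continuity), it suffices to treat \(0\leq\alpha,\beta<\widehat{\zeta}_n\).
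There the targets are genuine normalized chord points.

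\emph{Anchor route.} The triangle inequality through the anchors gives
\[
 d_t\leq d_n(H_{\alpha,i}(x),v_i)+d_n(v_i,v_j)+d_n(v_j,H_{\beta,j}(y))
 \leq\vartheta_n(\alpha)+\zeta_n+\vartheta_n(\beta),
\]
by \Cref{prop:ac-maximal-displacement}. It then suffices to have \(\vartheta_n(\alpha)+\vartheta_n(\beta)\leq d_s\).
When \(x\cdot y\leq-\rho_n\) we have \(Q\leq\cos D_n^\times(\alpha,\beta)\), so \(D_n^\times(\alpha,\beta)\leq d_s\leq\widehat{\zeta}_n\).
\Cref{lem:ac-crossed-schedule} then gives \(\vartheta_n(\alpha)+\vartheta_n(\beta)\leq D_n^\times(\alpha,\beta)\leq d_s\), which settles this case.
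The argument is sharp exactly on the swapped-anchor family, \(x=-v_j\), \(y=-v_i\), \(\alpha=\beta\).

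\emph{Main obstacle: \(x\cdot y>-\rho_n\).} Here \(x\) and \(y\) may be close, possibly equal on a common Voronoi face. The source distance is then smaller than \(D_n^\times\), and the anchor route is too lossy.
The complementary \emph{endpoint route} uses \(d_n(H_{\alpha,i}(x),x)\leq\widehat{\zeta}_n-\vartheta_n(\alpha)\) from \Cref{prop:ac-maximal-displacement}. It also loses too much when taken alone.

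What I would try first is to work in Euclidean chords, in the spirit of the Ptolemy hint in \Cref{sec:proof-strategy}. The target \(H_{\alpha,i}(x)\) is the radial projection of \(c_\alpha:=(1-\lambda)v_i+\lambda x\) with \(\lambda=\lambda_n(\alpha)\), and similarly \(c'_\beta\) for \(y\).
I would then apply Ptolemy's inequality to the quadrilateral \(v_i,x,y,v_j\) (more precisely, to the chord points \(c_\alpha,c'_\beta\) together with the four vertices). This should give a bound on \(\lVert c_\alpha-c'_\beta\rVert\) that interpolates linearly between the anchor estimate \(\lVert v_i-v_j\rVert\) and the endpoint estimate \(\lVert x-y\rVert\), with weights governed by \(\lambda\) and \(\lambda_n(\beta)\).
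Normalizing uses \(\lVert c_\alpha\rVert\geq\sqrt{1+2\rho_nr+r^2}/(1+r)\) from \Cref{lem:ac-chord-geometry}\textup{(i)}. This should produce an explicit lower bound for \(R\) as a function of the scalars \(t:=x\cdot y\), \(a:=v_i\cdot x\), \(b:=v_j\cdot y\) and the levels \(\alpha,\beta\).

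Next I would compare this lower bound with \(L_n(Q)\), where \(Q=\cos\alpha\cos\beta+t\sin\alpha\sin\beta\) depends on \(t\) only.
I expect monotonicity in \(a\) and \(b\) (as in the derivative computation of \Cref{prop:ac-maximal-displacement}) to push the extremal case to \(a=b=\rho_n\), that is, to the vertices \(-v_k\). That would reduce everything to a one-parameter inequality in \(t\) for fixed \(\alpha,\beta\).
That inequality should be checked at its two ends. At the end \(t=-\rho_n\) it coincides with the anchor-route case above, which is already proved. At the opposite end \(t=1\), with \(x=y\) on a common face, it should reduce to \eqref{eq:ac-12}, since then \(Q=\cos(\alpha-\beta)\).
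Convexity or concavity in \(t\) would bridge the two ends.

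The hardest step is this interpolation in \(t\). If it fails, the fallback is a case split by the sign of \(\alpha-F_n(\beta)\): use the anchor route when both levels are low, and the endpoint route combined with the reciprocal identity \eqref{eq:ac-6} when one level exceeds \(\alpha_n^\ast\).
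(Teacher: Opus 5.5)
Your rewriting of \eqref{eq:ac-33} as $d_n(H_{\alpha,i}(x),H_{\beta,j}(y))\le d_s+\zeta_n$ is correct, and so is your treatment of $x\cdot y\le-\rho_n$, which is exactly \Cref{lem:ac-near-direct}. One small fix in the closure step: use $c=\rho_n$, not $c<\rho_n$. Otherwise \Cref{cor:ac-finite-gain-closure} needs the finite-gain estimate for $Q<\rho_n$, which you have not proved. The case $Q=\rho_n$ is trivial because $L_n(\rho_n)=-1$.

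The range $u:=x\cdot y>-\rho_n$ carries essentially all the difficulty, and there you offer only expectations, several of them wrong.
\begin{itemize}
\item The end $t=1$ does not reduce to \eqref{eq:ac-12}, because that estimate compares targets at the \emph{same} level. The obvious passage to $\alpha\neq\beta$, via $d_n(H_{\alpha,j}(x),H_{\beta,j}(x))\le|\alpha-\beta|$, is false: for $x=-v_k$ the target moves at speed $\vartheta_n'(\alpha)$, which tends to $c_n(n+1)>1$ as $\alpha\uparrow\widehat{\zeta}_n$.
\item Pushing to $a=b=\rho_n$ leaves no free parameter inside the Voronoi cells. By \Cref{lem:ac-simplex-cell} it forces $x,y\in\{-v_k\}$, so $t\in\{1,-\rho_n\}$. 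A one-parameter family exists only after relaxing to the caps $\mathcal C_{v_i},\mathcal C_{v_j}$. There the cross terms $v_j\cdot x$ and $v_i\cdot y$ in $R$ must still be eliminated, which the paper does by Ptolemy on $x,y,-v_i,-v_j$ plus AM--GM (\Cref{lem:ac-near-ptolemy}).
\item No convexity bridges the two ends. The relaxed defect $\widetilde{\mathscr E}_{\alpha,\beta}(u)$ is a concave term $-L_n(Q(u))$ plus a convex term $\Phi_{\rho_n}(u)$. In the mixed regime the paper must therefore rule out a negative interior minimum. That needs the strictly stronger endpoint bound $\mathscr E_{\alpha,\beta}(0)\ge(1+\rho_n)T_{\alpha,\beta}$ (\Cref{lem:ac-near-mixed-minimum,lem:ac-near-mixed-slack}), proved by a separate concavity argument in $\alpha$ together with the scalar estimate \eqref{eq:ac-66}.
\end{itemize}

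Your fallback fails as well. The anchor and endpoint routes bound $d_t$ by quantities depending only on $\alpha,\beta$ and $\delta:=d_n(x,y)$; the identity \eqref{eq:ac-6} merely rewrites $\widehat{\zeta}_n-\vartheta_n(\beta)$ as $\vartheta_n(F_n(\beta))$. For $n=2$ both bounds exceed $\zeta_2+d_s$ in each regime you assign them.
\begin{itemize}
\item Low--low: take $\alpha=\beta=0.6$ and $\delta=1$, so $d_s\approx0.548$. The anchor route would need $2\Theta_2(0.6)\approx0.958\le d_s$. The endpoint route gives $2(\widehat{\zeta}_2-\Theta_2(0.6))+1\approx2.504>\zeta_2+d_s\approx2.459$.
\item Mixed: take $\alpha=0.4$, $\beta=0.8>\alpha_2^\ast=\pi/4$ and $\delta=1.5$, so $d_s\approx0.848$. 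The anchor route would need $\vartheta_2(\alpha)+\vartheta_2(\beta)\approx0.950\le d_s$. The endpoint route gives $2\widehat{\zeta}_2-\vartheta_2(\alpha)-\vartheta_2(\beta)+\delta\approx3.012>\zeta_2+d_s\approx2.759$.
\end{itemize}
Only the high--high regime closes by the endpoint route. Even there you need the $1$-Lipschitz comparison of \Cref{lem:ac-spherical-distance-comparison} together with $D_n^\times(\alpha,\beta)\ge\widehat{\zeta}_n$; this is \Cref{lem:ac-near-high}.

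The missing ideas are therefore two. For low--low levels the paper uses Ptolemy, the auxiliary angle $\eta\le\arccos Q$ obtained from the $1$-Lipschitz property of $\Theta_n$, and the exact factorization $(z-1)P_{\rho_n}$ with the sign lemma \Cref{lem:ac-low-polynomial-sign}. For mixed levels it uses the one-variable reduction together with the interior-minimum and endpoint estimates described above.
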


Equality is realized explicitly as follows.  If
\(
 \beta=\alpha,\qquad x=-v_j,\qquad y=-v_i,
\)
then \Cref{sec:swapped-anchor-configuration} gives, for
\(0\leq\alpha\leq\alpha_n^\ast\),
\[
 Q=\cos D_n(\alpha),\qquad
 R=\cos\bigl(\zeta_n+D_n(\alpha)\bigr)=L_n(Q).
\]
As \(\alpha\) increases from \(0\) to \(\alpha_n^\ast\), \(Q\)
decreases from \(1\) to \(\rho_n\).  Thus this family parametrizes the
graph \(R=L_n(Q)\) over \(\rho_n\leq Q\leq1\), shown in
panel~\textup{(b)} of \Cref{fig:qr-distortion-region}.

All the subsidiary results below are valid for every
\(n\geq1\).  The only dimension-dependent verification occurs in
\Cref{lem:ac-near-mixed-slack}, where the cases \(n\geq2\) and \(n=1\)
are treated separately.

The supporting results and their roles are summarized in
\Cref{tab:ac-near-proof-structure}.
\begin{table}[h!]
\centering
\caption{Structure of the lower-boundary argument for distinct anchors.}
\label{tab:ac-near-proof-structure}
\renewcommand{\arraystretch}{1.15}
\begin{tabularx}{\textwidth}{@{}>{\raggedright\arraybackslash}p{0.30\textwidth}
                                  >{\raggedright\arraybackslash}X
                                  >{\raggedright\arraybackslash}p{0.23\textwidth}@{}}
\toprule
Result & Role or regime & Main technique \\
\midrule
\Cref{lem:ac-near-direct}
&
$u\leq-\rho_n$
&
Direct comparison
\\
\Cref{lem:ac-near-ptolemy}
&
Common estimates for $u\geq-\rho_n$
&
Ptolemy
\\
\Cref{lem:ac-near-low}
&
Low--low regime
&
Factorization
\\
\Cref{lem:ac-low-polynomial-sign}
&
Polynomial sign in the low--low case
&
Polynomial estimate
\\
\Cref{lem:ac-spherical-distance-comparison}
&
Distance comparison used in the high--high case
&
Lipschitz estimate
\\
\Cref{lem:ac-near-high}
&
High--high regime
&
Triangle comparison
\\
\Cref{lem:ac-near-mixed}
&
Mixed regime
&
Ptolemy and concavity
\\
\bottomrule
\end{tabularx}
\end{table}

\paragraph{Notation for this subsection.}
\phantomsection\label{def:lower-boundary-notation}
Whenever \(i\neq j\), \(x\in M_i\), \(y\in M_j\), and
\(0\leq\alpha,\beta\leq\widehat{\zeta}_n\), we write
\[
 u:=x\cdot y,\qquad
 r_\alpha:=r_n(\alpha),\qquad
 r_\beta:=r_n(\beta),\qquad
 \theta_\alpha:=\vartheta_n(\alpha),\qquad
 \theta_\beta:=\vartheta_n(\beta).
\]
We also use repeatedly the elementary identity
\(
 \sin\widehat{\zeta}_n=\sin\zeta_n.
\)
Thus
\[
 Q=\cos\alpha\cos\beta+\sin\alpha\sin\beta\,u.
\]
Whenever the two gains are finite, we also write
\[
\begin{gathered}
 \mathrm{Num}:=(v_i+r_\alpha x)\cdot(v_j+r_\beta y),\\
 X:=\lVert v_i+r_\alpha x\rVert,\qquad
 Y:=\lVert v_j+r_\beta y\rVert,
\end{gathered}
\qquad
\begin{gathered}
 X_\alpha:=\sqrt{1+2\rho_n r_\alpha+r_\alpha^2},\\
 X_\beta:=\sqrt{1+2\rho_n r_\beta+r_\beta^2}.
\end{gathered}
\]
In particular,
\[
 R=\frac{\mathrm{Num}}{XY}.
\]
The quantities \(X_\alpha\) and \(X_\beta\) are the norms of the corresponding
unnormalized chord vectors in the boundary cases
\(v_i\cdot x=\rho_n\) and \(v_j\cdot y=\rho_n\), respectively; see
\Cref{fig:chord-triangle-geometry}.
The subscripts on \(r_\alpha,r_\beta,\theta_\alpha,\theta_\beta\)
will always refer to the two radial levels fixed in the comparison.

\begin{lemma}[The range \texorpdfstring{\(u\leq-\rho_n\)}{u <= -rho\_n}]
\label{lem:ac-near-direct}
With the notation above, if
\(
 u\leq-\rho_n,\qquad Q\geq\rho_n,
\)
then \(R\geq L_n(Q)\).
\end{lemma}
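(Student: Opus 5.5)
The plan is a direct triangle-inequality comparison through the two anchors. The hypothesis \(u\leq-\rho_n\) forces the source distance to dominate the critical source distance \(D_n^\times(\alpha,\beta)\). \Cref{lem:ac-crossed-schedule} then bounds the two target displacements by that same quantity. No Ptolemy-type or polynomial estimate should be needed in this range.

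\emph{Step 1 (source side).} Since \(0\leq\alpha,\beta\leq\widehat{\zeta}_n<\tfrac{\pi}{2}\), we have \(\sin\alpha\sin\beta\geq0\). Hence \(u\leq-\rho_n\) gives
\[
 Q=\cos\alpha\cos\beta+\sin\alpha\sin\beta\,u
 \leq\cos\alpha\cos\beta-\rho_n\sin\alpha\sin\beta
 =\cos D_n^\times(\alpha,\beta).
\]
Because \(\arccos\) is decreasing, this yields \(\arccos Q\geq D_n^\times(\alpha,\beta)\). Combined with \(Q\geq\rho_n\), it also gives \(\cos D_n^\times(\alpha,\beta)\geq\rho_n\), that is, \(D_n^\times(\alpha,\beta)\leq\widehat{\zeta}_n\). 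So the hypothesis of \Cref{lem:ac-crossed-schedule} holds, and
\[
 \vartheta_n(\alpha)+\vartheta_n(\beta)\leq D_n^\times(\alpha,\beta)\leq\arccos Q.
\]

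\emph{Step 2 (target side).} By \Cref{prop:ac-maximal-displacement}, valid on the closed range \(0\leq\alpha\leq\widehat{\zeta}_n\) including the endpoint convention, we have \(d_n(v_i,H_{\alpha,i}(x))\leq\vartheta_n(\alpha)\) and \(d_n(v_j,H_{\beta,j}(y))\leq\vartheta_n(\beta)\). Moreover \(d_n(v_i,v_j)=\zeta_n\). The triangle inequality through \(v_i\) and \(v_j\) then gives
\[
 \arccos R=d_n\bigl(H_{\alpha,i}(x),H_{\beta,j}(y)\bigr)
 \leq\vartheta_n(\alpha)+\zeta_n+\vartheta_n(\beta)
 \leq\zeta_n+\arccos Q.
\]

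\emph{Step 3 (conversion).} If \(\zeta_n+\arccos Q\geq\pi\), then \(L_n(Q)=\cos(\zeta_n+\arccos Q)\) is at most \(\cos\pi=-1\) when read via the formula on \([\pi,2\pi]\). More simply, the claim \(R\geq L_n(Q)\) is trivial whenever \(L_n(Q)\leq-1\). Otherwise both sides of the displayed inequality lie in \([0,\pi]\), and since cosine is decreasing there,
\[
 R\geq\cos\bigl(\zeta_n+\arccos Q\bigr)
 =-\rho_nQ-\sin\zeta_n\sqrt{1-Q^2}=L_n(Q),
\]
using \(\cos\zeta_n=-\rho_n\). One detail needs care here: in the first case \(\cos(\zeta_n+\arccos Q)\) need not be \(\leq-1\), so I will instead check directly that \(L_n(Q)\leq-1\) whenever \(\arccos Q\geq\widehat{\zeta}_n\). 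At \(Q=\rho_n\), for instance, \(L_n(\rho_n)=-\rho_n^2-(1-\rho_n^2)=-1\), and \(Q\geq\rho_n\) confines \(\arccos Q\leq\widehat{\zeta}_n\). So the first case occurs only at the boundary, where \(L_n(Q)=-1\) and the claim is trivial.

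The only genuinely nontrivial input is \Cref{lem:ac-crossed-schedule}, which is already available. Beyond that, the step requiring the most care is the bookkeeping that \(Q\geq\rho_n\) places us in its hypothesis and that the endpoint radial levels are covered.
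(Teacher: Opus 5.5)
Your proposal is correct and follows the paper's proof essentially step for step. You compare $Q$ with $\cos D_n^\times(\alpha,\beta)$ via $u\leq-\rho_n$, invoke \Cref{lem:ac-crossed-schedule} together with the cap inclusions of \Cref{prop:ac-maximal-displacement}, and finish with the triangle inequality through $v_i,v_j$ and the monotonicity of cosine. The case split in your Step~3 is unnecessary, and its side claim that $L_n(Q)\leq-1$ whenever $\arccos Q\geq\widehat{\zeta}_n$ holds only at equality; $Q\geq\rho_n$ already gives $\zeta_n+\arccos Q\leq\pi$, which is exactly how the paper concludes.
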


\begin{proof}
Put
\[
 Q_0:=\cos D_n^\times(\alpha,\beta)
 =\cos\alpha\cos\beta-\rho_n\sin\alpha\sin\beta.
\]
Thus \(\arccos Q_0=D_n^\times(\alpha,\beta)\).
By \Cref{prop:ac-maximal-displacement}, the two targets lie in the
spherical caps of radii \(\theta_\alpha\) and \(\theta_\beta\) about
\(v_i\) and \(v_j\), respectively.  These cap inclusions use only
\(x\in M_i\) and \(y\in M_j\); the assumption
\(u\leq-\rho_n\) enters in the comparison with \(Q_0\).  Indeed, since
\(\rho_n\leq Q\leq Q_0\), \Cref{lem:ac-crossed-schedule} and the
triangle inequality give
\[
 d_n(H_{\alpha,i}(x),H_{\beta,j}(y))
 \leq\zeta_n+\theta_\alpha+\theta_\beta
 \leq\zeta_n+\arccos Q_0
 \leq\zeta_n+\arccos Q.
\]
Because \(Q\geq\rho_n\), the rightmost quantity is at most
\(\zeta_n+\widehat{\zeta}_n=\pi\). More explicitly, since cosine is decreasing on \([0,\pi]\),
\[
\begin{aligned}
 R
 &=\cos d_n\bigl(H_{\alpha,i}(x),H_{\beta,j}(y)\bigr)\\
 &\geq\cos\bigl(\zeta_n+\arccos Q\bigr)\\
 &=-\rho_nQ-\sin\zeta_n\sqrt{1-Q^2}
 =L_n(Q).
\end{aligned}
\]
\end{proof}

\begin{lemma}[Ptolemy estimates for two distinct cells]
\label{lem:ac-near-ptolemy}
With the notation above, assume that
\(\alpha,\beta<\widehat{\zeta}_n\).  Define
\[
 \ell_j(x):=\frac{\|x+v_j\|^2}{2},\qquad
 \ell_i(y):=\frac{\|y+v_i\|^2}{2},
\]
and
\[
 \Phi_{\rho_n}(u):=(1+{\rho_n})
 -\sqrt{(1+{\rho_n})(1-u)}.
\]
Define the corresponding Ptolemy lower bound for the numerator by
\[
 \mathrm{Num}_{\mathrm{Ptol}}
 :=-{\rho_n}-r_\alpha-r_\beta+r_\alpha r_\beta u
 +2\sqrt{r_\alpha r_\beta}\,\Phi_{\rho_n}(u).
\]
If \(u\geq-\rho_n\), then
\[
 \sqrt{\ell_j(x)\ell_i(y)}
 \geq\Phi_{\rho_n}(u)\geq0,                              \acEquationTag{eq:ac-35}
\]
\[
 r_\alpha\ell_j(x)+r_\beta\ell_i(y)
 \geq2\sqrt{r_\alpha r_\beta}\,\Phi_{\rho_n}(u),          \acEquationTag{eq:ac-36}
\]
and
\[
\begin{aligned}
 \mathrm{Num}
 &=-{\rho_n}-r_\alpha-r_\beta+r_\alpha r_\beta u
   +r_\alpha\ell_j(x)+r_\beta\ell_i(y)
   \geq\mathrm{Num}_{\mathrm{Ptol}},\\
 X^2&=X_\alpha^2+2r_\alpha(v_i\cdot x-{\rho_n})\geq X_\alpha^2,\qquad
 Y^2=X_\beta^2+2r_\beta(v_j\cdot y-{\rho_n})\geq X_\beta^2.
\end{aligned}                                             \acEquationTag{eq:ac-37}
\]
\end{lemma}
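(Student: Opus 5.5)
The argument is a direct expansion of the numerator, followed by a Ptolemy-type inequality for the two cross terms. Everything reduces to the Voronoi-cell bound \eqref{eq:ac-10} and the facts \(v_i\cdot v_j=-\rho_n\), \(\lVert x\rVert=\lVert y\rVert=1\).

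\emph{Step 1: the identities in \eqref{eq:ac-37}.} Expand
\[
 \mathrm{Num}=v_i\cdot v_j+r_\alpha\,x\cdot v_j+r_\beta\,v_i\cdot y+r_\alpha r_\beta\,u .
\]
Use \(x\cdot v_j=\ell_j(x)-1\), which holds because \(\lVert x+v_j\rVert^2=2+2x\cdot v_j\); similarly \(v_i\cdot y=\ell_i(y)-1\). This gives the displayed formula for \(\mathrm{Num}\). For the norms, \(X^2=1+2r_\alpha(v_i\cdot x)+r_\alpha^2=X_\alpha^2+2r_\alpha(v_i\cdot x-\rho_n)\), and \eqref{eq:ac-10} gives \(X^2\geq X_\alpha^2\). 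The same computation gives \(Y^2\geq X_\beta^2\). Once \eqref{eq:ac-36} is proved, the lower bound \(\mathrm{Num}\geq\mathrm{Num}_{\mathrm{Ptol}}\) follows immediately. Also, \eqref{eq:ac-36} follows from \eqref{eq:ac-35} by AM--GM: \(r_\alpha\ell_j+r_\beta\ell_i\geq2\sqrt{r_\alpha r_\beta\,\ell_j\ell_i}\).

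\emph{Step 2: Ptolemy's inequality gives \eqref{eq:ac-35}.} Apply Ptolemy's inequality in Euclidean space to the four points \(x,\,-v_j,\,y,\,-v_i\):
\[
 \lVert x+v_j\rVert\,\lVert y+v_i\rVert
 \geq \lVert x-y\rVert\,\lVert v_i-v_j\rVert-\lVert x+v_i\rVert\,\lVert y+v_j\rVert.
\]
To use this lower bound, we need upper bounds on \(\lVert x+v_i\rVert\) and \(\lVert y+v_j\rVert\). By \eqref{eq:ac-10}, \(\lVert x+v_i\rVert^2=2+2v_i\cdot x\geq2(1+\rho_n)\). That is a lower bound, which points the wrong way, so this pairing must be rearranged.

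Instead, pair the four points so that the cross terms are controlled by quantities we know. We know \(\lVert v_i-v_j\rVert^2=2(1+\rho_n)\) exactly. We know \(\lVert x-v_i\rVert^2\leq2(1-\rho_n)\) and \(\lVert y-v_j\rVert^2\leq2(1-\rho_n)\) by \eqref{eq:ac-10}. We also know \(\lVert x-y\rVert^2=2(1-u)\). Ptolemy's inequality for the quadrilateral \(x,v_i,v_j,y\) reads
\[
 \lVert x-v_j\rVert\,\lVert y-v_i\rVert\leq \lVert x-v_i\rVert\,\lVert y-v_j\rVert+\lVert x-y\rVert\,\lVert v_i-v_j\rVert .
\]
Then \(\lVert x+v_j\rVert^2=4-\lVert x-v_j\rVert^2\), so an upper bound on the product \(\lVert x-v_j\rVert\,\lVert y-v_i\rVert\) must be converted into a lower bound on \(\sqrt{\ell_j\ell_i}=\tfrac12\sqrt{(4-A^2)(4-B^2)}\), where \(A=\lVert x-v_j\rVert\) and \(B=\lVert y-v_i\rVert\). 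Since \((4-A^2)(4-B^2)\geq(4-AB)^2\) whenever \(AB\leq4\) (AM--GM gives \(A^2+B^2\geq2AB\)), it suffices to show
\[
 AB\leq 2\bigl(1-\rho_n\bigr)+2\sqrt{(1+\rho_n)(1-u)}=4-2\Phi_{\rho_n}(u).
\]
This is exactly the Ptolemy bound above, after inserting the two cell bounds and the exact values \(\lVert v_i-v_j\rVert=\sqrt{2(1+\rho_n)}\) and \(\lVert x-y\rVert=\sqrt{2(1-u)}\). It then yields \(\sqrt{\ell_j\ell_i}\geq\Phi_{\rho_n}(u)\).

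\emph{Step 3: nonnegativity and the main obstacle.} It remains to check \(AB\leq4\) and \(\Phi_{\rho_n}(u)\geq0\). The latter is equivalent to \(1-u\leq1+\rho_n\), which is exactly the hypothesis \(u\geq-\rho_n\); when it holds, the right-hand side \(4-2\Phi_{\rho_n}(u)\) is at most \(4\), so \(AB\leq4\) as well. The delicate point, which I expect to be the main obstacle, is choosing the correct Ptolemy quadrilateral and signs, and confirming that the product-of-complements step \((4-A^2)(4-B^2)\geq(4-AB)^2\) loses nothing essential. Equality in the swapped-anchor case \(x=-v_j\), \(y=-v_i\) serves as a consistency check on the constants.
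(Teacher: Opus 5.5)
There is a genuine gap in Step~2. The key inequality you invoke runs the other way. Expanding gives
\[
 (4-A^2)(4-B^2)-(4-AB)^2=8AB-4(A^2+B^2)=-4(A-B)^2\leq0,
\]
so AM--GM $A^2+B^2\geq2AB$ yields $(4-A^2)(4-B^2)\leq(4-AB)^2$. That is an upper bound on $\sqrt{\ell_j(x)\ell_i(y)}$, not the lower bound you need.

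The route cannot be repaired. An upper bound on the product $AB=\lVert x-v_j\rVert\,\lVert y-v_i\rVert$, together with $AB\leq4$, gives no positive lower bound for $(4-A^2)(4-B^2)$. Taking $A=2$ and $B$ small satisfies both constraints and makes that product zero. The quadrilateral $x,v_i,v_j,y$ controls the wrong product.

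The fix is the first pairing you discarded. You isolated the wrong one of the three Ptolemy inequalities for $x,y,-v_i,-v_j$. Ptolemy also gives
\[
 \lVert x+v_i\rVert\,\lVert y+v_j\rVert
 \leq\lVert x+v_j\rVert\,\lVert y+v_i\rVert+\lVert x-y\rVert\,\lVert v_i-v_j\rVert .
\]
In this form, the lower bounds $\lVert x+v_i\rVert,\lVert y+v_j\rVert\geq\sqrt{2(1+\rho_n)}$ from \eqref{eq:ac-10} point exactly the right way. Using $\lVert x-y\rVert=\sqrt{2(1-u)}$ and $\lVert v_i-v_j\rVert=\sqrt{2(1+\rho_n)}$, you get
\[
 \lVert x+v_j\rVert\,\lVert y+v_i\rVert\geq2(1+\rho_n)-2\sqrt{(1+\rho_n)(1-u)}=2\Phi_{\rho_n}(u).
\]
Since $\lVert x+v_j\rVert\,\lVert y+v_i\rVert=2\sqrt{\ell_j(x)\ell_i(y)}$, this is the first inequality in \eqref{eq:ac-35}; it is the paper's argument. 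Your Step~1 (the expansions, the norm bounds, and AM--GM for \eqref{eq:ac-36}) and your Step~3 ($\Phi_{\rho_n}(u)\geq0$ from $u\geq-\rho_n$) are correct and can stay as written.
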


\begin{proof}

Apply Ptolemy's inequality in the ambient Euclidean space to the four
points
\(x,y,-v_i,-v_j\).

The cell bound \eqref{eq:ac-10} gives
\(
 v_i\cdot x\geq\rho_n,
 \qquad
 v_j\cdot y\geq\rho_n.
\)
Consequently, \eqref{eq:chord-norm-lower-bound} in
\Cref{lem:ac-chord-geometry}, applied to the pairs
\((v_i,x)\) and \((v_j,y)\) with \(a=1\), gives
\(
 \lVert x+v_i\rVert,\ \lVert y+v_j\rVert
 \geq\sqrt{2(1+\rho_n)}.
\)
This comparison value is the length of the sum of two unit vectors
whose inner product is \(\rho_n\).  The remaining two lengths in
Ptolemy's inequality are
\[
 \lVert x-y\rVert=\sqrt{2(1-u)},\qquad
 \lVert v_i-v_j\rVert=\sqrt{2(1+\rho_n)}.
\]

In the present labeling, Ptolemy's inequality reads
\[
 \lVert x-y\rVert\,\lVert v_i-v_j\rVert
 +\lVert x+v_j\rVert\,\lVert y+v_i\rVert
 \geq\lVert x+v_i\rVert\,\lVert y+v_j\rVert.
\]
Substituting the bounds and identities above, and then isolating the
second product, gives
\[
\begin{aligned}
 \lVert x+v_j\rVert\,\lVert y+v_i\rVert
 &\geq 2(1+\rho_n)
   -\sqrt{2(1-u)}\sqrt{2(1+\rho_n)}\\
 &=2(1+\rho_n)-2\sqrt{(1+\rho_n)(1-u)},
\end{aligned}
\]

After division by \(2\), this gives the first inequality in
\eqref{eq:ac-35}.  Moreover, \(u\geq-\rho_n\) gives
\(1-u\leq1+\rho_n\), and hence
\(
 \sqrt{(1+\rho_n)(1-u)}\leq1+\rho_n.
\)
Therefore \(\Phi_{\rho_n}(u)\geq0\), completing
\eqref{eq:ac-35}.

The quantities \(r_\alpha,r_\beta,\ell_j(x),\ell_i(y)\) are nonnegative.
Therefore the arithmetic--geometric mean inequality, applied to
\(r_\alpha\ell_j(x)\) and \(r_\beta\ell_i(y)\),
together with \eqref{eq:ac-35}, gives
\[
 r_\alpha\ell_j(x)+r_\beta\ell_i(y)
 \geq2\sqrt{\bigl(r_\alpha\ell_j(x)\bigr)
                  \bigl(r_\beta\ell_i(y)\bigr)}
 =2\sqrt{r_\alpha r_\beta}\,\sqrt{\ell_j(x)\ell_i(y)}
 \geq2\sqrt{r_\alpha r_\beta}\,\Phi_{\rho_n}(u).
\]
This proves \eqref{eq:ac-36}.  Combining it with the direct expansion
of \(\mathrm{Num}\) gives
\(\mathrm{Num}\geq\mathrm{Num}_{\mathrm{Ptol}}\); direct expansion also
gives the two norm identities in \eqref{eq:ac-37}.
\end{proof}

\begin{lemma}[Low radial levels: exact factorization]
\label{lem:ac-near-low}

With the notation above, assume
\[
 \alpha,\beta\leq\alpha_n^\ast<\widehat{\zeta}_n,\qquad
 u\geq-\rho_n,\qquad Q\geq\rho_n.
\]
Then \(R\geq L_n(Q)\).
\end{lemma}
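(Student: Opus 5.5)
We work in the low--low regime, where \(\theta_\alpha=\Theta_n(\alpha)\) and \(\theta_\beta=\Theta_n(\beta)\), so both gains are given by the first branch of \eqref{eq:ac-1}. The plan is to replace \(R\) by an explicit lower bound depending only on \((\alpha,\beta,u)\), and then show that this bound dominates \(L_n(Q)\).

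\emph{Step 1: reduce \(R\) to a function of \(u\).} By \Cref{lem:ac-near-ptolemy},
\[
 R=\frac{\mathrm{Num}}{XY},\qquad
 \mathrm{Num}\geq\mathrm{Num}_{\mathrm{Ptol}},\qquad
 X\geq X_\alpha,\quad Y\geq X_\beta.
\]
To pass to the lower bound \(\mathrm{Num}_{\mathrm{Ptol}}/(X_\alpha X_\beta)\), I split by sign.

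\begin{itemize}[nosep]
 \item If \(\mathrm{Num}\geq0\), then \(R\geq0\). Since \(L_n(Q)\leq-\rho_nQ<0\) for \(Q\geq\rho_n\), the claim is immediate.
 \item If \(\mathrm{Num}<0\), then \(R\geq\mathrm{Num}/(X_\alpha X_\beta)\geq\mathrm{Num}_{\mathrm{Ptol}}/(X_\alpha X_\beta)\), because a negative numerator over a smaller positive denominator gives a smaller value.
\end{itemize}

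\emph{Step 2: convert to trigonometric form.} Using \Cref{lem:ac-chord-geometry}, write
\[
 r_\alpha=\frac{\sin\Theta_\alpha}{\sin(\widehat{\zeta}_n-\Theta_\alpha)},\qquad
 X_\alpha=\frac{\sin\widehat{\zeta}_n}{\sin(\widehat{\zeta}_n-\Theta_\alpha)},
\]
and similarly for \(\beta\). Then \(\mathrm{Num}_{\mathrm{Ptol}}/(X_\alpha X_\beta)\) becomes an explicit function \(\Psi(u)\) of \(u\), with coefficients depending on \(\Theta_\alpha,\Theta_\beta\), and containing the concave term \(\Phi_{\rho_n}(u)\).

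The sanity check is the extremal configuration \(u=-\rho_n\) (i.e. \(x=-v_j\), \(y=-v_i\)). There \(\Phi_{\rho_n}(-\rho_n)=0\), and the estimates should collapse to the exact value
\[
 \cos\bigl(\zeta_n+\Theta_\alpha+\Theta_\beta\bigr),
\]
which is at least \(L_n(Q_0)\) by \Cref{lem:ac-crossed-schedule}.

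\emph{Step 3: compare \(\Psi(u)\) with \(L_n(Q)\).} On the other side, \(Q=\cos\alpha\cos\beta+u\sin\alpha\sin\beta\) is affine in \(u\). We must show
\[
 \Psi(u)-L_n(Q(u))\geq0\qquad\text{for } u\in[-\rho_n,1] \text{ with } Q\geq\rho_n.
\]
I would substitute \(t:=\sqrt{(1+\rho_n)(1-u)}\), which turns \(\Phi_{\rho_n}\) into a linear function of \(t\) and makes \(u\) quadratic in \(t\). The term \(\sqrt{1-Q^2}\) inside \(L_n\) is then handled by moving it to one side, checking the sign of the other side, and squaring. This is how the statement's ``exact factorization'' should arise: the difference, expressed via \(\sin\Theta=c_n\sin\alpha\) and the identities \(c_n^2+s_n^2=1\), \(c_n^2-s_n^2=\rho_n\), ought to factor as a nonnegative square-type factor times a polynomial in \(t\) (and in \(\sin\alpha,\sin\beta\)).

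The sign of that residual polynomial is the role of \Cref{lem:ac-low-polynomial-sign}. I would verify it on the constrained box \(\alpha,\beta\leq\alpha_n^\ast\), \(Q\geq\rho_n\), by checking the endpoints and using monotonicity in \(t\).

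\emph{Main obstacle.} Finding the factorization in Step 3 is the hard part. I would first attack the symmetric case \(\alpha=\beta\) to guess the factor, expecting it to vanish at \(t=\sqrt{1+\rho_n}\cdot\sqrt{1+\rho_n}\), i.e. at \(u=-\rho_n\). I would then generalize using the product structure \(\sqrt{r_\alpha r_\beta}\) appearing in \(\mathrm{Num}_{\mathrm{Ptol}}\).
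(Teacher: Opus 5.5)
Your Steps 1--2 (the sign split on \(\mathrm{Num}\), the Ptolemy bounds, and the sine-rule form of the gains) agree with the paper, and your check at \(u=-\rho_n\) is correct. The gap is Step 3: comparing \(\mathrm{Num}_{\mathrm{Ptol}}/(X_\alpha X_\beta)\) directly with \(L_n(Q)\) does not have the structure you expect.

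First, by the Cauchy--Schwarz step in \Cref{lem:ac-crossed-schedule}, \(\Theta_n(\alpha)+\Theta_n(\beta)<D_n^\times(\alpha,\beta)\) unless \(\alpha=\beta\). So at \(u=-\rho_n\) your bound \(\cos(\zeta_n+\theta_\alpha+\theta_\beta)\) is strictly larger than \(L_n(Q)\). After isolating the radical and squaring, the difference therefore does not vanish there, and the factor \(t-(1+\rho_n)\) you plan to extract from the symmetric case does not exist in general.

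Second, \(Q=\cos\alpha\cos\beta+u\sin\alpha\sin\beta\) contains \(\cos\alpha=c_n^{-1}\sqrt{\cos^2\theta_\alpha-s_n^2}\), which is not a rational function of the gain \(r_\alpha\). Squaring away \(\sqrt{1-Q^2}\) therefore still leaves radicals. Whatever residual you obtain is not \(P_{\rho_n}(r_{\alpha\beta}^{\times},r_{\alpha\beta}^{+},z)\), and the sign condition you need before squaring is not \eqref{eq:ac-low-poly-hyp}. So \Cref{lem:ac-low-polynomial-sign} cannot be invoked, and ``checking endpoints and monotonicity in \(t\)'' is not an argument.

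The missing idea is an intermediate angle that confines all dependence on \(\alpha,\beta\) to one easy inequality. Put \(z:=\sqrt{(1-u)/(1+\rho_n)}\) and \(d_Q:=\arccos Q\), and define \(\eta\in[0,\widehat{\zeta}_n]\) by \(\sin^2\frac{\eta}{2}=\sin^2\frac{\theta_\alpha-\theta_\beta}{2}+z^2\sin\theta_\alpha\sin\theta_\beta\), with \(\theta_\alpha=\Theta_n(\alpha)\). Since \(\sin\theta_\alpha=c_n\sin\alpha\) and \(c_n^2=(1+\rho_n)/2\), the \(z^2\)-term equals \(\frac{1-u}{2}\sin\alpha\sin\beta\). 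This is exactly the corresponding term in \(\sin^2\frac{d_Q}{2}=\sin^2\frac{\alpha-\beta}{2}+\frac{1-u}{2}\sin\alpha\sin\beta\). Because \(\Theta_n\) is \(1\)-Lipschitz, \(\eta\leq d_Q\leq\widehat{\zeta}_n\), so \(\cos(\zeta_n+\eta)\geq L_n(Q)\). It therefore suffices to prove \(\mathrm{Num}_{\mathrm{Ptol}}\geq X_\alpha X_\beta\cos(\zeta_n+\eta)\).

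This inequality involves only \((r_\alpha,r_\beta,z)\). Indeed \(X_\alpha X_\beta\cos\eta=1+\rho_n r_{\alpha\beta}^{+}+(r_{\alpha\beta}^{\times})^2(1-2\sin^2\widehat{\zeta}_n\,z^2)\) is polynomial. At \(z=1\) one has \(\eta=\theta_\alpha+\theta_\beta\), so equality holds there for every \(\alpha,\beta\); this is what produces the factor \(z-1\).

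With \(\mathcal G:=\mathrm{Num}_{\mathrm{Ptol}}+\rho_nX_\alpha X_\beta\cos\eta\), the case \(\mathcal G\geq0\) is immediate. If \(\mathcal G<0\), the squared inequality equals \(r_{\alpha\beta}^{\times}(1+\rho_n)^2(z-1)P_{\rho_n}\) by \eqref{eq:ac-45}, and \(\mathcal G<0\) is precisely \eqref{eq:ac-low-poly-hyp}. Zero gains must be handled separately, where equality holds directly, since the sign lemma assumes \(r_\alpha,r_\beta>0\).
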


We first state the polynomial-sign estimate used in the proof of
\Cref{lem:ac-near-low}.
The proof of the polynomial-sign estimate is given in
Appendix~\ref{app:ac-low-polynomial-sign}.

\begin{lemma}[Sign of the polynomial \texorpdfstring{\(P_{\rho_n}\)}{P}]
\label{lem:ac-low-polynomial-sign}
Let \(n\geq1\), let \(r_\alpha,r_\beta>0\), and let
\(0\leq z\leq1\).  Put
\[
 r_{\alpha\beta}^{\times}:=\sqrt{r_\alpha r_\beta},
 \qquad
 r_{\alpha\beta}^{+}:=r_\alpha+r_\beta,
 \qquad
 \chi_n:=1+2\rho_n-2\rho_n^2.
\]
Assume
\[
 (r_{\alpha\beta}^{\times})^2(1-\chi_nz^2)
 +2r_{\alpha\beta}^{\times}(1-z)
 -(1-\rho_n)r_{\alpha\beta}^{+}<0.                    \acEquationTag{eq:ac-low-poly-hyp}
\]
Define
\[
\begin{aligned}
 P_{\rho_n}
 (r_{\alpha\beta}^{\times},r_{\alpha\beta}^{+},z)
 :={}&-2(1-\rho_n)
 \bigl(r_{\alpha\beta}^{\times}(1+z)+2\bigr)r_{\alpha\beta}^{+}\\
 &+(r_{\alpha\beta}^{\times})^3(1+z)
 \bigl[1+(4\rho_n-5)z^2\bigr]\\
 &+4(r_{\alpha\beta}^{\times})^2(1-\chi_nz^2)\\
 &+4r_{\alpha\beta}^{\times}
 \bigl[2-\rho_n(2-\rho_n)(1+z)\bigr].
\end{aligned}
\]
Then
\[
 P_{\rho_n}
 (r_{\alpha\beta}^{\times},r_{\alpha\beta}^{+},z)\leq0.
\]
Consequently,
\[
 r_{\alpha\beta}^{\times}(1+\rho_n)^2(z-1)
 P_{\rho_n}
 (r_{\alpha\beta}^{\times},r_{\alpha\beta}^{+},z)\geq0.
\]
\end{lemma}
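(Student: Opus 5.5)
We write \(p:=r_{\alpha\beta}^{\times}>0\), \(s:=r_{\alpha\beta}^{+}\) and \(D:=1-\rho_n>0\). The plan is to use \eqref{eq:ac-low-poly-hyp} to eliminate the linear dependence of \(P_{\rho_n}\) on \(s\). The key observation is that \(P_{\rho_n}\) is affine in \(s\), with coefficient
\[
 -2D\bigl(p(1+z)+2\bigr)<0.
\]
The hypothesis says exactly that
\[
 Ds>p^2(1-\chi_nz^2)+2p(1-z).
\]
Substituting this lower bound for \(Ds\) into the negative linear term bounds \(P_{\rho_n}\) above by a polynomial \(\widetilde P(p,z)\) that no longer involves \(s\):
\[
 P_{\rho_n}
 < -2\bigl(p(1+z)+2\bigr)\bigl(p^2(1-\chi_nz^2)+2p(1-z)\bigr)
 +p^3(1+z)\bigl[1+(4\rho_n-5)z^2\bigr]
 +4p^2(1-\chi_nz^2)
 +4p\bigl[2-\rho_n(2-\rho_n)(1+z)\bigr].
\]
We also need AM--GM, \(s\geq2p\), to supplement this, because the hypothesis alone need not give a sign.

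Next, expand \(\widetilde P(p,z)/p\), which is a quadratic in \(p\). The \(p^2\) coefficient is
\[
 (1+z)\bigl[1+(4\rho_n-5)z^2-2+2\chi_nz^2\bigr]
 =(1+z)\bigl[-1+(4\rho_n-5+2+4\rho_n-4\rho_n^2)z^2\bigr].
\]
Since \(-3+8\rho_n-4\rho_n^2\leq1\) for \(\rho_n\in(0,\tfrac12]\), this coefficient is at most \(-(1+z)(1-z^2)\leq0\).

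The \(p\) coefficient is
\[
 -4(1+z)(1-z)-4(1-\chi_nz^2)+4(1-\chi_nz^2)
 =-4(1-z^2).
\]
The constant term is
\[
 -8(1-z)+4\bigl[2-\rho_n(2-\rho_n)(1+z)\bigr]
 =8z-4\rho_n(2-\rho_n)(1+z).
\]
This constant term can be positive when \(z\) is near \(1\), and that is the main obstacle. To handle it, I would return to the hypothesis together with \(s\geq2p\). The inequality
\[
 p^2(1-\chi_nz^2)+2p(1-z)<Ds
\]
does not by itself bound \(p\), so instead I keep the exact \(s\)-term. Using \(s\geq2p\) in the linear coefficient gives the additional contribution \(-4D\bigl(p(1+z)+2\bigr)p\), which dominates the constant \(8z\) part. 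The plan is therefore to take a convex combination of the two substitutions, the hypothesis and \(s\geq2p\), chosen to cancel the positive constant. After that, check that every coefficient of the resulting polynomial in \(p\) is nonpositive on \(z\in[0,1]\), \(\rho_n\in(0,\tfrac12]\). This should be a routine univariate sign check in \(z\), which I would verify by grouping factors of \((1-z)\) and \((1+z)\).

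The consequence then follows immediately. We have \(p>0\) and \((1+\rho_n)^2>0\). Since \(z-1\leq0\) and \(P_{\rho_n}\leq0\), the product \((z-1)P_{\rho_n}\) is nonnegative, so the whole displayed product is nonnegative.
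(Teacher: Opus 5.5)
Your setup is correct. Since $P_{\rho_n}$ is affine and strictly decreasing in $s$, the two facts $Ds>p^2(1-\chi_nz^2)+2p(1-z)$ and $s\ge 2p$ give $P_{\rho_n}\le\min\bigl\{\widetilde P(p,z),\,P_{\rho_n}(p,2p,z)\bigr\}$. Your coefficients of $A(p):=\widetilde P(p,z)/p$ are also right.

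\textbf{The gap is the finish.} Put $B(p):=P_{\rho_n}(p,2p,z)/p$ and consider $\lambda A+(1-\lambda)B$ with $\lambda$ independent of $p$. As $p\to0^+$ its sign is that of the constant term; as $p\to\infty$ it is that of the $p^2$-coefficient. These two requirements push $\lambda$ in opposite directions, and in general no $\lambda$ works. This fails even for the bare sign statement on $p>0$, not just coefficientwise. For example, take $n=9$ ($\rho_n=1/10$) and $z=1/5$:
\[
 A(p)=-\tfrac{4086}{3125}p^2-\tfrac{96}{25}p+\tfrac{86}{125},
 \qquad
 B(p)=\tfrac{612}{625}p^2-\tfrac{318}{625}p-\tfrac{14}{125}.
\]
A nonpositive constant term forces $\lambda\le 0.14$, while a nonpositive leading coefficient forces $\lambda\ge 3060/7146>0.42$. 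The cause is that $B$ has positive leading coefficient $(1+z)\bigl[1+(4\rho_n-5)z^2\bigr]$ at moderate $z$, while its constant term is only $O(\rho_n)$; the constant term of $A$, by contrast, is about $8z$. So the ``routine univariate sign check'' cannot succeed for large $n$.

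\textbf{What is needed.} The true statement is $\min\{A(p),B(p)\}\le 0$ for every $p>0$. The choice between the two substitutions must therefore depend on $p$, through a case split in $p$. The paper splits on the sign of $p(1-\chi_nz^2)+2(\rho_n-z)$.
\begin{itemize}
\item If this quantity is $\le0$, it uses $s\ge2p$ and the exact factorization
\[
 P_{\rho_n}(p,2p,z)=p\bigl(2\rho_n+p(1+z)\bigr)\Bigl[p(1-\chi_nz^2)+2(\rho_n-z)-2(1-\rho_n)z\bigl(1+p(2-\rho_n)z\bigr)\Bigr]\le0 .
\]
\item If it is $>0$, it uses the hypothesis, i.e.\ your $\widetilde P=-p\,\mathcal B_{\rho_n}(p,z)$ with $\mathcal B_{\rho_n}=-A$.
 \begin{itemize}
 \item For $z\le\rho_n$, the constant term of $\mathcal B_{\rho_n}$ is at least $4\rho_n^2(1-\rho_n)>0$, and its other terms are nonnegative.
 \item For $z>\rho_n$, the case condition supplies exactly the bound you said was unavailable: it forces $1-\chi_nz^2>0$ and $p>2(z-\rho_n)/(1-\chi_nz^2)$. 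Since $\mathcal B_{\rho_n}$ is increasing in $p$, an explicit factorization of $\mathcal B_{\rho_n}$ at this threshold into positive factors finishes the case.
 \end{itemize}
\end{itemize}
So the hypothesis does not bound $p$ by itself, but in the regime where it is the right substitution, the complementary case condition does. Your deduction of the displayed consequence from $P_{\rho_n}\le0$ is fine.
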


\begin{proof}[Proof of \Cref{lem:ac-near-low}]
We first encode \(u\) by a scalar \(z\) and introduce an auxiliary
angle \(\eta\), which we show is at most the source distance
\(\arccos Q\).  We then prove by exact factorization the comparison
\eqref{eq:ac-42} for the Ptolemy lower bound
\(\mathrm{Num}_{\mathrm{Ptol}}\) defined in
\Cref{lem:ac-near-ptolemy}.  The estimates in that lemma and the
comparison \(\eta\leq\arccos Q\) will then imply \(R\geq L_n(Q)\).

Because \(\alpha,\beta\leq\alpha_n^\ast\), the definition of
\(\vartheta_n\) gives
\(
 \theta_\alpha=\Theta_n(\alpha),
 \qquad
 \theta_\beta=\Theta_n(\beta).
\)
Put
\[
 z:=\sqrt{\frac{1-u}{1+{\rho_n}}}.
\]
Since \(-\rho_n\leq u\leq1\), one has \(0\leq z\leq1\), and
\(
 z=1\quad\Longleftrightarrow\quad u=-\rho_n.
\)
Thus \(z=1\) corresponds exactly to the boundary \(u=-\rho_n\) of the
range \(u\geq-\rho_n\) in which the Ptolemy estimates are applied.

The definitions of \(z\) and \(\Phi_{\rho_n}\) give
\(
 \Phi_{\rho_n}(u)=(1+\rho_n)(1-z).
\)
Define \(\eta\in[0,\widehat{\zeta}_n]\) by
\[
 \sin^2\frac\eta2=
 \sin^2\frac{\theta_\alpha-\theta_\beta}{2}
 +z^2\sin\theta_\alpha\sin\theta_\beta.                  \acEquationTag{eq:ac-40}
\]
This defines a unique \(\eta\in[0,\widehat{\zeta}_n]\).  Indeed,
\(u\geq-\rho_n\) gives \(0\leq z\leq1\), and therefore the right-hand
side is at most
\[
 \sin^2\frac{\theta_\alpha-\theta_\beta}{2}
 +\sin\theta_\alpha\sin\theta_\beta
 =\sin^2\frac{\theta_\alpha+\theta_\beta}{2}
 \leq\sin^2\frac{\widehat{\zeta}_n}{2}.
\]

If \(d_Q:=\arccos Q\), then
\[
\begin{aligned}
 \sin^2\frac{d_Q}{2}
 &=\frac{1-Q}{2}\\
 &=\frac{1-\cos\alpha\cos\beta-u\sin\alpha\sin\beta}{2}\\
 &=\frac{1-\cos(\alpha-\beta)}{2}
   +\frac{1-u}{2}\sin\alpha\sin\beta\\
 &=\sin^2\frac{\alpha-\beta}{2}
   +\frac{1-u}{2}\sin\alpha\sin\beta.
\end{aligned}
\]
Next, the identities
\[
 z^2=\frac{1-u}{1+\rho_n},\qquad
 \sin\theta_\alpha=c_n\sin\alpha,\qquad
 \sin\theta_\beta=c_n\sin\beta,\qquad
 \frac{c_n^2}{1+\rho_n}=\frac12
\]
give
\[
 z^2\sin\theta_\alpha\sin\theta_\beta
 =\frac{1-u}{1+\rho_n}c_n^2\sin\alpha\sin\beta
 =\frac{1-u}{2}\sin\alpha\sin\beta.
\]
Thus the defining relation \eqref{eq:ac-40} for \(\eta\) becomes
\[
 \sin^2\frac\eta2
 =\sin^2\frac{\theta_\alpha-\theta_\beta}{2}
  +\frac{1-u}{2}\sin\alpha\sin\beta.
\]

By \eqref{eq:theta-derivatives}, \(\Theta_n\) is \(1\)-Lipschitz, and hence
\[
 \lvert\theta_\alpha-\theta_\beta\rvert
 =\lvert\Theta_n(\alpha)-\Theta_n(\beta)\rvert
 \leq\lvert\alpha-\beta\rvert.
\]
Comparing the preceding two half-angle identities therefore gives
\(
 \sin^2\tfrac{\eta}{2}\leq\sin^2\tfrac{d_Q}{2}.
\)
Because \(s\mapsto\sin^2(s/2)\) is increasing on \([0,\pi]\), it follows
that
\(
 0\leq\eta\leq d_Q\leq \widehat{\zeta}_n.
\)
Here the final inequality follows from \(Q\geq\rho_n\).

To prove the lower-boundary estimate \eqref{eq:ac-33},
it suffices to establish the auxiliary comparison
\[
 \frac{\mathrm{Num}_{\mathrm{Ptol}}}{X_\alpha X_\beta}
 \geq\cos(\zeta_n+\eta),                                  \acEquationTag{eq:ac-42}
\]
Indeed, \eqref{eq:ac-35}--\eqref{eq:ac-37} give
\(\mathrm{Num}\geq\mathrm{Num}_{\mathrm{Ptol}}\) and
\(XY\geq X_\alpha X_\beta\).

If \(\mathrm{Num}\geq0\), then
\[
 R=\frac{\mathrm{Num}}{XY}\geq0\geq L_n(Q),
\]
so \eqref{eq:ac-33} holds.  If \(\mathrm{Num}<0\), then
\[
 R=\frac{\mathrm{Num}}{XY}
 \geq\frac{\mathrm{Num}}{X_\alpha X_\beta}
 \geq\frac{\mathrm{Num}_{\mathrm{Ptol}}}{X_\alpha X_\beta}.
\]
Moreover, \(\eta\leq d_Q\leq\widehat{\zeta}_n\) and monotonicity of
cosine on \([\zeta_n,\pi]\) give
\(\cos(\zeta_n+\eta)\geq\cos(\zeta_n+d_Q)\).  Thus
\eqref{eq:ac-42} implies the desired estimate \eqref{eq:ac-33}.
It therefore remains only to prove \eqref{eq:ac-42}.

\smallskip
\noindent\emph{Zero gains.}
The zero-gain cases can be handled directly.  If \(r_\alpha=0\), then
\(\theta_\alpha=0\), \(X_\alpha=1\), and \(\eta=\theta_\beta\) by
\eqref{eq:ac-40}.

By the sine-rule and norm identities in
\Cref{lem:ac-chord-geometry}, the formulas for \(r_\beta\) and
\(X_\beta\) give
\[
 \frac{\mathrm{Num}_{\mathrm{Ptol}}}{X_\alpha X_\beta}
 =\frac{-{\rho_n}-r_\beta}{X_\beta}
 =-\cos(\widehat{\zeta}_n-\theta_\beta)
 =\cos(\zeta_n+\eta).
\]
Thus \eqref{eq:ac-42} holds with equality.  The case \(r_\beta=0\) is
symmetric.

\smallskip
\noindent\emph{Positive gains: reduction to a polynomial inequality.}

Assume \(r_\alpha r_\beta>0\).  Using
\(
 \cos(\zeta_n+\eta)
 =-\rho_n\cos\eta-\sin\widehat{\zeta}_n\sin\eta,
\)
and multiplying \eqref{eq:ac-42} by \(X_\alpha X_\beta>0\), we see that the
required inequality is equivalent to

\[
 \mathcal G
 :=\mathrm{Num}_{\mathrm{Ptol}}
   +\rho_nX_\alpha X_\beta\cos\eta
 \geq-\sin\widehat{\zeta}_nX_\alpha X_\beta\sin\eta.       \acEquationTag{eq:ac-low-signed}
\]
The right-hand side is nonpositive.  Hence
\eqref{eq:ac-low-signed} holds immediately if \(\mathcal G\geq0\).
If \(\mathcal G<0\), both sides are nonpositive, and
\eqref{eq:ac-low-signed} is equivalent to
\[
 \sin^2\widehat{\zeta}_n(X_\alpha X_\beta)^2\sin^2\eta
 -\mathcal G^2\geq0.                                      \acEquationTag{eq:ac-low-squared}
\]

Assume henceforth that \(\mathcal G<0\).  It remains to prove
\eqref{eq:ac-low-squared}.

To evaluate the left side of \eqref{eq:ac-low-squared}, introduce the two symmetric
combinations of the gains
\[
 r_{\alpha\beta}^{\times}:=\sqrt{r_\alpha r_\beta},\qquad
 r_{\alpha\beta}^{+}:=r_\alpha+r_\beta,
\]

and define the dimension-dependent constant
\(
 \chi_n:=1+2\rho_n-2\rho_n^2.
\)

By \eqref{eq:chord-angular-form} and the norm identity in
\Cref{lem:ac-chord-geometry},
\[
\begin{aligned}
 X_\alpha\cos\theta_\alpha&=1+\rho_n r_\alpha,
 &\qquad
 X_\alpha\sin\theta_\alpha&=r_\alpha\sin\widehat{\zeta}_n,\\
 X_\beta\cos\theta_\beta&=1+\rho_n r_\beta,
 &
 X_\beta\sin\theta_\beta&=r_\beta\sin\widehat{\zeta}_n.
\end{aligned}
\]
On the other hand, \eqref{eq:ac-40} gives
\[
\begin{aligned}
 \cos\eta
 &=1-2\sin^2\frac{\eta}{2}\\
 &=\cos(\theta_\alpha-\theta_\beta)
   -2z^2\sin\theta_\alpha\sin\theta_\beta\\
 &=\cos\theta_\alpha\cos\theta_\beta
   +(1-2z^2)\sin\theta_\alpha\sin\theta_\beta.
\end{aligned}
\]
Multiplication by \(X_\alpha X_\beta\), followed by
\(\sin^2\widehat{\zeta}_n=1-\rho_n^2\), now gives
\[
\begin{aligned}
 X_\alpha X_\beta\cos\eta
 &=(1+\rho_n r_\alpha)(1+\rho_n r_\beta)
 +(1-2z^2)r_\alpha r_\beta
   \sin^2\widehat{\zeta}_n\\
 &=1+\rho_n r_{\alpha\beta}^{+}
 +(r_{\alpha\beta}^{\times})^2
 \bigl(1-2\sin^2\widehat{\zeta}_n z^2\bigr).
\end{aligned}
\]

The identities
\(
 u=1-(1+\rho_n)z^2,
 \qquad
 \Phi_{\rho_n}(u)=(1+\rho_n)(1-z)
\)
give
\[
 \mathrm{Num}_{\mathrm{Ptol}}
={}-\rho_n-r_{\alpha\beta}^{+}
 +(r_{\alpha\beta}^{\times})^2
   \bigl(1-(1+\rho_n)z^2\bigr)
 +2r_{\alpha\beta}^{\times}(1+\rho_n)(1-z).
\]
Adding \(\rho_nX_\alpha X_\beta\cos\eta\) and using the preceding
formula for \(X_\alpha X_\beta\cos\eta\), we obtain
\[
\begin{aligned}
 \mathcal G
 &=(1+\rho_n)\left[
 (r_{\alpha\beta}^{\times})^2(1-\chi_nz^2)
 +2r_{\alpha\beta}^{\times}(1-z)
 -(1-\rho_n)r_{\alpha\beta}^{+}\right].
\end{aligned}
 \acEquationTag{eq:ac-44}
\]

\medskip
\noindent\emph{Verification of the exact factorization.}

For the factorization, first note that direct multiplication gives
\[
 X_\alpha^2X_\beta^2
={}\bigl(1+\rho_n r_{\alpha\beta}^{+}
 +(r_{\alpha\beta}^{\times})^2\bigr)^2
 +\sin^2\widehat{\zeta}_n
 \bigl((r_{\alpha\beta}^{+})^2
 -4(r_{\alpha\beta}^{\times})^2\bigr).
\]
Moreover,
\[
 (X_\alpha X_\beta)^2\sin^2\eta
 =X_\alpha^2X_\beta^2
 -(X_\alpha X_\beta\cos\eta)^2.
\]
Substitute the preceding formulas for
\(X_\alpha^2X_\beta^2\),
\(X_\alpha X_\beta\cos\eta\), and \(\mathcal G\) into the left-hand
side of \eqref{eq:ac-low-squared}.

After these substitutions, the left-hand side of
\eqref{eq:ac-low-squared} is a polynomial in \(z\).  At \(z=1\), the
identities above give
\[
 \mathcal G
 =-\sin^2\widehat{\zeta}_n
 \bigl(r_{\alpha\beta}^{+}
 +2\rho_n(r_{\alpha\beta}^{\times})^2\bigr),
\qquad
 X_\alpha^2X_\beta^2
 -(X_\alpha X_\beta\cos\eta)^2
 =\sin^2\widehat{\zeta}_n
 \bigl(r_{\alpha\beta}^{+}
 +2\rho_n(r_{\alpha\beta}^{\times})^2\bigr)^2.
\]
Thus this polynomial vanishes at \(z=1\).

Since the left-hand side is a polynomial in \(z\), its vanishing at
\(z=1\) implies divisibility by \(z-1\).  The equivalence
\(
 z=1\quad\Longleftrightarrow\quad u=-\rho_n
\)
identifies this root with the boundary of the range
\(u\geq-\rho_n\) in \Cref{lem:ac-near-ptolemy}.  The resulting quotient
is identified coefficient by coefficient in
\Cref{app:ac-low-factorization}.

Restoring the original notation gives the exact factorization
\[
 \sin^2\widehat{\zeta}_n(X_\alpha X_\beta)^2\sin^2\eta-\mathcal G^2
 =r_{\alpha\beta}^{\times}(1+\rho_n)^2(z-1)
   P_{\rho_n}
   (r_{\alpha\beta}^{\times},r_{\alpha\beta}^{+},z),          \acEquationTag{eq:ac-45}
\]
where \(P_{\rho_n}\) is the polynomial defined in
\Cref{lem:ac-low-polynomial-sign}.

By \eqref{eq:ac-44} and the standing assumption \(\mathcal G<0\),
\[
 (r_{\alpha\beta}^{\times})^2(1-\chi_nz^2)
 +2r_{\alpha\beta}^{\times}(1-z)
 -(1-\rho_n)r_{\alpha\beta}^{+}<0.
\]
Thus \Cref{lem:ac-low-polynomial-sign} gives
\(
 P_{\rho_n}
 (r_{\alpha\beta}^{\times},r_{\alpha\beta}^{+},z)\leq0.
\)
Since \(r_{\alpha\beta}^{\times}>0\) and \(z-1\leq0\),
the right-hand side of \eqref{eq:ac-45} is nonnegative.  This proves
\eqref{eq:ac-low-squared}, hence \eqref{eq:ac-low-signed}, and
finally \eqref{eq:ac-42}.
\end{proof}

\begin{lemma}[Spherical-distance comparison]
\label{lem:ac-spherical-distance-comparison}
Let \(0\leq\alpha,\beta\leq\tfrac{\pi}{2}\), and define
\[
 \mathsf d_{\alpha,\beta}\colon[0,\pi]\longrightarrow[0,\pi],
\qquad
 \mathsf d_{\alpha,\beta}(\delta)
 :=
 \arccos\bigl(\cos\alpha\cos\beta
 +\sin\alpha\sin\beta\cos\delta\bigr).
\]
Then \(\mathsf d_{\alpha,\beta}\) is nondecreasing and
\(1\)-Lipschitz.  Equivalently, whenever
\(0\leq\delta_1\leq\delta_2\leq\pi\),
\[
 0\leq
 \mathsf d_{\alpha,\beta}(\delta_2)
 -\mathsf d_{\alpha,\beta}(\delta_1)
 \leq\delta_2-\delta_1.
\]
\end{lemma}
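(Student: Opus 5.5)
The plan is to interpret \(\mathsf d_{\alpha,\beta}(\delta)\) geometrically and settle both properties by the spherical law of cosines. Take the points \(P=p_\alpha(x)\) and \(P'=p_\beta(y)\) in \(\Sp^{n+1}\) (with \(n\geq1\)), where \(x,y\in\Sp(W_{n+2})\) satisfy \(d_n(x,y)=\delta\). The same computation that gives the formula for \(Q\) in the common notation yields
\[
 P\cdot P'=\cos\alpha\cos\beta+\sin\alpha\sin\beta\cos\delta,
\]
so \(\mathsf d_{\alpha,\beta}(\delta)=d_{n+1}(P,P')\). Throughout, \(\mathsf d_{\alpha,\beta}\) is the third side of the spherical triangle \(N,P,P'\), whose sides at \(N\) have lengths \(\alpha\) and \(\beta\) and enclose the angle \(\delta\).

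\emph{Monotonicity.} The argument of the arccosine is \(\cos\alpha\cos\beta+\sin\alpha\sin\beta\cos\delta\). Since \(\sin\alpha\sin\beta\geq0\) on \([0,\tfrac\pi2]^2\), this is nonincreasing in \(\delta\). Because \(\arccos\) is decreasing, \(\mathsf d_{\alpha,\beta}\) is nondecreasing.

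\emph{Lipschitz bound.} I would argue geometrically, since this avoids the singular points of the derivative. Fix \(\delta_1\leq\delta_2\) and choose \(y_1,y_2\) on a common great circle through \(x\) with \(d_n(x,y_k)=\delta_k\) and \(d_n(y_1,y_2)=\delta_2-\delta_1\). Rotation about the axis through \(N\) carries \(p_\beta(y_1)\) to \(p_\beta(y_2)\) along the latitude circle of colatitude \(\beta\), and this circle has length element \(\sin\beta\,d\delta\). Hence
\[
 d_{n+1}\bigl(p_\beta(y_1),p_\beta(y_2)\bigr)\leq\sin\beta\,(\delta_2-\delta_1)\leq\delta_2-\delta_1 .
\]
The triangle inequality then gives
\[
 \mathsf d_{\alpha,\beta}(\delta_2)-\mathsf d_{\alpha,\beta}(\delta_1)\leq\delta_2-\delta_1 .
\]

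As an analytic alternative, one can differentiate directly:
\[
 \mathsf d'(\delta)=\frac{\sin\alpha\sin\beta\sin\delta}{\sin\mathsf d(\delta)}.
\]
The spherical sine rule gives \(\sin\beta\sin\delta=\sin\mathsf d\,\sin(\text{angle at }P)\), so \(\mathsf d'\leq\sin\alpha\leq1\) wherever \(\sin\mathsf d>0\). The isolated points where \(\sin\mathsf d=0\) would then be handled by continuity, as in \Cref{lem:join-distance-estimate}. The geometric latitude argument already sidesteps this issue, so I would present that one.

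The only mildly delicate step is the degenerate cases \(\alpha=0\) or \(\beta=0\). In those cases \(\mathsf d_{\alpha,\beta}\) is constant, so both assertions are trivial.
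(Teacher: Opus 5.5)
Your proof is correct, and the Lipschitz half takes a different route from the paper.

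\textbf{The paper's route.} The paper argues purely analytically. After disposing of the case $\sin\alpha\sin\beta=0$, it checks that the arccosine argument lies strictly inside $(-1,1)$ for $0<\delta<\pi$. It then differentiates to get $\mathsf d_{\alpha,\beta}'(\delta)=\sin\alpha\sin\beta\sin\delta/\sin\mathsf d_{\alpha,\beta}(\delta)\geq0$. Writing $a=\cos\alpha\cos\beta$ and $b=\sin\alpha\sin\beta$, squaring reduces $\mathsf d_{\alpha,\beta}'\leq1$ to $a^2+b^2+2ab\cos\delta\leq1$. This follows from $(a+b)^2-2ab(1-\cos\delta)\leq\cos^2(\alpha-\beta)$. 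The paper then integrates and recovers the endpoints $\delta\in\{0,\pi\}$ by continuity.

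\textbf{Your route.} Your monotonicity step is the same fact, read off directly from the monotonicity of $\cos$ and $\arccos$ without differentiating. For the Lipschitz bound you instead realize $\mathsf d_{\alpha,\beta}(\delta)$ as $d_{n+1}(p_\alpha(x),p_\beta(y))$ and move one endpoint, using the triangle inequality. The needed bound $d_{n+1}(p_\beta(y_1),p_\beta(y_2))\leq\delta_2-\delta_1$ comes from the latitude arc. It also follows directly from $p_\beta(y_1)\cdot p_\beta(y_2)=\cos^2\beta+\sin^2\beta\,(y_1\cdot y_2)\geq y_1\cdot y_2$.

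\textbf{Comparison.} Your argument has several advantages:
\begin{itemize}
\item It never touches the singular denominator $\sin\mathsf d_{\alpha,\beta}$, so no endpoint limits are needed.
\item It needs no case split for degenerate $\alpha,\beta$.
\item It proves the Lipschitz bound for arbitrary $\alpha,\beta$; the hypothesis $\alpha,\beta\leq\tfrac{\pi}{2}$ is used only for monotonicity.
\item By symmetry it gives the sharper constant $\min\{\sin\alpha,\sin\beta\}$.
\item It matches how the lemma is used in \Cref{lem:ac-near-high}, where $\mathsf d_{\alpha,\beta}(\delta)$ is literally the source distance $d_{n+1}(p_\alpha(x),p_\beta(y))$.
\end{itemize}
The paper's computation, in exchange, is self-contained scalar algebra that needs no geometric model of the triangle and no choice of auxiliary points $y_1,y_2$.
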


\begin{proof}

If \(\sin\alpha\sin\beta=0\), then
\(\mathsf d_{\alpha,\beta}\) is constant, so the conclusion is
immediate.  Assume henceforth that \(\sin\alpha\sin\beta>0\).
Put
\(
 a:=\cos\alpha\cos\beta,\qquad
 b:=\sin\alpha\sin\beta.
\)

For \(0<\delta<\pi\), the standing assumption \(b>0\) gives
\(
 \cos(\alpha+\beta)
 =a-b
 <a+b\cos\delta
 <a+b
 =\cos(\alpha-\beta).
\)
Since \(0\leq\alpha,\beta\leq\tfrac{\pi}{2}\), it follows that
\(
 -1<a+b\cos\delta<1.
\)
Consequently
\(\mathsf d_{\alpha,\beta}(\delta)\in(0,\pi)\), and hence
\(\sin\mathsf d_{\alpha,\beta}(\delta)>0\).
Direct differentiation now gives
\[
 \mathsf d_{\alpha,\beta}'(\delta)
 =\frac{\sin\alpha\sin\beta\sin\delta}
 {\sin\mathsf d_{\alpha,\beta}(\delta)}\geq0.
\]

Hence squaring preserves the inequality
\(\mathsf d_{\alpha,\beta}'(\delta)\leq1\), and rearrangement gives
\[
 \bigl(\mathsf d_{\alpha,\beta}'(\delta)\bigr)^2\leq1
 \iff
 b^2\sin^2\delta
 \leq1-(a+b\cos\delta)^2
 \iff
 a^2+b^2+2ab\cos\delta\leq1.
\]
The final inequality follows directly from
\[
 a^2+b^2+2ab\cos\delta
 =(a+b)^2-2ab(1-\cos\delta)
 \leq(a+b)^2
 =\cos^2(\alpha-\beta)\leq1.
\]

Consequently
\(
 0\leq\mathsf d_{\alpha,\beta}'(\delta)\leq1
 \qquad(0<\delta<\pi).
\)

Integrating this inequality over
\([\delta_1,\delta_2]\subset(0,\pi)\) proves the assertion whenever
\(0<\delta_1\leq\delta_2<\pi\).

Since \(\mathsf d_{\alpha,\beta}\) is continuous on \([0,\pi]\), the
endpoint cases follow by taking limits.
\end{proof}

\begin{lemma}[High radial levels: triangle comparison]
\label{lem:ac-near-high}
With the notation above, assume
\[
\alpha_n^\ast\leq\alpha,\beta<\widehat{\zeta}_n,\qquad
 u\geq-\rho_n,\qquad Q\geq\rho_n.
\]
Then \(R\geq L_n(Q)\).
\end{lemma}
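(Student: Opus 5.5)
The plan is to run the argument of \Cref{lem:ac-near-direct} with the roles of anchors and equatorial points exchanged. At high radial levels, each target is close to its own equatorial point rather than to its anchor. So I would bound the target distance through \(x\) and \(y\), not through \(v_i\) and \(v_j\).

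\emph{Step 1: each target is close to its equatorial point.} By \Cref{prop:ac-maximal-displacement},
\[
 d_n\bigl(H_{\alpha,i}(x),x\bigr)\leq\widehat{\zeta}_n-\theta_\alpha,
 \qquad
 d_n\bigl(H_{\beta,j}(y),y\bigr)\leq\widehat{\zeta}_n-\theta_\beta .
\]
Put \(\widetilde\alpha:=F_n(\alpha)\) and \(\widetilde\beta:=F_n(\beta)\). Because \(\alpha,\beta\geq\alpha_n^\ast\), \Cref{prop:ac-critical-involution} gives \(\widetilde\alpha,\widetilde\beta\leq\alpha_n^\ast\). The second branch of \eqref{eq:ac-5} then gives
\[
 \widehat{\zeta}_n-\theta_\alpha=\Theta_n(\widetilde\alpha)\leq\frac{\widehat{\zeta}_n}{2},
 \qquad
 \widehat{\zeta}_n-\theta_\beta=\Theta_n(\widetilde\beta)\leq\frac{\widehat{\zeta}_n}{2}.
\]
Write \(\delta:=\arccos u\). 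The triangle inequality yields
\[
 d_n\bigl(H_{\alpha,i}(x),H_{\beta,j}(y)\bigr)
 \leq\delta+\Theta_n(\widetilde\alpha)+\Theta_n(\widetilde\beta)
 \leq\delta+\widehat{\zeta}_n .
\]

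\emph{Step 2: the source distance is large.} The source distance is \(\arccos Q=\mathsf d_{\alpha,\beta}(\delta)\), with \(\mathsf d_{\alpha,\beta}\) as in \Cref{lem:ac-spherical-distance-comparison}. The hypothesis \(u\geq-\rho_n\) means \(\delta\leq\zeta_n\). Moreover \(\mathsf d_{\alpha,\beta}(\zeta_n)=D_n^\times(\alpha,\beta)\), since \(\cos\zeta_n=-\rho_n\).

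Because \(\alpha\geq\alpha_n^\ast\), we have \(F_n(\alpha)\leq\alpha_n^\ast\leq\beta\). The superlevel characterization \eqref{eq:critical-involution-superlevel} then gives \(D_n^\times(\alpha,\beta)\geq\widehat{\zeta}_n\). Since \(\mathsf d_{\alpha,\beta}\) is \(1\)-Lipschitz,
\[
 \arccos Q\geq\mathsf d_{\alpha,\beta}(\zeta_n)-(\zeta_n-\delta)
 \geq\widehat{\zeta}_n-\zeta_n+\delta .
\]
Hence \(\zeta_n+\arccos Q\geq\widehat{\zeta}_n+\delta\). Combining this with Step 1 gives
\[
 d_n\bigl(H_{\alpha,i}(x),H_{\beta,j}(y)\bigr)\leq\zeta_n+\arccos Q .
\]

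\emph{Step 3: conclusion.} Since \(Q\geq\rho_n\), we have \(\zeta_n+\arccos Q\leq\pi\). Cosine is decreasing on \([0,\pi]\), so
\[
 R\geq\cos(\zeta_n+\arccos Q)=-\rho_nQ-\sin\zeta_n\sqrt{1-Q^2}=L_n(Q),
\]
exactly as at the end of \Cref{lem:ac-near-direct}.

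The step needing the most care is Step 2, specifically the direction of the Lipschitz comparison. We move from \(\delta=\zeta_n\) down to the actual \(\delta\leq\zeta_n\) while losing at most \(\zeta_n-\delta\), and this loss must be exactly compensated by the \(\delta\) appearing in the target bound. The monotone, \(1\)-Lipschitz property of \(\mathsf d_{\alpha,\beta}\) gives precisely this. I would also confirm that \(\alpha,\beta\leq\widehat{\zeta}_n<\tfrac{\pi}{2}\), so that \Cref{lem:ac-spherical-distance-comparison} applies. The slack \(\Theta_n(\widetilde\alpha)+\Theta_n(\widetilde\beta)\leq\widehat{\zeta}_n\) suggests the estimate is not tight here, which fits the fact that equality in \eqref{eq:ac-33} is realized only by the low-level family.
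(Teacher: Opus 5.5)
Your proof is correct and follows the paper's argument essentially step for step. Both use the complementary-displacement bounds from \Cref{prop:ac-maximal-displacement}, together with $\theta_\alpha,\theta_\beta\geq\widehat{\zeta}_n/2$, which you phrase as $\Theta_n(F_n(\alpha)),\Theta_n(F_n(\beta))\leq\widehat{\zeta}_n/2$; then the triangle inequality through $x$ and $y$, the bound $D_n^\times(\alpha,\beta)\geq\widehat{\zeta}_n$ from the superlevel characterization, and the $1$-Lipschitz property of $\mathsf d_{\alpha,\beta}$ to pass from $\zeta_n$ to the actual $\delta$ (the paper chains these inequalities directly rather than first isolating a lower bound for $\arccos Q$).
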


\begin{proof}
Put
\(
 \delta:=\arccos u\leq\zeta_n.
\)

Applying the complementary-displacement formula in
\Cref{prop:ac-maximal-displacement} first in the \(i\)-cell and then in
the \(j\)-cell gives
\[
 d_n(H_{\alpha,i}(x),x)\leq\widehat{\zeta}_n-\theta_\alpha,
 \qquad
 d_n(H_{\beta,j}(y),y)\leq\widehat{\zeta}_n-\theta_\beta.  \acEquationTag{eq:ac-51}
\]

The equatorial points in the swapped-anchor configuration of
\Cref{sec:swapped-anchor-configuration}, depicted in
\Cref{fig:swapped-anchor-great-circle}, are separated by \(\zeta_n\).
For the function \(\mathsf d_{\alpha,\beta}\) defined in
\Cref{lem:ac-spherical-distance-comparison}, this gives
\(
 \mathsf d_{\alpha,\beta}(\zeta_n)
 =D_n^\times(\alpha,\beta),
\)

whereas
\[
 \mathsf d_{\alpha,\beta}(\delta)
 =d_{n+1}\bigl(p_\alpha(x),p_\beta(y)\bigr)
 =\arccos Q.
\]

Since \(\delta\leq\zeta_n\),
\Cref{lem:ac-spherical-distance-comparison} gives
\[
 \mathsf d_{\alpha,\beta}(\zeta_n)
 -\mathsf d_{\alpha,\beta}(\delta)
 \leq\zeta_n-\delta.                                      \acEquationTag{eq:ac-high-lipschitz}
\]

We next prove
\[
 \mathsf d_{\alpha,\beta}(\zeta_n)\geq\widehat{\zeta}_n.
 \acEquationTag{eq:ac-high-endpoint-distance}
\]

Because \(F_n\) is decreasing and fixes \(\alpha_n^\ast\),
\(
 F_n(\alpha)\leq\alpha_n^\ast\leq\beta.
\)
The superlevel characterization
\eqref{eq:critical-involution-superlevel} in
\Cref{prop:ac-critical-involution} therefore gives
\(
 \mathsf d_{\alpha,\beta}(\zeta_n)
 =D_n^\times(\alpha,\beta)
 \geq\widehat{\zeta}_n.
\)
Because
\(\theta_\alpha,\theta_\beta\geq\widehat{\zeta}_n/2\),
\(
 (\widehat{\zeta}_n-\theta_\alpha)
 +(\widehat{\zeta}_n-\theta_\beta)
 \leq \widehat{\zeta}_n.
\)
Combining this estimate with
\eqref{eq:ac-high-lipschitz}--\eqref{eq:ac-high-endpoint-distance}
gives
\[
\begin{aligned}
 \delta+(\widehat{\zeta}_n-\theta_\alpha)
 +(\widehat{\zeta}_n-\theta_\beta)
 &\leq\delta+\widehat{\zeta}_n
 &\leq\delta+\mathsf d_{\alpha,\beta}(\zeta_n)
 &\leq\zeta_n+\mathsf d_{\alpha,\beta}(\delta).
\end{aligned}
\]

Here \(d_n(x,y)=\delta\).  Therefore the triangle inequality in
\(\Sp^n\), followed by \eqref{eq:ac-51} and the preceding estimate,
gives
\[
\begin{aligned}
 \arccos R
 &\leq d_n\bigl(H_{\alpha,i}(x),x\bigr)
       +d_n(x,y)
       +d_n\bigl(y,H_{\beta,j}(y)\bigr)\\
 &\leq(\widehat{\zeta}_n-\theta_\alpha)
       +\delta
       +(\widehat{\zeta}_n-\theta_\beta)\\
 &\leq\zeta_n+\mathsf d_{\alpha,\beta}(\delta)
 =\zeta_n+\arccos Q.
\end{aligned}
\]
Since \(Q\geq\rho_n\), one has
\(\arccos Q\leq\widehat{\zeta}_n\), and hence
\(\zeta_n+\arccos Q\leq\pi\).  The monotonicity of cosine on
\([0,\pi]\) now yields
\[
 R
 \geq\cos(\zeta_n+\arccos Q)
 =-\rho_nQ-\sin\zeta_n\sqrt{1-Q^2}
 =L_n(Q).
\]
This proves the lemma.
\end{proof}

\begin{lemma}[Mixed radial levels: convexity and an endpoint estimate]
\label{lem:ac-near-mixed}
With the notation above, assume
\[
 0\leq\alpha\leq\alpha_n^\ast\leq\beta<\widehat{\zeta}_n,\qquad
 \alpha<\beta,\qquad
 u\geq-\rho_n,\qquad Q\geq\rho_n.
\]
Then
\(R\geq L_n(Q)\).
\end{lemma}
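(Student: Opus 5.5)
The plan is to reduce the mixed case to the two regimes already treated, interpolating in the higher radial level. Put \(\widetilde\beta:=F_n(\beta)\in(0,\alpha_n^\ast]\) by \Cref{prop:ac-critical-involution}, so that \(\theta_\beta=\widehat{\zeta}_n-\Theta_n(\widetilde\beta)\) and \(r_\beta=1/r_n(\widetilde\beta)\). The hypothesis \(Q\geq\rho_n\) constrains \(\beta\) relative to \(\alpha\). Indeed, \(Q\leq\cos\alpha\cos\beta+\sin\alpha\sin\beta\) together with the triangle inequality through \(p_\beta(x)\) bounds \(d_{n+1}(p_\alpha(x),p_\beta(y))\) from below. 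I would first record the analogue of \Cref{lem:ac-near-low}'s auxiliary angle. That is, I would define \(\eta\) by the chord-geometry relation between the two targets in the extreme case \(v_i\cdot x=v_j\cdot y=\rho_n\), and show \(\eta\leq\arccos Q\). Here the Lipschitz property of \(\vartheta_n\) in the mixed range is needed; it follows from \(\Theta_n\) being \(1\)-Lipschitz and from the integrated derivative comparison \(\partial_\tau D_n^\times\leq\Theta_n'\) already proved in the mixed part of \Cref{lem:ac-crossed-schedule}.

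Next I would split on \(u\). If \(u\) is so small that the geometric triangle argument of \Cref{lem:ac-near-high} applies, I would use it. Concretely, I would estimate \(\arccos R\) by going from \(H_{\alpha,i}(x)\) to \(v_i\), then to \(y\) (or to \(x\)), then to \(H_{\beta,j}(y)\), using the displacement bounds \(\theta_\alpha\) and \(\widehat{\zeta}_n-\theta_\beta\) from \Cref{prop:ac-maximal-displacement}. The distance comparison then follows from \Cref{lem:ac-spherical-distance-comparison} applied with \(\delta=\arccos u\leq\zeta_n\), and from \(D_n^\times(\alpha,\beta)\geq\widehat{\zeta}_n\) or \(\leq\widehat{\zeta}_n\) via \eqref{eq:critical-involution-sublevel}--\eqref{eq:critical-involution-superlevel}. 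Otherwise I would use the Ptolemy bound of \Cref{lem:ac-near-ptolemy}. Exactly as in the proof of \Cref{lem:ac-near-low}, it suffices to show \(\mathrm{Num}_{\mathrm{Ptol}}/(X_\alpha X_\beta)\geq\cos(\zeta_n+\eta)\) for a suitable \(\eta\leq\arccos Q\).

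The title points to concavity. For fixed \(\alpha,x,y\), I would view the required inequality \(R-L_n(Q)\geq0\) as a function of the higher level \(\beta\in[\alpha_n^\ast,F_n(\alpha)]\) (or of \(\lambda_n(\beta)\)). I would then show that the slack is concave, or quasi-concave, in a suitable reparametrization, such as \(\vartheta_n(\beta)\) or \(\log r_\beta\). This reduces the problem to the two endpoints. At \(\beta=\alpha_n^\ast\) the pair is low--low and is covered by \Cref{lem:ac-near-low}. At the other endpoint, \(\beta=F_n(\alpha)\) (or where \(Q=\rho_n\)), one has \(\theta_\alpha+\theta_\beta=\widehat{\zeta}_n=D_n^\times\), and a direct estimate is needed. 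This estimate comes from combining the equality case of \Cref{lem:ac-crossed-schedule} with the triangle inequality, as in \Cref{lem:ac-near-direct}.

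I expect the main obstacle to be the concavity step, which is the "mixed slack" verification the paper flags as dimension dependent. There the slack involves \(r_\beta=1/r_n(F_n(\beta))\), which mixes the two branches of the gain. I would first try writing the slack in terms of \(t:=\Theta_n(\widetilde\beta)\), using the sine-rule parametrization of \Cref{lem:ac-chord-geometry}. Then \(X_\beta\) and \(r_\beta\) become trigonometric in \(t\), and the second derivative sign can be checked separately for \(n\geq2\) (where \(\rho_n\leq1/3\)) and for \(n=1\).
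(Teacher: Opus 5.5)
\textbf{There is a genuine gap.} The step that carries all the weight in your plan is the claimed concavity (or quasi-concavity) of the slack $R-L_n(Q)$ in $\beta$, or in $\vartheta_n(\beta)$ or $\log r_\beta$, for fixed $\alpha,x,y$. You only announce it, and there is no reason to expect it to hold. The slack depends on $\beta$ through $r_\beta=1/r_n(F_n(\beta))$, through $Y=\lVert v_j+r_\beta y\rVert$, and through $L_n(Q(\beta))$. A nonnegative function need not be minimized at the endpoints of an interval.

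Several supporting claims are also wrong.
\begin{enumerate}
\item The triangle argument of \Cref{lem:ac-near-high} is not triggered by $u$ being small. It needs $D_n^\times(\alpha,\beta)\ge\widehat\zeta_n$ and $\theta_\alpha+\theta_\beta\ge\widehat\zeta_n$, i.e.\ $\beta\ge F_n(\alpha)$. The path must be $H_{\alpha,i}(x)\to x\to y\to H_{\beta,j}(y)$, with the complementary bounds $\widehat\zeta_n-\theta_\alpha$ and $\widehat\zeta_n-\theta_\beta$. A detour through $v_i$ is useless, since $d_n(v_i,y)$ can equal $\pi$.
\item $\vartheta_n$ is not $1$-Lipschitz on its upper branch: $\vartheta_n'(\beta)\to c_n/\rho_n>1$ as $\beta\uparrow\widehat\zeta_n$.
\item The identity $z^2\sin\theta_\alpha\sin\theta_\beta=\tfrac{1-u}{2}\sin\alpha\sin\beta$ behind the low--low comparison $\eta\le\arccos Q$ fails once $\theta_\beta\ne\Theta_n(\beta)$.
\item At $\beta=F_n(\alpha)$ the argument of \Cref{lem:ac-near-direct} runs the wrong way. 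It needs $Q\le\cos D_n^\times(\alpha,\beta)$, i.e.\ $u\le-\rho_n$.
\item In the mixed regime $\beta$ may exceed $F_n(\alpha)$ (take $u$ near $1$), so your interval $[\alpha_n^\ast,F_n(\alpha)]$ does not exhaust the cases.
\end{enumerate}

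\textbf{How to repair it.} The ingredients of your first two paragraphs give a complete proof if you split on $\beta$ versus $F_n(\alpha)$ and drop the concavity step.

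\emph{Case $\beta\ge F_n(\alpha)$.} Then $F_n(\beta)\le\alpha$, so $\theta_\alpha+\theta_\beta=\widehat\zeta_n+\Theta_n(\alpha)-\Theta_n(F_n(\beta))\ge\widehat\zeta_n$. Also $D_n^\times(\alpha,\beta)\ge\widehat\zeta_n$ by \eqref{eq:critical-involution-superlevel}. These are the only two places where the proof of \Cref{lem:ac-near-high} uses $\alpha,\beta\ge\alpha_n^\ast$, so that proof applies verbatim for every $u\ge-\rho_n$.

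\emph{Case $\beta\le F_n(\alpha)$.} Put $\widetilde\beta:=F_n(\beta)\ge\alpha$ and define $\eta$ by \eqref{eq:ac-40}. Both $\sin^2(\eta/2)$ and $\sin^2(\tfrac12\arccos Q)$ are affine in $z^2\in[0,1]$, so $\eta\le\arccos Q$ reduces to its two endpoints.
\begin{itemize}
\item At $z=1$ it reads $\theta_\alpha+\theta_\beta\le D_n^\times(\alpha,\beta)$, which is \Cref{lem:ac-crossed-schedule}.
\item At $z=0$ it reads $\theta_\beta-\theta_\alpha\le\beta-\alpha$. This follows from $\Theta_n(\alpha)-\alpha\ge\Theta_n(\widetilde\beta)-\widetilde\beta$ together with the inequality $2\Theta_n(\widetilde\beta)\ge\widehat\zeta_n-\beta+\widetilde\beta$ obtained in the proof of \Cref{prop:ac-maximal-displacement}.
\end{itemize}
The rest of the proof of \Cref{lem:ac-near-low} then carries over: \eqref{eq:ac-42} via \eqref{eq:ac-44}, \eqref{eq:ac-45} and \Cref{lem:ac-low-polynomial-sign}. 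That argument only uses $X_\alpha\cos\theta_\alpha=1+\rho_nr_\alpha$ and $X_\alpha\sin\theta_\alpha=r_\alpha\sin\widehat\zeta_n$, which hold for every finite gain, and never uses $r\le1$.

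\textbf{Comparison with the paper.} The repaired route is shorter than the paper's. The paper reduces via Ptolemy to the scalar function $\widetilde{\mathscr E}_{\alpha,\beta}$ and shows its negative $z$-derivative is strictly convex. Hence any interior minimum is at least the $z=0$ value minus $(1+\rho_n)T_{\alpha,\beta}$. It then proves that endpoint bound by concavity in the lower level $\alpha$, not in $\beta$, with a separate check for $n=1$.
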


We prove the lemma after three subsidiary results.  The first reduces
the desired inequality, by the Ptolemy estimates, to the
nonnegativity of a scalar function.  The second controls the possible
interior minimum of the corresponding one-variable function.  The
third proves the endpoint estimate at \(z=0\) that makes the resulting
lower bound for this minimum nonnegative.

\begin{lemma}[Ptolemy reduction in the mixed radial range]
\label{lem:ac-near-mixed-reduction}
Under the hypotheses of \Cref{lem:ac-near-mixed}, for every
\(s\in[-\rho_n,1]\) satisfying
\(
 \cos\alpha\cos\beta+\sin\alpha\sin\beta\,s\geq\rho_n,
\)
define
\[
\begin{aligned}
 \widetilde{\mathscr E}_{\alpha,\beta}(s):={}&
 \sin^2\widehat{\zeta}_n
 \bigl(-L_n(\cos\alpha\cos\beta+\sin\alpha\sin\beta\,s)
 -\cos(\widehat{\zeta}_n-\theta_\alpha-\theta_\beta)\bigr)\\
 &+\sin\theta_\alpha\sin\theta_\beta\,(s+\rho_n)\\
 &+2\sqrt{\sin\theta_\alpha\sin\theta_\beta
 \sin(\widehat{\zeta}_n-\theta_\alpha)
 \sin(\widehat{\zeta}_n-\theta_\beta)}\,\Phi_{\rho_n}(s).
\end{aligned} \acEquationTag{eq:ac-39}
\]
Then
\[
 \mathrm{Num}-XYL_n(Q)
 \geq
 \frac{\widetilde{\mathscr E}_{\alpha,\beta}(u)}
 {\sin(\widehat{\zeta}_n-\theta_\alpha)
  \sin(\widehat{\zeta}_n-\theta_\beta)}.
\]
In particular,
\(\widetilde{\mathscr E}_{\alpha,\beta}(u)\geq0\) implies
\(R\geq L_n(Q)\).
\end{lemma}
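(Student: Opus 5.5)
The plan is to combine the two Ptolemy estimates of \Cref{lem:ac-near-ptolemy} with the sign of \(L_n(Q)\), and then to clear denominators using the sine-rule parametrization of the gains from \Cref{lem:ac-chord-geometry}. Under the hypotheses of \Cref{lem:ac-near-mixed} both radial levels are strictly below \(\widehat{\zeta}_n\), so both gains are finite, and \(u\geq-\rho_n\). Hence \eqref{eq:ac-37} applies and gives \(\mathrm{Num}\geq\mathrm{Num}_{\mathrm{Ptol}}\) and \(XY\geq X_\alpha X_\beta\).

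\emph{Step 1: sign of \(L_n(Q)\).} Since \(Q\geq\rho_n\), we have \(\arccos Q\leq\widehat{\zeta}_n\). Therefore
\[
 L_n(Q)=\cos(\zeta_n+\arccos Q)\leq0,
 \qquad\text{because }\zeta_n+\arccos Q\in[\zeta_n,\pi].
\]
It follows that \(-XYL_n(Q)\geq-X_\alpha X_\beta L_n(Q)\), and so
\[
 \mathrm{Num}-XYL_n(Q)\geq\mathrm{Num}_{\mathrm{Ptol}}-X_\alpha X_\beta L_n(Q).
\]

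\emph{Step 2: clearing denominators.} By \eqref{eq:ac-6} and \Cref{lem:ac-chord-geometry}, applied in the boundary case \(v\cdot u=\rho_n\), we have
\[
 r_\alpha=\frac{\sin\theta_\alpha}{\sin(\widehat{\zeta}_n-\theta_\alpha)},
 \qquad
 X_\alpha=\frac{\sin\widehat{\zeta}_n}{\sin(\widehat{\zeta}_n-\theta_\alpha)},
\]
and the same formulas hold with \(\beta\) in place of \(\alpha\). Put \(S:=\sin(\widehat{\zeta}_n-\theta_\alpha)\sin(\widehat{\zeta}_n-\theta_\beta)\). This is positive, because \(0\leq\theta_\alpha,\theta_\beta<\widehat{\zeta}_n\) when the levels are below \(\widehat{\zeta}_n\). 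We then multiply \(\mathrm{Num}_{\mathrm{Ptol}}-X_\alpha X_\beta L_n(Q)\) by \(S\), term by term:
\[
 SX_\alpha X_\beta=\sin^2\widehat{\zeta}_n,
 \qquad
 S\,r_\alpha r_\beta u=\sin\theta_\alpha\sin\theta_\beta\,u,
\]
\[
 S\cdot2\sqrt{r_\alpha r_\beta}\,\Phi_{\rho_n}(u)
 =2\sqrt{\sin\theta_\alpha\sin\theta_\beta\sin(\widehat{\zeta}_n-\theta_\alpha)\sin(\widehat{\zeta}_n-\theta_\beta)}\,\Phi_{\rho_n}(u).
\]
The remaining terms are \(S(-\rho_n-r_\alpha-r_\beta)\). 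These must combine with \(-\rho_n\sin\theta_\alpha\sin\theta_\beta\), taken from the \((s+\rho_n)\) term of \eqref{eq:ac-39}, into \(-\sin^2\widehat{\zeta}_n\cos(\widehat{\zeta}_n-\theta_\alpha-\theta_\beta)\). Concretely, the identity to verify is
\[
 \rho_n\sin(\widehat{\zeta}_n-a)\sin(\widehat{\zeta}_n-b)
 +\sin a\,\sin(\widehat{\zeta}_n-b)
 +\sin b\,\sin(\widehat{\zeta}_n-a)
 -\rho_n\sin a\sin b
 =\sin^2\widehat{\zeta}_n\cos(\widehat{\zeta}_n-a-b).
\]
I would check it by expanding both sides in \(\cos a,\sin a,\cos b,\sin b\), using \(\cos\widehat{\zeta}_n=\rho_n\). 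As a sanity check, at \(a=b=0\) both sides equal \(\rho_n\sin^2\widehat{\zeta}_n\). Once the identity holds, \(S\bigl(\mathrm{Num}_{\mathrm{Ptol}}-X_\alpha X_\beta L_n(Q)\bigr)\) equals \(\widetilde{\mathscr E}_{\alpha,\beta}(u)\) exactly. Dividing by \(S>0\) and combining with Step 1 gives the displayed inequality of the lemma.

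\emph{Step 3: the final implication.} Since \(XY>0\), we have \(R-L_n(Q)=\bigl(\mathrm{Num}-XYL_n(Q)\bigr)/(XY)\). Hence \(\widetilde{\mathscr E}_{\alpha,\beta}(u)\geq0\) gives \(R\geq L_n(Q)\).

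The main obstacle is the trigonometric identity in Step 2. It is elementary but must be matched precisely, including the placement of the \(+\rho_n\sin\theta_\alpha\sin\theta_\beta\) contribution from \((s+\rho_n)\). Everything else is direct substitution.
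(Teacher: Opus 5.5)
Your argument follows the paper's proof step for step. You combine the Ptolemy bounds $\mathrm{Num}\geq\mathrm{Num}_{\mathrm{Ptol}}$ and $XY\geq X_\alpha X_\beta$ with $L_n(Q)\leq0$. You then multiply by your $S$ and substitute the sine-rule formulas for $r_\alpha,r_\beta,X_\alpha,X_\beta$.

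The one real problem is the identity you single out as the crux: as displayed, it is false. Expand using $\sin(\widehat{\zeta}_n-\theta)=\sin\widehat{\zeta}_n\cos\theta-\rho_n\sin\theta$ and $\cos(\widehat{\zeta}_n-a-b)=\rho_n\cos(a+b)+\sin\widehat{\zeta}_n\sin(a+b)$. The $\cos a\cos b$ and $\sin(a+b)$ terms agree on both sides. The $\sin a\sin b$ coefficient is $\rho_n^3-2\rho_n$ on the left, but $-\rho_n\sin^2\widehat{\zeta}_n=\rho_n^3-\rho_n$ in $\sin^2\widehat{\zeta}_n\cos(\widehat{\zeta}_n-a-b)$. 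Hence
\[
 \rho_n\sin(\widehat{\zeta}_n-a)\sin(\widehat{\zeta}_n-b)
 +\sin a\sin(\widehat{\zeta}_n-b)+\sin b\sin(\widehat{\zeta}_n-a)
 =\sin^2\widehat{\zeta}_n\cos(\widehat{\zeta}_n-a-b)-\rho_n\sin a\sin b .
\]
So the term $\rho_n\sin a\sin b$ must be \emph{added} on the left, and your version is off by $2\rho_n\sin a\sin b$. Your sanity check at $a=b=0$ cannot detect this, because that term vanishes there.

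Your verbal bookkeeping is in fact the correct requirement:
\[
 S(-\rho_n-r_\alpha-r_\beta)-\rho_n\sin\theta_\alpha\sin\theta_\beta
 =-\sin^2\widehat{\zeta}_n\cos(\widehat{\zeta}_n-\theta_\alpha-\theta_\beta).
\]
The sign was lost when you negated this into the display. With the corrected identity, which is the one the paper uses, $S\bigl(\mathrm{Num}_{\mathrm{Ptol}}-X_\alpha X_\beta L_n(Q)\bigr)=\widetilde{\mathscr E}_{\alpha,\beta}(u)$ holds exactly. The rest of your argument then goes through unchanged.
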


\begin{proof}

Recall that
\(
 Q=\cos\alpha\cos\beta+\sin\alpha\sin\beta\,u.
\)
On \(Q\in[\rho_n,1]\), the definition of \(L_n\) in
\eqref{eq:admissible-boundary-functions} gives
\[
 -L_n(Q)
 =\rho_nQ+\sin\widehat{\zeta}_n\sqrt{1-Q^2},
\]
where \(\sin\widehat{\zeta}_n=\sin\zeta_n\).
The Ptolemy estimates \eqref{eq:ac-36}--\eqref{eq:ac-37} give
\(\mathrm{Num}\geq\mathrm{Num}_{\mathrm{Ptol}}\) and
\(XY\geq X_\alpha X_\beta\).  Since
\(L_n(Q)\leq0\),
\[
 \begin{aligned}
 \mathrm{Num}-XYL_n(Q)
 &={}\bigl(\mathrm{Num}_{\mathrm{Ptol}}
        -X_\alpha X_\beta L_n(Q)\bigr)
 +(\mathrm{Num}-\mathrm{Num}_{\mathrm{Ptol}})\\
 &\quad +(XY-X_\alpha X_\beta)\bigl(-L_n(Q)\bigr)\\
 &\geq \mathrm{Num}_{\mathrm{Ptol}}
        -X_\alpha X_\beta L_n(Q).
 \end{aligned}
\]

Both added terms are nonnegative because
\(\mathrm{Num}\geq\mathrm{Num}_{\mathrm{Ptol}}\),
\(XY\geq X_\alpha X_\beta\), and \(-L_n(Q)\geq0\).

By the definition of \(\mathrm{Num}_{\mathrm{Ptol}}\),
\[
\begin{aligned}
 \mathrm{Num}_{\mathrm{Ptol}}-X_\alpha X_\beta L_n(Q)
 &={}-\rho_n-r_\alpha-r_\beta+r_\alpha r_\beta u\\
 &\quad +2\sqrt{r_\alpha r_\beta}\,\Phi_{\rho_n}(u)
 -X_\alpha X_\beta L_n(Q).
\end{aligned} \acEquationTag{eq:ac-38}
\]

The quantities \(r_\alpha,\theta_\alpha\) correspond to the radial
level \(\alpha\), and \(r_\beta,\theta_\beta\) correspond to the radial
level \(\beta\).
Because the gains are finite,
\(\theta_\alpha,\theta_\beta\in[0,\widehat{\zeta}_n)\), and
\eqref{eq:ac-6} gives
\[
 r_\alpha=\frac{\sin\theta_\alpha}
 {\sin(\widehat{\zeta}_n-\theta_\alpha)},\qquad
 r_\beta=\frac{\sin\theta_\beta}
 {\sin(\widehat{\zeta}_n-\theta_\beta)}.
\]

The norm identity \eqref{eq:chord-geometry} in
\Cref{lem:ac-chord-geometry} gives
\[
 X_\alpha=\frac{\sin\widehat{\zeta}_n}
 {\sin(\widehat{\zeta}_n-\theta_\alpha)},\qquad
 X_\beta=\frac{\sin\widehat{\zeta}_n}
 {\sin(\widehat{\zeta}_n-\theta_\beta)}.
\]

Multiplying the left-hand side of \eqref{eq:ac-38} by the positive
factor
\(\sin(\widehat{\zeta}_n-\theta_\alpha)
\sin(\widehat{\zeta}_n-\theta_\beta)\), and substituting the formulas
above for \(r_\alpha,r_\beta,X_\alpha,X_\beta\), gives
\[
\begin{aligned}
&\sin(\widehat{\zeta}_n-\theta_\alpha)
 \sin(\widehat{\zeta}_n-\theta_\beta)
 \bigl(\mathrm{Num}_{\mathrm{Ptol}}
       -X_\alpha X_\beta L_n(Q)\bigr)\\
={}&-\rho_n
 \sin(\widehat{\zeta}_n-\theta_\alpha)
 \sin(\widehat{\zeta}_n-\theta_\beta)\\
&-\sin\theta_\alpha
 \sin(\widehat{\zeta}_n-\theta_\beta)
 -\sin\theta_\beta
 \sin(\widehat{\zeta}_n-\theta_\alpha)\\
&+\sin\theta_\alpha\sin\theta_\beta\,u\\
&+2\sqrt{\sin\theta_\alpha\sin\theta_\beta
 \sin(\widehat{\zeta}_n-\theta_\alpha)
 \sin(\widehat{\zeta}_n-\theta_\beta)}\,\Phi_{\rho_n}(u)\\
&-\sin^2\widehat{\zeta}_n L_n(Q).
\end{aligned}
\]
The first three terms on the right satisfy
\[
\begin{aligned}
&-\rho_n
 \sin(\widehat{\zeta}_n-\theta_\alpha)
 \sin(\widehat{\zeta}_n-\theta_\beta)\\
&\quad-\sin\theta_\alpha
 \sin(\widehat{\zeta}_n-\theta_\beta)
 -\sin\theta_\beta
 \sin(\widehat{\zeta}_n-\theta_\alpha)\\
={}&-\sin^2\widehat{\zeta}_n
 \cos(\widehat{\zeta}_n-\theta_\alpha-\theta_\beta)
 {}+\rho_n\sin\theta_\alpha\sin\theta_\beta.
\end{aligned}
\]
Consequently the preceding expression is exactly
\(\widetilde{\mathscr E}_{\alpha,\beta}(u)\) as defined in
\eqref{eq:ac-39}.

For the trigonometric simplification above, apply
\(
 \sin(\widehat{\zeta}_n-\theta)
 =\sin\widehat{\zeta}_n\cos\theta-\rho_n\sin\theta
\)
first with \(\theta=\theta_\alpha\) and then with
\(\theta=\theta_\beta\), and use
\(
 \cos(\widehat{\zeta}_n-\theta_\alpha-\theta_\beta)
 =\rho_n\cos(\theta_\alpha+\theta_\beta)
  +\sin\widehat{\zeta}_n\sin(\theta_\alpha+\theta_\beta).
\)
After collecting the
\(\cos\theta_\alpha\cos\theta_\beta\),
\(\sin\theta_\alpha\cos\theta_\beta\),
\(\cos\theta_\alpha\sin\theta_\beta\), and
\(\sin\theta_\alpha\sin\theta_\beta\) terms, the identity follows from
\(\rho_n^2+\sin^2\widehat{\zeta}_n=1\).

Combining this identity with the Ptolemy comparison derived from
\eqref{eq:ac-37},
\[
 \mathrm{Num}-XYL_n(Q)
 \geq
 \mathrm{Num}_{\mathrm{Ptol}}
 -X_\alpha X_\beta L_n(Q),
\]
gives
\[
 \mathrm{Num}-XYL_n(Q)
 \geq
 \frac{\widetilde{\mathscr E}_{\alpha,\beta}(u)}
 {\sin(\widehat{\zeta}_n-\theta_\alpha)
  \sin(\widehat{\zeta}_n-\theta_\beta)}.
\acEquationTag{eq:ac-near-mixed-ptolemy-to-scalar}
\]
Since the denominator in
\eqref{eq:ac-near-mixed-ptolemy-to-scalar} is positive, the final
assertion follows.
\end{proof}

\begin{lemma}[One-variable minimum estimate]
\label{lem:ac-near-mixed-minimum}
Under the hypotheses of \Cref{lem:ac-near-mixed}, let
\(\widetilde{\mathscr E}_{\alpha,\beta}\) be as in
\Cref{lem:ac-near-mixed-reduction}.

Put
\[
 \overline\theta_\beta
 :=\widehat{\zeta}_n-\theta_\beta
 =\Theta_n(F_n(\beta))
 \in(0,\widehat{\zeta}_n/2].
\]

The second equality is \eqref{eq:ac-6}, applied at \(\beta\).
For \(0\leq z\leq1\), define
\[
 Q_{\alpha,\beta}(z)
 :=\cos(\alpha-\beta)
 -(1+\rho_n)\sin\alpha\sin\beta\,z^2,
\]

The set
\(
 \{z\in[0,1]:Q_{\alpha,\beta}(z)\geq\rho_n\}
\)
is a nonempty closed interval containing \(0\).  Denote its largest
element by \(z_*\).

Define
\[
 S_{\alpha,\beta}
 :=\sin\theta_\alpha\sin\theta_\beta,
 \qquad
 T_{\alpha,\beta}
 :=\sqrt{
 \sin\theta_\alpha\sin\theta_\beta
 \sin(\widehat{\zeta}_n-\theta_\alpha)
 \sin(\widehat{\zeta}_n-\theta_\beta)}
\]
and, for \(0\leq z\leq z_*\), define
\[
\begin{aligned}
 \mathscr E_{\alpha,\beta}(z)
 &:=
 \widetilde{\mathscr E}_{\alpha,\beta}
 \bigl(1-(1+\rho_n)z^2\bigr)\\
 &=
 \sin^2\widehat{\zeta}_n
 \bigl(-L_n(Q_{\alpha,\beta}(z))
 -\cos(\theta_\alpha-\overline\theta_\beta)\bigr)\\
 &\quad +(1+{\rho_n})\bigl[S_{\alpha,\beta}(1-z^2)
 +2T_{\alpha,\beta}(1-z)\bigr].
\end{aligned}                                                \acEquationTag{eq:ac-54}
\]
Thus \(\mathscr E_{\alpha,\beta}\) is the defect
\(\widetilde{\mathscr E}_{\alpha,\beta}\) expressed under the change of
variables \(s=1-(1+\rho_n)z^2\).

Then
\[
 \mathscr E_{\alpha,\beta}(z_*)\geq0
\]
and
\[
 \min_{0\leq z\leq z_*}\mathscr E_{\alpha,\beta}(z)
 \geq
 \min\left\{
 \mathscr E_{\alpha,\beta}(z_*),
 \mathscr E_{\alpha,\beta}(0)
 -(1+\rho_n)T_{\alpha,\beta}
 \right\}.                                                  \acEquationTag{eq:ac-mixed-minimum}
\]
In particular, if
\[
 \mathscr E_{\alpha,\beta}(0)
 -(1+\rho_n)T_{\alpha,\beta}\geq0,
\]
then
\(\mathscr E_{\alpha,\beta}(z)\geq0\) for every
\(z\in[0,z_*]\).
\end{lemma}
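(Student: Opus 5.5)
The plan is to split \(\mathscr E_{\alpha,\beta}\) into a part that is concave in the variable \(w:=z^2\) and the linear term \(2(1+\rho_n)T_{\alpha,\beta}(1-z)\). The minimum bound then follows from concavity in \(w\) plus a one-line comparison of convex combinations, and the endpoint estimate \(\mathscr E_{\alpha,\beta}(z_*)\ge0\) is checked separately.

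\emph{Preliminaries.} Since \(\alpha,\beta\in(0,\pi/2)\), the function \(Q_{\alpha,\beta}\) is decreasing in \(z\) with \(Q_{\alpha,\beta}(0)=\cos(\alpha-\beta)\). We need \(\cos(\alpha-\beta)\ge\rho_n\). By the characterization \eqref{eq:critical-involution-sublevel}, \(\beta\le F_n(\alpha)\) gives \(D_n^\times(\alpha,\beta)\le\widehat{\zeta}_n\), and we have \(|\alpha-\beta|\le D_n^\times(\alpha,\beta)\). If instead \(\beta>F_n(\alpha)\), the hypothesis \(Q\ge\rho_n\) with \(u\) realized still gives nonemptiness. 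Hence the superlevel set is \([0,z_*]\).

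\emph{The concave part.} Put \(g(Q):=-L_n(Q)=\rho_nQ+\sin\zeta_n\sqrt{1-Q^2}\). This function is concave on \([-1,1]\), and \(g'(Q)=\rho_n-\sin\zeta_n\,Q/\sqrt{1-Q^2}\le0\) for \(Q\ge\rho_n\), with equality at \(Q=\rho_n\). Since \(Q_{\alpha,\beta}\) is affine and decreasing in \(w\), the composition \(w\mapsto g(Q_{\alpha,\beta}(\sqrt w))\) is concave on \([0,z_*^2]\). Therefore
\[
 A(w):=\sin^2\widehat{\zeta}_n\bigl(g(Q_{\alpha,\beta}(\sqrt w))-\cos(\theta_\alpha-\overline\theta_\beta)\bigr)+(1+\rho_n)S_{\alpha,\beta}(1-w)
\]
is concave in \(w\), and \(\mathscr E_{\alpha,\beta}(z)=A(z^2)+2cT(1-z)\) with \(c:=1+\rho_n\).

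\emph{The minimum bound.} Write \(z=tz_*\) with \(t\in[0,1]\). Concavity gives \(A(z^2)\ge(1-t^2)A(0)+t^2A(z_*^2)\). Note that \(E(0):=\mathscr E_{\alpha,\beta}(0)=A(0)+2cT\) and \(E_*:=\mathscr E_{\alpha,\beta}(z_*)=A(z_*^2)+2cT(1-z_*)\). Subtracting \((1-t^2)(E(0)-cT)+t^2E_*\) from the resulting lower bound for \(\mathscr E_{\alpha,\beta}(z)\) leaves
\[
 cT\bigl[1-t^2-2z_*t(1-t)\bigr]=cT(1-t)(1+t-2z_*t)\ge cT(1-t)^2\ge0,
\]
using \(z_*\le1\). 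Hence \(\mathscr E_{\alpha,\beta}(z)\) is at least a convex combination of \(E(0)-cT\) and \(E_*\), which proves \eqref{eq:ac-mixed-minimum}. The final ``in particular'' statement then follows once \(E_*\ge0\).

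\emph{The endpoint \(z_*\) (expected main obstacle).} There are two cases.

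If \(Q_{\alpha,\beta}(z_*)=\rho_n\), then \(g=1\). The first term of \(\mathscr E_{\alpha,\beta}(z_*)\) becomes \(\sin^2\widehat{\zeta}_n(1-\cos(\cdot))\ge0\), and the bracket \(S(1-z_*^2)+2T(1-z_*)\) is nonnegative, so \(E_*\ge0\).

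Otherwise \(z_*=1\), i.e.\ \(s=-\rho_n\). Then \(Q_{\alpha,\beta}(1)=\cos D_n^\times(\alpha,\beta)>\rho_n\), so \(D_n^\times(\alpha,\beta)<\widehat{\zeta}_n\), and the bracket vanishes. Using \(g(Q)=\cos(\widehat{\zeta}_n-\arccos Q)\), we need
\[
 \widehat{\zeta}_n-D_n^\times(\alpha,\beta)\le\bigl|\theta_\alpha-\overline\theta_\beta\bigr|.
\]
The two-level inequality \Cref{lem:ac-crossed-schedule} gives \(\theta_\alpha+\theta_\beta\le D_n^\times(\alpha,\beta)\). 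Since \(\overline\theta_\beta=\widehat{\zeta}_n-\theta_\beta\), this says \(\widehat{\zeta}_n-D_n^\times\le\overline\theta_\beta-\theta_\alpha\), which is exactly the required bound. The care needed here is in matching \(g\) to \(\cos(\widehat{\zeta}_n-d_Q)\) on the correct branch, and in confirming \(\cos(\widehat{\zeta}_n-d)\ge\cos(\overline\theta_\beta-\theta_\alpha)\) using that both angles lie in \([0,\pi]\).
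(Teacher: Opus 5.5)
Your proof is correct. For the main inequality \eqref{eq:ac-mixed-minimum} it takes a genuinely different and simpler route than the paper.

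\emph{The paper's route.} It differentiates, setting \(\mathscr G_{\alpha,\beta}=-\mathscr E_{\alpha,\beta}'/(2(1+\rho_n))\). After setting aside \(\alpha=0\), it computes \(\mathscr G_{\alpha,\beta}''\) explicitly and shows it is positive on \((0,z_*)\). It deduces that the only possible interior minimum of \(\mathscr E_{\alpha,\beta}\) is at the first zero \(z_c\) of \(\mathscr G_{\alpha,\beta}\). Integrating the linear majorant \(\mathscr G_{\alpha,\beta}\le T_{\alpha,\beta}(1-z/z_c)\) then gives \(\mathscr E_{\alpha,\beta}(z_c)\ge\mathscr E_{\alpha,\beta}(0)-(1+\rho_n)T_{\alpha,\beta}z_c\).

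\emph{Your route.} You use the structural fact behind that computation: \(Q_{\alpha,\beta}\) is affine in \(w=z^2\) and \(-L_n\) is concave. Hence \(\mathscr E_{\alpha,\beta}(z)=A(z^2)+2(1+\rho_n)T_{\alpha,\beta}(1-z)\) with \(A\) concave in \(w\). One two-point concavity estimate on \([0,z_*^2]\), plus the explicit slack \((1+\rho_n)T_{\alpha,\beta}(1-t)(1+t-2z_*t)\ge0\), bounds \(\mathscr E_{\alpha,\beta}(z)\) below by a convex combination of the two quantities in the minimum. I checked the slack algebra and it is right.

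\emph{Comparison.} Your argument avoids the third-order derivative computation and the separate \(\alpha=0\) case. The monotonicity of \(-L_n\) that you mention is not needed: concave composed with affine already suffices. The paper's version gives the marginally sharper bound \(\mathscr E_{\alpha,\beta}(0)-(1+\rho_n)T_{\alpha,\beta}z_c\) and describes the critical points, but neither is used afterwards. Your verification of \(\mathscr E_{\alpha,\beta}(z_*)\ge0\) is the same as the paper's: the case \(Q_{\alpha,\beta}(z_*)=\rho_n\), and the case \(z_*=1\) via \Cref{lem:ac-crossed-schedule}.

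\emph{Points to tidy.}
\begin{itemize}
\item Nonemptiness of the superlevel set needs only the point \(z_u=\sqrt{(1-u)/(1+\rho_n)}\in[0,1]\), which satisfies \(Q_{\alpha,\beta}(z_u)=Q\ge\rho_n\); monotonicity then gives the interval \([0,z_*]\). Your detour through \(F_n\) and \(\cos(\alpha-\beta)\ge\rho_n\) is unnecessary.
\item The hypotheses allow \(\alpha=0\), so ``\(\alpha,\beta\in(0,\pi/2)\)'' should be weakened. In that case \(Q_{\alpha,\beta}\) is constant, which is harmless for your argument.
\item The statement also asserts \(\overline\theta_\beta\in(0,\widehat\zeta_n/2]\), which you should verify. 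It follows from \(\theta_\beta=\vartheta_n(\beta)\in[\widehat\zeta_n/2,\widehat\zeta_n)\) for \(\alpha_n^\ast\le\beta<\widehat\zeta_n\).
\end{itemize}
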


\begin{proof}

Since \(\beta\geq\alpha_n^\ast\), the definition of
\(\vartheta_n\) gives
\(\theta_\beta\in
[\widehat{\zeta}_n/2,\widehat{\zeta}_n)\).  Consequently,
\(\overline\theta_\beta\in(0,\widehat{\zeta}_n/2]\), as asserted.

The function \(Q_{\alpha,\beta}\) is continuous and nonincreasing on
\([0,1]\).  The hypotheses give
\[
 \sqrt{\frac{1-u}{1+\rho_n}}\in[0,1],
 \qquad
 Q_{\alpha,\beta}\left(
 \sqrt{\frac{1-u}{1+\rho_n}}\right)=Q\geq\rho_n.
\]
Thus its superlevel set at \(\rho_n\) is nonempty.  Continuity also
makes this set closed.

If \(0\leq z_1\leq z_2\leq1\) and
\(Q_{\alpha,\beta}(z_2)\geq\rho_n\), then monotonicity gives
\(
 Q_{\alpha,\beta}(z_1)
 \geq Q_{\alpha,\beta}(z_2)
 \geq\rho_n.
\)
The superlevel set is therefore an interval containing \(0\).
Because it is a nonempty closed subset of \([0,1]\), it is compact and
has a largest element \(z_*\).  Thus it is precisely \([0,z_*]\), as
asserted in the statement.
If \(\alpha=0\), then \(Q_{\alpha,\beta}(z)=\cos\beta\) and
\(S_{\alpha,\beta}=T_{\alpha,\beta}=0\), so
\(\mathscr E_{\alpha,\beta}\) is
constant in \(z\); its nonnegativity is included in the endpoint check
below.  We may therefore assume \(\alpha>0\) while analyzing possible
interior minima.

\smallskip
\noindent\emph{The possible interior minimum.}
To study the critical points of \(\mathscr E_{\alpha,\beta}\), define
its normalized negative derivative by
\[
 \mathscr G_{\alpha,\beta}(z)
 :=-\frac{\mathscr E_{\alpha,\beta}'(z)}{2(1+\rho_n)}.
\]
Direct differentiation gives
\[
\begin{aligned}
 \mathscr G_{\alpha,\beta}(z)
 ={}&T_{\alpha,\beta}+zS_{\alpha,\beta}
 &-\sin^2\widehat{\zeta}_n\sin\alpha\sin\beta\,z
 \left(\frac{{\sin \widehat{\zeta}_n}\,Q_{\alpha,\beta}(z)}
 {\sqrt{1-Q_{\alpha,\beta}(z)^2}}-\rho_n\right).
\end{aligned}
\]
Because
 $0<\beta-\alpha<\widehat{\zeta}_n$,
one has
\(
 \rho_n<Q_{\alpha,\beta}(0)=\cos(\beta-\alpha)<1.
\)
Since \(Q_{\alpha,\beta}\) is nonincreasing and
\(Q_{\alpha,\beta}(z)\geq\rho_n\) for \(0\leq z\leq z_*\), all
denominators in the preceding formula are nonzero on this interval.
Substitution at \(z=0\) gives
\(
 \mathscr G_{\alpha,\beta}(0)=T_{\alpha,\beta}.
\)

The identities
\[
 Q_{\alpha,\beta}'(z)
 =-2(1+\rho_n)\sin\alpha\sin\beta\,z,
\qquad
 Q_{\alpha,\beta}''(z)
 =-2(1+\rho_n)\sin\alpha\sin\beta
\]
together with
\[
 \frac{d}{dz}
 \left(\frac{Q_{\alpha,\beta}(z)}
 {\sqrt{1-Q_{\alpha,\beta}(z)^2}}\right)
 =
 \frac{Q_{\alpha,\beta}'(z)}
 {(1-Q_{\alpha,\beta}(z)^2)^{3/2}}
\]
give, after one further differentiation,
\[
\begin{aligned}
 \mathscr G_{\alpha,\beta}''(z)
 ={}&\frac{6(1+\rho_n)\sin^3\widehat{\zeta}_n
 (\sin\alpha\sin\beta)^2z}
 {(1-Q_{\alpha,\beta}(z)^2)^{5/2}}\cdot
 \left[1-Q_{\alpha,\beta}(z)^2
 -2Q_{\alpha,\beta}(z)
 \bigl(\cos(\alpha-\beta)-Q_{\alpha,\beta}(z)\bigr)\right].
\end{aligned}
\]
Finally,
\[
\begin{aligned}
 &1-Q_{\alpha,\beta}(z)^2
 -2Q_{\alpha,\beta}(z)
 \bigl(\cos(\alpha-\beta)-Q_{\alpha,\beta}(z)\bigr) =
 \bigl(Q_{\alpha,\beta}(z)-\cos(\alpha-\beta)\bigr)^2
 +\sin^2(\alpha-\beta).
\end{aligned}
\]
Therefore
\[
\begin{aligned}
 \mathscr G_{\alpha,\beta}''(z)
 ={}&\frac{6(1+\rho_n)\sin^3\widehat{\zeta}_n
 (\sin\alpha\sin\beta)^2z}
 {(1-Q_{\alpha,\beta}(z)^2)^{5/2}}
 \left[(Q_{\alpha,\beta}(z)-\cos(\alpha-\beta))^2
 +\sin^2(\alpha-\beta)\right]>0.
\end{aligned}
\]

This holds for \(0<z<z_*\).
Thus \(\mathscr G_{\alpha,\beta}\) is strictly convex and has at most
two zeros.

We now verify that
\(\mathscr E_{\alpha,\beta}(z_*)\geq0\).  Suppose first that
\(z_*=1\).

By the definition of \(Q_{\alpha,\beta}\),
\(
 Q_{\alpha,\beta}(1)=\cos D_n^\times(\alpha,\beta).
\)

Because \(z_*=1\) belongs to the superlevel set defining \(z_*\),
\(
 Q_{\alpha,\beta}(1)\geq\rho_n.
\)
It follows that
\(D_n^\times(\alpha,\beta)\leq\widehat{\zeta}_n\), while
\Cref{lem:ac-crossed-schedule} gives
\(
 \theta_\alpha+\theta_\beta\leq D_n^\times(\alpha,\beta).
\)
Consequently,
\[
 -L_n\bigl(Q_{\alpha,\beta}(1)\bigr)
 =\cos\bigl(\widehat{\zeta}_n-D_n^\times(\alpha,\beta)\bigr)
 \geq
 \cos(\widehat{\zeta}_n-\theta_\alpha-\theta_\beta)
 =\cos(\theta_\alpha-\overline\theta_\beta).
\]
The remaining terms in \eqref{eq:ac-54} vanish at \(z=1\), so
\(\mathscr E_{\alpha,\beta}(1)\geq0\).

If \(z_*<1\), continuity and the definition of \(z_*\) give
\(Q_{\alpha,\beta}(z_*)=\rho_n\).  Since
\(-L_n(\rho_n)=1\), every term in \eqref{eq:ac-54} is then
nonnegative.  Thus in both cases
\(\mathscr E_{\alpha,\beta}(z_*)\geq0\).

Since
\(\mathscr E_{\alpha,\beta}'
=-2(1+\rho_n)\mathscr G_{\alpha,\beta}\) and
\(\mathscr G_{\alpha,\beta}(0)=T_{\alpha,\beta}>0\), the only possible
interior minimum of \(\mathscr E_{\alpha,\beta}\) occurs at the first
zero \(z_c\) of \(\mathscr G_{\alpha,\beta}\), where
\(\mathscr G_{\alpha,\beta}\) changes from positive to negative.  A
second zero of \(\mathscr G_{\alpha,\beta}\), if present, produces a
local maximum of \(\mathscr E_{\alpha,\beta}\).

Convexity and
\(\mathscr G_{\alpha,\beta}(0)=T_{\alpha,\beta}\),
\(\mathscr G_{\alpha,\beta}(z_c)=0\) imply
\[
 \mathscr G_{\alpha,\beta}(z)\leq T_{\alpha,\beta}
 \left(1-\frac z{z_c}\right)
 \qquad(0\leq z\leq z_c),
\]
and hence
\[
 \mathscr E_{\alpha,\beta}(z_c)
 \geq\mathscr E_{\alpha,\beta}(0)
 -(1+{\rho_n})T_{\alpha,\beta}z_c
 \geq\mathscr E_{\alpha,\beta}(0)
 -(1+{\rho_n})T_{\alpha,\beta}.                              \acEquationTag{eq:ac-58}
\]

At the other endpoint,
\(
 \mathscr E_{\alpha,\beta}(0)
 \geq
 \mathscr E_{\alpha,\beta}(0)
 -(1+\rho_n)T_{\alpha,\beta}.
\)
Together with the estimate for
\(\mathscr E_{\alpha,\beta}(z_*)\), the classification of the possible
interior critical points, and \eqref{eq:ac-58}, this proves
\eqref{eq:ac-mixed-minimum}.

This proves the lemma.
\end{proof}

\begin{lemma}[Nonnegativity of the left-endpoint expression]
\label{lem:ac-near-mixed-slack}
Under the hypotheses and notation of
\Cref{lem:ac-near-mixed-minimum}, put
\(
 \delta_\beta:=\widehat{\zeta}_n-\beta.
\)
Define
\[
\begin{aligned}
\mathscr E_{\alpha,\beta}^{\mathrm{left}}
 &:=
 \frac{\mathscr E_{\alpha,\beta}(0)
 -(1+{\rho_n})T_{\alpha,\beta}}
 {1+{\rho_n}}\\
 &=
 (1-{\rho_n})\bigl[
 \cos(\delta_\beta+\alpha)
 -\cos(\theta_\alpha-\overline\theta_\beta)\bigr]
 +S_{\alpha,\beta}+T_{\alpha,\beta}.
\end{aligned}
\]
Then
\[
 \mathscr E_{\alpha,\beta}^{\mathrm{left}}\geq0.
\]
Equivalently,
\[
 \mathscr E_{\alpha,\beta}(0)
 -(1+\rho_n)T_{\alpha,\beta}\geq0.
\]
\end{lemma}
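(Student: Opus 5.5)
The plan is to make the inequality an explicit two-variable trigonometric statement and then settle it by an endpoint argument in \(\alpha\).

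\emph{Reparametrization.} Put \(\gamma:=F_n(\beta)\). Since \(\beta\geq\alpha_n^\ast\), \Cref{prop:ac-critical-involution} gives \(\gamma\in[0,\alpha_n^\ast]\), \(\beta=F_n(\gamma)\), and \(\overline\theta_\beta=\Theta_n(\gamma)\). The standing hypotheses then place \(\alpha\) in an interval \(0\leq\alpha\leq\alpha_{\max}(\gamma)\leq\alpha_n^\ast\). The constraints \(\alpha<\beta\) and \(Q_{\alpha,\beta}(0)=\cos(\beta-\alpha)\geq\rho_n\), the latter giving \(\beta-\alpha\leq\widehat{\zeta}_n\), cut out this interval. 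In these variables:
\begin{itemize}
\item \(\sin\theta_\alpha=c_n\sin\alpha\);
\item \(\sin\theta_\beta=\sin(\widehat{\zeta}_n-\Theta_n(\gamma))\);
\item \(\delta_\beta=\widehat{\zeta}_n-F_n(\gamma)\), with \(F_n\) given explicitly by \eqref{eq:ac-F-explicit}.
\end{itemize}
The first task is to confirm the displayed closed form of \(\mathscr E^{\mathrm{left}}_{\alpha,\beta}\). At \(z=0\) one has \(Q_{\alpha,\beta}(0)=\cos(\beta-\alpha)\) with \(\beta-\alpha\leq\widehat{\zeta}_n\). Hence \(-L_n(Q_{\alpha,\beta}(0))=\cos(\widehat{\zeta}_n-\beta+\alpha)=\cos(\delta_\beta+\alpha)\). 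The factor \(\sin^2\widehat{\zeta}_n=(1-\rho_n)(1+\rho_n)\) then produces the stated form after division by \(1+\rho_n\).

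\emph{Endpoint \(\alpha=0\).} Here \(S_{\alpha,\beta}=T_{\alpha,\beta}=0\) and \(\theta_\alpha=0\), so the claim reduces to \(\cos\delta_\beta\geq\cos\Theta_n(\gamma)\), that is, \(\Theta_n(\gamma)\geq\widehat{\zeta}_n-\beta\). This is exactly the inequality \(\Theta_n(\widetilde\alpha)\geq\widehat{\zeta}_n-\alpha\) proved in \Cref{prop:ac-maximal-displacement}: the triangle through \(p_\gamma(-v_j)\), \(p_\gamma(-v_i)\), \(p_\beta(-v_i)\), together with the path through the north pole. The same argument supplies two reusable facts:
\begin{itemize}
\item \(\delta_\beta\leq\gamma\) and \(\delta_\beta\leq\Theta_n(\gamma)\);
\item \(\theta_\alpha\leq\alpha\), since \(\Theta_n\) is \(1\)-Lipschitz by \eqref{eq:theta-derivatives}.
\end{itemize}

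\emph{Growth in \(\alpha\).} For fixed \(\gamma\), write the bracket as
\[
 \cos(\delta_\beta+\alpha)-\cos(\theta_\alpha-\overline\theta_\beta)
 =-2\sin\tfrac{\alpha+\delta_\beta+\theta_\alpha-\overline\theta_\beta}{2}\,
   \sin\tfrac{\alpha+\delta_\beta-\theta_\alpha+\overline\theta_\beta}{2}.
\]
The second factor is at most of order \(\alpha-\theta_\alpha+\overline\theta_\beta+\delta_\beta\). The first factor is small because \(\delta_\beta\leq\overline\theta_\beta\) and \(\theta_\alpha\leq\alpha\): its argument is at most \(\alpha\). So the negative part of the bracket is bounded by roughly \((1-\rho_n)\,\alpha\,(\overline\theta_\beta+\alpha)\). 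The positive terms \(S_{\alpha,\beta}+T_{\alpha,\beta}\) grow like \(c_n\sin\alpha\) times \(\sqrt{\sin\overline\theta_\beta\,\sin\theta_\beta}\), up to factors bounded below. The plan is to turn this comparison into a rigorous one by showing that \(\alpha\mapsto\mathscr E^{\mathrm{left}}_{\alpha,\beta}\) has nonnegative derivative on \([0,\alpha_{\max}(\gamma)]\). To do this I would:
\begin{itemize}
\item differentiate using \(\Theta_n'=c_n\cos\alpha/\cos\Theta_n\);
\item bound \(T_{\alpha,\beta}\) from below via \(\sin(\widehat{\zeta}_n-\theta_\alpha)\geq\sin(\widehat{\zeta}_n/2)=s_n\), valid since \(\theta_\alpha\leq\widehat{\zeta}_n/2\);
\item reduce to an inequality between elementary functions of \(\sin\alpha\) and \(\sin\gamma\).
\end{itemize}
If monotonicity fails near \(\alpha_{\max}\), I would fall back on checking a concavity-type property in \(\alpha\) and verifying the right endpoint \(\alpha=\alpha_{\max}(\gamma)\) separately. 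At that endpoint either \(\alpha=\alpha_n^\ast\) (where \(\theta_\alpha=\widehat{\zeta}_n/2\)) or \(\beta-\alpha=\widehat{\zeta}_n\) (where \(\cos(\delta_\beta+\alpha)=1\)), and both are explicit.

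\emph{Main obstacle.} The hard step is the uniform-in-\(n\) two-variable inequality in the preceding paragraph. There the \(\rho_n\)-dependence of \(c_n,s_n\), the explicit formula for \(F_n\), and the square root in \(T_{\alpha,\beta}\) all interact. As the paper indicates, I expect \(n\geq2\) to close with crude bounds using \(\rho_n\leq1/3\). The case \(n=1\), with \(\rho_1=1/2\) and \(\widehat{\zeta}_1=\pi/3\), is tighter. There I would substitute the explicit constants and verify the resulting inequality directly, if necessary by clearing the square root and checking a polynomial sign in \(\sin\alpha\) and \(\sin\gamma\).
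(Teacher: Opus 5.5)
Your reparametrization by \(\gamma=F_n(\beta)\) is correct and agrees with the paper. So are your check of the closed form of \(\mathscr E^{\mathrm{left}}_{\alpha,\beta}\) and the endpoint \(\alpha=0\), which reduces to \(\overline\theta_\beta\geq\delta_\beta\), i.e.\ \(\vartheta_n(\beta)\leq\beta\).

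\textbf{The central step fails.}

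First, the admissible \(\alpha\)-range is all of \([0,\alpha_n^\ast]\). The condition \(\cos(\beta-\alpha)\geq\rho_n\) only says \(\alpha\geq\beta-\widehat\zeta_n\), which is vacuous because \(\beta<\widehat\zeta_n\). The condition \(\alpha<\beta\) removes at most the single point \(\alpha=\beta=\alpha_n^\ast\). So your second right endpoint (\(\beta-\alpha=\widehat\zeta_n\), \(\cos(\delta_\beta+\alpha)=1\)) never occurs; it would force \(\alpha=0\) and \(\beta=\widehat\zeta_n\).

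Second, and more seriously, \(\alpha\mapsto\mathscr E^{\mathrm{left}}_{\alpha,\beta}\) is not nondecreasing on this range. Take \(n=2\), so \(\rho_2=1/3\) and \(\alpha_2^\ast=\pi/4\), and take \(\beta=\alpha_2^\ast\), so \(\theta_\beta=\overline\theta_\beta=\widehat\zeta_2/2\). Then \(\mathscr E^{\mathrm{left}}_{\alpha,\beta}\approx0.057,\ 0.276,\ 0.245\) at \(\alpha=0,\ 0.4,\ 0.7\), and it tends to \(2/9\approx0.222\) as \(\alpha\uparrow\pi/4\). The \(T\)-term gives infinite slope at \(\alpha=0\), but the bracket \(\cos(\delta_\beta+\alpha)-\cos(\theta_\alpha-\overline\theta_\beta)\) eventually dominates. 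Hence no argument showing a nonnegative derivative can succeed.

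Your heuristic cannot be made rigorous either. Near \(\alpha=\pi/4\) in this example, your bound \((1-\rho_2)\alpha(\overline\theta_\beta+\alpha)\approx0.73\) for the negative part already exceeds \(S_{\alpha,\beta}+T_{\alpha,\beta}=2/3\). Also, the lower bound \(\sin(\widehat\zeta_n-\theta_\alpha)\geq s_n\) controls \(T_{\alpha,\beta}\), not its derivative.

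\textbf{What is needed is your fallback, made precise.} The paper shows that \(\alpha\mapsto\mathscr E^{\mathrm{left}}_{\alpha,\beta}\) is concave on \([0,\alpha_n^\ast]\), so only \(\alpha=0\) and \(\alpha=\alpha_n^\ast\) need checking. The concavity is where the work lies, and your proposal gives no mechanism for it.

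The paper first splits off \(T_{\alpha,\beta}=\sqrt{\sin\theta_\beta\sin\overline\theta_\beta}\,\psi_n(\Theta_n(\alpha))\), where \(\psi_n(\xi)=\sqrt{\sin\xi\,\sin(\widehat\zeta_n-\xi)}\) is increasing and concave on \([0,\widehat\zeta_n/2]\). Composed with the increasing concave \(\Theta_n\), this is concave. For the remainder, \(\Theta_n''=-s_n^2\sin\Theta_n/\cos^3\Theta_n\) gives
\[
\partial_\alpha^2\bigl(\mathscr E^{\mathrm{left}}_{\alpha,\beta}-T_{\alpha,\beta}\bigr)
=-(1-\rho_n)\bigl[\cos(\delta_\beta+\alpha)-\cos\overline\theta_\beta\,\Xi_n(\alpha)\bigr]
+\sin\theta_\alpha\bigl(\sin\overline\theta_\beta-\sin\widehat\zeta_n\cos\overline\theta_\beta\bigr),
\]
where \(\Xi_n(\alpha)=\cos\theta_\alpha-s_n^2/\cos^3\theta_\alpha\).
\begin{itemize}
\item The last term is nonpositive, because \(\tan\overline\theta_\beta\leq\tan(\widehat\zeta_n/2)<\sin\widehat\zeta_n\).
\item The bracket is nonnegative. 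This is trivial if \(\Xi_n(\alpha)<0\). Otherwise it follows from \(\overline\theta_\beta\geq\delta_\beta\), \(\delta_\beta\leq\widehat\zeta_n-\alpha_n^\ast\), and the scalar estimate \(\Xi_n(\alpha)\leq\cos\alpha-\tan(\widehat\zeta_n-\alpha_n^\ast)\sin\alpha\).
\item That scalar estimate rests on \(\tan(\widehat\zeta_n-\alpha_n^\ast)<1/2\) and a weighted AM--GM bound for \(n\geq2\), with a separate polynomial check for \(n=1\).
\end{itemize}

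At \(\alpha=\alpha_n^\ast\) one has \(\theta_\alpha=\widehat\zeta_n/2\) and \(\overline\theta_\beta\leq\widehat\zeta_n/2\). Hence
\[
S_{\alpha,\beta}+T_{\alpha,\beta}\geq s_n\bigl[\sin(\widehat\zeta_n-\overline\theta_\beta)+\sin\overline\theta_\beta\bigr]=(1-\rho_n)\cos(\widehat\zeta_n/2-\overline\theta_\beta),
\]
which gives \(\mathscr E^{\mathrm{left}}_{\alpha_n^\ast,\beta}\geq(1-\rho_n)\cos(\delta_\beta+\alpha_n^\ast)>0\).
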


\noindent\emph{Proof strategy.}
Fix \(\beta\).  We prove the result by showing that
\(\alpha\mapsto\mathscr E_{\alpha,\beta}^{\mathrm{left}}\) is concave on
\([0,\alpha_n^\ast]\) and then checking its two endpoint values.
We establish concavity in two steps: first for
\(T_{\alpha,\beta}\), and then for
\(\mathscr E_{\alpha,\beta}^{\mathrm{left}}-T_{\alpha,\beta}\); the second step
reduces to the scalar estimate \eqref{eq:ac-66}.

The complete concavity and endpoint verification is given in
\Cref{app:ac-near-mixed-slack}.

\begin{proof}[Proof of \Cref{lem:ac-near-mixed}]
By \Cref{lem:ac-near-mixed-reduction}, it is enough to prove
\(\widetilde{\mathscr E}_{\alpha,\beta}(u)\geq0\).  Put
\[
 z:=\sqrt{\frac{1-u}{1+\rho_n}}.
\]

Recall from \Cref{lem:ac-near-mixed-minimum} that \(z_*\) is the
largest \(z'\in[0,1]\) satisfying
\(
 Q_{\alpha,\beta}(z')\geq\rho_n.
\)

The hypotheses \(u\geq-\rho_n\) and
\(Q=Q_{\alpha,\beta}(z)\geq\rho_n\) give
\(z\in[0,z_*]\), and \eqref{eq:ac-54} gives
\(
 \widetilde{\mathscr E}_{\alpha,\beta}(u)
 =\mathscr E_{\alpha,\beta}(z).
\)
By \Cref{lem:ac-near-mixed-slack},
\(
 \mathscr E_{\alpha,\beta}(0)
 -(1+\rho_n)T_{\alpha,\beta}\geq0.
\)
The final assertion of \Cref{lem:ac-near-mixed-minimum} therefore gives
\(\mathscr E_{\alpha,\beta}(z)\geq0\).  Hence
\(\widetilde{\mathscr E}_{\alpha,\beta}(u)\geq0\), and
\Cref{lem:ac-near-mixed-reduction} yields \(R\geq L_n(Q)\).
\end{proof}

\begin{proof}[Proof of \Cref{lem:ac-near}]
First suppose that
\(\alpha,\beta<\widehat{\zeta}_n\).  If
\(u\leq-\rho_n\), apply \Cref{lem:ac-near-direct}.  If
\(u\geq-\rho_n\) and
\(\alpha,\beta\leq\alpha_n^\ast\), apply
\Cref{lem:ac-near-low}.  If
\(u\geq-\rho_n\) and
\(\alpha,\beta\geq\alpha_n^\ast\), apply
\Cref{lem:ac-near-high}.  In the remaining case,
\(u\geq-\rho_n\), and the radial levels lie strictly on opposite sides
of \(\alpha_n^\ast\).  Interchange the two pairs if necessary so that
\(\alpha<\alpha_n^\ast<\beta\), and apply
\Cref{lem:ac-near-mixed}.  Thus \eqref{eq:ac-33} holds for every
finite-gain configuration.

It remains to allow one or both radial levels to equal
\(\widehat{\zeta}_n\), which is not covered by the finite-gain regime
lemmas.  If \(Q>\rho_n\), \Cref{cor:ac-finite-gain-closure}\textup{(i)}
with \(g:=L_n\) and \(c:=\rho_n\) extends the finite-gain estimate to
these endpoint configurations.  If \(Q=\rho_n\), then
\(L_n(\rho_n)=-1\), so the desired inequality follows from
\(R\geq-1\).  This proves \eqref{eq:ac-33} in every case.
\end{proof}

\Needspace{10\baselineskip}
\acUpperBoundaryHeading

The main conceptual result of this section is
\Cref{prop:ac-cap-dimension-reduction}.  It reduces an optimization over
two spherical caps to a one-dimensional concave maximization on an
explicit interval.
This reduction is the principal input in the proof of the upper
\((Q,R)\)-boundary for pairs with distinct anchors.

\subsection{Dimension reduction over two spherical caps}
\label{sec:ac-cap-dimension-reduction}

Throughout this subsection, assume \(n\geq2\).  We first define the
spherical caps used below, compute the maximum of \(q\cdot x\) over one
such cap when \(p\cdot q=-\rho_n\), and determine the support function
of a spherical cap.  We then prove
\Cref{prop:ac-cap-dimension-reduction}, followed by three corollaries
that record, respectively, a concave maximization in the boundary
coordinate \(q\cdot x\), the estimate obtained by removing the interval
constraint, and the exact maximizer on the constrained interval.

For any \(p\in\Sp^n\), write
\phantomsection\label{def:spherical-cap}
\[
 \mathcal C_p
 :=\{x\in\Sp^n:p\cdot x\geq\rho_n\}
 =\{x\in\Sp^n:d_n(p,x)\leq\widehat{\zeta}_n\}
\]
for the closed spherical cap centered at \(p\) with angular radius
\(\widehat{\zeta}_n\).

\paragraph{A preliminary cap computation.}

The following projection criterion describes exactly which pairs of
inner products \((p\cdot x,q\cdot x)\) can occur.  We use it first to
compute \(B_n^{\mathrm{cap}}\), and later to determine the interval in
the one-dimensional reduction over two spherical caps.

\begin{lemma}[Feasibility and the boundary of two spherical caps]
\label{lem:ac-cap-gram-boundary}
Assume \(n\geq2\).  Let
\(p,q\in\R^{n+1}\) be unit vectors with
\(p\cdot q=-\rho_n\).
\begin{enumerate}[label=\textup{(\roman*)}]
\item For \(s,t\in[-1,1]\), there exists a unit vector
\(x\in\R^{n+1}\) satisfying
\(
 p\cdot x=s,\qquad q\cdot x=t
\)
if and only if
\[
 s^2+t^2+2\rho_nst\leq1-\rho_n^2.                  \acEquationTag{eq:ac-cap-gram-feasible}
\]
\item If \(s\geq\rho_n\) and a unit vector \(x\) as in
\textup{(i)} exists, then there exists a unit vector \(x'\) satisfying
\(
 p\cdot x'=\rho_n,\qquad q\cdot x'=t.
\)
The values of \(t\) for which such a vector \(x'\) exists are exactly
\(
 -1\leq t\leq1-2\rho_n^2.
\)
\item Consequently,
\[
 B_n^{\mathrm{cap}}
 :=\max_{x\in\mathcal C_p}q\cdot x
 =\max_{y\in\mathcal C_q}p\cdot y
 =1-2\rho_n^2.                                           \acEquationTag{eq:ac-cap-bound}
\]
\end{enumerate}
\end{lemma}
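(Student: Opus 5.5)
The plan is to reduce everything to the Gram matrix of \(p,q,x\), since only the three inner products among them matter.

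For (i), write \(x=\lambda p+\mu q+w\) with \(w\perp\operatorname{span}\{p,q\}\). The two prescribed values \(p\cdot x=s\) and \(q\cdot x=t\) determine \(\lambda,\mu\) uniquely, because \(p,q\) are independent (\(p\cdot q=-\rho_n\neq\pm1\)). Consequently the squared norm of the projection of \(x\) onto \(\operatorname{span}\{p,q\}\) equals
\[
 (s,t)\,G^{-1}(s,t)^{\mathsf T},\qquad
 G=\begin{pmatrix}1&-\rho_n\\-\rho_n&1\end{pmatrix},
\]
which simplifies to \((s^2+t^2+2\rho_nst)/(1-\rho_n^2)\). A unit vector with these two inner products exists if and only if this projected norm is at most \(1\). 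The condition is necessary because \(\lVert w\rVert^2\) must be nonnegative. It is sufficient because \(n\geq2\) gives \(\dim\R^{n+1}\geq3\), so a vector \(w\) of the required length orthogonal to the plane can always be chosen. This yields \eqref{eq:ac-cap-gram-feasible}.

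For (ii), fix \(t\) and view \(f(s):=s^2+2\rho_nts+t^2\) as a convex quadratic in \(s\). The first step is to show that \(f(\rho_n)\leq f(s)\) whenever \(s\geq\rho_n\) and \((s,t)\) is feasible. If \(-\rho_nt\leq\rho_n\), that is \(t\geq-1\), the vertex \(-\rho_nt\) lies at or left of \(\rho_n\), so \(f\) is nondecreasing on \([\rho_n,\infty)\) and the inequality follows. Since \(t\in[-1,1]\) always, this covers every case, and feasibility of \((\rho_n,t)\) follows. The second step is to solve \(f(\rho_n)\leq1-\rho_n^2\), which reads
\[
 t^2+2\rho_n^2t+2\rho_n^2-1\leq0 .
\]
This factors as \((t+1)(t-(1-2\rho_n^2))\leq0\), so the admissible set is \(-1\leq t\leq1-2\rho_n^2\). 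Both endpoints are attained, and this gives the asserted range.

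For (iii), take any \(x\in\mathcal C_p\), so \(s=p\cdot x\geq\rho_n\). By (ii), \(t=q\cdot x\leq1-2\rho_n^2\). The value \(t=1-2\rho_n^2\) is attained with \(s=\rho_n\), and such an \(x\) lies in \(\mathcal C_p\). The setup is symmetric under swapping \(p\) and \(q\), which gives the second equality. The only delicate point is the monotonicity argument in (ii), namely locating the vertex of \(f\); beyond that the steps are routine algebra, and I expect no real obstacle.
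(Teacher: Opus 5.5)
Your proposal is correct and follows the paper's proof essentially step for step. In (i) you use the same projection onto \(\operatorname{span}\{p,q\}\) and projected-norm formula. In (ii) you use the same monotonicity of \(s\mapsto s^2+2\rho_nts+t^2\) on \([\rho_n,1]\) and the same factorization \((t+1)(t-(1-2\rho_n^2))\leq0\). The only cosmetic differences are that you locate the vertex instead of bounding the derivative, and that in (iii) you obtain attainment from (i) rather than exhibiting the explicit maximizer \(q+2\rho_np\).
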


\begin{figure}[!ht]
    \centering
    \resizebox{1.0\textwidth}{!}{\colorlet{gramblue}{blue!70!black}
\colorlet{gramorange}{orange!85!black}
\colorlet{gramteal}{teal!70!black}
\colorlet{grampurple}{purple!70!black}
\colorlet{gramoutline}{black!68}
\colorlet{gramaux}{black!30}

\tikzset{
  gram axis/.style={
    -{Latex[length=1.6mm]},
    black!58,
    semithick
  },
  gram boundary/.style={
    draw=gramoutline,
    very thick
  },
  gram point/.style={
    circle,
    fill,
    inner sep=0pt,
    minimum size=4.6pt
  },
  gram statement/.style={
    draw=black!24,
    rounded corners=2pt,
    fill=black!2,
    inner sep=5pt,
    align=center
  }
}

\begin{tikzpicture}[
  line cap=round,
  line join=round,
  every node/.style={font=\small}
]
  \def\rhof{0.3333333333}
  \pgfmathsetmacro{\Bcapf}{1-2*\rhof*\rhof}
  \def\gramellipse{plot[domain=0:360,samples=180,smooth,variable=\a]
    ({
      (sqrt(1-\rhof)*cos(\a)+sqrt(1+\rhof)*sin(\a))/sqrt(2)
     },{
      (sqrt(1-\rhof)*cos(\a)-sqrt(1+\rhof)*sin(\a))/sqrt(2)
     })}

\begin{scope}[xshift=-6.35cm]
    \node[font=\normalsize\bfseries] at (0,2.72)
      {\textup{(i)} Feasibility criterion};

    \begin{scope}[x=2.02cm,y=2.02cm]
      \path[fill=gramteal!6] \gramellipse -- cycle;
      \path[gram boundary] \gramellipse;

      \draw[gram axis] (-1.10,0)--(1.13,0)
        node[right=1pt,font=\scriptsize] {\(s\)};
      \draw[gram axis] (0,-1.10)--(0,1.13)
        node[above=1pt,font=\scriptsize] {\(t\)};

      \coordinate (gramsamplei) at (.25,.38);
      \draw[gramaux,densely dashed]
        (gramsamplei)--(.25,0);
      \draw[gramaux,densely dashed]
        (gramsamplei)--(0,.38);
      \node[gram point,fill=grampurple] at (gramsamplei) {};
      \node[
        grampurple,
        above right=3pt,
        font=\footnotesize,fill=white,
        inner sep=1pt
      ] at (gramsamplei)
        {\(x:\ (s,t)\)};

      \node[
        gramteal,
        font=\footnotesize,
        fill=white,
        inner sep=1pt
      ] at (-.48,-.55)
        {feasible region};
    \end{scope}

\end{scope}

  \node[black!40,font=\large] at (-3.18,0)
    {\(\Longrightarrow\)};

\begin{scope}
    \node[font=\normalsize\bfseries] at (0,2.72)
      {\textup{(ii)} Boundary vector \(x'\)};

    \begin{scope}[x=2.02cm,y=2.02cm]
      \path[fill=black!3] \gramellipse -- cycle;

      \begin{scope}
        \clip \gramellipse -- cycle;
        \fill[gramblue!14] (\rhof,-1.15) rectangle (1.15,1.15);
      \end{scope}

      \path[gram boundary] \gramellipse;
      \draw[gram axis] (-1.10,0)--(1.13,0)
        node[right=1pt,font=\scriptsize] {\(s\)};
      \draw[gram axis] (0,-1.10)--(0,1.13)
        node[above=1pt,font=\scriptsize] {\(t\)};

      \draw[gramblue,densely dashed]
        (\rhof,-1.08)--(\rhof,1.08);
      \draw[gramteal,very thick]
        (\rhof,-1)--(\rhof,\Bcapf);

      \coordinate (gramx) at (.73,.10);
      \coordinate (gramxprime) at (\rhof,.10);
      \node[gram point,fill=gramorange] at (gramx) {};
      \node[
        gramorange,
        above right=7pt,
        font=\footnotesize
      ] at (gramx)
        {\(x:\ (s,t)\)};
      \node[gram point,fill=gramteal] at (gramxprime) {};
      \node[
        gramteal,
        below left=3pt,
        font=\footnotesize,fill=white,
        inner sep=1pt
      ] at (gramxprime)
        {\(x':\ (\rho_n,t)\)};
      \draw[-{Latex[length=1.7mm]},gramorange,very thick]
        (gramx)--(gramxprime)
        node[midway,below=3pt,font=\scriptsize,fill=white,
        inner sep=1pt]
        {keep \(t\)};

      \node[gram point,fill=gramteal] at (\rhof,\Bcapf) {};
      \node[
        gramteal,
        above right=1pt,
        font=\scriptsize,
        align=left
      ] at (\rhof,\Bcapf)
        {\((\rho_n,B_n^{\mathrm{cap}})\)\\[2pt]
         \(x^{\mathrm{cap}}=q+2\rho_np\)};

      \node[gram point,fill=gramteal] at (\rhof,-1) {};
      \node[
        gramteal,
        above left=2pt,
        font=\scriptsize,
        align=right,fill=white,
        inner sep=1pt
      ] at (\rhof,-1)
        {\((\rho_n,-1)\)\\[-2pt]\(x'=-q\)};

      \node[
        gramblue,
        font=\footnotesize,
        align=center,fill=white,
        inner sep=1pt
      ] at (.73,.53)
        {\(s\geq\rho_n\)};
    \end{scope}

\end{scope}

  \node[black!40,font=\large] at (3.18,0)
    {\(\Longrightarrow\)};

\begin{scope}[xshift=6.35cm]
    \node[font=\normalsize\bfseries] at (0,2.72)
      {\textup{(iii)} The two cap maxima};

\begin{scope}[xshift=-1.43cm,yshift=.28cm,x=1.20cm,y=1.20cm]
      \path[fill=black!3] \gramellipse -- cycle;
      \begin{scope}
        \clip \gramellipse -- cycle;
        \fill[gramblue!16] (\rhof,-1.15) rectangle (1.15,1.15);
      \end{scope}
      \path[gram boundary] \gramellipse;
      \draw[gram axis] (-1.08,0)--(1.10,0);
      \draw[gram axis] (0,-1.08)--(0,1.10);
      \draw[gramblue,densely dashed]
        (\rhof,-1.06)--(\rhof,1.06);
      \draw[-{Latex[length=1.5mm]},gramblue,very thick]
        (\rhof,-.08)--(\rhof,\Bcapf-.05)
        node[pos=.62,left=2pt,font=\scriptsize] {\(\max t\)};
      \node[gram point,fill=gramblue] at (\rhof,\Bcapf) {};
      \node[
        gramblue,
        above right=1pt,
        font=\scriptsize
      ] at (\rhof,\Bcapf)
        {\(P\)};
      \node[
        gramblue,
        font=\scriptsize,
        align=center,fill=white,
        inner sep=1pt
      ] at (.67,-.57)
        {\(\mathcal C_p\)\\[-2pt]\(s\geq\rho_n\)};
      \node[
        font=\scriptsize\bfseries,
        text=gramblue,fill=white,
        inner sep=1pt
      ] at (0,1.56)
        {\(x\in\mathcal C_p\)};
    \end{scope}

\begin{scope}[xshift=1.43cm,yshift=.28cm,x=1.20cm,y=1.20cm]
      \path[fill=black!3] \gramellipse -- cycle;
      \begin{scope}
        \clip \gramellipse -- cycle;
        \fill[gramorange!17] (-1.15,\rhof) rectangle (1.15,1.15);
      \end{scope}
      \path[gram boundary] \gramellipse;
      \draw[gram axis] (-1.08,0)--(1.10,0);
      \draw[gram axis] (0,-1.08)--(0,1.10);
      \draw[gramorange,densely dashed]
        (-1.06,\rhof)--(1.06,\rhof);
      \draw[-{Latex[length=1.5mm]},gramorange,very thick]
        (-.08,\rhof)--(\Bcapf-.05,\rhof)
        node[pos=.62,below=2pt,font=\scriptsize] {\(\max s\)};
      \node[gram point,fill=gramorange] at (\Bcapf,\rhof) {};
      \node[
        gramorange,
        below right=0.5pt,
        font=\scriptsize
      ] at (\Bcapf,\rhof+0.06)
        {\(Q\)};
      \node[
        gramorange,
        font=\scriptsize,
        align=center,fill=white,
        inner sep=1pt
      ] at (-.57,.68)
        {\(\mathcal C_q\)\\[-2pt]\(t\geq\rho_n\)};
      \node[
        font=\scriptsize\bfseries,
        text=gramorange
      ] at (0,1.56)
        {\(y\in\mathcal C_q\)};
    \end{scope}

\end{scope}

\end{tikzpicture} }
    \caption{
  The three conclusions of \Cref{lem:ac-cap-gram-boundary} for unit vectors \(p,q\) with
\(p\cdot q=-\rho_n\). Panel \textup{(i)} gives the exact feasible Gram
region for $(s,t)=(p\cdot x,q\cdot x).$
Panel \textup{(ii)} illustrates \Cref{lem:ac-cap-gram-boundary}\textup{(ii)}: whenever
\((s,t)\) is feasible and the cap condition \(s\geq\rho_n\) holds,
\((\rho_n,t)\) is also feasible. 
The possible values of \(t\) are exactly
\(-1\leq t\leq B_n^{\mathrm{cap}}\). The horizontal arrow represents
this reduction of feasible Gram data and is not a prescribed
projection of \(x\). Panel \textup{(iii)} uses two separate copies of
the feasible region for the symmetric optimizations over
\(\mathcal C_p\) and \(\mathcal C_q\); both maxima equal
\(B_n^{\mathrm{cap}}=1-2\rho_n^2\). The plotted ellipses use
\(\rho_n=1/3\); all statements and identities are symbolic.}
    \label{fig:gram_ellipse}
\end{figure}

\begin{proof}
Put \(E:=\operatorname{span}\{p,q\}\).  Since
\(p\cdot q=-\rho_n\) and \(0<\rho_n<1\), the vectors \(p,q\) are
linearly independent.  Solving the two equations
\(
 p\cdot x_E=s,\qquad q\cdot x_E=t
\)
for \(x_E\in E\) gives
\[
 x_E
 =\frac{s+\rho_nt}{1-\rho_n^2}\,p
  +\frac{\rho_ns+t}{1-\rho_n^2}\,q.                  \acEquationTag{eq:ac-cap-projection}
\]
Its squared norm is
\[
 \lVert x_E\rVert^2
 =\frac{(s+\rho_nt)s+(\rho_ns+t)t}{1-\rho_n^2}
 =\frac{s^2+t^2+2\rho_nst}{1-\rho_n^2}.
\acEquationTag{eq:ac-cap-projection-norm}
\]

If a unit vector \(x\in\R^{n+1}\) has the prescribed inner products,
then its orthogonal projection onto \(E\) is \(x_E\).  Thus
\(x=x_E+w\) for some \(w\in E^\perp\), and

\(
 1=\lVert x\rVert^2
  =\lVert x_E\rVert^2+\lVert w\rVert^2
  \geq\lVert x_E\rVert^2.
\)
This proves the necessity of \eqref{eq:ac-cap-gram-feasible}.
Conversely, assume \eqref{eq:ac-cap-gram-feasible}.  Then
\eqref{eq:ac-cap-projection-norm} gives
\(\lVert x_E\rVert\leq1\).  Because \(n\geq2\),
\(
 \dim E^\perp=n-1\geq1,
\)
so there exists \(w\in E^\perp\) with
\(
 \lVert w\rVert
 =\sqrt{1-\lVert x_E\rVert^2}.
\)
The vector \(x:=x_E+w\) is unit and has the prescribed inner products.
This proves part~\textup{(i)}.

For fixed \(t\in[-1,1]\), the function
\(s\mapsto s^2+t^2+2\rho_nst\) is nondecreasing on
\([\rho_n,1]\), because its derivative satisfies
\(
 2(s+\rho_nt)\geq2\rho_n(1+t)\geq0.
\)
Hence, if \((s,t)\) satisfies
\eqref{eq:ac-cap-gram-feasible} with \(s\geq\rho_n\), then
\(
 \rho_n^2+t^2+2\rho_n^2t
 \leq s^2+t^2+2\rho_nst
 \leq1-\rho_n^2.
\)
Part~\textup{(i)} therefore shows that \((\rho_n,t)\) is feasible.
At \(s=\rho_n\), condition \eqref{eq:ac-cap-gram-feasible} is
equivalent to
\(
 (t+1)\bigl(t-(1-2\rho_n^2)\bigr)\leq0.
\)
Since \(t\in[-1,1]\), the feasible values are exactly
\(
 -1\leq t\leq1-2\rho_n^2.
\)
This proves part~\textup{(ii)}.

If \(x\in\mathcal C_p\), then \(s:=p\cdot x\geq\rho_n\), so
part~\textup{(ii)} gives
\(
 q\cdot x\leq1-2\rho_n^2.
\)
The upper bound is attained at
\(
 x^{\mathrm{cap}}:=q+2\rho_np,
\)
because
\(
 \lVert x^{\mathrm{cap}}\rVert=1,\qquad
 p\cdot x^{\mathrm{cap}}=\rho_n,\qquad
 q\cdot x^{\mathrm{cap}}=1-2\rho_n^2.
\)
Thus the first maximum in \eqref{eq:ac-cap-bound} is
\(1-2\rho_n^2\).  Interchanging \(p\) and \(q\) proves the second and
completes part~\textup{(iii)}.
\end{proof}

\begin{lemma}[Support function of a spherical cap]
\label{lem:ac-cap-support}
For a unit vector \(q\in\R^{n+1}\) and \(w\in\R^{n+1}\), define
\[
 \mathcal S_{\rho_n}(w;q)
 :=\max_{y\in\mathcal C_q}w\cdot y.
\]
Then \(\mathcal S_{\rho_n}(0;q)=0\).  If \(w\neq0\), then
\[
 \mathcal S_{\rho_n}(w;q)=
 \begin{cases}
 \lVert w\rVert,
 &\dfrac{w}{\lVert w\rVert}\cdot q\geq\rho_n,\\[3pt]
 {\rho_n}(w\cdot q)+{\sin \widehat{\zeta}_n}
 \sqrt{\lVert w\rVert^2-(w\cdot q)^2},
 &\dfrac{w}{\lVert w\rVert}\cdot q<\rho_n.
 \end{cases}                                                \acEquationTag{eq:ac-74}
\]
\end{lemma}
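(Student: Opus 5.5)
The support function has a direct description: maximize \(w\cdot y\) over the cap \(\mathcal C_q=\{y\in\Sp^n: q\cdot y\geq\rho_n\}\). The case \(w=0\) is trivial. For \(w\neq0\), the plan is to reduce to the two-dimensional plane \(\operatorname{span}\{w,q\}\) and then maximize over an arc of a circle.

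\emph{First case.} If \(\hat w:=w/\lVert w\rVert\) satisfies \(\hat w\cdot q\geq\rho_n\), then \(\hat w\in\mathcal C_q\). Cauchy--Schwarz gives \(w\cdot y\leq\lVert w\rVert\), and this value is attained at \(y=\hat w\). So \(\mathcal S_{\rho_n}(w;q)=\lVert w\rVert\).

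\emph{Second case.} Assume \(\hat w\cdot q<\rho_n\). Decompose \(w=(w\cdot q)q+w_\perp\) with \(w_\perp\perp q\). Here \(w_\perp\neq0\); otherwise \(\hat w=\pm q\), and \(\hat w\cdot q<\rho_n\) forces \(\hat w=-q\).

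Write \(y=tq+y_\perp\) with \(t=q\cdot y\in[\rho_n,1]\) and \(\lVert y_\perp\rVert=\sqrt{1-t^2}\). Then
\[
w\cdot y=(w\cdot q)\,t+w_\perp\cdot y_\perp\leq (w\cdot q)\,t+\lVert w_\perp\rVert\sqrt{1-t^2},
\]
with equality when \(y_\perp\) is the appropriate multiple of \(w_\perp/\lVert w_\perp\rVert\). When \(w_\perp=0\) any \(y_\perp\) works, and this is where \(n\geq1\) is used.

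Now set \(t=\cos\phi\) with \(\phi\in[0,\widehat{\zeta}_n]\). The bound becomes \(\lVert w\rVert\cos(\phi-\psi)\), where \(\psi\in[0,\pi]\) is defined by \(\cos\psi=\hat w\cdot q\) and \(\sin\psi=\lVert w_\perp\rVert/\lVert w\rVert\). The second-case hypothesis says exactly that \(\psi>\widehat{\zeta}_n\). Since \(\phi-\psi\) ranges over \([-\psi,\widehat{\zeta}_n-\psi]\subset[-\pi,0)\), the function \(\cos(\phi-\psi)\) is increasing in \(\phi\) on this interval. The maximum is therefore at \(\phi=\widehat{\zeta}_n\), that is, \(t=\rho_n\). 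Substituting gives \(\rho_n(w\cdot q)+\sin\widehat{\zeta}_n\lVert w_\perp\rVert\), and \(\lVert w_\perp\rVert^2=\lVert w\rVert^2-(w\cdot q)^2\), which is the second line of \eqref{eq:ac-74}.

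The only delicate point is checking monotonicity on the correct interval, i.e.\ that \(\psi>\widehat{\zeta}_n\) pushes the unconstrained maximizer \(\phi=\psi\) outside the cap. The degenerate situations \(w_\perp=0\) and \(\hat w=-q\) need a separate check; in the latter the formula gives \(-\rho_n\lVert w\rVert\), which is attained at every point of the boundary circle \(q\cdot y=\rho_n\).
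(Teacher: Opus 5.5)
Your proof is correct and follows essentially the paper's route: split \(w\) and \(y\) into components along \(q\) and \(q^\perp\), reduce to maximizing \((w\cdot q)t+\lVert w_\perp\rVert\sqrt{1-t^2}\) over \(t=q\cdot y\in[\rho_n,1]\), and place the maximum at \(t=\rho_n\) when \(\hat w\cdot q<\rho_n\). The paper reaches that last step by concavity, using the unconstrained maximizer \(t=\hat w\cdot q\), whereas you use the substitution \(t=\cos\phi\) and the monotonicity of \(\cos(\phi-\psi)\); these are equivalent. One small slip: your claim \(w_\perp\neq0\) in the second case fails when \(\hat w=-q\), but, as you note, your angle argument with \(\psi=\pi\) covers that case without change.
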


\begin{proof}
The value at \(w=0\) is immediate.  Suppose \(w\neq0\), and write
\(
 w_\perp:=w-(w\cdot q)q,
 \qquad w=(w\cdot q)q+w_\perp,
 \qquad w_\perp\perp q.
\)
For \(y\in\mathcal C_q\), put \(s:=q\cdot y\).  Then
\[
 y=sq+\sqrt{1-s^2}\,\xi,
 \qquad \rho_n\leq s\leq1,
 \qquad \xi\perp q,\quad \lVert \xi\rVert=1.
\]

For fixed \(s\), the decompositions of \(w\) and \(y\) above give
\[
 w\cdot y
 =s(w\cdot q)+\sqrt{1-s^2}\,(w_\perp\cdot\xi)
 \leq s(w\cdot q)+\sqrt{1-s^2}\,\lVert w_\perp\rVert,
\]
where the inequality is Cauchy--Schwarz applied to \(w_\perp\) and
\(\xi\).
If \(w_\perp\neq0\), equality is attained by
\(\xi=w_\perp/\lVert w_\perp\rVert\); if \(w_\perp=0\), equality holds
for every admissible \(\xi\).  Hence the remaining maximization is that
of the concave function
\[
 g_w(s):=
 s(w\cdot q)+\sqrt{1-s^2}\,\lVert w_\perp\rVert,
 \qquad -1\leq s\leq1.
\]

Without the constraint \(s\geq\rho_n\), the function \(g_w\) is
maximized at
\(
 s_0=\tfrac{w\cdot q}{\lVert w\rVert},
\)
corresponding to \(y=w/\lVert w\rVert\).  If \(s_0\geq\rho_n\), this
point belongs to \(\mathcal C_q\), and the maximum is
\(\lVert w\rVert\).  If \(s_0<\rho_n\), concavity implies that \(g_w\)
is nonincreasing on \([\rho_n,1]\), so its maximum on this interval is
\(g_w(\rho_n)\).  Since
\(
 \lVert w_\perp\rVert
 =\sqrt{\lVert w\rVert^2-(w\cdot q)^2},
\)
this proves \eqref{eq:ac-74}.
\end{proof}

\Cref{fig:lemma44-cap-reduction} summarizes the reduction.  The interval
\(J_{b,c}\) and the points
\(\mathsf h_{\mathrm{free}},\mathsf h_{\mathrm{opt}}\)
appearing in panel~\textup{(c)} are defined in
\Cref{prop:ac-cap-dimension-reduction,cor:ac-cap-projected-maximizer}.

\begin{figure}[htbp]
\centering
\resizebox{0.98\textwidth}{!}{\begin{tikzpicture}[
  line cap=round,
  line join=round,
  every node/.style={font=\small},
  capone/.style={blue!65!black},
  captwo/.style={orange!85!black},
  scalar/.style={teal!65!black}
]

\begin{scope}[xshift=-5.15cm]
  \def\R{1.72}
  \def\pangle{145}
  \def\qangle{35}
  \def\capangle{70}

  \coordinate (Oa) at (0,0);
  \coordinate (Pa) at (\pangle:1.25);
  \coordinate (Qa) at (\qangle:1.25);
  \coordinate (Xa) at (190:\R);
  \coordinate (Ya) at (18:\R);

\path[fill=blue!13]
    (75:\R) arc[start angle=75,end angle=215,radius=\R] -- cycle;
  \path[fill=orange!16,fill opacity=.72]
    (-35:\R) arc[start angle=-35,end angle=105,radius=\R] -- cycle;
  \draw[black!60,semithick] (Oa) circle[radius=\R];

\draw[capone,densely dashed] (75:\R)--(215:\R);
  \draw[captwo,densely dashed] (-35:\R)--(105:\R);
  \draw[capone,very thick]
    (75:\R) arc[start angle=75,end angle=215,radius=\R];
  \draw[captwo,very thick]
    (-35:\R) arc[start angle=-35,end angle=105,radius=\R];

\draw[-{Latex[length=1.8mm]},capone,semithick] (Oa)--(Pa)
    node[pos=.72,below left=-1pt] {\(p\)};
  \draw[-{Latex[length=1.8mm]},captwo,semithick] (Oa)--(Qa)
    node[pos=.72,below right=-1pt] {\(q\)};
  \fill[capone] (Xa) circle[radius=2pt]
    node[below left=1pt] {\(x\in\mathcal C_p\)};
  \fill[captwo] (Ya) circle[radius=2pt]
    node[right=2pt] {\(y\in\mathcal C_q\)};
  \fill[black!65] (Oa) circle[radius=1.2pt];

  \node[capone,align=center,font=\footnotesize] at (-1.88,1.84)
    {\(\mathcal C_p\)\\[-2pt]\(p\cdot x\geq\rho_n\)};
  \node[captwo,align=center,font=\footnotesize] at (1.88,1.84)
    {\(\mathcal C_q\)\\[-2pt]\(q\cdot y\geq\rho_n\)};
  \node[font=\footnotesize,align=center] at (0,-2.19)
    {\textup{(a)} cap-constrained variables};
\end{scope}

\begin{scope}[xshift=0cm]
  \def\R{1.72}
  \def\capangle{70}
  \coordinate (Ob) at (0,0);
  \coordinate (Qb) at (0:1.25);
  \coordinate (Wb) at (112:1.48);
  \coordinate (Yb) at (\capangle:\R);

  \path[fill=orange!16,fill opacity=.72]
    (-\capangle:\R)
    arc[start angle=-\capangle,end angle=\capangle,radius=\R] -- cycle;
  \draw[black!60,semithick] (Ob) circle[radius=\R];
  \draw[captwo,densely dashed]
    (-\capangle:\R)--(\capangle:\R);
  \draw[captwo,very thick]
    (-\capangle:\R)
    arc[start angle=-\capangle,end angle=\capangle,radius=\R];

  \draw[-{Latex[length=1.8mm]},captwo,semithick] (Ob)--(Qb)
    node[pos=1,below right=-1pt] {\(q\)};
  \draw[-{Latex[length=2mm]},scalar,very thick] (Ob)--(Wb)
    node[pos=.66,left=2pt] {\(w_x\)};
  \draw[black!35,dashed] (Ob)--(Yb);

\draw[scalar,densely dashed]
    ($(Yb)+(22:1.02)$)--($(Yb)+(202:1.02)$);
  \fill[scalar] (Yb) circle[radius=2.2pt]
    node[right=5pt,yshift=7pt,align=left]
      {\(y_{\max}\)\\[-2pt]\(q\cdot y_{\max}=\rho_n\)};

  \node[font=\footnotesize,black!65] at (0,2.52)
    {the plane \(\operatorname{span}\{q,w_x\}\)};
  \node[font=\footnotesize,align=center] at (0,-2.19)
    {\textup{(b)} fix \(x\), then maximize \(w_x\cdot y\)};
\end{scope}

\begin{scope}[xshift=5.20cm]
  \def\zleft{-1.15}
  \def\zright{-0.05}
  \def\zfree{0.48}

  \draw[-{Latex[length=1.7mm]},black!65]
    (-1.78,0)--(1.84,0) node[right=-1pt]
    {\(\mathsf h=w_x\cdot q\)};
  \draw[-{Latex[length=1.7mm]},black!65]
    (-1.62,-.08)--(-1.62,2.02);

\draw[scalar,very thick,domain=-1.58:1.58,samples=100,smooth,variable=\z]
    plot ({\z},{.40+.25*\z+.80*sqrt(2.56-\z*\z)});

\draw[blue!16,line width=7pt] (\zleft,-.20)--(\zright,-.20);
  \draw[capone,line width=1.2pt] (\zleft,-.20)--(\zright,-.20);
  \draw[capone] (\zleft,-.27)--(\zleft,-.13);
  \draw[capone] (\zright,-.27)--(\zright,-.13);
  \node[capone,above=3pt] at ({(\zleft+\zright)/2},-.20)
    {\(J_{b,c}\)};

  \draw[orange!85!black,densely dashed]
    (\zfree,0)--(\zfree,1.742);
  \fill[orange!85!black] (\zfree,1.742) circle[radius=2.1pt]
    node[xshift=2.5pt,yshift=8pt] {\(\mathsf h_{\mathrm{free}}\)};

  \draw[capone,densely dashed]
    (\zright,0)--(\zright,1.666);
  \fill[capone] (\zright,1.666) circle[radius=2.1pt]
    node[xshift=-2.5pt,yshift=8pt] {\(\mathsf h_{\mathrm{opt}}\)};

  \draw[-{Latex[length=1.7mm]},black!55]
    (\zfree,0.8) to (\zright,0.8) node[right=14pt,font=\scriptsize]
      {\(\operatorname{proj}_{J_{b,c}}\)};

  \node[font=\footnotesize,scalar] at (.10,2.52)
    {one-variable concave objective};
  \node[font=\footnotesize,align=center] at (0,-2.19)
    {\textup{(c)} project \(\mathsf h_{\mathrm{free}}\) onto \(J_{b,c}\)};
\end{scope}

\node[font=\large,black!45] at (-2.58,0) {\(\Longrightarrow\)};
\node[font=\large,black!45] at (2.60,0) {\(\Longrightarrow\)};

\end{tikzpicture}
 }
\caption{The exact reduction in
\Cref{prop:ac-cap-dimension-reduction}.  Panel~\textup{(a)} shows
meridional sections of the two spherical caps, whose axes satisfy
\(p\cdot q=-\rho_n\).  Panel~\textup{(b)} depicts the meridional
two-plane in which the maximization over \(y\) may be carried out.
Panel~\textup{(c)} shows the remaining one-variable concave
maximization on \(J_{b,c}\) and the projection of its unconstrained
maximizer \(\mathsf h_{\mathrm{free}}\) to
\(\mathsf h_{\mathrm{opt}}\in J_{b,c}\).  The symbols in
panel~\textup{(c)} are defined in
\Cref{prop:ac-cap-dimension-reduction,cor:ac-cap-projected-maximizer}.
}
\label{fig:lemma44-cap-reduction}
\end{figure}
\FloatBarrier

\begin{proposition}[Exact dimension reduction over two spherical caps]
\label{prop:ac-cap-dimension-reduction}
Assume \(n\geq2\).  Let \(p,q\in\R^{n+1}\) be unit vectors satisfying
\(
 p\cdot q=-{\rho_n}.
\)
For \(a,b,c>0\), define
\[
\mathfrak{C}(a,b,c)
:=
\max_{\substack{x\in\mathcal C_p\\y\in\mathcal C_q}}
\left\{
a(q\cdot x)+b(p\cdot y)-c(x\cdot y)
\right\}.
\]

Here \(p,q,x,y\) denote vectors, whereas \(a,b,c,\nu_{b,c}\), and the scalar \(\mathsf h\) introduced below are real numbers.

Recall that
\(
 B_n^{\mathrm{cap}}:=1-2\rho_n^2,
\)
and set
\[
\begin{gathered}
 \nu_{b,c}
 :=\sqrt{b^2+c^2-2\rho_nbc},\\
 J_{b,c}:=
 \bigl[-{\rho_n}b-c B_n^{\mathrm{cap}},\,
       \min\{c-{\rho_n}b,{\rho_n}\nu_{b,c}\}\bigr].
\end{gathered}                                                   \acEquationTag{eq:ac-72}
\]
\begin{enumerate}[label=\textup{(\roman*)}]
\item The set \(J_{b,c}\) is a nonempty closed subinterval of
\([-\nu_{b,c},\nu_{b,c}]\).

\item
For every \(t\in[-1,B_n^{\mathrm{cap}}]\), the maximum in the
definition of \(\mathfrak C(a,b,c)\), subject to the additional
constraint \(q\cdot x=t\), is attained by a pair
\((x,y)\in\mathcal C_p\times\mathcal C_q\) satisfying
\(p\cdot x=\rho_n\).

\item The quantity \(\mathfrak C(a,b,c)\) equals the following
one-dimensional concave maximum:
\[
 \boxed{
 \mathfrak C(a,b,c)
 =-\frac{ab{\rho_n}}{c}
 +\max_{\mathsf h\in J_{b,c}}
 \left\{\big({\rho_n}-\tfrac{a}{c}\big)\mathsf h
 +{\sin \widehat{\zeta}_n}
 \sqrt{\nu_{b,c}^2-\mathsf h^2}\right\}.
 }                                                               \acEquationTag{eq:ac-73}
\]
\end{enumerate}

\end{proposition}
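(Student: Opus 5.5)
The plan is to maximize first over \(y\) for fixed \(x\), then to show that the cap constraint on \(x\) may be taken active, and finally to reparametrize by one scalar. Fix \(x\in\mathcal C_p\) and put \(s:=p\cdot x\geq\rho_n\) and \(t:=q\cdot x\). The \(y\)-dependent part of the objective is \(w_x\cdot y\) with
\[
 w_x:=bp-cx .
\]
So \Cref{lem:ac-cap-support} gives
\[
 \max_{y\in\mathcal C_q}\{b(p\cdot y)-c(x\cdot y)\}=\mathcal S_{\rho_n}(w_x;q).
\]
The two quantities entering \eqref{eq:ac-74} are
\[
 \lVert w_x\rVert^2=b^2+c^2-2bcs,
 \qquad
 \mathsf h:=w_x\cdot q=-\rho_nb-ct .
\]
Solving the second for \(t\) gives \(t=-(\mathsf h+\rho_nb)/c\). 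Hence
\[
 a(q\cdot x)=-\frac{ab\rho_n}{c}-\frac{a}{c}\mathsf h ,
\]
which already produces the additive constant and the \(-\tfrac{a}{c}\mathsf h\) term in \eqref{eq:ac-73}.

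For part (ii), fix \(t\); then \(\mathsf h\) is fixed as well. In both branches of \eqref{eq:ac-74} the support value is nondecreasing in \(\lVert w_x\rVert^2\) for fixed \(w_x\cdot q\): it is \(\lVert w_x\rVert\), or \(\rho_n\mathsf h+\sin\widehat{\zeta}_n\sqrt{\lVert w_x\rVert^2-\mathsf h^2}\). I should also check that the branch switch does not spoil this monotonicity. It does not, because \(\mathcal S_{\rho_n}(\cdot;q)\) is a support function and hence continuous, and both branch formulas coincide on the switching set.

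Now \(\lVert w_x\rVert^2\) is decreasing in \(s\). By \Cref{lem:ac-cap-gram-boundary}(ii), whenever \((s,t)\) is feasible with \(s\geq\rho_n\), the pair \((\rho_n,t)\) is also feasible. Therefore the constrained maximum for fixed \(t\) is attained with \(p\cdot x=\rho_n\). This gives (ii), and at this choice \(\lVert w_x\rVert=\nu_{b,c}\). By the same lemma, \(t\) then ranges exactly over \([-1,B_n^{\mathrm{cap}}]\). Consequently \(\mathsf h\) ranges over
\[
 [-\rho_nb-cB_n^{\mathrm{cap}},\,c-\rho_nb].
\]
Since this interval consists of values \(w_x\cdot q\) with \(\lVert w_x\rVert=\nu_{b,c}\), it lies in \([-\nu_{b,c},\nu_{b,c}]\).

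For part (iii), put
\[
 g(\mathsf h):=\rho_n\mathsf h+\sin\widehat{\zeta}_n\sqrt{\nu_{b,c}^2-\mathsf h^2}.
\]
This is the second branch of \eqref{eq:ac-74} at \(\lVert w\rVert=\nu_{b,c}\), and it is concave. Its unconstrained maximizer is \(\mathsf h=\rho_n\nu_{b,c}\), with value \(\nu_{b,c}\). Hence \(\mathcal S_{\rho_n}\) equals \(g(\mathsf h)\) for \(\mathsf h\leq\rho_n\nu_{b,c}\) and equals the constant \(\nu_{b,c}\) for \(\mathsf h\geq\rho_n\nu_{b,c}\). On the latter range the full objective is \(\text{const}-\tfrac{a}{c}\mathsf h+\nu_{b,c}\), which is strictly decreasing because \(a,c>0\). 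So the maximum may be restricted to \(\mathsf h\leq\rho_n\nu_{b,c}\). This produces the upper endpoint \(\min\{c-\rho_nb,\rho_n\nu_{b,c}\}\) of \(J_{b,c}\), and on \(J_{b,c}\) the objective is the concave function in \eqref{eq:ac-73}. Every value of \(\mathsf h\) in \(J_{b,c}\) is realized: choose \(x\) with \(p\cdot x=\rho_n\) and the corresponding \(t\), then take the maximizing \(y\) of \Cref{lem:ac-cap-support}. Hence equality holds, and not just an inequality.

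For part (i), \(J_{b,c}\) is contained in the interval of realized \(\mathsf h\), hence in \([-\nu_{b,c},\nu_{b,c}]\). For nonemptiness, compare the lower endpoint with both upper candidates:
\[
 -\rho_nb-cB_n^{\mathrm{cap}}\leq c-\rho_nb,
 \qquad
 -\rho_nb-cB_n^{\mathrm{cap}}<0\leq\rho_n\nu_{b,c},
\]
using \(B_n^{\mathrm{cap}}>0\).

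I expect the main obstacle to be the bookkeeping around the two branches of the support function. This means justifying the monotonicity in \(s\) across the branch switch (via continuity of \(\mathcal S_{\rho_n}\)), and checking that discarding \(\mathsf h>\rho_n\nu_{b,c}\) loses nothing. That second point holds because the branch-two formula and the constant \(\nu_{b,c}\) agree at \(\mathsf h=\rho_n\nu_{b,c}\), where the concave \(g\) has its maximum.
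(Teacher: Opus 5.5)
Your proposal is correct and follows essentially the paper's route. You maximize over $y$ via the cap support function with $w_x=bp-cx$, then use \Cref{lem:ac-cap-gram-boundary}(ii) and monotonicity of the support value in $\lVert w_x\rVert$ to push $x$ to $p\cdot x=\rho_n$. You reparametrize by $\mathsf h=-\rho_nb-ct$ and discard $\mathsf h>\rho_n\nu_{b,c}$ because the first branch is strictly decreasing there.

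The differences are minor. The paper obtains monotonicity across the branch switch from the variational form $\max_{\sigma\in[\rho_n,1]}\{\sigma\mathsf h+\sqrt{1-\sigma^2}\sqrt{\lVert w\rVert^2-\mathsf h^2}\}$, where you use continuity at the switch. It also proves $J_{b,c}\subseteq[-\nu_{b,c},\nu_{b,c}]$ by explicit algebraic identities, where you use the Cauchy--Schwarz bound $|w_x\cdot q|\le\lVert w_x\rVert=\nu_{b,c}$ on realized vectors.
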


\paragraph{Geometric interpretation.}
The geometric reduction behind the formula has two stages. For fixed
\(x\), the terms involving \(y\) form the linear functional \(w_x\cdot y\),
where \(w_x:=bp-cx\).  Rotational symmetry of the spherical cap
about its axis \(q\) implies that a maximizing \(y\) may be chosen in a
meridional two-plane containing \(q\) and \(w_x\); when these vectors
are linearly independent, this plane is
\(\operatorname{span}\{q,w_x\}\).  After maximizing over \(y\), the
resulting expression depends on \(x\) only through
\(p\cdot x\) and \(q\cdot x\).  For fixed \(q\cdot x\), it is
nonincreasing as \(p\cdot x\) increases, so a maximizing \(x\) may be
chosen with \(p\cdot x=\rho_n\).  On this boundary the objective
depends only on the scalar \(q\cdot x\), or equivalently on
\(\mathsf h:=w_x\cdot q\), which ranges over the interval \(J_{b,c}\) defined
in \eqref{eq:ac-72}.  Thus the cap-constrained
maximum reduces first to the maximization of \(w_x\cdot y\) over
\(y\in\mathcal C_q\) within such a meridional two-plane, and then to a
one-variable concave maximization.

\begin{proof}[Proof of \Cref{prop:ac-cap-dimension-reduction}]
We first verify the assertions concerning the quantities in
\eqref{eq:ac-72}.  The scalar \(\nu_{b,c}\) is positive because
\(
 \nu_{b,c}^2
 =(b-\rho_nc)^2+(1-\rho_n^2)c^2>0.
\)
Since \(n\geq2\), one has
\(
 0<\rho_n\leq\tfrac13,
 \qquad
 B_n^{\mathrm{cap}}=1-2\rho_n^2>0.
\)

The left endpoint of \(J_{b,c}\) is smaller than each of its two
possible right endpoints:
\[
 (c-\rho_nb)
 -(-\rho_nb-c B_n^{\mathrm{cap}})
 =c(1+B_n^{\mathrm{cap}})>0,
\qquad
 -\rho_nb-c B_n^{\mathrm{cap}}<0
 <\rho_n\nu_{b,c}.
\]
Thus \(J_{b,c}\) is a nonempty closed interval.  To locate it relative
to \([-\nu_{b,c},\nu_{b,c}]\), compute
\[
 \nu_{b,c}^2
 -(\rho_nb+c B_n^{\mathrm{cap}})^2
 =(1-\rho_n^2)(b-2\rho_nc)^2\geq0,
\qquad
 \nu_{b,c}^2-(c-\rho_nb)^2
 =(1-\rho_n^2)b^2\geq0.
\]
Because \(\rho_nb+c B_n^{\mathrm{cap}}>0\), the first identity gives
\(
 -\nu_{b,c}
 \leq-\rho_nb-c B_n^{\mathrm{cap}}.
\)
The second gives
\(
 |c-\rho_nb|\leq\nu_{b,c},
\)
and \(0<\rho_n<1\) gives
\(\rho_n\nu_{b,c}<\nu_{b,c}\).  Both possible right endpoints in
\eqref{eq:ac-72} are therefore at most \(\nu_{b,c}\), proving
\(
 J_{b,c}\subseteq[-\nu_{b,c},\nu_{b,c}].
\)
This proves part~\textup{(i)}.

Fix \(t\in[-1,B_n^{\mathrm{cap}}]\).  By
\Cref{lem:ac-cap-gram-boundary}\textup{(ii)}, there exists
\(x\in\mathcal C_p\) satisfying \(q\cdot x=t\).  Consider all such
points \(x\).  For each of them,
the part of the objective defining \(\mathfrak C(a,b,c)\) that depends
on \(y\in\mathcal C_q\) is
\(
 b(p\cdot y)-c(x\cdot y)=(bp-cx)\cdot y.
\)
Thus, with \(w_x:=bp-cx\), the support-function formula in
\Cref{lem:ac-cap-support} shows that maximizing over
\(y\in\mathcal C_q\) gives
\(
 at+\mathcal S_{\rho_n}(w_x;q).
\)
Put \(s:=p\cdot x\).  The two quantities on which the support function
depends satisfy
\(
 w_x\cdot q=-\rho_nb-ct,\qquad
 \lVert w_x\rVert^2=b^2+c^2-2bcs.
\)
Part~\textup{(ii)} of \Cref{lem:ac-cap-gram-boundary} provides a unit
vector \(x'\in\mathcal C_p\) such that
\(
 p\cdot x'=\rho_n,\qquad q\cdot x'=t.
\)
If \(w_{x'}:=bp-cx'\), then
\[
 w_{x'}\cdot q=w_x\cdot q,\qquad
 \lVert w_{x'}\rVert^2
 =b^2+c^2-2bc\rho_n
 \geq\lVert w_x\rVert^2.
\]
For a fixed value of \(w\cdot q\), the proof of
\eqref{eq:ac-74} expresses the support value as
\[
 \max_{\rho_n\leq \sigma\leq1}
 \left\{
  \sigma(w\cdot q)
  +\sqrt{1-\sigma^2}
   \sqrt{\lVert w\rVert^2-(w\cdot q)^2}
 \right\}.
\]
Every expression inside this maximum is nondecreasing in
\(\lVert w\rVert\).  Hence
\(
 \mathcal S_{\rho_n}(w_{x'};q)
 \geq\mathcal S_{\rho_n}(w_x;q).
\)
Thus, for each feasible \(t\), the maximum may be attained with
\(p\cdot x=\rho_n\).
This proves part~\textup{(ii)}.
On this boundary, direct calculation gives
\(
 \lVert w_x\rVert^2
 =b^2+c^2-2bc\rho_n
 =\nu_{b,c}^2,
\)
and therefore \(\lVert w_x\rVert=\nu_{b,c}\).

\phantomsection\label{def:cap-height-coordinate}
Put
\(
 \mathsf h:=w_x\cdot q=-\rho_nb-c(q\cdot x)=-\rho_nb-ct.
\)
Since \(q\) is a unit vector, \(\mathsf h\) is the scalar component of
\(w_x\) in the direction of \(q\).
As
\(q\cdot x\) ranges over \([-1,B_n^{\mathrm{cap}}]\), the scalar
\(\mathsf h\)
ranges over
\(
 [-\rho_nb-c B_n^{\mathrm{cap}},\,c-\rho_nb].
\)
For \(\mathsf h<\rho_n\nu_{b,c}\), the second case of
\eqref{eq:ac-74} shows that the reduced quantity
\(at+\mathcal S_{\rho_n}(w_x;q)\) equals
\[
 -\frac{ab\rho_n}{c}
 +\big(\rho_n-\tfrac{a}{c}\big)\mathsf h
 +\sin\widehat{\zeta}_n
  \sqrt{\nu_{b,c}^2-\mathsf h^2}.                    \acEquationTag{eq:ac-cap-second-branch}
\]
This expression extends continuously to
\(\mathsf h=\rho_n\nu_{b,c}\), where the two cases of
\eqref{eq:ac-74} agree.  If the interval above extends beyond this
value, then for \(\mathsf h\geq\rho_n\nu_{b,c}\) the first case gives
\[
 -\frac{ab\rho_n}{c}-\frac ac\,\mathsf h+\nu_{b,c}.
\]
Since \(a/c>0\), this affine function is strictly decreasing in
\(\mathsf h\);
its maximum on the first-branch portion is therefore attained at
\(\mathsf h=\rho_n\nu_{b,c}\), where its value equals the value of
\eqref{eq:ac-cap-second-branch}.
Consequently, the maximum over the full \(\mathsf h\)-interval equals the
maximum of the expression in \eqref{eq:ac-cap-second-branch} over
\[
 \bigl[-\rho_nb-cB_n^{\mathrm{cap}},\,
 \min\{c-\rho_nb,\rho_n\nu_{b,c}\}\bigr]
 =J_{b,c}.
\]
This proves \eqref{eq:ac-73}.
This proves part~\textup{(iii)}.

\end{proof}

\begin{corollary}[Reduction in the variable \texorpdfstring{\(q\cdot x\)}{q dot x}]
\label{cor:ac-cap-boundary-coordinate}
Under the hypotheses and notation of
\Cref{prop:ac-cap-dimension-reduction}, for each
\(t\in[-1,B_n^{\mathrm{cap}}]\), choose a unit vector \(x_t\) such that
\[
 p\cdot x_t=\rho_n,
 \qquad
 q\cdot x_t=t,
\]
and put
\[
 w_t:=bp-cx_t,
 \qquad
 \Psi_{a,b,c}(t):=at+\mathcal S_{\rho_n}(w_t;q).
\]
Then \(\Psi_{a,b,c}\) is independent of the choice of \(x_t\), is
concave on \([-1,B_n^{\mathrm{cap}}]\), and
\[
 \mathfrak C(a,b,c)
 =\max_{-1\leq t\leq B_n^{\mathrm{cap}}}\Psi_{a,b,c}(t).
 \acEquationTag{eq:ac-cap-boundary-coordinate}
\]
\end{corollary}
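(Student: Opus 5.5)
The plan is to read the corollary off the proof of \Cref{prop:ac-cap-dimension-reduction}, taking the formulas there along the boundary $p\cdot x=\rho_n$ and reparametrizing by $t$ instead of $\mathsf h$.

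\emph{Independence of the choice of $x_t$.} By \Cref{lem:ac-cap-gram-boundary}\textup{(ii)}, a vector $x_t$ exists for every $t\in[-1,B_n^{\mathrm{cap}}]$. By \Cref{lem:ac-cap-support}, the support value $\mathcal S_{\rho_n}(w;q)$ depends on $w$ only through $w\cdot q$ and $\lVert w\rVert$. For $w_t=bp-cx_t$ we compute
\[
 w_t\cdot q=-\rho_nb-ct,
 \qquad
 \lVert w_t\rVert^2=b^2+c^2-2bc\rho_n=\nu_{b,c}^2 .
\]
Neither quantity involves the particular choice of $x_t$, so $\Psi_{a,b,c}(t)$ is well defined.

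\emph{Maximum formula.} Part~\textup{(ii)} of \Cref{prop:ac-cap-dimension-reduction} shows that for each admissible value $t=q\cdot x$ the constrained maximum is attained with $p\cdot x=\rho_n$. For such $x$ the inner maximum over $y\in\mathcal C_q$ equals $at+\mathcal S_{\rho_n}(w_x;q)=\Psi_{a,b,c}(t)$. Every $x\in\mathcal C_p$ has $q\cdot x\in[-1,B_n^{\mathrm{cap}}]$ by \Cref{lem:ac-cap-gram-boundary}\textup{(iii)}. Taking the maximum over $t$ therefore gives \eqref{eq:ac-cap-boundary-coordinate}.

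\emph{Concavity.} This is the only genuinely new point. Write $\mathsf h(t):=-\rho_nb-ct$, which is affine in $t$. The function
\[
 \Psi_{a,b,c}(t)=at+\mathcal S_{\rho_n}(w;q)\big|_{w\cdot q=\mathsf h(t),\ \lVert w\rVert=\nu_{b,c}}
\]
is then the sum of a linear function and the composition of an affine map with the one-variable function
\[
 \sigma(\mathsf h):=\max_{\rho_n\leq s\leq1}\bigl\{s\mathsf h+\sqrt{1-s^2}\sqrt{\nu_{b,c}^2-\mathsf h^2}\bigr\},
 \qquad \mathsf h\in[-\nu_{b,c},\nu_{b,c}].
\]
It suffices to show that $\sigma$ is concave. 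The cleanest route avoids the two-branch formula. For fixed $\lVert w\rVert=\nu$, the map $w\mapsto\mathcal S_{\rho_n}(w;q)$ is a support function, hence convex, so that argument alone does not give concavity in $\mathsf h$. Instead I would use \eqref{eq:ac-74} directly:
\begin{itemize}
\item on $\mathsf h\le\rho_n\nu$ the value is $\rho_n\mathsf h+\sin\widehat{\zeta}_n\sqrt{\nu^2-\mathsf h^2}$, which is concave;
\item on $\mathsf h\ge\rho_n\nu$ the value is the constant $\nu$.
\end{itemize}
At the junction $\mathsf h=\rho_n\nu$ the values agree, both equal to $\nu$. The left derivative there is $\rho_n-\sin\widehat{\zeta}_n\,\rho_n\nu/\sqrt{\nu^2-\rho_n^2\nu^2}=\rho_n-\rho_n=0$, which matches the right derivative $0$. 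So $\sigma$ is $C^1$, and its derivative is nonincreasing on each piece, hence globally. Therefore $\sigma$ is concave, and composing with the affine $\mathsf h(t)$ and adding $at$ preserves concavity on $[-1,B_n^{\mathrm{cap}}]$.

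The derivative matching at the branch point is the main step to check. It reduces to the identity $\sin\widehat{\zeta}_n=\sqrt{1-\rho_n^2}$. The degenerate endpoint $\mathsf h=\pm\nu$ only affects one-sided derivatives there and does not affect concavity.
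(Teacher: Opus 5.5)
Your proof is correct and follows essentially the same route as the paper. Independence comes from the two invariants $w_t\cdot q=-\rho_nb-ct$ and $\lVert w_t\rVert=\nu_{b,c}$, the maximum formula from \Cref{prop:ac-cap-dimension-reduction}\textup{(ii)} together with admissibility of $x_t$, and concavity from gluing the concave second branch of \eqref{eq:ac-74} to its affine first branch with matching value and slope at $\mathsf h=\rho_n\nu_{b,c}$. The only cosmetic difference is that you verify the $C^1$ gluing in the variable $\mathsf h$ and then compose with the affine map $t\mapsto-\rho_nb-ct$, whereas the paper checks it directly in $t$, where both one-sided slopes equal $a$.
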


\begin{proof}
The existence of \(x_t\) follows from
\Cref{lem:ac-cap-gram-boundary}\textup{(ii)}.  Moreover,
\[
 \lVert w_t\rVert^2=b^2+c^2-2\rho_nbc=\nu_{b,c}^2,
 \qquad
 w_t\cdot q=-\rho_nb-ct.
 \acEquationTag{eq:ac-cap-boundary-coordinate-data}
\]
Thus the support-function formula \eqref{eq:ac-74} shows that
\(\Psi_{a,b,c}(t)\) is independent of the chosen \(x_t\).

Maximizing first over \(y\in\mathcal C_q\) gives
\[
 \mathfrak C(a,b,c)
 =\max_{x\in\mathcal C_p}
 \bigl\{a(q\cdot x)+\mathcal S_{\rho_n}(bp-cx;q)\bigr\}.
\]
For each fixed value \(t=q\cdot x\), the fiberwise boundary conclusion
of \Cref{prop:ac-cap-dimension-reduction}\textup{(ii)} shows that replacing \(x\) by
\(x_t\) does not decrease this expression.  This proves that
the left side of \eqref{eq:ac-cap-boundary-coordinate} is at most its
right side.  Conversely, every \(x_t\) is an admissible point of
\(\mathcal C_p\), so the reverse inequality follows directly from the
definition of \(\mathfrak C(a,b,c)\).

It remains to prove concavity.  Put
\(\mathsf h_t:=-\rho_nb-ct\).  By
\eqref{eq:ac-cap-boundary-coordinate-data} and \eqref{eq:ac-74},
\[
 \Psi_{a,b,c}(t)=
 \begin{cases}
  at+\nu_{b,c},
  &\mathsf h_t\geq\rho_n\nu_{b,c},\\[2pt]
  at+\rho_n\mathsf h_t
  +\sin\widehat\zeta_n\sqrt{\nu_{b,c}^2-\mathsf h_t^2},
  &\mathsf h_t<\rho_n\nu_{b,c}.
 \end{cases}
\]
The first expression is affine, and the second is concave because
\(t\mapsto\mathsf h_t\) is affine and
\(h\mapsto\sqrt{\nu_{b,c}^2-h^2}\) is concave.  At the transition
\(\mathsf h_t=\rho_n\nu_{b,c}\), the two expressions have the same
value and the same derivative \(a\).  Since \(\mathsf h_t\) is strictly
decreasing, there is at most one transition; hence \(\Psi_{a,b,c}\) is
concave on the full interval.

\end{proof}

\begin{corollary}[Unconstrained estimate]
\label{cor:ac-cap-unconstrained-estimate}
Under the hypotheses and notation of
\Cref{prop:ac-cap-dimension-reduction}, put
\(
 \nu_{a,c}:=\sqrt{a^2+c^2-2\rho_nac}.
\)

Then
\[
 \mathfrak C(a,b,c)
 \leq
 \frac{\nu_{a,c}\nu_{b,c}-\rho_nab}{c}.              \acEquationTag{eq:ac-cap-unconstrained}
\]
\end{corollary}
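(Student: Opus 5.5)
The plan is to drop the interval constraint $\mathsf h\in J_{b,c}$ in the one-dimensional formula \eqref{eq:ac-73} of \Cref{prop:ac-cap-dimension-reduction}\textup{(iii)} and compute the unconstrained maximum explicitly. Since $J_{b,c}\subseteq[-\nu_{b,c},\nu_{b,c}]$ by part~\textup{(i)}, we have
\[
 \mathfrak C(a,b,c)\leq-\frac{ab\rho_n}{c}
 +\max_{|\mathsf h|\leq\nu_{b,c}}
 \Bigl\{\bigl(\rho_n-\tfrac ac\bigr)\mathsf h
 +\sin\widehat\zeta_n\sqrt{\nu_{b,c}^2-\mathsf h^2}\Bigr\}.
\]
It therefore suffices to evaluate the right-hand maximum and check that it equals $\nu_{a,c}\nu_{b,c}/c$.

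To evaluate it, I parametrize $\mathsf h=\nu_{b,c}\cos\phi$ with $\phi\in[0,\pi]$. Then $\sqrt{\nu_{b,c}^2-\mathsf h^2}=\nu_{b,c}\sin\phi$, and the objective becomes $\nu_{b,c}\bigl(A\cos\phi+B\sin\phi\bigr)$ with $A:=\rho_n-a/c$ and $B:=\sin\widehat\zeta_n>0$. Because $B>0$, the unconstrained maximum over all $\phi\in\R$ is attained at some $\phi$ with $\sin\phi>0$, so it lies in $[0,\pi]$. By Cauchy--Schwarz, the maximum equals $\nu_{b,c}\sqrt{A^2+B^2}$. Equivalently, one can note that $\mathsf h\mapsto A\mathsf h+B\sqrt{\nu^2-\mathsf h^2}$ is the support function of the upper half-disk of radius $\nu$ in the direction $(A,B)$.

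The last step is the identity $A^2+B^2=\nu_{a,c}^2/c^2$. Using $\sin^2\widehat\zeta_n=1-\rho_n^2$,
\[
 \bigl(\rho_n-\tfrac ac\bigr)^2+1-\rho_n^2
 =1-\frac{2\rho_na}{c}+\frac{a^2}{c^2}
 =\frac{a^2+c^2-2\rho_nac}{c^2}.
\]
Hence the maximum equals $\nu_{b,c}\nu_{a,c}/c$. Adding the constant $-ab\rho_n/c$ gives \eqref{eq:ac-cap-unconstrained}.

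There is no real obstacle here. The only point needing care is the inclusion of $J_{b,c}$ in the domain $[-\nu_{b,c},\nu_{b,c}]$ of the square root, which \Cref{prop:ac-cap-dimension-reduction}\textup{(i)} already supplies. Because $c>0$, division by $c$ causes no sign issues.
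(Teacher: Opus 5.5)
Your proposal is correct and follows the paper's proof essentially verbatim. You relax $J_{b,c}$ to $[-\nu_{b,c},\nu_{b,c}]$ via part~(i) of \Cref{prop:ac-cap-dimension-reduction}, bound the one-variable maximum by Cauchy--Schwarz (your angular parametrization is the same computation), and use $\sin^2\widehat{\zeta}_n=1-\rho_n^2$ to get $\sqrt{(\rho_n-a/c)^2+\sin^2\widehat{\zeta}_n}=\nu_{a,c}/c$. Your extra remark that the relaxed maximum is actually attained is true but unnecessary, since only the upper bound is used.
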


\begin{proof}

By \Cref{prop:ac-cap-dimension-reduction}\textup{(i)},
\(J_{b,c}\subseteq[-\nu_{b,c},\nu_{b,c}]\), so
\eqref{eq:ac-73} gives
\[
\begin{aligned}
 \mathfrak C(a,b,c)
 &\leq-\frac{ab\rho_n}{c}
 +\max_{-\nu_{b,c}\leq\mathsf h\leq\nu_{b,c}}
 \left\{(\rho_n-a/c)\mathsf h
 +\sin\widehat{\zeta}_n
  \sqrt{\nu_{b,c}^2-\mathsf h^2}\right\}\\
 &\leq-\frac{ab\rho_n}{c}
 +\nu_{b,c}
 \sqrt{(\rho_n-a/c)^2+\sin^2\widehat{\zeta}_n}.
\end{aligned}
\]
The second inequality is Cauchy--Schwarz applied to
\[
 \bigl(\mathsf h,\sqrt{\nu_{b,c}^2-\mathsf h^2}\bigr)
 \quad\text{and}\quad
 \bigl(\rho_n-a/c,\sin\widehat{\zeta}_n\bigr);
\]
Using \(\sin^2\widehat{\zeta}_n=1-\rho_n^2\), the last square root is
\[
 \sqrt{(\rho_n-a/c)^2+1-\rho_n^2}
 =\frac{\sqrt{a^2+c^2-2\rho_nac}}{c}
 =\frac{\nu_{a,c}}{c}.
\]
Substitution proves \eqref{eq:ac-cap-unconstrained}.
\end{proof}

\begin{corollary}[Explicit projected maximizer]
\label{cor:ac-cap-projected-maximizer}
Under the hypotheses and notation of
\Cref{prop:ac-cap-dimension-reduction},
for every nonempty closed interval
\(J=[s_-,s_+]\subset\R\) and every \(t\in\R\), define
\[
 \operatorname{proj}_{J}(t)
 :=\operatorname*{arg\,min}_{\xi\in J}|t-\xi|
 =\min\{s_+,\max\{s_-,t\}\}.
\]
Define
\[
 \mathsf h_{\mathrm{free}}
 :=
 \frac{\nu_{b,c}(\rho_n-a/c)}
 {\sqrt{(\rho_n-a/c)^2+\sin^2\widehat{\zeta}_n}},
 \qquad
 \mathsf h_{\mathrm{opt}}
 :=
 \operatorname{proj}_{J_{b,c}}(\mathsf h_{\mathrm{free}}).
                                                               \acEquationTag{eq:ac-cap-maximizers}
\]
Then \(\mathsf h_{\mathrm{opt}}\) is the unique maximizer in
\eqref{eq:ac-73}, and
\[
 \boxed{\begin{aligned}
 \mathfrak C(a,b,c)
 =-\frac{ab\rho_n}{c}
 +(\rho_n-a/c)\mathsf h_{\mathrm{opt}}
 +\sin\widehat{\zeta}_n
 \sqrt{\nu_{b,c}^2-\mathsf h_{\mathrm{opt}}^2}.
 \end{aligned}}                                                \acEquationTag{eq:ac-cap-explicit}
\]
\end{corollary}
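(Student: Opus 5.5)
By \Cref{prop:ac-cap-dimension-reduction}\textup{(iii)}, it suffices to maximize the one-variable function
\[
 \phi(\mathsf h):=(\rho_n-a/c)\mathsf h+\sin\widehat{\zeta}_n\sqrt{\nu_{b,c}^2-\mathsf h^2}
\]
over \(J_{b,c}\subseteq[-\nu_{b,c},\nu_{b,c}]\). The plan is to show that \(\phi\) is strictly concave on \([-\nu_{b,c},\nu_{b,c}]\) and that its unconstrained maximizer is \(\mathsf h_{\mathrm{free}}\). The constrained maximizer is then the projection of \(\mathsf h_{\mathrm{free}}\) onto \(J_{b,c}\), and substituting it gives \eqref{eq:ac-cap-explicit}.

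\emph{Step 1 (strict concavity).} The square-root term has second derivative
\[
 -\sin\widehat{\zeta}_n\,\nu_{b,c}^2(\nu_{b,c}^2-\mathsf h^2)^{-3/2}<0
\]
on the open interval. Here \(\sin\widehat{\zeta}_n>0\) and \(\nu_{b,c}>0\) by part~\textup{(i)}. Adding the linear term does not change this, and \(\phi\) is continuous at the endpoints. Hence \(\phi\) is strictly concave on \([-\nu_{b,c},\nu_{b,c}]\). In particular, its maximizer over any closed subinterval is unique.

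\emph{Step 2 (free maximizer).} Write \(\mathsf h=\nu_{b,c}\cos\psi\) and \(\sqrt{\nu_{b,c}^2-\mathsf h^2}=\nu_{b,c}\sin\psi\) with \(\psi\in[0,\pi]\). Then
\[
 \phi=\nu_{b,c}\bigl((\rho_n-a/c)\cos\psi+\sin\widehat{\zeta}_n\sin\psi\bigr).
\]
This is maximized when \((\cos\psi,\sin\psi)\) is parallel to \((\rho_n-a/c,\sin\widehat{\zeta}_n)\), which is the equality case of the Cauchy--Schwarz step in \Cref{cor:ac-cap-unconstrained-estimate}. The resulting \(\psi\) lies in \((0,\pi)\) because the second coordinate is positive. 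This gives \(\mathsf h_{\mathrm{free}}\) as displayed, and it is the unique critical point, where \(\phi'(\mathsf h_{\mathrm{free}})=0\). By strict concavity, \(\phi\) is strictly increasing on \([-\nu_{b,c},\mathsf h_{\mathrm{free}}]\) and strictly decreasing on \([\mathsf h_{\mathrm{free}},\nu_{b,c}]\).

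\emph{Step 3 (projection).} Write \(J_{b,c}=[s_-,s_+]\) and split into three cases.
\begin{itemize}
 \item If \(\mathsf h_{\mathrm{free}}\in J_{b,c}\), it is the maximizer.
 \item If \(\mathsf h_{\mathrm{free}}>s_+\), then \(\phi\) is strictly increasing on \(J_{b,c}\), so the maximizer is \(s_+\).
 \item If \(\mathsf h_{\mathrm{free}}<s_-\), then \(\phi\) is strictly decreasing on \(J_{b,c}\), so the maximizer is \(s_-\).
\end{itemize}
In every case the maximizer is \(\min\{s_+,\max\{s_-,\mathsf h_{\mathrm{free}}\}\}=\mathsf h_{\mathrm{opt}}\), and it is unique. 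Inserting \(\mathsf h_{\mathrm{opt}}\) into \eqref{eq:ac-73} yields \eqref{eq:ac-cap-explicit}.

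No step is a real obstacle. The only care needed is in Step~2, to justify that \(\mathsf h_{\mathrm{free}}\) lies in \([-\nu_{b,c},\nu_{b,c}]\) and that the monotonicity on each side extends to the endpoints \(\pm\nu_{b,c}\), where \(\phi'\) blows up. Both points follow from the angular parametrization together with the continuity of \(\phi\).
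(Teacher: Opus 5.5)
Your proposal is correct and follows essentially the same route as the paper: strict concavity of the one-variable objective, identification of its unique unconstrained maximizer \(\mathsf h_{\mathrm{free}}\) (so the objective increases on \([-\nu_{b,c},\mathsf h_{\mathrm{free}}]\) and decreases on \([\mathsf h_{\mathrm{free}},\nu_{b,c}]\)), and projection onto \(J_{b,c}\). The only difference is cosmetic: the paper finds \(\mathsf h_{\mathrm{free}}\) by solving \(g_{a,b,c}'(\mathsf h)=0\), using that \(g_{a,b,c}'\) is decreasing and tends to \(\pm\infty\) at \(\mp\nu_{b,c}\), whereas you use the angular parametrization and the Cauchy--Schwarz equality case.
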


\begin{proof} Put
\[
 g_{a,b,c}(\mathsf h)
 :=(\rho_n-a/c)\mathsf h
 +\sin\widehat{\zeta}_n\sqrt{\nu_{b,c}^2-\mathsf h^2}.
\]
For \(-\nu_{b,c}<\mathsf h<\nu_{b,c}\),
\[
 g_{a,b,c}'(\mathsf h)
 =\rho_n-\frac ac
 -\frac{\sin\widehat{\zeta}_n\,\mathsf h}
 {\sqrt{\nu_{b,c}^2-\mathsf h^2}},
\qquad
 g_{a,b,c}''(\mathsf h)
 =-\frac{\sin\widehat{\zeta}_n\,\nu_{b,c}^2}
 {(\nu_{b,c}^2-\mathsf h^2)^{3/2}}<0.
\]
Moreover,
\[
 \lim_{\mathsf h\downarrow-\nu_{b,c}}g_{a,b,c}'(\mathsf h)=+\infty,
 \qquad
 \lim_{\mathsf h\uparrow\nu_{b,c}}g_{a,b,c}'(\mathsf h)=-\infty.
\]
Thus \(g_{a,b,c}'\) has a unique zero, and solving
\(g_{a,b,c}'(\mathsf h)=0\) gives
\[
 \mathsf h=
 \frac{\nu_{b,c}(\rho_n-a/c)}
 {\sqrt{(\rho_n-a/c)^2+\sin^2\widehat{\zeta}_n}}
 =\mathsf h_{\mathrm{free}}.
\]
It follows that \(g_{a,b,c}\) increases on
\([-\nu_{b,c},\mathsf h_{\mathrm{free}}]\) and decreases on
\([\mathsf h_{\mathrm{free}},\nu_{b,c}]\).  Its unique maximizer on the
nonempty closed subinterval \(J_{b,c}\) is therefore
\(
 \operatorname{proj}_{J_{b,c}}(\mathsf h_{\mathrm{free}})
 =\mathsf h_{\mathrm{opt}}.
\)
Substitution in \eqref{eq:ac-73} gives
\eqref{eq:ac-cap-explicit}.
\end{proof}

\subsection{The upper boundary for distinct anchors:
\texorpdfstring{\(Q\leq-\rho_n\)}{Q <= -rho\_n}}
\label{sec:ac-distinct-upper-boundary}

For \(Q\in[-1,-\rho_n]\), membership in the admissible region
\(\mathcal A_n\) requires \(R\leq U_n(Q)\).  The graph of \(U_n\) over
this interval is the upper curved boundary shown in panel~\textup{(b)}
of \Cref{fig:qr-distortion-region}.  We first prove the stronger affine
estimate
\[
 R\leq1+\rho_n+Q.
\]
The concavity argument in \Cref{lem:ac-far} then shows that this estimate
implies the required upper boundary.  After the dimension reduction of
\Cref{prop:ac-cap-dimension-reduction}, the proof treats the low--low,
high--high, and mixed radial regimes separately.

\begin{proposition}[Stronger affine estimate for distinct anchors]
\label{prop:ac-far-affine}

Assume \(n\geq2\).  In the common distinct-anchor notation of
\Cref{sec:ac-distinct-lower-boundary}, one has
\[
 Q\leq-{\rho_n}\quad\Longrightarrow\quad
 R\leq1+{\rho_n}+Q.                                               \acEquationTag{eq:ac-77}
\]
\end{proposition}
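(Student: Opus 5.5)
We will bound \(R-Q\) from above, i.e.\ show \(R-Q\leq1+\rho_n\) whenever \(Q\leq-\rho_n\), by rewriting both inner products in terms of \(x,y\) and the anchors and then invoking the cap reduction of \Cref{prop:ac-cap-dimension-reduction}.

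\textbf{Reductions.} By \Cref{cor:ac-finite-gain-closure}\textup{(ii)} with \(g(Q)=1+\rho_n+Q\) and \(c=-\rho_n\), it suffices to treat finite gains \(\alpha,\beta<\widehat{\zeta}_n\), provided the boundary value \(Q=-\rho_n\) is handled by continuity from the finite-gain estimate or checked directly. We then write
\[
 R=\frac{(v_i+r_\alpha x)\cdot(v_j+r_\beta y)}{XY}
 =\frac{-\rho_n+r_\alpha(v_j\cdot x)+r_\beta(v_i\cdot y)+r_\alpha r_\beta(x\cdot y)}{XY}.
\]
Since \(x\in M_i\subseteq\mathcal C_{v_i}\) and \(y\in M_j\subseteq\mathcal C_{v_j}\) by \eqref{eq:ac-10}, and \(v_i\cdot v_j=-\rho_n\), the caps are exactly those of \Cref{prop:ac-cap-dimension-reduction} with \(p=v_i\), \(q=v_j\). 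The source product \(Q=\cos\alpha\cos\beta+\sin\alpha\sin\beta\,u\) is affine in \(u=x\cdot y\). Consequently, for fixed radial levels, the inequality \(R\leq1+\rho_n+Q\) becomes
\[
 r_\alpha(q\cdot x)+r_\beta(p\cdot y)+\bigl(r_\alpha r_\beta-XY\sin\alpha\sin\beta\bigr)u
 \leq XY(1+\rho_n+\cos\alpha\cos\beta)+\rho_n .
\]
Here \(X,Y\) still depend on \(v_i\cdot x\) and \(v_j\cdot y\). We first replace them by the lower bounds \(X_\alpha,X_\beta\) of \eqref{eq:ac-37}. This is legitimate because the right side \(XY(1+\rho_n+Q)\) has nonnegative coefficient when \(Q\geq-1-\rho_n\), and \(R\) scales by \(X_\alpha X_\beta/(XY)\) toward zero, which only helps if the numerator is handled separately by sign. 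Once \(X,Y\) are fixed, the coefficient of \(u\) should be negative, i.e.\ \(c:=X_\alpha X_\beta\sin\alpha\sin\beta-r_\alpha r_\beta>0\). In that case the left side is \(\mathfrak C(a,b,c)\) with \(a=r_\alpha\), \(b=r_\beta\). If instead \(c\leq0\), the hypothesis \(Q\leq-\rho_n\) forces \(u\) to be very negative, and we expect a direct cap bound via \Cref{lem:ac-cap-gram-boundary} to suffice.

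\textbf{Main estimate.} The key step is to apply \Cref{cor:ac-cap-unconstrained-estimate}, which gives \(\mathfrak C\leq(\nu_{a,c}\nu_{b,c}-\rho_nab)/c\), and to verify
\[
 \nu_{a,c}\nu_{b,c}-\rho_n ab\leq c\bigl(X_\alpha X_\beta(1+\rho_n+\cos\alpha\cos\beta)+\rho_n\bigr).
\]
We parametrize by the displacements \(\theta_\alpha,\theta_\beta\), using \(r=\sin\theta/\sin(\widehat\zeta_n-\theta)\) and \(X=\sin\widehat\zeta_n/\sin(\widehat\zeta_n-\theta)\), and split into the low--low, high--high and mixed regimes as in \Cref{lem:ac-near}. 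In the low--low regime \(\sin\theta=c_n\sin\alpha\), so the verification becomes a two-variable trigonometric inequality. In the high--high regime we use the involution \(\vartheta_n(F_n(\alpha))=\widehat\zeta_n-\vartheta_n(\alpha)\) from \eqref{eq:ac-6}, together with \(\vartheta_n\leq\alpha\) from \Cref{prop:ac-maximal-displacement}, to convert back to low levels.

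\textbf{Expected obstacle.} The main difficulty is the mixed regime, and more specifically the cases where the unconstrained bound is too lossy and the interval constraint \(J_{b,c}\) actually binds. There we would switch to the exact projected maximizer of \Cref{cor:ac-cap-projected-maximizer}, with \(\mathsf h_{\mathrm{opt}}\) at an endpoint of \(J_{b,c}\), and prove the resulting one-variable inequality by concavity in \(\alpha\) with an endpoint check, mirroring \Cref{lem:ac-near-mixed-slack}. Equality is attained at the north pole and along the swapped-anchor boundary \(\alpha+\beta\) near \(F_n\), so every estimate must be sharp there; this suggests organizing the argument around the factor \(\beta-F_n(\alpha)\).
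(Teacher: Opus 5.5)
\textbf{Gap: the main estimate fails when the coupling coefficient is small and positive.} You bound the cap maximum $\mathfrak C(a,b,c)$, with $a=r_\alpha$, $b=r_\beta$, $c=X_\alpha X_\beta\sin\alpha\sin\beta-r_\alpha r_\beta$, by \Cref{cor:ac-cap-unconstrained-estimate}. You then want $\nu_{a,c}\nu_{b,c}-\rho_nab\leq c\bigl(X_\alpha X_\beta(1+\rho_n+\cos\alpha\cos\beta)+\rho_n\bigr)$. This cannot hold for small $c>0$: as $c\downarrow0$ the left side tends to $(1-\rho_n)r_\alpha r_\beta>0$, while the right side tends to $0$. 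The true maximum stays bounded and tends to $B_n^{\mathrm{cap}}(r_\alpha+r_\beta)$. What goes wrong is that the unconstrained estimate discards the interval $J_{b,c}$, which collapses to the single point $-\rho_nb$ as $c\downarrow0$.

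Such $c$ do occur, inside the high--high regime. At $\alpha=\beta=\alpha_n^\ast$ one has $r_\alpha=1$ and $c=1-2\rho_n>0$. As $\alpha=\beta\uparrow\widehat{\zeta}_n$, instead $c\to-\infty$ (leading term $-\rho_n^2r_\alpha^2$), so $c$ changes sign in $[\alpha_n^\ast,\widehat{\zeta}_n)$. Concretely, for $n=2$ and $\alpha=\beta=1.12$:
\begin{itemize}
\item $r_\alpha\approx3.649$, $X_\alpha X_\beta\approx16.75$ and $c\approx0.253$, so your inequality would require roughly $8.33\leq6.55$, which is false;
\item yet $x=v_i\in M_i$, $y=-v_i\in M_j$ gives $Q=\cos(2.24)\approx-0.62\leq-\rho_2$, so these radial levels are covered by the proposition.
\end{itemize}
Your sufficient condition does not depend on $x,y$, so the method proves nothing at these levels. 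Your $c\leq0$ fallback does not apply here. ``Converting back to low levels'' does not help either: after multiplying by $\bar r_\alpha\bar r_\beta$ the coupling becomes $h_{\alpha\beta}=\Pi_{\alpha\beta}^{\mathrm{HH}}\sin\alpha\sin\beta-1$, which vanishes at the same place.

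\textbf{The missing idea.} The paper's device in \Cref{lem:ac-far-high} is never to evaluate the unconstrained bound at a small coupling. After the inverse-gain rescaling, for fixed $(x,y)$ the objective $\bar r_\beta(v_j\cdot x)+\bar r_\alpha(v_i\cdot y)-h(x\cdot y)$ is affine in $h$. Hence its maximum over the caps is convex on $[0,h_{\alpha\beta}^{\max}]$, where $h_{\alpha\beta}^{\max}=\Pi_{\alpha\beta}^{\mathrm{HH}}-1$.
\begin{itemize}
\item At $h=0$ the caps decouple, and the value is exactly $B_n^{\mathrm{cap}}(\bar r_\alpha+\bar r_\beta)\leq\Pi_{\alpha\beta}^{\mathrm{HH}}$.
\item Only at $h=h_{\alpha\beta}^{\max}$ is \Cref{cor:ac-cap-unconstrained-estimate} applied; there \eqref{eq:ac-91} and $2ab\leq a^2+b^2$ give the bound $\Pi_{\alpha\beta}^{\mathrm{HH}}$.
\item Convexity interpolates between these two endpoints. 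The case $\omega_{\alpha\beta}\geq0$ is handled separately, using $x\cdot y\leq1$ and \eqref{eq:ac-81}.
\end{itemize}
The paper also avoids the unconstrained bound in the other regimes. In the low--low regime it uses $-\omega_{\alpha\beta}=(1-2\rho_n)r_\alpha r_\beta$ to show that the concave function $\Psi$ of \Cref{cor:ac-cap-boundary-coordinate} has nonnegative left derivative at $\sigma=B_n^{\mathrm{cap}}$. In the mixed regime it uses $\mu_{\alpha\beta}=r_\alpha\kappa_\beta^{\mathrm{LH}}$ with $0<\kappa_\beta^{\mathrm{LH}}\leq1-2\rho_n$, which reduces the problem to three projection cases.

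Finally, your sharpness heuristic is misplaced. The north pole has $Q=1$, and the swapped-anchor family saturates the lower boundary $R=L_n(Q)$ of \Cref{lem:ac-near}, not this affine estimate. So organizing the argument around $\beta-F_n(\alpha)$ is not what is needed here.
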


After the common finite-gain reduction, the proof treats the
low--low, high--high, and mixed radial regimes separately.  The
low--low argument uses the boundary-coordinate form of the cap
reduction in \Cref{cor:ac-cap-boundary-coordinate}, the mixed argument
uses \Cref{prop:ac-cap-dimension-reduction}, and the high--high argument
uses inverse gains and convexity.  The supporting results and their
roles are summarized in \Cref{tab:ac-far-proof-structure}.

\begin{table}[ht!]
\centering
\caption{Structure of the upper-boundary argument for distinct anchors.}
\label{tab:ac-far-proof-structure}
\renewcommand{\arraystretch}{1.15}
\begin{tabularx}{\textwidth}{@{}>{\raggedright\arraybackslash}p{0.25\textwidth}
                                  >{\raggedright\arraybackslash}X
                                  >{\raggedright\arraybackslash}p{0.31\textwidth}@{}}
\toprule
Result & Role or regime & Main argument \\
\midrule
\Cref{lem:ac-far-common-reduction}
&
All finite-gain regimes
&
Reduction to \eqref{eq:ac-80} and the estimates
\eqref{eq:ac-81}
\\
\Cref{lem:ac-far-low}
&
Low--low regime
&
Cap dimension reduction and endpoint maximization
\\
\Cref{lem:ac-far-high}
&
High--high regime
&
Inverse gains and convexity
\\
\Cref{lem:ac-mixed-coefficient}
&
Mixed regime: coefficient identity and bounds
&
Inverse-gain factorization and a reflection argument
\\
\Cref{lem:ac-far-mixed-unconstrained}
&
Mixed regime: unconstrained maximizer
&
Algebraic estimate for the unconstrained maximizer
\\
\Cref{lem:ac-far-mixed}
&
Mixed regime
&
Cap dimension reduction and projection to the feasible interval
\\
\bottomrule
\end{tabularx}
\end{table}
\FloatBarrier

\paragraph{Notation for this subsection.}
\phantomsection\label{def:upper-boundary-notation}
For the finite-gain comparisons in the three supporting lemmas below,
that is, for \(\alpha,\beta<\widehat{\zeta}_n\), write
\[
 r_\alpha:=r_n(\alpha),\qquad
 r_\beta:=r_n(\beta),\qquad
 \theta_\alpha:=\vartheta_n(\alpha),\qquad
 \theta_\beta:=\vartheta_n(\beta),
\]
\[
 X_\alpha:=\sqrt{1+2\rho_n r_\alpha+r_\alpha^2},\qquad
 X_\beta:=\sqrt{1+2\rho_n r_\beta+r_\beta^2},\qquad
 \Pi_{\alpha\beta}:=X_\alpha X_\beta,
\]
and
\[
 \omega_{\alpha\beta}
 :=r_\alpha r_\beta
 -\Pi_{\alpha\beta}\sin\alpha\sin\beta.
\]

Write
\[
 \mathrm{Num}:=(v_i+r_\alpha x)\cdot(v_j+r_\beta y),\qquad
 X:=\lVert v_i+r_\alpha x\rVert,\qquad
 Y:=\lVert v_j+r_\beta y\rVert,
\]
for the inner product and norms of the two unnormalized target vectors,
so that \(R=\mathrm{Num}/(XY)\).  We also use
\(B_n^{\mathrm{cap}}\) as computed in
\eqref{eq:ac-cap-bound}.  By the cell bound \eqref{eq:ac-10} and the
definition of the spherical caps,
\(
 x\in\mathcal C_{v_i},\qquad y\in\mathcal C_{v_j}.
\)
Put
\(
 \widetilde{\alpha}:=F_n(\alpha),\qquad
 \widetilde{\beta}:=F_n(\beta).
\)
Whenever \(r_\alpha>0\), define
\[
 \bar r_\alpha:=r_\alpha^{-1}=r_n(\widetilde{\alpha}),\qquad
 \bar X_\alpha
 :=\sqrt{1+2\rho_n\bar r_\alpha+\bar r_\alpha^2}.
\]
Whenever \(r_\beta>0\), define
\[
 \bar r_\beta:=r_\beta^{-1}=r_n(\widetilde{\beta}),\qquad
 \bar X_\beta
 :=\sqrt{1+2\rho_n\bar r_\beta+\bar r_\beta^2}.
\]

The equalities
\(
 \bar r_\alpha=r_n(\widetilde{\alpha}),
 \qquad
 \bar r_\beta=r_n(\widetilde{\beta}),
\)
whenever the corresponding gain is positive, are the two instances of
the reciprocal rule from \Cref{sec:critical-involution} used below.

\begin{lemma}[Common finite-gain reduction]
\label{lem:ac-far-common-reduction}
Assume \(n\geq2\).  Under the hypotheses of
\Cref{prop:ac-far-affine}, suppose that
\(\alpha,\beta<\widehat{\zeta}_n\), and retain the notation introduced
above.  Then
the affine estimate \(R\leq1+\rho_n+Q\) follows from
\[
\boxed{
 r_\alpha(v_j\cdot x)+r_\beta(v_i\cdot y)
 +\omega_{\alpha\beta}(x\cdot y)
 \leq {\rho_n}+\Pi_{\alpha\beta}
 (1+{\rho_n}+\cos\alpha\cos\beta).}                         \acEquationTag{eq:ac-80}
\]
In addition, one has the cross-term bounds
\[
 v_j\cdot x\leq B_n^{\mathrm{cap}},\qquad
 v_i\cdot y\leq B_n^{\mathrm{cap}}.                   \acEquationTag{eq:ac-81}
\]
\end{lemma}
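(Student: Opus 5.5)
The plan is to expand \(R\) as \(\mathrm{Num}/(XY)\) and replace the normalizing denominators by the boundary norms \(X_\alpha,X_\beta\). We then use \(Q\leq-\rho_n\) to turn the target inequality into the linear statement \eqref{eq:ac-80}. The cross-term bounds \eqref{eq:ac-81} are immediate and should be handled first. By the cell bound \eqref{eq:ac-10}, \(x\in\mathcal C_{v_i}\) and \(y\in\mathcal C_{v_j}\), and \(v_i\cdot v_j=-\rho_n\). Hence \Cref{lem:ac-cap-gram-boundary}\textup{(iii)}, applied with \((p,q)=(v_i,v_j)\), gives \(v_j\cdot x\leq B_n^{\mathrm{cap}}\), and by symmetry \(v_i\cdot y\leq B_n^{\mathrm{cap}}\).

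For the reduction, expand
\[
 \mathrm{Num}=-\rho_n+r_\beta(v_i\cdot y)+r_\alpha(v_j\cdot x)+r_\alpha r_\beta(x\cdot y).
\]
The target inequality is \(\mathrm{Num}\leq XY(1+\rho_n+Q)\). Since \(Q\geq-1\), the factor \(1+\rho_n+Q\) is positive. By \eqref{eq:ac-37}, \(XY\geq X_\alpha X_\beta=\Pi_{\alpha\beta}\), so it suffices to prove
\[
 \mathrm{Num}\leq\Pi_{\alpha\beta}(1+\rho_n+Q).
\]
Now substitute \(Q=\cos\alpha\cos\beta+\sin\alpha\sin\beta\,(x\cdot y)\). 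The \(x\cdot y\) terms combine into the coefficient \(r_\alpha r_\beta-\Pi_{\alpha\beta}\sin\alpha\sin\beta=\omega_{\alpha\beta}\), and moving \(-\rho_n\) to the right gives exactly \eqref{eq:ac-80}.

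The main point to check is the monotonicity step \(XY\geq\Pi_{\alpha\beta}\). We only use it multiplied by the positive quantity \(1+\rho_n+Q\), so no sign issue arises even if \(\mathrm{Num}\) is negative. If \(\mathrm{Num}\leq0\), the affine estimate is in fact trivial, since the right-hand side \(1+\rho_n+Q\) is positive. The implication is therefore clean. The zero-gain cases need no separate treatment, because the formula holds with \(r=0\).
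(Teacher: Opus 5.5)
Your proposal is correct and essentially matches the paper's proof: the cap bound \eqref{eq:ac-cap-bound} gives \eqref{eq:ac-81}, the norm comparison gives $XY\geq\Pi_{\alpha\beta}$, and expanding $\mathrm{Num}$ and $Q$ yields \eqref{eq:ac-80}; applying $XY\geq\Pi_{\alpha\beta}$ to the right-hand side, multiplied by the positive factor $1+\rho_n+Q$, neatly removes the paper's case split on the sign of $\mathrm{Num}$. The only quibble is a citation: \eqref{eq:ac-37} comes from a lemma stated under the hypothesis $u\geq-\rho_n$, so you should instead take $X\geq X_\alpha$ and $Y\geq X_\beta$ directly from \eqref{eq:chord-norm-lower-bound} together with the cell bound \eqref{eq:ac-10}, which hold without that hypothesis.
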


\begin{proof}

Since \(x\in\mathcal C_{v_i}\) and \(y\in\mathcal C_{v_j}\), one has
\(
 v_i\cdot x\geq\rho_n,\qquad v_j\cdot y\geq\rho_n.
\)
Therefore \eqref{eq:chord-norm-lower-bound} in
\Cref{lem:ac-chord-geometry}, together with the definitions of
\(X_\alpha\) and \(X_\beta\), gives
\(
 X=\lVert v_i+r_\alpha x\rVert\geq X_\alpha,\qquad
 Y=\lVert v_j+r_\beta y\rVert\geq X_\beta.
\)
By the notation above,

\(R=\mathrm{Num}/(XY)\).
If \(\mathrm{Num}\leq0\), then \(R\leq0\).  Since \(Q\) is the inner
product of two unit source vectors, \(Q\geq-1\), and hence
\(
 1+\rho_n+Q\geq\rho_n>0.
\)
Thus \eqref{eq:ac-77} holds in this case.
We may therefore assume that \(\mathrm{Num}>0\).  Since
\(XY\geq\Pi_{\alpha\beta}\), it is enough to prove
\(
 \mathrm{Num}\leq\Pi_{\alpha\beta}(1+\rho_n+Q).
\)

Expanding \(\mathrm{Num}\) and \(Q\), and then rearranging, shows that
this last inequality is precisely \eqref{eq:ac-80}.

Since
\(
 x\in\mathcal C_{v_i},\qquad
 y\in\mathcal C_{v_j},\qquad
 v_i\cdot v_j=-\rho_n,
\)
one application of \eqref{eq:ac-cap-bound}, with
\((p,q)=(v_i,v_j)\), gives both
\(
 v_j\cdot x\leq B_n^{\mathrm{cap}},
 \qquad
 v_i\cdot y\leq B_n^{\mathrm{cap}},
\)
which is \eqref{eq:ac-81}.

\end{proof}

Throughout the three regimes below, every maximum over \(x,y\) is
taken over
\(
 x\in\mathcal C_{v_i},\qquad y\in\mathcal C_{v_j}.
\)

The next three lemmas verify \eqref{eq:ac-80} according to the
positions of \(\alpha\) and \(\beta\) relative to
\(\alpha_n^\ast\).

\begin{lemma}[Low--low affine estimate]
\label{lem:ac-far-low}
Assume \(n\geq2\).  Under the hypotheses of
\Cref{prop:ac-far-affine}, retain the common finite-gain notation above.
If \(Q\leq-\rho_n\) and
\(0\leq\alpha,\beta\leq\alpha_n^\ast\),
then
\[
R\leq1+\rho_n+Q.
\]
\end{lemma}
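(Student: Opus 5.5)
By \Cref{lem:ac-far-common-reduction}, and after disposing of the zero-gain cases, it suffices to prove \eqref{eq:ac-80} for \(0<\alpha,\beta\leq\alpha_n^\ast\). If \(r_\alpha=0\), that is \(\alpha=0\), the inequality is checked directly: the left side becomes \(r_\beta(v_i\cdot y)\le r_\beta B_n^{\mathrm{cap}}\) by \eqref{eq:ac-81}, and the right side is explicit in \(\beta\). The case \(\beta=0\) is symmetric.

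In the low--low range we have \(r_\alpha,r_\beta\le1\) and \(\sin\theta=c_n\sin\alpha\). The identity \(X_\alpha\sin\theta_\alpha=r_\alpha\sin\widehat{\zeta}_n\) from \Cref{lem:ac-chord-geometry} gives
\[
 \Pi_{\alpha\beta}\sin\alpha\sin\beta=\frac{r_\alpha r_\beta\sin^2\widehat{\zeta}_n}{c_n^2\sin\theta_\alpha\sin\theta_\beta}\,\sin\theta_\alpha\sin\theta_\beta\cdot\frac{1}{\cdots}.
\]
More cleanly, \(\Pi_{\alpha\beta}\sin\alpha\sin\beta=r_\alpha r_\beta\sin^2\widehat{\zeta}_n/c_n^2=2(1-\rho_n)r_\alpha r_\beta\), which is at least \(r_\alpha r_\beta\) since \(2(1-\rho_n)\ge1\). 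Hence \(\omega_{\alpha\beta}=r_\alpha r_\beta(2\rho_n-1)<0\), and we may set \(c:=-\omega_{\alpha\beta}>0\), \(a:=r_\alpha\), \(b:=r_\beta\). The left side of \eqref{eq:ac-80} is then at most \(\mathfrak C(a,b,c)\) with \((p,q)=(v_i,v_j)\), because \(x\in\mathcal C_{v_i}\) and \(y\in\mathcal C_{v_j}\).

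I would apply \Cref{cor:ac-cap-boundary-coordinate}, which writes \(\mathfrak C\) as the maximum of a concave function \(\Psi_{a,b,c}(t)\) over \(t\in[-1,B_n^{\mathrm{cap}}]\). Concavity alone does not put the maximum at an endpoint, so the plan is to show that \(\Psi\) is nondecreasing on the interval. The maximum would then sit at \(t=B_n^{\mathrm{cap}}\), that is at \(x=x^{\mathrm{cap}}=v_j+2\rho_nv_i\), and the bound would reduce to a two-parameter inequality in \((\alpha,\beta)\).

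First I will check that \(\mathsf h_t=-\rho_nb-ct\) stays in the second branch, \(\mathsf h_t<\rho_n\nu_{b,c}\). After that, monotonicity follows from a sign check of \(\Psi'\) at \(t=B_n^{\mathrm{cap}}\), since by concavity \(\Psi'\) is smallest there. If this fails, the fallback is \Cref{cor:ac-cap-projected-maximizer} with the explicit \(\mathsf h_{\mathrm{opt}}\), or \Cref{cor:ac-cap-unconstrained-estimate}, which gives \(\mathfrak C\le(\nu_{a,c}\nu_{b,c}-\rho_nab)/c\).

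The resulting inequality must hold for all \(0<\alpha,\beta\le\alpha_n^\ast\), even though \(Q\le-\rho_n\) is no longer used. The constraint \(Q\le-\rho_n\) may, however, be needed to exclude small \(\alpha,\beta\). If so, I will keep the relation \(x\cdot y\) against \(Q\) instead of discarding it: \(\omega<0\) means large \(x\cdot y\) helps the bound, and \(Q\le-\rho_n\) forces \(x\cdot y\) to be very negative, which is the worst case. The main obstacle is the final trigonometric inequality. I would parametrize by \(\theta_\alpha,\theta_\beta\in[0,\widehat{\zeta}_n/2]\) using the sine-rule formulas, multiply out by \(\sin(\widehat{\zeta}_n-\theta_\alpha)\sin(\widehat{\zeta}_n-\theta_\beta)\), and prove nonnegativity. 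For that I would rely on equality at \(\alpha=\beta=\alpha_n^\ast\) (where \(r=1\)) together with monotonicity in each variable.
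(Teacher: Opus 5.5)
Your outline is the paper's proof. You reduce to \eqref{eq:ac-80} and compute $\omega_{\alpha\beta}=-(1-2\rho_n)r_\alpha r_\beta<0$; your formula is right, but discard the garbled first display. You then apply \Cref{cor:ac-cap-boundary-coordinate} with $(a,b,c)=(r_\alpha,r_\beta,-\omega_{\alpha\beta})$ and get monotonicity of the concave $\Psi$ from its left derivative at $B_n^{\mathrm{cap}}$. Your direct treatment of $\alpha\beta=0$ is fine but unnecessary, since $\alpha=0$ gives $Q=\cos\beta>0$.

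The proposal, however, stops exactly at the two inequalities that carry the proof, and the plan at both places is flawed.

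\emph{The monotonicity step.} You cannot verify $\mathsf h_t<\rho_n\nu_{b,c}$ on all of $[-1,B_n^{\mathrm{cap}}]$. At $t=-1$ one has $\mathsf h_{-1}=c-\rho_nb=r_\beta\bigl((1-2\rho_n)r_\alpha-\rho_n\bigr)$, which exceeds $\rho_n\nu_{b,c}$ when, for instance, $\rho_n=1/10$ and $r_\alpha=1$. You do not need this check anyway. Concavity across the branch switch is already part of the corollary, and only the right endpoint matters, where $\mathsf h=-\rho_nr_\beta+\omega_{\alpha\beta}B_n^{\mathrm{cap}}<0$.

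What you leave open is the sign of $\Psi'_-(B_n^{\mathrm{cap}})$, and that is the real content. Put $c=-\omega_{\alpha\beta}$. At the endpoint, $\lVert w\rVert^2-(w\cdot v_j)^2=(1-\rho_n^2)(r_\beta-2\rho_nc)^2$, so
$\Psi'_-(B_n^{\mathrm{cap}})=\bigl[(r_\alpha-2\rho_nc)(r_\beta-2\rho_nc)-c^2\bigr]/(r_\beta-2\rho_nc)$.
The ratio bounds $c/r_\alpha=(1-2\rho_n)r_\beta\le1-2\rho_n$ and $c/r_\beta\le1-2\rho_n$ give $r_\alpha/c-2\rho_n\ge\frac1{1-2\rho_n}-2\rho_n>1$, and likewise for $r_\beta$. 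This makes the numerator positive and the denominator positive.

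\emph{The final inequality.} Your strategy rests on a false premise: the inequality is not tight at $\alpha=\beta=\alpha_n^\ast$. After maximizing over the caps, the slack in \eqref{eq:ac-80} at $r_\alpha=r_\beta=1$ is at least $5\rho_n+6\rho_n^2+\Pi_{\alpha\beta}\cos^2\alpha_n^\ast>0$. So ``equality at the corner plus monotonicity'' is not the mechanism, and you need neither a trigonometric parametrization nor any further use of $Q\le-\rho_n$.

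The paper proceeds as follows.
\begin{itemize}
\item Evaluate $\Psi(B_n^{\mathrm{cap}})=B_n^{\mathrm{cap}}(r_\alpha+r_\beta)+\omega_{\alpha\beta}\rho_n(3-4\rho_n^2)\le B_n^{\mathrm{cap}}(r_\alpha+r_\beta)$.
\item Use $X_\alpha^2-\frac{1+\rho_n}{2}(1+r_\alpha)^2=\frac{1-\rho_n}{2}(1-r_\alpha)^2$, and likewise for $\beta$, to get $\Pi_{\alpha\beta}\ge\frac{1+\rho_n}{2}(1+r_\alpha)(1+r_\beta)$.
\item Expand:
\[
2\bigl[\rho_n+(1+\rho_n)\Pi_{\alpha\beta}-B_n^{\mathrm{cap}}(r_\alpha+r_\beta)\bigr]\ge(1-r_\alpha)(1-r_\beta)+\rho_n\bigl(4+2(r_\alpha+r_\beta)+2r_\alpha r_\beta\bigr)+\rho_n^2\bigl(1+5(r_\alpha+r_\beta)+r_\alpha r_\beta\bigr)\ge0.
\]
\end{itemize}
This holds for every $0<\alpha,\beta\le\alpha_n^\ast$ because $r_\alpha,r_\beta\le1$. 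The remaining term $\Pi_{\alpha\beta}\cos\alpha\cos\beta\ge0$ is simply discarded.
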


\begin{proof}

By \Cref{lem:ac-far-common-reduction}, it is enough to verify
\eqref{eq:ac-80}.  Put
\(
 \sigma:=v_j\cdot x.
\)
The reduction below expresses the relevant maximum as a one-variable
maximum over \(-1\leq\sigma\leq B_n^{\mathrm{cap}}\); we prove that it
is attained at \(\sigma=B_n^{\mathrm{cap}}\).

\smallskip
\noindent\emph{Sign of \(\omega_{\alpha\beta}\).}
The hypothesis \(Q\leq-\rho_n\) forces
\(\alpha,\beta>0\): if, for example, \(\alpha=0\), then
\(
 Q=p_0(x)\cdot p_\beta(y)=\cos\beta>0,
\)
and the case \(\beta=0\) is symmetric.  Hence
\(r_\alpha,r_\beta>0\).  Because
\(\alpha,\beta\leq\alpha_n^\ast\), the definition of
\(\vartheta_n\) gives
\(
 \theta_\alpha=\Theta_n(\alpha),\qquad
 \theta_\beta=\Theta_n(\beta).
\)

The first two identities in each row follow from
\eqref{eq:chord-geometry} in \Cref{lem:ac-chord-geometry}; the third is
the defining identity for \(\Theta_n\) from
\Cref{sec:anchored-chord-definition}:
\[
\begin{aligned}
 r_\alpha
 &=\frac{\sin\theta_\alpha}
 {\sin(\widehat{\zeta}_n-\theta_\alpha)},&
 X_\alpha
 &=\frac{\sin\widehat{\zeta}_n}
 {\sin(\widehat{\zeta}_n-\theta_\alpha)},&
 \sin\theta_\alpha&=c_n\sin\alpha,\\
 r_\beta
 &=\frac{\sin\theta_\beta}
 {\sin(\widehat{\zeta}_n-\theta_\beta)},&
 X_\beta
 &=\frac{\sin\widehat{\zeta}_n}
 {\sin(\widehat{\zeta}_n-\theta_\beta)},&
 \sin\theta_\beta&=c_n\sin\beta.
\end{aligned}
\]
Since \(c_n^2=(1+\rho_n)/2\) and
\(\sin^2\widehat{\zeta}_n=1-\rho_n^2\), substitution in
\(
 -\omega_{\alpha\beta}
 =\Pi_{\alpha\beta}\sin\alpha\sin\beta-r_\alpha r_\beta
\)
gives the exact factorization
\[
 -\omega_{\alpha\beta}
 =(1-2\rho_n)r_\alpha r_\beta=
 \frac{(1+\rho_n)(1-2\rho_n)\sin\alpha\sin\beta}
 {2\sin(\widehat{\zeta}_n-\Theta_n(\alpha))
   \sin(\widehat{\zeta}_n-\Theta_n(\beta))}>0.
\]
Consequently,
\[
 \frac{-\omega_{\alpha\beta}}{r_\alpha}
 =(1-2\rho_n)r_\beta\leq1-2\rho_n,\qquad
 \frac{-\omega_{\alpha\beta}}{r_\beta}
 =(1-2\rho_n)r_\alpha\leq1-2\rho_n,
 \acEquationTag{eq:ac-far-low-ratio-bounds}
\]
where \(0<r_\alpha,r_\beta\leq1\) follows from
\(\alpha,\beta\leq\alpha_n^\ast\).

\smallskip
\noindent\emph{Reduction to one variable.}
Apply \Cref{cor:ac-cap-boundary-coordinate} with
\[
 (p,q,a,b,c)
 =(v_i,v_j,r_\alpha,r_\beta,-\omega_{\alpha\beta}).
\]
The last coefficient is positive by the preceding factorization.  For
each \(\sigma\in[-1,B_n^{\mathrm{cap}}]\), choose \(x_\sigma\) as in
that corollary and put
\[
 w_\sigma:=r_\beta v_i+\omega_{\alpha\beta}x_\sigma,
 \qquad
 \Psi(\sigma)
 :=\Psi_{r_\alpha,r_\beta,-\omega_{\alpha\beta}}(\sigma)
 =r_\alpha\sigma
 +\mathcal S_{\rho_n}(w_\sigma;v_j).
\]
The corollary shows that \(\Psi\) is independent of the choice of
\(x_\sigma\), is concave on \([-1,B_n^{\mathrm{cap}}]\), and satisfies
\[
 \mathfrak C(r_\alpha,r_\beta,-\omega_{\alpha\beta})
 =\max_{-1\leq\sigma\leq B_n^{\mathrm{cap}}}\Psi(\sigma).
 \acEquationTag{eq:ac-far-low-coordinate-reduction}
\]
Its identities \eqref{eq:ac-cap-boundary-coordinate-data} specialize to
\[
 \lVert w_\sigma\rVert^2
 =r_\beta^2+\omega_{\alpha\beta}^2
 +2\rho_n r_\beta\omega_{\alpha\beta},
 \qquad
 w_\sigma\cdot v_j
 =-\rho_n r_\beta+\omega_{\alpha\beta}\sigma.
 \acEquationTag{eq:ac-far-low-coordinate-data}
\]

\smallskip
\noindent\emph{The endpoint maximizer.}
We now compute the derivative at the right endpoint.  At
\(\sigma=B_n^{\mathrm{cap}}=1-2\rho_n^2\), direct expansion gives
\[
 \lVert w_{B_n^{\mathrm{cap}}}\rVert^2
 -(w_{B_n^{\mathrm{cap}}}\cdot v_j)^2
 =\sin^2\widehat{\zeta}_n\,
   (r_\beta+2\rho_n\omega_{\alpha\beta})^2.
\]
The second inequality in \eqref{eq:ac-far-low-ratio-bounds} gives
\[
 r_\beta+2\rho_n\omega_{\alpha\beta}
 \geq r_\beta\bigl(1-2\rho_n(1-2\rho_n)\bigr)>0.
 \acEquationTag{eq:ac-far-low-xi-positive}
\]
Consequently,
\[
 \sqrt{\lVert w_{B_n^{\mathrm{cap}}}\rVert^2
 -(w_{B_n^{\mathrm{cap}}}\cdot v_j)^2}
 =\sin\widehat{\zeta}_n\,
   (r_\beta+2\rho_n\omega_{\alpha\beta}).
\]
Moreover
\(
 w_{B_n^{\mathrm{cap}}}\cdot v_j
 =-\rho_n r_\beta+\omega_{\alpha\beta}B_n^{\mathrm{cap}}<0,
\)
 so the second case of \eqref{eq:ac-74} applies in a left neighborhood
of \(\sigma=B_n^{\mathrm{cap}}\).

Differentiating that formula and using
\(
 \tfrac{d}{d\sigma}(w_\sigma\cdot v_j)=\omega_{\alpha\beta}
\)
gives the left derivative
\[
\begin{aligned}
 \Psi'_{-}(B_n^{\mathrm{cap}})
 &=r_\alpha+\rho_n\omega_{\alpha\beta}
 -\frac{\sin\widehat{\zeta}_n\,\omega_{\alpha\beta}
 (w_{B_n^{\mathrm{cap}}}\cdot v_j)}
 {\sqrt{\lVert w_{B_n^{\mathrm{cap}}}\rVert^2
 -(w_{B_n^{\mathrm{cap}}}\cdot v_j)^2}}\\
 &=
 \frac{\omega_{\alpha\beta}^2
 [(r_\alpha/(-\omega_{\alpha\beta})-2{\rho_n})
  (r_\beta/(-\omega_{\alpha\beta})-2{\rho_n})-1]}
 {r_\beta+2{\rho_n}\omega_{\alpha\beta}}.
\end{aligned}
\]
The two inequalities in \eqref{eq:ac-far-low-ratio-bounds} imply
\[
 \frac{r_\alpha}{-\omega_{\alpha\beta}}-2\rho_n
 \geq\frac1{1-2\rho_n}-2\rho_n
 =1+\frac{4\rho_n^2}{1-2\rho_n}>1,
 \qquad
 \frac{r_\beta}{-\omega_{\alpha\beta}}-2\rho_n
 \geq1+\frac{4\rho_n^2}{1-2\rho_n}>1.
\]
Thus the numerator and denominator in the last expression for
\(\Psi'_{-}(B_n^{\mathrm{cap}})\) are nonnegative and positive,
respectively, so \(\Psi'_{-}(B_n^{\mathrm{cap}})\geq0\).
For a concave function, every earlier secant slope is at least its
left derivative at the right endpoint.  Hence \(\Psi\) is
nondecreasing on its interval, and its maximum occurs at
\(\sigma=B_n^{\mathrm{cap}}\).

\smallskip
\noindent\emph{Verification of the scalar bound.}

Since the maximum of \(\Psi\) occurs at
\(\sigma=B_n^{\mathrm{cap}}\), and the second case of \eqref{eq:ac-74}
applies there, the endpoint identities above give
\[
\begin{aligned}
 \mathfrak C(r_\alpha,r_\beta,-\omega_{\alpha\beta})
 &=\Psi(B_n^{\mathrm{cap}})=r_\alpha B_n^{\mathrm{cap}}
 +\rho_n\bigl(-\rho_n r_\beta
  +\omega_{\alpha\beta}B_n^{\mathrm{cap}}\bigr)
 +(1-\rho_n^2)
 \bigl(r_\beta+2\rho_n\omega_{\alpha\beta}\bigr)\\
 &=B_n^{\mathrm{cap}}(r_\alpha+r_\beta)
 +\omega_{\alpha\beta}(3\rho_n-4\rho_n^3)
 \leq B_n^{\mathrm{cap}}(r_\alpha+r_\beta).
\end{aligned}                                                \acEquationTag{eq:ac-85}
\]

Here \(3\rho_n-4\rho_n^3=\rho_n(3-4\rho_n^2)>0\) and
\(\omega_{\alpha\beta}<0\), which proves the final inequality in
\eqref{eq:ac-85}.  Furthermore,
\[
 X_\alpha^2-\frac{1+\rho_n}{2}(1+r_\alpha)^2
 =\frac{1-\rho_n}{2}(1-r_\alpha)^2\geq0,
\]
and similarly for \(X_\beta,r_\beta\).  Hence
\(
 \Pi_{\alpha\beta}
 \geq\tfrac{1+\rho_n}{2}(1+r_\alpha)(1+r_\beta).
\)
Substituting this lower bound and
\(B_n^{\mathrm{cap}}=1-2\rho_n^2\), and then expanding, gives
\[
\begin{aligned}
2[{\rho_n}+(1+{\rho_n})\Pi_{\alpha\beta}
  -B_n^{\mathrm{cap}}(r_\alpha+r_\beta)]
&\geq(1-r_\alpha)(1-r_\beta)
 +{\rho_n}\bigl(4+2(r_\alpha+r_\beta)+2r_\alpha r_\beta\bigr)\\
&\quad+\rho_n^2\bigl(1+5(r_\alpha+r_\beta)+r_\alpha r_\beta\bigr)\geq0.
\end{aligned} \acEquationTag{eq:ac-86}
\]

Finally, since \(\cos\alpha\cos\beta\geq0\), the cap comparison above and
\eqref{eq:ac-85}--\eqref{eq:ac-86} give

\[
\begin{aligned}
&r_\alpha(v_j\cdot x)+r_\beta(v_i\cdot y)
 +\omega_{\alpha\beta}(x\cdot y)
 \leq\mathfrak C(r_\alpha,r_\beta,-\omega_{\alpha\beta})\\
&\quad\leq B_n^{\mathrm{cap}}(r_\alpha+r_\beta)
 \leq \rho_n+(1+\rho_n)\Pi_{\alpha\beta}
 \leq
 \rho_n+\Pi_{\alpha\beta}
 (1+\rho_n+\cos\alpha\cos\beta).
\end{aligned}
\]

This proves \eqref{eq:ac-80}, and hence the lemma.
\end{proof}

\begin{lemma}[High--high affine estimate]
\label{lem:ac-far-high}
Assume \(n\geq2\).  Under the hypotheses of
\Cref{prop:ac-far-affine}, retain the common finite-gain notation above.
If
\[
 Q\leq-\rho_n,\qquad
 \alpha_n^\ast\leq\alpha,\beta<\widehat{\zeta}_n,
\]
then \(R\leq1+\rho_n+Q\).
\end{lemma}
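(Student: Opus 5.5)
By \Cref{lem:ac-far-common-reduction} it suffices to verify \eqref{eq:ac-80}. The plan is to use the reciprocal rule to turn the high--high case into a low--low-type configuration, as the structure table indicates (``inverse gains and convexity'').

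\emph{Step 1: pass to inverse gains.} Since \(\alpha,\beta\geq\alpha_n^\ast\), we have \(r_\alpha,r_\beta\geq1>0\). Put \(\bar r_\alpha=r_\alpha^{-1}=r_n(\widetilde\alpha)\) and \(\bar r_\beta=r_n(\widetilde\beta)\), where \(\widetilde\alpha,\widetilde\beta\leq\alpha_n^\ast\). Two identities then follow:
\[
v_i+r_\alpha x=r_\alpha(x+\bar r_\alpha v_i),
\qquad
X_\alpha=r_\alpha\bar X_\alpha .
\]
Dividing \eqref{eq:ac-80} by \(r_\alpha r_\beta\) gives an inequality of the same shape. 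In it the roles of anchor and equatorial point are swapped: \(x\) now plays the anchor role and \(v_i\) the moving role. The left side becomes
\[
\bar r_\beta(v_j\cdot x)+\bar r_\alpha(v_i\cdot y)+\bigl(1-\bar X_\alpha\bar X_\beta\sin\alpha\sin\beta\bigr)(x\cdot y),
\]
and the right side becomes \(\rho_n\bar r_\alpha\bar r_\beta+\bar X_\alpha\bar X_\beta(1+\rho_n+\cos\alpha\cos\beta)\).

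\emph{Step 2: check the sign of the coefficient of \(x\cdot y\).} Write \(\bar\omega:=1-\bar X_\alpha\bar X_\beta\sin\alpha\sin\beta\). I need \(\bar\omega\) signed so that the cap reduction applies. Using the chord formulas \(\bar X=\sin\widehat\zeta_n/\sin(\widehat\zeta_n-\Theta_n(\widetilde\cdot))\) together with the involution relation \(D_n^\times(\widetilde\alpha,\alpha)=\widehat\zeta_n\), I expect \(\sin\alpha\) to be expressible through \(\widetilde\alpha\). I then expect \(\bar\omega\leq0\), i.e.\ the product \(\bar X_\alpha\bar X_\beta\sin\alpha\sin\beta\geq1\), by convexity of \(\tau\mapsto\log(\bar X\sin)\) along the involution. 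This is where ``convexity'' enters.

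\emph{Step 3: bound by the cap maximum.} With \(\bar\omega\leq0\), the left side is at most \(\mathfrak C(\bar r_\beta,\bar r_\alpha,-\bar\omega)\) for \((p,q)=(v_i,v_j)\) and \(x\in\mathcal C_{v_i}\), \(y\in\mathcal C_{v_j}\). I would bound this crudely by \(B_n^{\mathrm{cap}}(\bar r_\alpha+\bar r_\beta)+|\bar\omega|\), or use the unconstrained estimate \Cref{cor:ac-cap-unconstrained-estimate}. The constraint \(Q\leq-\rho_n\) must be used here. Since \(Q=\cos\alpha\cos\beta+\sin\alpha\sin\beta(x\cdot y)\), it gives \(x\cdot y\leq(-\rho_n-\cos\alpha\cos\beta)/(\sin\alpha\sin\beta)\). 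Because \(\bar\omega\leq0\), the constraint should not be needed directly. Instead I would compare against the right side using \(\cos\alpha\cos\beta\geq0\) and a lower bound of the type \(\bar X^2\geq\tfrac{1+\rho_n}{2}(1+\bar r)^2\), which was already used in \eqref{eq:ac-86}. The resulting polynomial inequality in \(\bar r_\alpha,\bar r_\beta\in(0,1]\) should then follow by expansion.

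\emph{Main obstacle.} I expect Step 2 and the final comparison to be the hard part. The trivial bound \(|\bar\omega|\leq\bar X_\alpha\bar X_\beta\) may be too lossy. If so, I would fall back to the exact one-variable reduction of \Cref{cor:ac-cap-projected-maximizer}, and verify nonnegativity at the projected maximizer via a left-derivative argument at \(\sigma=B_n^{\mathrm{cap}}\), exactly as in \Cref{lem:ac-far-low}.
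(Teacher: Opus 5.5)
Your Step~1 is exactly the paper's rescaling: multiplying \eqref{eq:ac-80} by $\bar r_\alpha\bar r_\beta$ and using $X_\alpha=r_\alpha\bar X_\alpha$, $X_\beta=r_\beta\bar X_\beta$ puts the coefficient $\bar\omega=1-\bar X_\alpha\bar X_\beta\sin\alpha\sin\beta$ in front of $x\cdot y$. Step~2, however, is false, because $\bar\omega$ has no fixed sign on the high--high range. At $\alpha=\beta=\alpha_n^\ast$ one has $\bar r_\alpha=\bar r_\beta=1$ and $\bar\omega=2\rho_n-1<0$. But as $\alpha=\beta\uparrow\widehat{\zeta}_n$ one has $F_n(\alpha)\to0$, hence $\bar r_\alpha\to0$, $\bar X_\alpha\to1$, and $\bar\omega\to1-\sin^2\widehat{\zeta}_n=\rho_n^2>0$. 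These levels are compatible with $Q\le-\rho_n$: take $x=v_i\in M_i$ and $y=-v_i\in M_j$, so that $Q=\cos2\alpha\to2\rho_n^2-1<-\rho_n$. So no convexity argument along the involution can give $\bar\omega\le0$, and the case $\bar\omega\ge0$ must be handled separately. This is the easy case, and it is the paper's first case. Bound $x\cdot y\le1$ and the cross terms by \eqref{eq:ac-81}, and use $\cos(\alpha-\beta)\ge\cos(\widehat{\zeta}_n-\alpha_n^\ast)\ge1-\rho_n$ together with $\bar X_\alpha\bar X_\beta\ge\max\{1,\bar r_\alpha+\bar r_\beta\}$. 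Both sides then compare with $2\bar X_\alpha\bar X_\beta$.

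For $h:=-\bar\omega>0$, your Step~3 has no working mechanism.
\begin{itemize}
\item The crude bound $B_n^{\mathrm{cap}}(\bar r_\alpha+\bar r_\beta)+h$ equals $3-2\rho_n-4\rho_n^2$ at $\alpha=\beta=\alpha_n^\ast$. The rescaled right side there is $2+9\rho_n+2\rho_n^2$, so the crude bound fails for every $n\ge11$.
\item \Cref{cor:ac-cap-unconstrained-estimate} applied directly at $c=h$ behaves like $(1-\rho_n)\bar r_\alpha\bar r_\beta/h$. It blows up as $h\downarrow0$, which does occur, precisely because $\bar X_\alpha\bar X_\beta\sin\alpha\sin\beta$ crosses $1$ inside the range.
\item The endpoint-derivative argument of \Cref{lem:ac-far-low} rested on the ratio bounds \eqref{eq:ac-far-low-ratio-bounds}, and their analogue fails here. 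With $\alpha=\alpha_n^\ast$ and $\beta\uparrow\widehat{\zeta}_n$, one has $\bar r_\beta\to0$ while $h\to2c_n(1-\rho_n)-1>0$.
\end{itemize}

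The missing idea, which is what ``convexity'' refers to in the paper, is convexity in the coefficient itself. The function $\mathfrak F(h):=\max_{x\in\mathcal C_{v_i},\,y\in\mathcal C_{v_j}}\{\bar r_\beta(v_j\cdot x)+\bar r_\alpha(v_i\cdot y)-h(x\cdot y)\}$ is a maximum of affine functions of $h$, hence convex. Moreover $0<h\le h^{\max}:=\bar X_\alpha\bar X_\beta-1$, because $\sin\alpha\sin\beta\le1$. At $h=0$ one has $\mathfrak F(0)=B_n^{\mathrm{cap}}(\bar r_\alpha+\bar r_\beta)\le\bar X_\alpha\bar X_\beta$. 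At $h=h^{\max}$, \Cref{cor:ac-cap-unconstrained-estimate} together with the identity \eqref{eq:ac-91} and $2ab\le a^2+b^2$ gives $\mathfrak F(h^{\max})\le\bar X_\alpha\bar X_\beta$. Convexity then yields $\mathfrak F(h)\le\bar X_\alpha\bar X_\beta$, which is at most the rescaled right side since $\cos\alpha\cos\beta\ge0$.
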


\begin{proof}

By \Cref{lem:ac-far-common-reduction}, it is enough to verify
\eqref{eq:ac-80}.  Put
\(
 \Pi_{\alpha\beta}^{\mathrm{HH}}:=\bar X_\alpha\bar X_\beta.
\)

The inverse gains satisfy
\(
 X_\alpha=r_\alpha\bar X_\alpha,\qquad
 X_\beta=r_\beta\bar X_\beta,
\)
and therefore
\[
 \bar r_\alpha\bar r_\beta\omega_{\alpha\beta}
 =1-\Pi_{\alpha\beta}^{\mathrm{HH}}\sin\alpha\sin\beta.
\acEquationTag{eq:ac-far-high-rescaled-coefficient}
\]

These identities give the coefficients obtained when
\eqref{eq:ac-80} is multiplied by
\(\bar r_\alpha\bar r_\beta\).
We separate the proof according to the sign of
\(\omega_{\alpha\beta}\).

\smallskip

\noindent\emph{The case \(\omega_{\alpha\beta}\geq0\).}
Since \(\bar r_\alpha\bar r_\beta>0\),
\eqref{eq:ac-far-high-rescaled-coefficient} gives
\(
 1-\Pi_{\alpha\beta}^{\mathrm{HH}}
 \sin\alpha\sin\beta\geq0.
\)
Equations \eqref{eq:ac-81} and \(x\cdot y\leq1\)
reduce \eqref{eq:ac-80}, after multiplication by
\(\bar r_\alpha\bar r_\beta\), to
\[
 B_n^{\mathrm{cap}}(\bar r_\alpha+\bar r_\beta)+1
 \leq {\rho_n}\bar r_\alpha\bar r_\beta
 +\Pi_{\alpha\beta}^{\mathrm{HH}}
  (1+{\rho_n}+\cos(\alpha-\beta)).                              \acEquationTag{eq:ac-88}
\]

Since \(\alpha,\beta\in[\alpha_n^\ast,\widehat{\zeta}_n)\),
\[
\begin{aligned}
 0\leq|\alpha-\beta|
 &\leq\widehat{\zeta}_n-\alpha_n^\ast,\\
 \cos(\alpha-\beta)
 &\geq\cos(\widehat{\zeta}_n-\alpha_n^\ast)\\
 &=\rho_n\cos\alpha_n^\ast
   +\sin\widehat{\zeta}_n\sin\alpha_n^\ast\\
 &=1-\rho_n+\rho_n\sqrt{\frac{2\rho_n}{1+\rho_n}}
 \geq1-\rho_n.
\end{aligned}
\]

The following elementary estimates depend only on
\(\bar r_\alpha,\bar r_\beta\geq0\), and therefore hold independently
of the sign of \(\omega_{\alpha\beta}\):
\[
 \Pi_{\alpha\beta}^{\mathrm{HH}}
 \geq\sqrt{(1+\bar r_\alpha^2)(1+\bar r_\beta^2)}
 \geq\bar r_\alpha+\bar r_\beta
 \acEquationTag{eq:ac-far-high-norm-comparison}
\]
They give
\[
 1+B_n^{\mathrm{cap}}(\bar r_\alpha+\bar r_\beta)
 \leq1+\bar r_\alpha+\bar r_\beta
 \leq2\Pi_{\alpha\beta}^{\mathrm{HH}}.
\]
The preceding bound
\(\cos(\alpha-\beta)\geq1-\rho_n\) also gives
\[
 \rho_n\bar r_\alpha\bar r_\beta
 +\Pi_{\alpha\beta}^{\mathrm{HH}}
 (1+\rho_n+\cos(\alpha-\beta))
 \geq2\Pi_{\alpha\beta}^{\mathrm{HH}}.
\]
Together these inequalities prove \eqref{eq:ac-88}.

\smallskip

\noindent\emph{The case \(\omega_{\alpha\beta}<0\).}
It remains to suppose that \(\omega_{\alpha\beta}<0\).  By
\eqref{eq:ac-far-high-rescaled-coefficient}, put
\[
 h_{\alpha\beta}
 :=\Pi_{\alpha\beta}^{\mathrm{HH}}\sin\alpha\sin\beta-1
 =-\bar r_\alpha\bar r_\beta\omega_{\alpha\beta}>0.
\]
Since \(\sin\alpha\sin\beta\leq1\),
\(
 h_{\alpha\beta}\leq
 \Pi_{\alpha\beta}^{\mathrm{HH}}-1
 =:h_{\alpha\beta}^{\max}.
\)
In particular, \(h_{\alpha\beta}^{\max}>0\), as required when it is
used as the positive coefficient \(c\) in
\Cref{prop:ac-cap-dimension-reduction}.

After multiplication by \(\bar r_\alpha\bar r_\beta\), the left-hand
side of \eqref{eq:ac-80} is
\(
 \bar r_\beta(v_j\cdot x)+\bar r_\alpha(v_i\cdot y)
 -h_{\alpha\beta}(x\cdot y).
\)
For
\(0\leq h\leq h_{\alpha\beta}^{\max}\),
\[
 \mathfrak F_{\alpha\beta}^{\mathrm{HH}}(h)
 :=
 \max_{\substack{x\in\mathcal C_{v_i}\\
 y\in\mathcal C_{v_j}}}
 \{\bar r_\beta(v_j\cdot x)+\bar r_\alpha(v_i\cdot y)
 -h(x\cdot y)\}.
\]
\(
 \mathfrak F_{\alpha\beta}^{\mathrm{HH}}(h)
 =\mathfrak C(\bar r_\beta,\bar r_\alpha,h)
 \quad\text{for \(p=v_i\) and \(q=v_j\)}.
\)
For each fixed \((x,y)\), the expression being maximized is affine in
\(h\).  Hence
\(\mathfrak F_{\alpha\beta}^{\mathrm{HH}}\) is convex on
\([0,h_{\alpha\beta}^{\max}]\).  We bound it at the two endpoints of
this interval and then apply convexity at \(h=h_{\alpha\beta}\).

At \(h=0\), \eqref{eq:ac-cap-bound} and
\eqref{eq:ac-far-high-norm-comparison} give
\[
 \mathfrak F_{\alpha\beta}^{\mathrm{HH}}(0)
 =B_n^{\mathrm{cap}}(\bar r_\alpha+\bar r_\beta)
 \leq\bar r_\alpha+\bar r_\beta
 \leq \Pi_{\alpha\beta}^{\mathrm{HH}}.
\]

At \(h_{\alpha\beta}^{\max}\), define the positive numbers
\[
\begin{aligned}
 A_\alpha^{\mathrm{HH}}
 &:=
 \sqrt{\bar r_\alpha^2+(h_{\alpha\beta}^{\max})^2
 -2\rho_n\bar r_\alpha h_{\alpha\beta}^{\max}},\\
 A_\beta^{\mathrm{HH}}
 &:=
 \sqrt{\bar r_\beta^2+(h_{\alpha\beta}^{\max})^2
 -2\rho_n\bar r_\beta h_{\alpha\beta}^{\max}}.
\end{aligned}
\]

Whenever
\(v_i\cdot x=v_j\cdot y=\rho_n\),
\[
 A_\alpha^{\mathrm{HH}}
 =\lVert\bar r_\alpha v_i-h_{\alpha\beta}^{\max}x\rVert,
 \qquad
 A_\beta^{\mathrm{HH}}
 =\lVert\bar r_\beta v_j-h_{\alpha\beta}^{\max}y\rVert.
\]
Taking \(p=v_i\), \(q=v_j\),
\(a=\bar r_\beta\), \(b=\bar r_\alpha\), and
\(c=h_{\alpha\beta}^{\max}\), the maximum
\(\mathfrak F_{\alpha\beta}^{\mathrm{HH}}
(h_{\alpha\beta}^{\max})\) is precisely
\(\mathfrak C(a,b,c)\), and
\(
 \nu_{a,c}=A_\beta^{\mathrm{HH}},
 \qquad
 \nu_{b,c}=A_\alpha^{\mathrm{HH}}.
\)
Therefore \Cref{cor:ac-cap-unconstrained-estimate} gives
\[
 \mathfrak F_{\alpha\beta}^{\mathrm{HH}}(h_{\alpha\beta}^{\max})
 \leq\frac{A_\alpha^{\mathrm{HH}}A_\beta^{\mathrm{HH}}
 -{\rho_n}\bar r_\alpha\bar r_\beta}{h_{\alpha\beta}^{\max}}.
 \acEquationTag{eq:ac-far-high-cap-estimate}
\]

Direct expansion gives
\[
\begin{aligned}
 \Pi_{\alpha\beta}^{\mathrm{HH}}\left(
 2h_{\alpha\beta}^{\max}
 -\frac{(A_\alpha^{\mathrm{HH}})^2}{\bar X_\alpha^2}
 -\frac{(A_\beta^{\mathrm{HH}})^2}{\bar X_\beta^2}
 \right)
 ={}&(\bar X_\alpha-\bar X_\beta)^2
 (2-\Pi_{\alpha\beta}^{\mathrm{HH}})\\
 &+2{\rho_n}(\bar X_\beta^2\bar r_\alpha
 +\bar X_\alpha^2\bar r_\beta).
\end{aligned} \acEquationTag{eq:ac-91}
\]

We now show that the right-hand side of \eqref{eq:ac-91} is
nonnegative.

If \(\Pi_{\alpha\beta}^{\mathrm{HH}}\leq2\), both terms on the
right-hand side of \eqref{eq:ac-91} are nonnegative.

Suppose now that \(\Pi_{\alpha\beta}^{\mathrm{HH}}>2\).
Since \(\alpha,\beta\geq\alpha_n^\ast\), one has
\(0<\bar r_\alpha,\bar r_\beta\leq1\), and hence
\(\Pi_{\alpha\beta}^{\mathrm{HH}}\leq2+2{\rho_n}\).  Moreover,
\[
 \frac{d}{dt}\sqrt{1+2\rho_nt+t^2}
 =\frac{t+\rho_n}{\sqrt{1+2\rho_nt+t^2}}\leq1
 \qquad(0\leq t\leq1),
\]
because
\(
 1+2\rho_nt+t^2-(t+\rho_n)^2=1-\rho_n^2>0.
\)
Thus \(t\mapsto\sqrt{1+2\rho_nt+t^2}\) is one-Lipschitz on
\([0,1]\), and
\(
 |\bar X_\alpha-\bar X_\beta|
 \leq|\bar r_\alpha-\bar r_\beta|.
\)
By symmetry, assume \(\bar r_\alpha\geq\bar r_\beta\).  Then
\[
 (\bar X_\alpha-\bar X_\beta)^2
 \leq(\bar r_\alpha-\bar r_\beta)^2
 \leq\bar r_\alpha^2\leq\bar r_\alpha
 \leq\bar X_\beta^2\bar r_\alpha
 +\bar X_\alpha^2\bar r_\beta.
\]
Since
\(-2{\rho_n}\leq2-\Pi_{\alpha\beta}^{\mathrm{HH}}<0\), the right-hand
side of \eqref{eq:ac-91} is at least
\[
 -2{\rho_n}(\bar X_\alpha-\bar X_\beta)^2
 +2{\rho_n}(\bar X_\beta^2\bar r_\alpha
 +\bar X_\alpha^2\bar r_\beta)\geq0.
\]
Thus the right-hand side of \eqref{eq:ac-91} is nonnegative in all
cases.  Since
\(\Pi_{\alpha\beta}^{\mathrm{HH}}
=\bar X_\alpha\bar X_\beta>0\),
\eqref{eq:ac-91} gives
\[
 \frac{(A_\alpha^{\mathrm{HH}})^2}{\bar X_\alpha^2}
 +\frac{(A_\beta^{\mathrm{HH}})^2}{\bar X_\beta^2}
 \leq2h_{\alpha\beta}^{\max}.
\]
Applying \(2ab\leq a^2+b^2\) directly to the two normalized ratios
gives
\[
 \frac{2A_\alpha^{\mathrm{HH}}A_\beta^{\mathrm{HH}}}
 {\Pi_{\alpha\beta}^{\mathrm{HH}}}
 =2\left(\frac{A_\alpha^{\mathrm{HH}}}{\bar X_\alpha}\right)
     \left(\frac{A_\beta^{\mathrm{HH}}}{\bar X_\beta}\right)
 \leq
 \frac{(A_\alpha^{\mathrm{HH}})^2}{\bar X_\alpha^2}
 +\frac{(A_\beta^{\mathrm{HH}})^2}{\bar X_\beta^2}
 \leq2h_{\alpha\beta}^{\max}.
\]
Thus
\(
 A_\alpha^{\mathrm{HH}}A_\beta^{\mathrm{HH}}
 \leq\Pi_{\alpha\beta}^{\mathrm{HH}}h_{\alpha\beta}^{\max}.
\)

Combining \eqref{eq:ac-far-high-cap-estimate} with the preceding
product bound gives
\(
 \mathfrak F_{\alpha\beta}^{\mathrm{HH}}
 (h_{\alpha\beta}^{\max})
 \leq\Pi_{\alpha\beta}^{\mathrm{HH}}.
\)
Since \(h_{\alpha\beta}\in[0,h_{\alpha\beta}^{\max}]\), convexity and
the bounds at \(h=0\) and \(h=h_{\alpha\beta}^{\max}\) imply
\(
 \mathfrak F_{\alpha\beta}^{\mathrm{HH}}(h_{\alpha\beta})
 \leq\Pi_{\alpha\beta}^{\mathrm{HH}}.
\)
Since \(\rho_n>0\) and
\(\cos\alpha\cos\beta\geq0\), it follows that
\[
\begin{aligned}
 {\rho_n}\bar r_\alpha\bar r_\beta
 +\Pi_{\alpha\beta}^{\mathrm{HH}}
 (1+{\rho_n}+\cos\alpha\cos\beta)
 &\geq\Pi_{\alpha\beta}^{\mathrm{HH}}\\
 &\geq\mathfrak F_{\alpha\beta}^{\mathrm{HH}}(h_{\alpha\beta})\\
 &\geq\bar r_\beta(v_j\cdot x)+\bar r_\alpha(v_i\cdot y)
 -h_{\alpha\beta}(x\cdot y).
\end{aligned}
\]
This proves \eqref{eq:ac-80} when
\(\omega_{\alpha\beta}<0\), and hence completes the high--high
regime.
\end{proof}

\paragraph{Notation for mixed radial levels.}
\phantomsection\label{def:mixed-upper-notation}

Whenever
\(
 0<\alpha\leq\alpha_n^\ast\leq\beta<\widehat{\zeta}_n,
\)
retain the standing definitions of
\(r_\alpha,\bar r_\beta,X_\alpha,\bar X_\beta\), and
\(\widetilde\beta=F_n(\beta)\), and put
\[
\begin{aligned}
 \Pi_{\alpha\beta}^{\mathrm{LH}}
 &:=X_\alpha\bar X_\beta,&
 \kappa_\beta^{\mathrm{LH}}
 &:=\sqrt{2(1-\rho_n)}\,\bar X_\beta\sin\beta-1,&
 \mu_{\alpha\beta}
 &:=r_\alpha\kappa_\beta^{\mathrm{LH}},
\end{aligned}
\]

\begin{lemma}[The mixed-level coefficient identity and bounds]
\label{lem:ac-mixed-coefficient}
Assume \(n\geq2\) and
\(0<\alpha\leq\alpha_n^\ast\leq\beta<\widehat{\zeta}_n\).
With the notation above, the following statements hold.

\begin{enumerate}[label=\textup{(\roman*)},leftmargin=*,itemsep=2pt]
\item The coefficient satisfies
\[
 \Pi_{\alpha\beta}^{\mathrm{LH}}\sin\alpha\sin\beta-r_\alpha
 =r_\alpha\kappa_\beta^{\mathrm{LH}}
 =\mu_{\alpha\beta}.
\]
\item One has
\[
 0<\kappa_\beta^{\mathrm{LH}}\leq1-2\rho_n.                \acEquationTag{eq:ac-94}
\]
\end{enumerate}
\end{lemma}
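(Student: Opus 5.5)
Part~(i) is a direct computation from the chord geometry at the low level \(\alpha\). Since \(0<\alpha\leq\alpha_n^\ast\), the definition of \(\vartheta_n\) gives \(\theta_\alpha=\Theta_n(\alpha)\), hence \(\sin\theta_\alpha=c_n\sin\alpha\). \Cref{lem:ac-chord-geometry} then gives
\[
 r_\alpha=\frac{\sin\theta_\alpha}{\sin(\widehat{\zeta}_n-\theta_\alpha)},
 \qquad
 X_\alpha=\frac{\sin\widehat{\zeta}_n}{\sin(\widehat{\zeta}_n-\theta_\alpha)}.
\]
From these,
\[
 X_\alpha\sin\alpha=\frac{\sin\widehat{\zeta}_n}{c_n}\,r_\alpha .
\]
Using \(\sin^2\widehat{\zeta}_n=1-\rho_n^2\) and \(c_n^2=(1+\rho_n)/2\), the constant \(\sin\widehat{\zeta}_n/c_n\) equals \(\sqrt{2(1-\rho_n)}\). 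Multiplying by \(\bar X_\beta\sin\beta\) and subtracting \(r_\alpha\) gives exactly \(r_\alpha\kappa_\beta^{\mathrm{LH}}\), which proves~(i).

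For~(ii), I would rewrite \(\kappa_\beta^{\mathrm{LH}}+1\) as a function of the single level \(\beta\in[\alpha_n^\ast,\widehat{\zeta}_n)\). Put \(t:=\widetilde\beta=F_n(\beta)\in(0,\alpha_n^\ast]\). Since \(\bar r_\beta=r_n(t)\) with \(t\) in the low range, \Cref{lem:ac-chord-geometry} gives
\[
 \bar X_\beta=\frac{\sin\widehat{\zeta}_n}{\sin(\widehat{\zeta}_n-\Theta_n(t))},
 \qquad\text{so}\qquad
 \kappa_\beta^{\mathrm{LH}}+1
 =\frac{\sqrt{2(1-\rho_n)}\,\sin\widehat{\zeta}_n\,\sin\beta}
       {\sin(\widehat{\zeta}_n-\Theta_n(F_n(\beta)))}.
\]
I first evaluate the two endpoints.

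\emph{At \(\beta=\alpha_n^\ast\):} here \(t=\alpha_n^\ast\) and \(\bar r_\beta=1\), so \(\bar X_\beta=\sqrt{2(1+\rho_n)}\). With \(\sin\alpha_n^\ast=\sqrt{(1-\rho_n)/(1+\rho_n)}\), the product becomes \(2(1-\rho_n)\). Thus \(\kappa_\beta^{\mathrm{LH}}=1-2\rho_n\), which is the claimed upper bound and is attained.

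\emph{As \(\beta\uparrow\widehat{\zeta}_n\):} here \(t\to0\) and \(\bar X_\beta\to1\), so the product tends to \((1-\rho_n)\sqrt{2(1+\rho_n)}\). This limit is decreasing in \(\rho_n\) on \((0,\tfrac13]\), and at \(\rho_n=\tfrac13\) it equals \(\tfrac23\sqrt{8/3}>1\). Since \(n\geq2\) gives \(\rho_n\leq\tfrac13\), the limiting value of \(\kappa_\beta^{\mathrm{LH}}\) is strictly positive.

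It therefore suffices to show that \(\beta\mapsto\bar X_\beta\sin\beta\) is nonincreasing on \([\alpha_n^\ast,\widehat{\zeta}_n)\). Both bounds in~\eqref{eq:ac-94} then follow from the endpoint values, and positivity holds throughout because the infimum is the limit, which exceeds~\(1\). This monotonicity is the main obstacle, because \(\sin\beta\) increases while \(\bar X_\beta\) decreases.

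To establish it, I would parametrize everything by \(t\in(0,\alpha_n^\ast]\), using the defining relation~\eqref{eq:critical-involution-level} between \(t\) and \(\beta=F_n(t)\). I would compute \(\sin\beta\) from the explicit formula~\eqref{eq:ac-F-explicit} for \(\tan F_n(t)\), and compute \(\bar X_\beta^2=1+2\rho_n r_n(t)+r_n(t)^2\) with \(r_n(t)=\sin\Theta_n(t)/\sin(\widehat{\zeta}_n-\Theta_n(t))\). The goal is then to show that \(t\mapsto\bar X_\beta^2\sin^2\beta\) is nondecreasing; this is equivalent to the desired monotonicity in \(\beta\), since \(F_n\) is decreasing. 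After substituting \(\sin\Theta_n(t)=c_n\sin t\), the expression should become a rational function of \(\sin t\) and \(\cos\Theta_n(t)\). My first attempt would be to differentiate it and reduce the sign condition, by squaring, to a polynomial inequality in \(\sin t\) on \([0,s_n/c_n]\). If the derivative is unwieldy, the fallback is to prove the two inequalities of~\eqref{eq:ac-94} directly: each is a one-variable inequality in \(t\), and after squaring it should factor with a root at the endpoint \(t=\alpha_n^\ast\), in the same spirit as the factorizations used in \Cref{lem:ac-near-low}.
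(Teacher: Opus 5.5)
Part~(i) is correct and matches the paper's computation. Your endpoint values are also right: $\kappa_\beta^{\mathrm{LH}}=1-2\rho_n$ at $\beta=\alpha_n^\ast$, and $\sqrt{2(1-\rho_n)}\,\bar X_\beta\sin\beta\to\sqrt{2(1-\rho_n)}\sin\widehat{\zeta}_n>1$ as $\beta\uparrow\widehat{\zeta}_n$.

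The gap is that all of part~(ii) rests on the claim that $\beta\mapsto\bar X_\beta\sin\beta$ is nonincreasing on $[\alpha_n^\ast,\widehat{\zeta}_n)$, and you never prove it. You describe a derivative computation you would try, and a fallback inequality that ``should factor'', but neither is carried out, so \eqref{eq:ac-94} is not established. The monotonicity does appear to hold numerically, so the plan is unfinished rather than wrong. However, it is strictly stronger than what the lemma needs, and it is the hardest route available.

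The missing idea is that both bounds are \emph{pointwise} comparisons that follow from facts already in hand. You already have $\bar X_\beta\sin\beta=\sin\widehat{\zeta}_n\sin\beta/\sin(\widehat{\zeta}_n-\Theta_n(\widetilde\beta))$ with $\widetilde\beta=F_n(\beta)$.

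For the lower bound, the second branch of \eqref{eq:ac-5} and \Cref{prop:ac-maximal-displacement} give $\widehat{\zeta}_n-\Theta_n(\widetilde\beta)=\vartheta_n(\beta)\leq\beta$, with both angles in $[0,\tfrac{\pi}{2}]$. Hence $\bar X_\beta\sin\beta\geq\sin\widehat{\zeta}_n$, and so $\kappa_\beta^{\mathrm{LH}}\geq\sqrt{2(1-\rho_n)}\sin\widehat{\zeta}_n-1>0$. This is exactly your limiting constant, obtained with no monotonicity.

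For the upper bound, $\kappa_\beta^{\mathrm{LH}}\leq1-2\rho_n$ is equivalent to $\sin\Theta_n(\beta)=c_n\sin\beta\leq\sin(\widehat{\zeta}_n-\Theta_n(\widetilde\beta))$. That in turn follows from $\Theta_n(\widetilde\beta)+\Theta_n(\beta)\leq\widehat{\zeta}_n$, which the paper proves by reflecting in $e_{ij}^\perp$, where $e_{ij}=(v_i-v_j)/\lVert v_i-v_j\rVert$:
\begin{itemize}
\item the points $p_{\widetilde\beta}(v_i)$ and $p_\beta(v_j)$ are at distance $\widehat{\zeta}_n$, by the defining equation of $F_n$;
\item they lie on opposite sides of $\mathcal E_{ij}=\Sp^{n+1}\cap e_{ij}^\perp$, at distances $\Theta_n(\widetilde\beta)$ and $\Theta_n(\beta)$ from it;
\item so their minimizing geodesic crosses $\mathcal E_{ij}$, giving the inequality.
\end{itemize}
Alternatively, the Cauchy--Schwarz step in the low--low case of \Cref{lem:ac-crossed-schedule} uses only $\Theta_n<\tfrac{\pi}{2}$. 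It therefore gives $\Theta_n(a)+\Theta_n(b)\leq D_n^\times(a,b)$ for all $a,b\in[0,\widehat{\zeta}_n]$. Taking $(a,b)=(\widetilde\beta,\beta)$, where $D_n^\times=\widehat{\zeta}_n$, yields the same bound.
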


\begin{proof}
Fix distinct indices \(i,j\).
The sine-ratio formulas in \Cref{lem:ac-chord-geometry} and
\(\sin\Theta_n(\alpha)=c_n\sin\alpha\) give
\[
 \frac{X_\alpha\sin\alpha}{r_\alpha}
 =\frac{\sin\widehat{\zeta}_n\sin\alpha}
        {\sin\Theta_n(\alpha)}
 =\frac{\sin\widehat{\zeta}_n}{c_n}
 =\sqrt{2(1-\rho_n)}.
\]
Multiplying by \(\bar X_\beta\sin\beta\) and using the definition of
\(\kappa_\beta^{\mathrm{LH}}\) proves assertion \textup{(i)}.

To prove assertion \textup{(ii)}, first note that
\[
 \bar r_\beta
 =\frac{\sin\Theta_n(\widetilde{\beta})}
 {\sin(\widehat{\zeta}_n-\Theta_n(\widetilde{\beta}))},
 \qquad
 \bar X_\beta=\frac {\sin \widehat{\zeta}_n}
 {\sin(\widehat{\zeta}_n-\Theta_n(\widetilde{\beta}))}.
\]
Recall that \(v_i\cdot v_j=-\rho_n\).
The second branch of \eqref{eq:ac-5} gives
\(
 \vartheta_n(\beta)
 =\widehat{\zeta}_n-\Theta_n(\widetilde{\beta}).
\)
Consequently, \Cref{prop:ac-maximal-displacement} gives
\(
 \widehat{\zeta}_n-\Theta_n(\widetilde{\beta})
 =\vartheta_n(\beta)\leq\beta.
\)
Both angles belong to \([0,\tfrac{\pi}{2}]\), so the monotonicity of sine gives
\(
 \sin(\widehat{\zeta}_n-\Theta_n(\widetilde{\beta}))\leq\sin\beta.
\)
Therefore
\[
 \bar X_\beta\sin\beta
 =\frac{\sin\widehat{\zeta}_n\sin\beta}
 {\sin(\widehat{\zeta}_n-\Theta_n(\widetilde{\beta}))}
 \geq\sin\widehat{\zeta}_n.
\]

By the definition of \(\kappa_\beta^{\mathrm{LH}}\), this proves the
lower bound in the two-sided estimate below.

It remains to prove
\[
 \kappa_\beta^{\mathrm{LH}}\leq1-2\rho_n.
\]
For this, let
\[
 e_{ij}:=\frac{v_i-v_j}{\lVert v_i-v_j\rVert},
 \qquad
 \mathcal E_{ij}:=\Sp^{n+1}\cap e_{ij}^\perp.
\]
Since \(v_i\cdot v_j=-\rho_n\), one has
\[
 \lVert v_i-v_j\rVert=\sqrt{2(1+\rho_n)}=2c_n,
 \qquad
 v_i\cdot e_{ij}=c_n,
 \qquad
 v_j\cdot e_{ij}=-c_n.
\]

Reflection in \(e_{ij}^{\perp}\) fixes
\(\mathcal E_{ij}\) pointwise.  Moreover, it interchanges the anchors
\(v_i\) and \(v_j\), since
\[
 v_i-2(v_i\cdot e_{ij})e_{ij}=v_j.
\]
Taking inner products with \(e_{ij}\) gives
\[
 p_{\widetilde{\beta}}(v_i)\cdot e_{ij}
 =c_n\sin\widetilde{\beta}
 =\sin\Theta_n(\widetilde{\beta}),
 \qquad
 p_\beta(v_j)\cdot e_{ij}
 =-c_n\sin\beta=-\sin\Theta_n(\beta).
\]
Consequently \(p_{\widetilde{\beta}}(v_i)\) and
\(p_\beta(v_j)\) lie on opposite sides of \(\mathcal E_{ij}\), and
their geodesic distances to \(\mathcal E_{ij}\) are respectively
\(\Theta_n(\widetilde{\beta})\) and \(\Theta_n(\beta)\).
Since \(\widetilde{\beta}:=F_n(\beta)\), the defining equation
\eqref{eq:ac-2} gives
\[
 p_{\widetilde{\beta}}(v_i)\cdot p_\beta(v_j)
 =\cos\widetilde{\beta}\cos\beta
 +(v_i\cdot v_j)\sin\widetilde{\beta}\sin\beta
 =\cos\widetilde{\beta}\cos\beta
 -\rho_n\sin\widetilde{\beta}\sin\beta
 =\rho_n.
\]
Since \(\widehat{\zeta}_n=\arccos\rho_n\), it follows that
\(
 d_{n+1}\bigl(p_{\widetilde{\beta}}(v_i),p_\beta(v_j)\bigr)
 =\widehat{\zeta}_n.
\)
Since the two points lie on opposite sides of \(\mathcal E_{ij}\) and
their distance is \(\widehat{\zeta}_n<\pi\), their unique minimizing
geodesic meets \(\mathcal E_{ij}\) at a unique point \(m\).  Then
\[
 \widehat{\zeta}_n
 =d_{n+1}\bigl(p_{\widetilde{\beta}}(v_i),p_\beta(v_j)\bigr)
 =d_{n+1}\bigl(p_{\widetilde{\beta}}(v_i),m\bigr)
 +d_{n+1}\bigl(m,p_\beta(v_j)\bigr)
 \geq\Theta_n(\widetilde{\beta})+\Theta_n(\beta).
\]

\begin{figure}[!ht]
    \centering
    \resizebox{1.00\textwidth}{!}{\colorlet{anchorone}{blue!70!black}
\colorlet{anchortwo}{orange!85!black}
\colorlet{construction}{teal!70!black}
\colorlet{resultcolor}{purple!70!black}
\colorlet{auxiliary}{black!35}
\colorlet{outline}{black!70}

\tikzset{
  >={Latex[length=2mm]},
  line cap=round,
  line join=round,
  every node/.style={font=\small},
  point/.style={circle,fill,inner sep=0pt,minimum size=4.6pt},
  comparison/.style={densely dashed,black!55,line width=.8pt},
  proofbox/.style={
    draw=black!28,
    rounded corners=2pt,
    fill=black!2,
    inner sep=6pt,
    align=center,
    text width=4.15cm
  },
  endpointbox/.style={
    draw=construction!55!black,
    rounded corners=2pt,
    fill=construction!6,
    inner sep=6pt,
    align=center,
    text width=4.25cm
  },
  implication/.style={-{Latex[length=2mm]},black!55,semithick}
}

\begin{tikzpicture}[every node/.style={font=\small}]

\begin{scope}[xshift=-4.50cm,yshift=.80cm]
  \draw[outline,semithick] (0,0) circle[radius=2.05cm];
  \draw[construction,very thick] (-1.98,0)--(1.98,0);
  \node[construction,fill=white,inner sep=1.5pt] at (-.82,.24)
    {\(\mathcal E_{ij}\)};

  \coordinate (vi) at (.62,1.18);
  \coordinate (vj) at (.62,-1.18);
  \fill[anchorone] (vi) circle[radius=2.4pt]
    node[above right=2pt] {\(v_i\)};
  \fill[anchortwo] (vj) circle[radius=2.4pt]
    node[below right=2pt] {\(v_j\)};
  \draw[comparison] (vi)--(vj);
  \draw[{Latex[length=1.8mm]}-{Latex[length=1.8mm]},
        resultcolor,semithick]
    (1.03,.86) to[bend left=34] (1.03,-.86)
   ;

\node[font=\footnotesize,align=center] at (0,3.1)
    {\textup{(a)} \(v_i\) and \(v_j\) are reflection images};
\end{scope}

\begin{scope}[xshift=3.65cm,yshift=.80cm]
  \def\sphereradius{2.38}
  \def\equatorratio{0.317999364002}
  \pgfmathsetmacro{\equatorheight}{\sphereradius*\equatorratio}

\def\Ax{-0.704323380585}
  \def\Ay{ 0.521791075212}
  \def\Bx{-0.223606797750}
  \def\By{-0.457081064741}
  \def\Mx{-0.651338604685}
  \def\My{ 0.241293848514}
  \def\Azerox{-0.749417332898}
  \def\Azeroy{ 0.210546718784}
  \def\Bzerox{-0.316227766017}
  \def\Bzeroy{ 0.301680685419}

\def\Tx{ 0.011845094767}
  \def\Ty{-0.669288684468}
  \def\geodesicangle{70.528779365509}
  \def\crossingangle{21.422174014640}

\def\screeney{0.948090926280}
  \def\thetaA{19.977311514707}
  \def\thetaB{45}

\path[fill=black!1] (0,0) circle[radius=\sphereradius cm];
  \draw[outline,semithick] (0,0) circle[radius=\sphereradius cm];

\draw[construction!55,densely dashed]
    (-\sphereradius,0)
    arc[
      start angle=180,
      end angle=360,
      x radius=\sphereradius cm,
      y radius=\equatorheight cm
    ];
  \draw[construction,very thick]
    (\sphereradius,0)
    arc[
      start angle=0,
      end angle=180,
      x radius=\sphereradius cm,
      y radius=\equatorheight cm
    ];

\draw[resultcolor!20,densely dashed,line width=.55pt]
    plot[
      domain=147.079:327.079,
      samples=100,
      smooth,
      variable=\g
    ]
    ({
      \sphereradius*(\Ax*cos(\g)+\Tx*sin(\g))
     },{
      \sphereradius*(\Ay*cos(\g)+\Ty*sin(\g))
     });
  \draw[resultcolor!24,line width=.55pt]
    plot[
      domain=-32.921:147.079,
      samples=100,
      smooth,
      variable=\g
    ]
    ({
      \sphereradius*(\Ax*cos(\g)+\Tx*sin(\g))
     },{
      \sphereradius*(\Ay*cos(\g)+\Ty*sin(\g))
     });

\draw[resultcolor,very thick]
    plot[
      domain=0:\crossingangle,
      samples=45,
      smooth,
      variable=\g
    ]
    ({
      \sphereradius*(\Ax*cos(\g)+\Tx*sin(\g))
     },{
      \sphereradius*(\Ay*cos(\g)+\Ty*sin(\g))
     })
    plot[
      domain=\crossingangle:\geodesicangle,
      samples=65,
      smooth,
      variable=\g
    ]
    ({
      \sphereradius*(\Ax*cos(\g)+\Tx*sin(\g))
     },{
      \sphereradius*(\Ay*cos(\g)+\Ty*sin(\g))
     });

\draw[anchorone,densely dashed,semithick]
    plot[
      domain=0:\thetaA,
      samples=35,
      smooth,
      variable=\g
    ]
    ({
      \sphereradius*\Azerox*cos(\g)
     },{
      \sphereradius*
      (\Azeroy*cos(\g)+\screeney*sin(\g))
     });
  \draw[anchortwo,densely dashed,semithick]
    plot[
      domain=0:\thetaB,
      samples=50,
      smooth,
      variable=\g
    ]
    ({
      \sphereradius*\Bzerox*cos(\g)
     },{
      \sphereradius*
      (\Bzeroy*cos(\g)-\screeney*sin(\g))
     });

  \coordinate (A) at
    ({\sphereradius*\Ax},{\sphereradius*\Ay});
  \coordinate (B) at
    ({\sphereradius*\Bx},{\sphereradius*\By});
  \coordinate (m) at
    ({\sphereradius*\Mx},{\sphereradius*\My});
  \coordinate (A0) at
    ({\sphereradius*\Azerox},{\sphereradius*\Azeroy});
  \coordinate (B0) at
    ({\sphereradius*\Bzerox},{\sphereradius*\Bzeroy});

  \fill[anchorone] (A) circle[radius=2.5pt]
    node[above left=4pt] {\(A\)};
  \fill[anchortwo] (B) circle[radius=2.5pt]
    node[below right=2pt,fill=white,
        inner sep=1pt] {\(B\)};
  \fill[resultcolor] (m) circle[radius=2.2pt]
    node[below right=2pt,fill=white,
        inner sep=1pt] {\(m\in\mathcal E_{ij}\)};
  \fill[anchorone!75] (A0) circle[radius=1.8pt]
    node[below left=2pt,fill=white,
        inner sep=1pt] {\(A_0\)};
  \fill[anchortwo!85] (B0) circle[radius=1.8pt]
    node[above right=1pt] {\(B_0\)};

  \node[anchorone,font=\footnotesize,anchor=east,fill=white,
        inner sep=1pt]
    at (-1.86,.93) {\(\Theta_n(\widetilde\beta)\)};
  \node[anchortwo,font=\footnotesize,anchor=west]
    at (-.42,-.28) {\(\Theta_n(\beta)\)};
  \node[
    construction,
    fill=white,
    inner sep=1.5pt,
    font=\footnotesize
  ] at (1.25,-.88)
    {\(\mathcal E_{ij}\)};
  \node[
    resultcolor,
    fill=white,
    inner sep=1pt,
    font=\scriptsize
  ] at (1.28,1.55)
    {supporting great circle};

  \node[
    draw=black!25,
    fill=white,
    rounded corners=2pt,
    inner sep=5pt,
    align=center,
    text width=7.0cm
  ] at (0,-3.65)
  {\textcolor{anchorone}{\(A=p_{\widetilde\beta}(v_i)\), \quad
     \(A\cdot e_{ij}=\sin\Theta_n(\widetilde\beta)>0\)}
   \\[2pt]
   \textcolor{anchortwo}{\(B=p_\beta(v_j)\), \quad
     \(B\cdot e_{ij}=-\sin\Theta_n(\beta)<0\)}
      \\[2pt]
      {\(\displaystyle
     \widehat{\zeta}_n=d(A,B)
     =d(A,m)+d(m,B)\)\\[-1pt]
   \(\displaystyle
     \geq\Theta_n(\widetilde\beta)+\Theta_n(\beta).\)}
    };

\node[font=\footnotesize,align=center] at (0,3.10)
    {\textup{(b)} the crossing argument involving \(\mathcal E_{ij}\)};
\end{scope}

\end{tikzpicture} }
    \caption{In panel \textup{(a)}, reflection fixes the great hypersphere \(\mathcal E_{ij}\) pointwise and interchanges the anchors \(v_i\) and \(v_j\), where \(\lVert v_i-v_j\rVert=2c_n\). In panel \textup{(b)}, the points \(A=p_{\widetilde\beta}(v_i)\) and \(B=p_\beta(v_j)\) lie on opposite sides of \(\mathcal E_{ij}\), so their minimizing geodesic meets \(\mathcal E_{ij}\) at a crossing point \(m\). Let \(A_0\) and \(B_0\) be respective nearest points of \(A\) and \(B\) on \(\mathcal E_{ij}\). Then \(d_{n+1}(A,m)\geq d_{n+1}(A,A_0) =\Theta_n(\widetilde\beta)\) and \(d_{n+1}(B,m)\geq d_{n+1}(B,B_0) =\Theta_n(\beta)\). Hence \(d_{n+1}(A,B)\geq \Theta_n(\widetilde\beta)+\Theta_n(\beta)\).
}
    \label{fig:reflection_geometry}
\end{figure}

Using \(\sin\Theta_n(\beta)=c_n\sin\beta\),
\(\sin\widehat{\zeta}_n/c_n=\sqrt{2(1-\rho_n)}\), the monotonicity of
sine on \([0,\widehat{\zeta}_n]\subset[0,\tfrac{\pi}{2}]\), and
\(
\widehat{\zeta}_n\geq
\Theta_n(\widetilde{\beta})+\Theta_n(\beta),
\)
we obtain
\[
 \bar X_\beta\sin\beta
 =\frac{\sin\widehat{\zeta}_n\sin\beta}
         {\sin(\widehat{\zeta}_n-\Theta_n(\widetilde{\beta}))}
 =\sqrt{2(1-\rho_n)}\,
   \frac{\sin\Theta_n(\beta)}
        {\sin(\widehat{\zeta}_n-\Theta_n(\widetilde{\beta}))}
 \leq\sqrt{2(1-\rho_n)}.
\]
Together with the lower bound above, this gives
\[
 \sqrt{2(1-\rho_n)}\sin\widehat{\zeta}_n-1
 \leq\kappa_\beta^{\mathrm{LH}}\leq1-2\rho_n.
\]
The lower bound is positive.  Indeed, the function
\(
 f(\rho):=2(1-\rho)(1-\rho^2)
\)
is decreasing on \((0,1/3]\), because
\(
 f'(\rho)=-2(1+2\rho-3\rho^2)<0,
\)
and
\(
 f(\tfrac{1}{3})=\tfrac{32}{27}>1.
\)
Thus \(2(1-\rho_n)(1-\rho_n^2)>1\), completing the proof.
\end{proof}

The case in which the constrained one-variable maximizer equals the
unconstrained maximizer will require the following algebraic estimate.

\begin{lemma}[Scalar estimate for the unconstrained maximizer]
\label{lem:ac-far-mixed-unconstrained}

Let \(\rho,r,t,\kappa\) be real numbers satisfying
\[
 0<\rho\leq\frac13,\qquad
 0<r\leq1,\qquad
 t>0,\qquad
 0<\kappa\leq1-2\rho,
\]
and put
\(
 \mu:=r\kappa.
\)

Then
\[
 1-2\rho\mu>0.                                           \acEquationTag{eq:ac-mixed-positive-factor}
\]
Moreover, if
\[
 \frac{
 \sqrt{1+\mu^2-2\rho\mu}\,(t-\rho\kappa)}
 {\sqrt{t^2+\kappa^2-2\rho t\kappa}}
 \leq
 \rho+\mu(1-2\rho^2),                                  \acEquationTag{eq:ac-mixed-left-endpoint}
\]
then
\[
 \sqrt{1+\mu^2-2\rho\mu}\,
 \sqrt{t^2+\kappa^2-2\rho t\kappa}
 \leq\kappa\sqrt{(1+2\rho r+r^2)(1+2\rho t+t^2)}+\rho t.
 \acEquationTag{eq:ac-mixed-free-bound}
\]
\end{lemma}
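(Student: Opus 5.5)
The first claim is elementary. Since \(0<r\leq1\) and \(0<\kappa\leq1-2\rho\), we have \(\mu=r\kappa\leq1-2\rho<1\). Hence
\[
2\rho\mu\leq2\rho(1-2\rho)\leq\tfrac29<1,
\]
which is \eqref{eq:ac-mixed-positive-factor}.

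For the main estimate, write
\[
N_\mu:=\sqrt{1+\mu^2-2\rho\mu},\qquad
N_t:=\sqrt{t^2+\kappa^2-2\rho t\kappa},\qquad
X_r:=\sqrt{1+2\rho r+r^2},\qquad
X_t:=\sqrt{1+2\rho t+t^2}.
\]
The target inequality is then \(N_\mu N_t\leq\kappa X_rX_t+\rho t\).

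I would split according to the sign of \(t-\rho\kappa\) and the size of \(t\). First I would check whether the inequality holds without using \eqref{eq:ac-mixed-left-endpoint} in some range. For \(t\) small, \(N_t\approx\kappa\) and \(N_\mu\leq1\leq X_r\), so \(N_\mu N_t\leq\kappa X_r\) should hold directly. Concretely:
\begin{itemize}
\item \(N_\mu\leq X_r\) since \(\mu\leq r\) and \(-2\rho\mu\leq2\rho r\).
\item \(N_t\leq t+\kappa\) always holds, and \(N_t\leq\kappa X_t\) holds as long as \(t^2(1-\kappa^2)\leq2\rho t\kappa(1+\kappa)\).
\end{itemize}
So the bound is trivial in a regime of small \(t\). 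The hypothesis \eqref{eq:ac-mixed-left-endpoint} is needed when \(t\) is large, because then the ratio \((t-\rho\kappa)/N_t\) tends to \(1\), and the hypothesis caps \(N_\mu\) by roughly \(\rho+\mu(1-2\rho^2)\).

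The cleanest route is to use the hypothesis as an upper bound for \(N_\mu\) whenever \(t>\rho\kappa\):
\[
N_\mu\leq\frac{\bigl(\rho+\mu(1-2\rho^2)\bigr)N_t}{t-\rho\kappa}.
\]
Substituting, it suffices to prove
\[
\bigl(\rho+\mu(1-2\rho^2)\bigr)N_t^2\leq(t-\rho\kappa)\bigl(\kappa X_rX_t+\rho t\bigr).
\]
I would combine this with the unconditional bound \(N_\mu\leq\min\{X_r,1\}\), using the tighter of the two in each regime. Then I would lower-bound \(X_rX_t\geq\tfrac{1+\rho}{2}(1+r)(1+t)\), the same trick as in \eqref{eq:ac-86}, or alternatively \(X_rX_t\geq\sqrt{(1+r^2)(1+t^2)}\geq r+t\). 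This turns everything into a polynomial inequality in \(r,t,\kappa\) with \(\mu=r\kappa\). That inequality is linear in \(\mu\) on the left, so it suffices to check the endpoints \(\kappa\to0\) and \(\kappa=1-2\rho\), and similarly \(r\in\{0,1\}\) by monotonicity or convexity.

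The main obstacle is the large-\(t\) polynomial verification in the worst case \(\kappa=1-2\rho\), \(r=1\), \(\rho=1/3\). There the margin is presumably small, so the crude bounds on \(X_rX_t\) may lose too much. If that happens, I would instead square the target inequality; after moving \(\rho t\) across, both sides are positive. I would then treat the result as a quadratic in \(t\), showing that its leading coefficient \(\kappa^2X_r^2-N_\mu^2+\dots\) and its discriminant have the right signs. The hypothesis \eqref{eq:ac-mixed-left-endpoint} would be used only to exclude the range of \(t\) where the quadratic could be negative. Dependence on \(\rho\) would be handled by monotonicity on \((0,1/3]\), as in the proof of \Cref{lem:ac-mixed-coefficient}.
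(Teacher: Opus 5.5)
\textbf{There is a genuine gap in the main estimate.} Your proof of \eqref{eq:ac-mixed-positive-factor} is fine, apart from a slip: \(2\rho(1-2\rho)\) peaks at \(\tfrac14\) (at \(\rho=\tfrac14\)), not \(\tfrac29\), which is still \(<1\). The main estimate, however, is not proved, and the route you commit to cannot close.

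\emph{The unconditional bound is false.} The bound \(N_\mu\le1\) does not hold in general, since \(N_\mu\le1\) iff \(\mu\le2\rho\). Only \(N_\mu\le X_r\) is available.

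\emph{The two-regime strategy leaves a hole.} Take \((\rho,\kappa,r,t)=(0.01,0.5,1,0.1)\), so \(\mu=0.5\) and \(N_\mu\approx1.11\).
\begin{itemize}
\item The hypothesis holds: its left side is \(\approx0.208\) and its right side is \(0.5099\).
\item The conclusion holds: \(N_\mu N_t\approx0.567<0.716\approx\kappa X_rX_t+\rho t\).
\item You are in your large-\(t\) regime, since \(t(1-\kappa)=0.05>2\rho\kappa=0.01\).
\item There \(X_rN_t\approx0.723>0.716\), so the bound \(N_\mu\le X_r\) is too weak.
\item Your reduced inequality \((\rho+\mu(1-2\rho^2))N_t^2\le(t-\rho\kappa)(\kappa X_rX_t+\rho t)\) reads \(0.132\le0.068\), which is false.
\end{itemize}
So neither bound, nor ``the tighter of the two,'' proves the claim here. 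The endpoint reduction (``linear in \(\mu\), so check \(\kappa\in\{0,1-2\rho\}\), \(r\in\{0,1\}\)'') is also unsupported: \(\kappa\) and \(r\) enter \(N_t\), \(X_r\) and \(\rho\kappa\) nonlinearly, and your regime split itself depends on \(\kappa\).

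\textbf{The missing idea.} The hypothesis \eqref{eq:ac-mixed-left-endpoint} is an \emph{upper bound on \(t\)}, not a cap on \(N_\mu\). As \(t\to\infty\) it would force \(N_\mu\le\rho+\mu(1-2\rho^2)\), which is impossible because
\[ (\rho+\mu(1-2\rho^2))^2-N_\mu^2=-(1-\rho^2)(1-2\rho\mu)^2<0. \]
The paper squares the target and expands exactly:
\[ (\kappa X_rX_t+\rho t)^2-N_\mu^2N_t^2=(1-\kappa^2)(\mu^2-t^2)+2\rho\kappa(1+\kappa)(1+rt)(\mu+t)+\rho^2t^2+2\rho t\kappa X_rX_t. \]
This is nonnegative as soon as \((1-\kappa)(t-\mu)\le2\rho\kappa\). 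Your observation \(N_t\le\kappa X_t\iff t(1-\kappa)\le2\rho\kappa\) is the weaker version with \(\mu\) thrown away. The condition \((1-\kappa)(t-\mu)\le2\rho\kappa\) is then established as follows:
\begin{itemize}
\item If \(t\le\rho\kappa\), it is trivial.
\item Otherwise, squaring the hypothesis and clearing \(N_t^2\) gives the factorization \((1-\rho^2)[t(1-2\rho\mu)+\mu\kappa]\{\kappa[2\rho+\mu(1-4\rho^2)]-t(1-2\rho\mu)\}\ge0\). Hence \(t\le\kappa[2\rho+\mu(1-4\rho^2)]/(1-2\rho\mu)\).
\item Substituting this into \(2\rho\kappa-(1-\kappa)(t-\mu)\) leaves a quadratic in \(\mu\) that is concave on \([0,\kappa]\) and nonnegative at both ends.
\end{itemize}

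\textbf{Your fallback.} It points in this direction but is not carried out, and as described it would not work verbatim. The squared target is not a quadratic in \(t\) until the cross term \(2\rho t\kappa X_rX_t\) is dropped. After dropping it, the \(t^2\)-coefficient \(\kappa^2-1+\rho^2+2\rho\mu(1+\kappa)\) can be negative, so the sign of a discriminant is not the issue. What is needed is exactly the explicit upper bound on \(t\) from the hypothesis, together with a check that it lies in the nonnegativity range. That check is the whole content of the proof.
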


The proof is given in
Appendix~\ref{app:ac-unconstrained-maximizer-estimate}.

\begin{lemma}[Mixed-level affine estimate]
\label{lem:ac-far-mixed}
Assume \(n\geq2\).  Under the hypotheses of
\Cref{prop:ac-far-affine}, retain the common finite-gain notation above.
Suppose
\[
 Q\leq-\rho_n,\qquad
 \alpha,\beta<\widehat{\zeta}_n,\qquad
 \min\{\alpha,\beta\}\leq\alpha_n^\ast
 \leq\max\{\alpha,\beta\}.
\]

Then \(R\leq1+\rho_n+Q\).
\end{lemma}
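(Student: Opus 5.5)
By symmetry of the roles of the two pairs (interchanging \((\alpha,x,i)\) with \((\beta,y,j)\) leaves \(Q\), \(R\) and \eqref{eq:ac-80} invariant), I will assume \(\alpha\leq\alpha_n^\ast\leq\beta\). As in \Cref{lem:ac-far-low}, \(Q\leq-\rho_n\) forces \(\alpha,\beta>0\), so \(r_\alpha\in(0,1]\) and \(\bar r_\beta=r_\beta^{-1}\in(0,1]\). By \Cref{lem:ac-far-common-reduction} it suffices to prove \eqref{eq:ac-80}.

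The first step is to multiply \eqref{eq:ac-80} by \(\bar r_\beta\). Since \(X_\beta=r_\beta\bar X_\beta\), we get \(\Pi_{\alpha\beta}=r_\beta\Pi^{\mathrm{LH}}_{\alpha\beta}\) and \(\bar r_\beta\omega_{\alpha\beta}=r_\alpha-\Pi^{\mathrm{LH}}_{\alpha\beta}\sin\alpha\sin\beta=-\mu_{\alpha\beta}\) by \Cref{lem:ac-mixed-coefficient}(i). The inequality to prove therefore becomes
\[
 r_\alpha\bar r_\beta(v_j\cdot x)+(v_i\cdot y)-\mu_{\alpha\beta}(x\cdot y)
 \leq\rho_n\bar r_\beta+\Pi^{\mathrm{LH}}_{\alpha\beta}(1+\rho_n+\cos\alpha\cos\beta).
\]
Here \(\mu_{\alpha\beta}>0\) by \eqref{eq:ac-94}. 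I will bound the left side by \(\mathfrak C(a,b,c)\) with \(p=v_i\), \(q=v_j\), \(a=r_\alpha\bar r_\beta\), \(b=1\), \(c=\mu_{\alpha\beta}\), and then drop \(\cos\alpha\cos\beta\geq0\). After dividing by \(r_\alpha\) it suffices to show
\[
 \mathfrak C\le \rho_n\bar r_\beta+(1+\rho_n)\Pi^{\mathrm{LH}}_{\alpha\beta}.
\]

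Next I will evaluate \(\mathfrak C\) with \Cref{cor:ac-cap-projected-maximizer} and split according to where \(\mathsf h_{\mathrm{opt}}\) lands in \(J_{1,\mu}\).

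\emph{Interior case} (\(\mathsf h_{\mathrm{opt}}=\mathsf h_{\mathrm{free}}\)). Then \(\mathfrak C\) equals the unconstrained value of \Cref{cor:ac-cap-unconstrained-estimate}, namely \((\nu_{a,\mu}\nu_{1,\mu}-\rho_n a)/\mu\). Dividing by \(r_\alpha\) and writing \(a/\mu=\bar r_\beta/\kappa\), this should reduce, with \(\rho=\rho_n\), \(r=r_\alpha\), \(\kappa=\kappa^{\mathrm{LH}}_\beta\), \(t=\bar r_\beta\), to \eqref{eq:ac-mixed-free-bound} of \Cref{lem:ac-far-mixed-unconstrained}. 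That lemma requires \eqref{eq:ac-mixed-left-endpoint}, which I expect to be exactly the condition \(\mathsf h_{\mathrm{free}}\leq\rho_n\nu_{1,\mu}\) or \(\mathsf h_{\mathrm{free}}\le c-\rho_n b\), i.e.\ the right-endpoint constraint of \(J_{1,\mu}\) not being active. I will verify this identification explicitly, using \eqref{eq:ac-mixed-positive-factor} to fix signs.

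\emph{Left-endpoint case} (\(\mathsf h_{\mathrm{opt}}=-\rho_n-\mu B_n^{\mathrm{cap}}\)). This corresponds to \(v_j\cdot x=B_n^{\mathrm{cap}}\). Here the value is explicit: as in \eqref{eq:ac-85}, \(\mathfrak C=aB_n^{\mathrm{cap}}+B_n^{\mathrm{cap}}-\mu(3\rho_n-4\rho_n^3)\le B_n^{\mathrm{cap}}(a+1)\). The claim then follows from lower bounds on \(\Pi^{\mathrm{LH}}=X_\alpha\bar X_\beta\) of the type \(X^2\geq\tfrac{1+\rho_n}{2}(1+r)^2\) used in \eqref{eq:ac-86}, together with \(r_\alpha,\bar r_\beta\le1\).

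\emph{Right-endpoint case}. Here \(\mathsf h_{\mathrm{opt}}=\min\{\mu-\rho_n,\rho_n\nu_{1,\mu}\}<\mathsf h_{\mathrm{free}}\), so the maximizer has \(y\) on the cap boundary aligned with \(w_x\) (or \(x=-v_j\)). I will evaluate \(g\) there; since \(g\) is increasing up to \(\mathsf h_{\mathrm{free}}\), the value is at most the unconstrained one. However, \eqref{eq:ac-mixed-left-endpoint} may fail in this case, so I would bound it directly by \(\nu_{1,\mu}\) plus the affine term and compare with \(\Pi^{\mathrm{LH}}\).

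I expect the main obstacle to be the interior case: correctly matching \(\mathsf h_{\mathrm{free}}\in J_{1,\mu}\) with the hypothesis \eqref{eq:ac-mixed-left-endpoint}, and handling the complementary case where the right endpoint is active. The endpoint lemma alone may not suffice there, so I may instead need monotonicity of \(g\) together with \(\mu\le(1-2\rho_n)r_\alpha\).
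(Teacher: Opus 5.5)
Your reduction matches the paper's: rescale \eqref{eq:ac-80} by \(\bar r_\beta\), use \Cref{lem:ac-mixed-coefficient} to get the coefficient \(-\mu_{\alpha\beta}\), bound by \(\mathfrak C(r_\alpha\bar r_\beta,1,\mu_{\alpha\beta})\), and split according to \(\mathsf h_{\mathrm{opt}}\). The interior case is handled correctly. The endpoint cases, however, rest on a misidentification. The numerator of \(\mathsf h_{\mathrm{free}}\) carries the factor \(\rho_n\kappa_\beta^{\mathrm{LH}}-\bar r_\beta\), so \eqref{eq:ac-mixed-left-endpoint} reads \(-\mathsf h_{\mathrm{free}}\leq\rho_n+\mu_{\alpha\beta}B_n^{\mathrm{cap}}\), i.e.\ \(\mathsf h_{\mathrm{free}}\geq-\rho_n-\mu_{\alpha\beta}B_n^{\mathrm{cap}}\). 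This is the \emph{left}-endpoint constraint of \(J_{1,\mu_{\alpha\beta}}\), not the right one. Your worry in the right-endpoint case is therefore unfounded. There \(\mathsf h_{\mathrm{free}}>\mathsf h_{\mathrm{opt}}\), which exceeds the left endpoint, so the hypothesis holds. Then \Cref{cor:ac-cap-unconstrained-estimate} and \Cref{lem:ac-far-mixed-unconstrained} give \(\mathfrak C\leq\Pi^{\mathrm{LH}}_{\alpha\beta}\) exactly as in the interior case. Since \Cref{cor:ac-cap-unconstrained-estimate} holds unconditionally, the natural dichotomy is simply whether \eqref{eq:ac-mixed-left-endpoint} holds or fails. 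When it fails, \(\mathsf h_{\mathrm{opt}}\) is the left endpoint. The paper instead keeps three cases: it shows \(\mathsf h_{\mathrm{free}}<\rho_n\nu_{1,\mu_{\alpha\beta}}\), so only \(\mu_{\alpha\beta}-\rho_n\) can be an active right endpoint. It then evaluates there directly, \(\mathfrak C=B_n^{\mathrm{cap}}+r_\alpha(\rho_n\kappa_\beta^{\mathrm{LH}}-\bar r_\beta)\leq1+\rho_nr_\alpha\leq X_\alpha\leq\Pi^{\mathrm{LH}}_{\alpha\beta}\).

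The left-endpoint case has a step that fails as written. Your explicit value \(\mathfrak C=B_n^{\mathrm{cap}}(1+r_\alpha\bar r_\beta)-\mu_{\alpha\beta}(3\rho_n-4\rho_n^3)\) is correct; extracting its square root uses \(1-2\rho_n\mu_{\alpha\beta}>0\), i.e.\ \eqref{eq:ac-mixed-positive-factor}. But the \eqref{eq:ac-86}-type bound \(X^2\geq\tfrac{1+\rho_n}{2}(1+r)^2\) cannot finish it. It only gives \((1+\rho_n)\Pi^{\mathrm{LH}}_{\alpha\beta}\geq\tfrac{(1+\rho_n)^2}{2}(1+r_\alpha)(1+\bar r_\beta)\), and \(r_\alpha,\bar r_\beta\leq1\) works against you, because \((1+r_\alpha)(1+\bar r_\beta)\leq2(1+r_\alpha\bar r_\beta)\). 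For large \(n\) both gains can be small in this case. Take \(n=99\), \(\alpha=0.025\), \(\beta=\widehat{\zeta}_n-0.001\), with \(Q\leq-\rho_n\) realized by \(x=v_i\), \(y=-v_i\). Then \(r_\alpha\approx0.018\), \(\bar r_\beta\approx0.068\), and \(\mathsf h_{\mathrm{free}}\approx-0.15\) lies below the left endpoint \(\approx-0.017\). Your lower bound for \(\rho_n\bar r_\beta+(1+\rho_n)\Pi^{\mathrm{LH}}_{\alpha\beta}\) is then about \(0.55\), while \(\mathfrak C\approx1.0\). Use instead \(X_\alpha\geq\sqrt{1+r_\alpha^2}\) and \(\bar X_\beta\geq\sqrt{1+\bar r_\beta^2}\). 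This gives \(\Pi^{\mathrm{LH}}_{\alpha\beta}\geq\sqrt{(1+r_\alpha^2)(1+\bar r_\beta^2)}\geq1+r_\alpha\bar r_\beta\geq\mathfrak C\), which is what the paper does.
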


\begin{proof}
By \Cref{lem:ac-far-common-reduction}, it is enough to verify
\eqref{eq:ac-80}.  By symmetry, suppose
\(\alpha\leq\alpha_n^\ast\leq\beta\).

\smallskip
\noindent\emph{The coefficient after inverse-gain rescaling.}

The inequality \(Q\leq-\rho_n\) rules out \(\alpha=0\).
Thus the radial levels satisfy
\(
 0<\alpha\leq\alpha_n^\ast\leq\beta<\widehat{\zeta}_n.
\)

Use the notation for mixed radial levels introduced immediately before
\Cref{lem:ac-mixed-coefficient}.  That lemma gives

\[
 \Pi_{\alpha\beta}^{\mathrm{LH}}\sin\alpha\sin\beta-r_\alpha
 =r_\alpha\kappa_\beta^{\mathrm{LH}}
 =\mu_{\alpha\beta},
 \qquad
 0<\kappa_\beta^{\mathrm{LH}}\leq1-2\rho_n.
\]

The inverse-gain quantities satisfy
\[
 X_\beta=r_\beta\bar X_\beta,\qquad
 \bar r_\beta\Pi_{\alpha\beta}
 =\Pi_{\alpha\beta}^{\mathrm{LH}},\qquad
 \bar r_\beta\omega_{\alpha\beta}
 =-r_\alpha\kappa_\beta^{\mathrm{LH}}.
\]

Indeed, the first identity follows from
\(\bar r_\beta=r_\beta^{-1}\) and the positivity of both sides; the
second follows from the definitions
of the two normalization products.

The third follows from the definition of
\(\omega_{\alpha\beta}\) in the
\hyperref[def:upper-boundary-notation]{standing notation for this
subsection}, the second identity above, and
\Cref{lem:ac-mixed-coefficient}\textup{(i)}.
Consequently, after multiplying \eqref{eq:ac-80} by
\(\bar r_\beta>0\), it is enough to prove

\[
 \mathfrak F_{\alpha\beta}^{\mathrm{LH}}:=
 \max_{\substack{x\in\mathcal C_{v_i}\\
                  y\in\mathcal C_{v_j}}}
\left\{
 r_\alpha\bar r_\beta(v_j\cdot x)+(v_i\cdot y)
 -\mu_{\alpha\beta}(x\cdot y)
 \right\}
 \leq \Pi_{\alpha\beta}^{\mathrm{LH}}.
 \acEquationTag{eq:ac-far-mixed-cap-bound}
\]

The maximum above is the quantity
\(\mathfrak C(a,b,c)\) in
\Cref{prop:ac-cap-dimension-reduction} under the substitution

\(
 a=r_\alpha\bar r_\beta,\qquad
 b=1,\qquad
 c=\mu_{\alpha\beta}.
\)
For this substitution, the generic norm quantities from
\Cref{prop:ac-cap-dimension-reduction,cor:ac-cap-unconstrained-estimate}
become
\[
\begin{aligned}
 \nu_{1,\mu_{\alpha\beta}}
 &=\sqrt{1+\mu_{\alpha\beta}^2
          -2\rho_n\mu_{\alpha\beta}},\\
 \nu_{r_\alpha\bar r_\beta,\mu_{\alpha\beta}}
 &=r_\alpha
 \sqrt{\bar r_\beta^2+(\kappa_\beta^{\mathrm{LH}})^2
       -2\rho_n\bar r_\beta\kappa_\beta^{\mathrm{LH}}}.
\end{aligned}
\]
Moreover,
\[
 \rho_n-\frac ac
 =\rho_n-\frac{\bar r_\beta}{\kappa_\beta^{\mathrm{LH}}},
\]
and, since \(\kappa_\beta^{\mathrm{LH}}>0\),
\[
 \sqrt{(\rho_n-a/c)^2+\sin^2\widehat{\zeta}_n}
 =
 \frac{
 \sqrt{\bar r_\beta^2+(\kappa_\beta^{\mathrm{LH}})^2
       -2\rho_n\bar r_\beta\kappa_\beta^{\mathrm{LH}}}}
 {\kappa_\beta^{\mathrm{LH}}}.
\]
The generic unconstrained maximizer in
\Cref{cor:ac-cap-projected-maximizer} is therefore
\[
 \mathsf h_{\mathrm{free}}
 =
 \frac{
 \sqrt{1+\mu_{\alpha\beta}^2-2\rho_n\mu_{\alpha\beta}}\,
 (\rho_n\kappa_\beta^{\mathrm{LH}}-\bar r_\beta)}
 {\sqrt{\bar r_\beta^2+(\kappa_\beta^{\mathrm{LH}})^2
       -2\rho_n\bar r_\beta\kappa_\beta^{\mathrm{LH}}}}.
\]
Define
\(
 \mathsf h_{\mathrm{opt}}
 :=\operatorname{proj}_{J_{1,\mu_{\alpha\beta}}}
 (\mathsf h_{\mathrm{free}}).
\)

When \(v_i\cdot x=v_j\cdot y=\rho_n\), the two square roots above are
\[
 \sqrt{1+\mu_{\alpha\beta}^2-2\rho_n\mu_{\alpha\beta}}
 =\lVert v_i-\mu_{\alpha\beta}x\rVert,
\]
and
\[
 \sqrt{\bar r_\beta^2+(\kappa_\beta^{\mathrm{LH}})^2
       -2\rho_n\bar r_\beta\kappa_\beta^{\mathrm{LH}}}
 =\lVert\bar r_\beta v_j-\kappa_\beta^{\mathrm{LH}}y\rVert.
\]

Recall that
\[
 J_{1,\mu_{\alpha\beta}}
 =
 \bigl[-\rho_n-\mu_{\alpha\beta}B_n^{\mathrm{cap}},\,
 \min\{\mu_{\alpha\beta}-\rho_n,\,
       \rho_n
       \sqrt{1+\mu_{\alpha\beta}^2
             -2\rho_n\mu_{\alpha\beta}}\}\bigr].
\]
We show that \(\mathsf h_{\mathrm{free}}\) is strictly smaller than the
second entry in the minimum defining the right endpoint.  Since
\(\bar r_\beta/\kappa_\beta^{\mathrm{LH}}>0\), one has
\[
 \rho_n-\frac{\bar r_\beta}{\kappa_\beta^{\mathrm{LH}}}<\rho_n.
\]
If
\(\rho_n-\bar r_\beta/\kappa_\beta^{\mathrm{LH}}\leq0\), then
\[
 \mathsf h_{\mathrm{free}}\leq0
 <\rho_n
 \sqrt{1+\mu_{\alpha\beta}^2-2\rho_n\mu_{\alpha\beta}}.
\]
If \(0<\rho_n-\bar r_\beta/\kappa_\beta^{\mathrm{LH}}<\rho_n\), then
\[
 \frac{\mathsf h_{\mathrm{free}}}
 {\sqrt{1+\mu_{\alpha\beta}^2-2\rho_n\mu_{\alpha\beta}}}
 =\frac{\rho_n-\bar r_\beta/\kappa_\beta^{\mathrm{LH}}}
 {\sqrt{(\rho_n-\bar r_\beta/\kappa_\beta^{\mathrm{LH}})^2
 +\sin^2\widehat{\zeta}_n}}
 <\rho_n.
\]
Thus
\[
 \mathsf h_{\mathrm{free}}
 <\rho_n
 \sqrt{1+\mu_{\alpha\beta}^2-2\rho_n\mu_{\alpha\beta}}.
\]
Consequently, the nearest-point projection formula in
\Cref{cor:ac-cap-projected-maximizer} leaves exactly the three
possibilities
\[
 \mathsf h_{\mathrm{opt}}=\mu_{\alpha\beta}-\rho_n,\qquad
 \mathsf h_{\mathrm{opt}}
 =-\rho_n-\mu_{\alpha\beta}B_n^{\mathrm{cap}},\qquad
 \mathsf h_{\mathrm{opt}}=\mathsf h_{\mathrm{free}}.
\]
Apply \Cref{lem:ac-far-mixed-unconstrained} with
\[
 \rho=\rho_n,\qquad
 r=r_\alpha,\qquad
 t=\bar r_\beta,\qquad
 \kappa=\kappa_\beta^{\mathrm{LH}},\qquad
 \mu=\mu_{\alpha\beta}.
\]
Its first conclusion gives
\(
 1-2\rho_n\mu_{\alpha\beta}>0.
\)

\smallskip
\noindent\emph{The case
\(\mathsf h_{\mathrm{opt}}=\mu_{\alpha\beta}-{\rho_n}\).}
The projected-maximizer formula in
\Cref{cor:ac-cap-projected-maximizer}, equivalently
\eqref{eq:ac-cap-explicit}, gives
\(
 \mathfrak F_{\alpha\beta}^{\mathrm{LH}}
 =B_n^{\mathrm{cap}}
 +r_\alpha(\rho_n\kappa_\beta^{\mathrm{LH}}-\bar r_\beta).
\)
Since \(B_n^{\mathrm{cap}}\leq1\), \(\kappa_\beta^{\mathrm{LH}}\leq1\), and
\(\bar r_\beta\geq0\),
\(
 \mathfrak F_{\alpha\beta}^{\mathrm{LH}}
 \leq1+\rho_n r_\alpha.
\)
Moreover,
\(
 X_\alpha^2-(1+\rho_n r_\alpha)^2
 =(1-\rho_n^2)r_\alpha^2\geq0,
\)
and \(\bar X_\beta\geq1\).  Hence
\(
 1+\rho_n r_\alpha
 \leq X_\alpha
 \leq X_\alpha\bar X_\beta
 =\Pi_{\alpha\beta}^{\mathrm{LH}}.
\)

\smallskip
\noindent\emph{The case
\(\mathsf h_{\mathrm{opt}}
=-{\rho_n}-\mu_{\alpha\beta}B_n^{\mathrm{cap}}\).}

Since \(1-2\rho_n\mu_{\alpha\beta}>0\), the identity
\[
 1+\mu_{\alpha\beta}^2-2\rho_n\mu_{\alpha\beta}
 -({\rho_n}+\mu_{\alpha\beta}B_n^{\mathrm{cap}})^2
 = \sin^2\widehat{\zeta}_n
   (1-2{\rho_n}\mu_{\alpha\beta})^2
\]

together with the projected-maximizer formula
\eqref{eq:ac-cap-explicit} from
\Cref{cor:ac-cap-projected-maximizer} give
\(
 \mathfrak F_{\alpha\beta}^{\mathrm{LH}}
 =B_n^{\mathrm{cap}}(1+r_\alpha\bar r_\beta)
 -\mu_{\alpha\beta}(3\rho_n-4\rho_n^3).
\)
Since \(B_n^{\mathrm{cap}}\leq1\),
\(\mu_{\alpha\beta}>0\), and
\(
 3\rho_n-4\rho_n^3>0,
\)
the right-hand side is at most
\(1+r_\alpha\bar r_\beta\).  Finally,
\[
\begin{aligned}
 \Pi_{\alpha\beta}^{\mathrm{LH}}
 &\geq
 \sqrt{(1+r_\alpha^2)(1+\bar r_\beta^2)}
 \geq1+r_\alpha\bar r_\beta,
\end{aligned}
\]
where the second inequality follows after squaring from
\((r_\alpha-\bar r_\beta)^2\geq0\).

\smallskip
\noindent\emph{The case
\(\mathsf h_{\mathrm{opt}}=\mathsf h_{\mathrm{free}}\).}
The projected-maximizer formula in
\Cref{cor:ac-cap-projected-maximizer} gives
\[
\mathfrak F_{\alpha\beta}^{\mathrm{LH}}
 =
 \frac{
 \sqrt{\bar r_\beta^2+(\kappa_\beta^{\mathrm{LH}})^2
       -2\rho_n\bar r_\beta\kappa_\beta^{\mathrm{LH}}}\,
 \sqrt{1+\mu_{\alpha\beta}^2
       -2\rho_n\mu_{\alpha\beta}}
 -\rho_n\bar r_\beta}
 {\kappa_\beta^{\mathrm{LH}}}.                         \acEquationTag{eq:ac-100}
\]

The equality
\(\mathsf h_{\mathrm{opt}}=\mathsf h_{\mathrm{free}}\) implies that
\(\mathsf h_{\mathrm{free}}\in J_{1,\mu_{\alpha\beta}}\).
Its left-endpoint inequality, together with the displayed formula for
\(\mathsf h_{\mathrm{free}}\), is
\[
 \frac{
 \sqrt{1+\mu_{\alpha\beta}^2-2\rho_n\mu_{\alpha\beta}}\,
 (\bar r_\beta-\rho_n\kappa_\beta^{\mathrm{LH}})}
 {\sqrt{\bar r_\beta^2+(\kappa_\beta^{\mathrm{LH}})^2
       -2\rho_n\bar r_\beta\kappa_\beta^{\mathrm{LH}}}}
 \leq
 \rho_n+\mu_{\alpha\beta}(1-2\rho_n^2).
\]
This is the hypothesis
\eqref{eq:ac-mixed-left-endpoint} under the substitution above.
The second conclusion of
\Cref{lem:ac-far-mixed-unconstrained} therefore gives
\[
\begin{aligned}
 &\sqrt{1+\mu_{\alpha\beta}^2-2\rho_n\mu_{\alpha\beta}}\,
  \sqrt{\bar r_\beta^2+(\kappa_\beta^{\mathrm{LH}})^2
        -2\rho_n\bar r_\beta\kappa_\beta^{\mathrm{LH}}}\\
 &\qquad\leq
 \kappa_\beta^{\mathrm{LH}}\Pi_{\alpha\beta}^{\mathrm{LH}}
 +\rho_n\bar r_\beta.
\end{aligned}
\]
Dividing by \(\kappa_\beta^{\mathrm{LH}}>0\) and using
\eqref{eq:ac-100} gives
\(
 \mathfrak F_{\alpha\beta}^{\mathrm{LH}}
 \leq\Pi_{\alpha\beta}^{\mathrm{LH}}.
\)

Thus \eqref{eq:ac-far-mixed-cap-bound} holds for all three possible
values of \(\mathsf h_{\mathrm{opt}}\).  For every
\(x\in\mathcal C_{v_i}\) and \(y\in\mathcal C_{v_j}\), it follows that
\[
\begin{aligned}
 &r_\alpha\bar r_\beta(v_j\cdot x)+(v_i\cdot y)
 -r_\alpha\kappa_\beta^{\mathrm{LH}}(x\cdot y)\\
 &\qquad\leq\mathfrak F_{\alpha\beta}^{\mathrm{LH}}
 \leq\Pi_{\alpha\beta}^{\mathrm{LH}}\\
 &\qquad\leq
 \rho_n\bar r_\beta
 +\Pi_{\alpha\beta}^{\mathrm{LH}}
 (1+\rho_n+\cos\alpha\cos\beta).
\end{aligned}
\]

This is the scaled form of \eqref{eq:ac-80}.  Hence
\eqref{eq:ac-80} holds in the mixed case.  By
\Cref{lem:ac-far-common-reduction},
\(R\leq1+\rho_n+Q\), as required.
\end{proof}

\begin{proof}[Proof of \Cref{prop:ac-far-affine}]
Suppose first that \(\alpha,\beta<\widehat{\zeta}_n\).
Apply \Cref{lem:ac-far-low,lem:ac-far-high,lem:ac-far-mixed}
according as both levels lie below \(\alpha_n^\ast\), both lie above it,
or they lie on opposite sides.

If at least one level equals \(\widehat{\zeta}_n\), the case
\(Q<-\rho_n\) follows from
\Cref{cor:ac-finite-gain-closure}\textup{(ii)} with
\(g(s):=1+\rho_n+s\) and \(c:=-\rho_n\).
For \(Q=-\rho_n\), the desired estimate is \(R\leq1\), which is
automatic.
\end{proof}

\begin{corollary}[Upper boundary for distinct anchors]
\label{lem:ac-far}
Assume \(n\geq2\).  In the common distinct-anchor notation of
\Cref{sec:ac-distinct-lower-boundary}, one has
\[
 Q\leq-{\rho_n}\quad\Longrightarrow\quad
 R\leq U_n(Q)
 =-{\rho_n}Q+{\sin \zeta_n}\sqrt{1-Q^2}.                    \acEquationTag{eq:ac-76}
\]
\end{corollary}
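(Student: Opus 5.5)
The plan is to deduce the corollary from the affine estimate of \Cref{prop:ac-far-affine}, which already gives \(R\leq1+\rho_n+Q\) whenever \(Q\leq-\rho_n\). It therefore suffices to prove the scalar inequality
\[
 1+\rho_n+Q\leq U_n(Q)=-\rho_nQ+\sin\zeta_n\sqrt{1-Q^2}
 \qquad(-1\leq Q\leq-\rho_n).
\]
This is the statement, visible in panel~(b) of \Cref{fig:qr-distortion-region}, that the straight line lies below the curved upper boundary on this range.

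I will write \(D(Q):=U_n(Q)-(1+\rho_n+Q)=-(1+\rho_n)(1+Q)+\sin\zeta_n\sqrt{1-Q^2}\). The key observation is that \(Q\mapsto\sqrt{1-Q^2}\) is concave on \([-1,1]\), and the remaining terms are affine. Hence \(D\) is concave, and on the interval \([-1,-\rho_n]\) it is bounded below by its minimum over the two endpoints.

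It then remains to check the two endpoints.

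At \(Q=-1\): \(D(-1)=0\).

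At \(Q=-\rho_n\): using \(\sin\zeta_n=\sqrt{1-\rho_n^2}\),
\[
 D(-\rho_n)=-(1+\rho_n)(1-\rho_n)+(1-\rho_n^2)=0.
\]

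Both endpoint values vanish, so concavity gives \(D\geq0\) throughout \([-1,-\rho_n]\). Combining this with \Cref{prop:ac-far-affine} yields \(R\leq1+\rho_n+Q\leq U_n(Q)\), which is \eqref{eq:ac-76}.

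I do not expect a genuine obstacle here: the substantive work sits entirely in \Cref{prop:ac-far-affine}. The only point needing care is confirming that both endpoint evaluations use \(\cos\zeta_n=-\rho_n\) and \(\sin\zeta_n=\sin\widehat{\zeta}_n=\sqrt{1-\rho_n^2}\) correctly. Geometrically, the line \(R=1+\rho_n+Q\) is the chord of the ellipse arc \(R=U_n(Q)\) joining \((-1,\rho_n)\) to \((-\rho_n,1)\). The points \(Q=-1\), \(R=\rho_n\) and \(Q=-\rho_n\), \(R=1\) both lie on that arc, which is why a concavity argument closes the proof.
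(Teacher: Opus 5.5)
Your proposal is correct and matches the paper's proof: both reduce the corollary to the affine estimate of \Cref{prop:ac-far-affine}, then use concavity of \(U_n\) on \([-1,-\rho_n]\) together with the endpoint values \(U_n(-1)=\rho_n\) and \(U_n(-\rho_n)=1\) to conclude \(U_n(Q)\geq 1+\rho_n+Q\). Your endpoint computations of \(D\) are correct.
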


\begin{proof}
On the interval \(s\in[-1,-{\rho_n}]\), the upper boundary function
\(
 U_n(s)=-{\rho_n}s+{\sin \widehat{\zeta}_n}\sqrt{1-s^2}
\)
is concave; here \(\sin\widehat{\zeta}_n=\sin\zeta_n\).  Its endpoint
values \(U_n(-1)={\rho_n}\) and \(U_n(-{\rho_n})=1\) therefore give
\(
 U_n(s)\geq1+{\rho_n}+s
 \qquad(-1\leq s\leq-{\rho_n}).
\)
The conclusion now follows immediately from
\Cref{prop:ac-far-affine}.
\end{proof}

Together, the same-anchor estimate \Cref{lem:ac-same-anchor}, the
lower-boundary result \Cref{lem:ac-near}, and the upper-boundary result
\Cref{lem:ac-far} control every active-band comparison for \(n\geq2\);
the middle \(Q\)-range was handled at the beginning of
\Cref{sec:anchored-chord-estimates}.
\Cref{sec:anchored-chord-assembly} treats the identity band, the
antipodal extension, and the case \(n=1\).

\acAssemblyHeading

Three tasks remain.  We first extend the anchored--chord estimates to
comparisons involving the identity band and the antipodal extension.
We then treat the case \(n=1\), using the lower-boundary
estimate of \Cref{lem:ac-near}; the spherical-cap and upper-boundary
estimates of \Cref{sec:anchored-chord-upper-boundary} are not needed.
Finally, we assemble all cases and use the north-pole fiber, together
with the known lower bound, to prove sharpness
for the remaining cases \(n\geq2\).

\subsection{Identity band and antipodal extension}
\label{sec:assembly-identity-antipodal}

\begin{lemma}[Identity-band estimate]
\label{lem:ac-identity-band}

Every comparison between two pairs of \(\mathcal R_n^+\) in which at
least one pair belongs to the identity band in \eqref{eq:ac-8} has
defect at most \(\zeta_n\).

\end{lemma}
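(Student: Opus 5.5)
The plan is to show that every pair in $\mathcal R_n^+$ has its target within distance $\widehat{\zeta}_n$ of its equatorial foot $x$, while the source $p_\alpha(x)$ is close to $x$ in the identity band. The triangle inequality then bounds the defect. Throughout, write the identity-band pair as $(p_\beta(y),y)$ with $\widehat{\zeta}_n\le\beta\le\tfrac{\pi}{2}$. Write the other pair as $(p_\alpha(x),z)$, where $z=x$ if it also lies in the band, and $z=H_{\alpha,i}(x)$ with $x\in M_i$, $0\le\alpha\le\widehat{\zeta}_n$, if it lies in the anchored--chord part.

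\emph{Key tool.} Consider the map $\sigma\colon p_\gamma(w)\mapsto w$ on the open upper hemisphere. I will use the comparison of \Cref{lem:ac-spherical-distance-comparison}, viewing source distances as $\mathsf d_{\alpha,\beta}(\delta)$ with $\delta=d_n(x,y)$. Since $\mathsf d_{\alpha,\beta}$ is nondecreasing and $1$-Lipschitz, with $\mathsf d_{\alpha,\beta}(0)=|\alpha-\beta|$, we get
\[
 |\alpha-\beta|\le d_{n+1}(p_\alpha(x),p_\beta(y))\le |\alpha-\beta|+\delta .
\]
A second bound is needed: $d_{n+1}(p_\alpha(x),p_\beta(y))\ge \delta-(\pi-\alpha-\beta)$. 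This holds because $\mathsf d_{\alpha,\beta}(\pi)=\alpha+\beta$ and the function is $1$-Lipschitz.

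\emph{Case 1: both pairs in the band.} Here the target distance is $\delta$. The source distance $d_s$ satisfies $\delta-(\pi-\alpha-\beta)\le d_s\le \delta+|\alpha-\beta|$, and both error terms are at most $\pi-2\widehat{\zeta}_n$. This quantity is at most $\zeta_n$, since $\widehat{\zeta}_n\ge0$ gives $\pi-2\widehat{\zeta}_n\le\pi-\widehat{\zeta}_n=\zeta_n$.

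\emph{Case 2: one pair in the anchored--chord part.} By \Cref{prop:ac-maximal-displacement}, $d_n(H_{\alpha,i}(x),x)\le\widehat{\zeta}_n-\vartheta_n(\alpha)$, so the target distance $d_t$ satisfies $|d_t-\delta|\le \widehat{\zeta}_n-\vartheta_n(\alpha)$. Combining this with the source estimates gives
\[
 d_s-d_t\le (\beta-\alpha)+\widehat{\zeta}_n-\vartheta_n(\alpha),
 \qquad
 d_t-d_s\le (\pi-\alpha-\beta)+\widehat{\zeta}_n-\vartheta_n(\alpha).
\]
The second bound is at most $\pi-\widehat{\zeta}_n-\alpha-\vartheta_n(\alpha)+\widehat{\zeta}_n\le\pi-\widehat{\zeta}_n=\zeta_n$, using $\beta\ge\widehat{\zeta}_n$ and $\alpha,\vartheta_n\ge0$. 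The first bound is at most $\tfrac{\pi}{2}-\alpha+\widehat{\zeta}_n-\vartheta_n(\alpha)$, which is too weak at $\alpha=0$ since it gives $\tfrac{\pi}{2}+\widehat{\zeta}_n>\zeta_n$. This direction is therefore the main obstacle.

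For it, I would go back to the target side. The target distance is $d_n(z,y)\ge d_n(v_i,y)-\vartheta_n(\alpha)$, and the source distance is at most $d_{n+1}(p_\alpha(x),N)+d_{n+1}(N,p_\beta(y))=\alpha+\beta$. This gives $d_s-d_t\le \alpha+\beta$ only in the wrong form, so I plan instead to use the embedding of \Cref{lem:ac-common-anchor-pointwise}. Its estimate gives $d_{n+1}(p_\alpha(x),\iota_{v_i}z)\le\widehat{\zeta}_n/2$, and it needs extending to a bound $d_{n+1}(p_\beta(y),\iota_{v_i}y)\le\widehat{\zeta}_n/2$ that is not restricted to $y\in M_i$. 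The needed check is that for an arbitrary equatorial $y$ and $\beta\ge\widehat{\zeta}_n$, the inner product $p_\beta(y)\cdot\iota_{v_i}y=\sin\beta+(y\cdot v_i)^2 s_n(1-\ldots)$ stays large enough. If the bound fails for general $y$, I would fall back on the crude bound $d_s-d_t\le \zeta_n$ derived from $d_s\le \pi-\widehat{\zeta}_n+\ldots$ restricted to $Q\ge\rho_n$, with the $(Q,R)$ estimates of \Cref{lem:ac-near} handling $Q$ near $1$. The case $n=1$ would be checked directly.
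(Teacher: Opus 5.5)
Your Case~1 is correct, but Case~2 is not proved in either direction.

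\textbf{The direction $d_t-d_s$.} There is an arithmetic slip. After using $\beta\ge\widehat{\zeta}_n$, your bound is $\pi-\alpha-\vartheta_n(\alpha)$. This is at most $\zeta_n=\pi-\widehat{\zeta}_n$ only when $\alpha+\vartheta_n(\alpha)\ge\widehat{\zeta}_n$; at $\alpha=0$ it equals $\pi$. The loss is structural, not just algebraic. Take $\beta=\widehat{\zeta}_n$, $\delta=\pi-2\alpha$ and $0<\alpha<\widehat{\zeta}_n/3$. Even after adding $d_t\le\pi$ and $d_s\ge\beta-\alpha$, your estimates yield only $\pi-\beta+\alpha=\zeta_n+\alpha$, which exceeds $\zeta_n$.

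\textbf{The direction $d_s-d_t$.} You correctly flag this as the obstacle, but the proposed extension $d_{n+1}(p_\beta(y),\iota_{v_i}y)\le\widehat{\zeta}_n/2$ to arbitrary equatorial $y$ is false. For $y=-v_i$ and $\beta=\widehat{\zeta}_n$, both points lie on the meridian through $-v_i$, at colatitudes $\widehat{\zeta}_n$ and $\pi-\widehat{\zeta}_n/2$. So they are $\pi-\tfrac{3}{2}\widehat{\zeta}_n>\widehat{\zeta}_n/2$ apart. The fallback does not apply either: \Cref{lem:ac-near} concerns two anchored--chord pairs with distinct anchors, not comparisons with the identity band. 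Also, the common-anchor estimate is proved only for $n\ge2$, whereas this lemma is needed for $n=1$ as well, in \Cref{prop:ac-n-one}.

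\textbf{The missing idea.} Compare each source point with \emph{its own} target inside $\Sp^{n+1}$, where the target sphere $\Sp(W_{n+2})$ sits isometrically as the equator. By the triangle inequality, the defect of two pairs $(q,z),(q',z')$ is at most $d_{n+1}(q,z)+d_{n+1}(q',z')$.
\begin{itemize}
\item For an anchored--chord pair: since $e_0\perp W_{n+2}$, \eqref{eq:ac-11} gives $p_\alpha(x)\cdot H_{\alpha,i}(x)=\sin\alpha\,\bigl(x\cdot H_{\alpha,i}(x)\bigr)\ge0$. Hence $d_{n+1}(p_\alpha(x),H_{\alpha,i}(x))\le\pi/2$.
\item For a band pair: $d_{n+1}(p_\beta(y),y)=\pi/2-\beta\le\pi/2-\widehat{\zeta}_n$.
\end{itemize}
So in your Case~2 the defect is at most $\pi-\beta\le\zeta_n$, in both directions at once and for every $n\ge1$. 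In Case~1 it is at most $\pi-\beta_1-\beta_2\le\pi-2\widehat{\zeta}_n\le\zeta_n$. This is the paper's proof.

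The crude bound $\pi/2$ for the chord pair is enough because the band pair costs at most $\pi/2-\widehat{\zeta}_n$. Routing both comparisons through the feet $x,y$ and the Lipschitz function $\mathsf d_{\alpha,\beta}$ throws away exactly this slack.
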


\begin{proof}

For an anchored--chord pair
\((p_\alpha(x),H_{\alpha,i}(x))\), the equatorial identification in
\Cref{sec:anchored-chord-definition} and \eqref{eq:ac-11} give
\[
 p_\alpha(x)\cdot H_{\alpha,i}(x)
 =\sin\alpha\,(x\cdot H_{\alpha,i}(x))\geq0,
\]
so \(d_{n+1}(p_\alpha(x),H_{\alpha,i}(x))\leq\tfrac{\pi}{2}\).
For an identity-band pair \((p_\beta(y),y)\),
\(\beta\geq\widehat{\zeta}_n\), and
\(d_{n+1}(p_\beta(y),y)=\tfrac{\pi}{2}-\beta\).
Applying the triangle inequality twice in \(\Sp^{n+1}\) therefore
bounds the defect of these two pairs by
\[
 d_{n+1}(p_\alpha(x),H_{\alpha,i}(x))
 +d_{n+1}(p_\beta(y),y)
 \leq\pi-\beta\leq\zeta_n.
\]
For two identity-band pairs at radial levels
\(\beta_1,\beta_2\), the same inequality gives defect at most
\(\pi-\beta_1-\beta_2\leq2\zeta_n-\pi\leq\zeta_n\).
\end{proof}

\subsection{The case \texorpdfstring{\(n=1\)}{n=1}}
\label{sec:assembly-n-one}

\begin{proposition}[Optimality for \(n=1\)]
\label{prop:ac-n-one}
The relation of \Cref{def:anchored-chord-relation}, specialized to
\(n=1\), is a correspondence
\[
 \mathcal R_1\subseteq\Sp^2\times\Sp^1
\]
with
\[
 \dis(\mathcal R_1)=\zeta_1=\frac{2\pi}{3}.
\]
In particular, \(\mathcal R_1\) is an optimal correspondence between
\(\Sp^2\) and \(\Sp^1\).
\end{proposition}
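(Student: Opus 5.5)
The plan is to reuse the parts of the general argument that are valid for every \(n\geq1\), and to replace the two ingredients that were proved only for \(n\geq2\) by direct arguments on the circle. Those two ingredients are the common-anchor embedding of \Cref{lem:ac-common-anchor-pointwise} and the cap reduction of \Cref{sec:anchored-chord-upper-boundary}. The correspondence property and the value of the north-pole fiber come from \Cref{prop:ac-basic}, whose proof places no restriction on \(n\). The fiber \(\mathcal R_1[N]=\{v_1,v_2,v_3\}\) has diameter \(\zeta_1=\tfrac{2\pi}{3}\), so \(\dis(\mathcal R_1)\geq\zeta_1\). Optimality then follows from \Cref{prop:adjacent-lower-bound} once the reverse inequality \(\dis(\mathcal R_1)\leq\zeta_1\) is proved. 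By \Cref{lem:relation-helmet-trick} it suffices to bound the defects inside \(\mathcal R_1^+\). \Cref{lem:ac-identity-band} already covers every comparison that involves the identity band, so only comparisons between two anchored--chord pairs remain.

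For two anchored--chord pairs with source inner product \(Q\) and target inner product \(R\), I will split into cases as in \Cref{sec:proof-strategy}. The middle range \(-\tfrac12\leq Q\leq\tfrac12\) needs no work. For distinct anchors with \(Q\geq\rho_1=\tfrac12\), the bound \(R\geq L_1(Q)\) is exactly \Cref{lem:ac-near}, which was proved for all \(n\geq1\). What remains is the inequality \(\arccos Q-\arccos R\leq\tfrac{2\pi}{3}\), needed in two places. The first is the same-anchor case, in every \(Q\)-range. The second is the range \(Q\leq-\tfrac12\) with distinct anchors.

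For the same-anchor case I will argue on the circle directly. Here \(M_i\) is an arc of length \(\tfrac{2\pi}{3}\) centred at \(v_i\), and \(H_{\alpha,i}(x)\) lies on the subarc from \(v_i\) to \(x\), hence in \(M_i\). This gives \(\arccos R\leq\tfrac{2\pi}{3}\) at once, so the target-minus-source defect is at most \(\zeta_1\). For the opposite defect I will bound the source distance from above. The tools are \Cref{lem:ac-spherical-distance-comparison}, which gives \(d_2(p_\alpha(x),p_\beta(y))\leq|\alpha-\beta|+d_1(x,y)\), and the displacement bound \(\vartheta_1\leq\alpha\) of \Cref{prop:ac-maximal-displacement}. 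Together they should give
\[
 d_1\bigl(H_{\alpha,i}(x),H_{\beta,i}(y)\bigr)\geq d_1(x,y)-\bigl(\widehat{\zeta}_1-\vartheta_1(\alpha)\bigr)-\bigl(\widehat{\zeta}_1-\vartheta_1(\beta)\bigr),
\]
together with the trivial bound \(\arccos Q\leq\alpha+\beta\). Since \(\widehat{\zeta}_1=\tfrac{\pi}{3}\) is small, I expect these estimates to close the case.

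The main obstacle is the distinct-anchor range \(Q\leq-\tfrac12\), where \(R\leq U_1(Q)\) must be shown without the cap machinery. I will first try to copy the triangle argument of \Cref{lem:ac-near-high}. Each target lies within \(\widehat{\zeta}_1-\vartheta_1\) of its equatorial point, and the source distance is at most \(\mathsf d_{\alpha,\beta}(\delta)\) with \(\delta=d_1(x,y)\). This reduces the claim to the inequality \(\mathsf d_{\alpha,\beta}(\delta)-\delta\leq\tfrac{2\pi}{3}-2\widehat{\zeta}_1+\vartheta_1(\alpha)+\vartheta_1(\beta)\). If that fails near the antipodal configurations, I will parametrize everything by angles on \(\Sp^1\), so that \(x,y\) and both targets become explicit angles. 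I will then check the resulting two- or three-variable inequality at the extremal configurations \(x=-v_j\), \(y=-v_i\), using \Cref{lem:ac-crossed-schedule} and the involution \(F_1\) to control the mixed-level cases.
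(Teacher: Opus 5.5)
Most of your outline coincides with the paper's proof. You obtain the correspondence and the north-pole fiber from \Cref{prop:ac-basic}, and reduce to \(\mathcal R_1^+\) via \Cref{lem:relation-helmet-trick} and \Cref{lem:ac-identity-band}. You note that \(H_{\alpha,i}(x)\) stays in the arc \(M_i\) of diameter \(\zeta_1\) for same-anchor pairs, and you use \Cref{lem:ac-near} for distinct anchors with \(Q\geq\tfrac12\). The gap is the step you call the main obstacle. The distinct-anchor range \(Q\leq-\tfrac12\) is left to a tentative plan (``I will first try\dots\ If that fails\dots'') that is never carried out. As written, the bound \(\dis(\mathcal R_1)\leq\zeta_1\) is therefore not established.

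The missing idea is one you already wrote down and then underused. Every anchored--chord pair has radial level \(\alpha\leq\widehat{\zeta}_1=\tfrac{\pi}{3}\). The broken geodesic through \(N\) therefore gives \(d_2(p_\alpha(x),p_\beta(y))\leq\alpha+\beta\leq2\widehat{\zeta}_1=\tfrac{2\pi}{3}=\zeta_1\) for \emph{any} two anchored--chord pairs, whether their anchors agree or not. The coincidence \(2\widehat{\zeta}_1=\zeta_1\) is special to \(n=1\); for \(n\geq2\) one has \(2\widehat{\zeta}_n>\zeta_n\), which is why the cap machinery is needed there.

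This bound gives \(\arccos Q-\arccos R\leq\zeta_1\) in every case. It also shows \(Q\geq-\tfrac12\) always, so the range \(Q\leq-\rho_1\) collapses to \(Q=-\tfrac12\), where \(U_1(-\tfrac12)=1\) and there is nothing to prove. The remaining direction, target minus source, is then handled exactly as you describe:
\begin{itemize}
\item by the cell diameter for the same anchor;
\item by \(\pi-\widehat{\zeta}_1=\zeta_1\) for distinct anchors when the source distance is at least \(\widehat{\zeta}_1\);
\item by \Cref{lem:ac-near} otherwise.
\end{itemize}
This is precisely the paper's argument. The triangle-inequality lower bound in your same-anchor paragraph and the whole angle-parametrization plan become unnecessary.
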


\begin{proof}
Here
\[
 \rho_1=\frac12,\qquad \widehat{\zeta}_1=\frac\pi3,
 \qquad \zeta_1=\frac{2\pi}{3}=2\widehat{\zeta}_1.
\]
By \Cref{prop:ac-basic}, \(\mathcal R_1\) is a correspondence.  The three
anchors form a regular triangle in the circle \(\Sp(W_3)\).  Each closed
spherical Voronoi cell \(M_i\) is the circular arc of radius
\(\widehat{\zeta}_1\) centered at \(v_i\), and hence has diameter
\(2\widehat{\zeta}_1=\zeta_1\).
For \(x\in M_i\), the normalized positive combination
\(H_{\alpha,i}(x)\) lies on the shorter circular arc from \(v_i\) to
\(x\); consequently,
\[
 H_{\alpha,i}(x)\in M_i.
\]

Consider two pairs in the anchored--chord part of \(\mathcal R_1^+\), and
denote their source and target distances by \(d_{\mathrm{src}}\) and
\(d_{\mathrm{tar}}\), respectively.
If their radial
parameters are \(\alpha,\beta\leq\widehat{\zeta}_1\), then the path through the north
pole gives
\[
 d_{\mathrm{src}}\leq\alpha+\beta
 \leq2\widehat{\zeta}_1=\zeta_1.
\]
If the pairs use the same anchor, the preceding inclusion places both targets
in the same cell, so \(d_{\mathrm{tar}}\leq\zeta_1\); thus
\(|d_{\mathrm{src}}-d_{\mathrm{tar}}|\leq\zeta_1\).  Suppose that their
anchors are distinct.  When
\(d_{\mathrm{src}}\geq\widehat{\zeta}_1\), the bounds
\(d_{\mathrm{src}}\leq\zeta_1\) and \(d_{\mathrm{tar}}\leq\pi\) give
\[
 d_{\mathrm{src}}-d_{\mathrm{tar}}\leq\zeta_1,
 \qquad
 d_{\mathrm{tar}}-d_{\mathrm{src}}
 \leq\pi-\widehat{\zeta}_1=\zeta_1.
\]
When \(d_{\mathrm{src}}<\widehat{\zeta}_1\), put
\(Q:=\cos d_{\mathrm{src}}>\rho_1\).
\Cref{lem:ac-near} gives
\[
 R\geq-\rho_1\cos d_{\mathrm{src}}
   -\sin\widehat{\zeta}_1\sin d_{\mathrm{src}}
   =\cos(\zeta_1+d_{\mathrm{src}}),
\]
where \(R\) is the target inner product.  Since
\(0<\zeta_1+d_{\mathrm{src}}<\pi\), it follows that
\(d_{\mathrm{tar}}\leq\zeta_1+d_{\mathrm{src}}\); the reverse
difference is already controlled by \(d_{\mathrm{src}}\leq\zeta_1\).
Hence all
comparisons between two anchored--chord pairs have defect at most
\(\zeta_1\).

\Cref{lem:ac-identity-band} now gives
\(\dis(\mathcal R_1^+)\leq\zeta_1\).  By \eqref{eq:ac-9} and
\Cref{lem:relation-helmet-trick},
\(\dis(\mathcal R_1)=\dis(\mathcal R_1^+)\leq\zeta_1\).
At the north pole, one source point is related to all three anchors,
two of which are separated by \(\tfrac{2\pi}{3}\); hence
\(\dis(\mathcal R_1)=\zeta_1\).
Equation \eqref{eq:gh-correspondence-formula} gives
\(2\dgh(\Sp^1,\Sp^2)\leq\dis(\mathcal R_1)=\zeta_1\), while
\Cref{prop:adjacent-lower-bound} gives the reverse inequality.  Thus
\(\mathcal R_1\) is optimal.

\end{proof}

\subsection{Proof of
\texorpdfstring{\Cref{thm:main}}{the consecutive-sphere theorem}}
\label{sec:assembly-main-theorem}
The same-anchor estimate, the lower and upper \((Q,R)\)-boundary
estimates for distinct anchors, the identity-band and antipodal-extension
argument, and the separate treatment of \(n=1\) now cover all cases
required for the proof.

\begin{proof}[Proof of \Cref{thm:main}]
The case \(n=1\) is \Cref{prop:ac-n-one}.  Hence assume \(n\geq2\).
By \Cref{prop:ac-basic}, \(\mathcal R_n\) is a correspondence.

By \eqref{eq:ac-9} and \Cref{lem:relation-helmet-trick},
\(\dis(\mathcal R_n)=\dis(\mathcal R_n^+)\).
\Cref{lem:ac-identity-band} handles every comparison in
\(\mathcal R_n^+\) involving the identity band, so only two
anchored--chord pairs remain.

Take two anchored--chord pairs,
\((p_\alpha(x),H_{\alpha,i}(x))\) and
\((p_\beta(y),H_{\beta,j}(y))\), and define the corresponding source and
target inner products by
\[
 Q:=p_\alpha(x)\cdot p_\beta(y),\qquad
 R:=H_{\alpha,i}(x)\cdot H_{\beta,j}(y).
\]
The four possible configurations are as follows.

\begin{itemize}[leftmargin=*,itemsep=0.6em]
\item \emph{Same anchor.}
If they use the same anchor,
\Cref{lem:ac-same-anchor} gives defect at most \(\widehat{\zeta}_n<\zeta_n\).
\item \emph{Distinct anchors with \(Q\geq\rho_n\).}
If their anchors are distinct and \(Q\geq {\rho_n}\), \Cref{lem:ac-near} is equivalent
to
\[
 d_n(H_{\alpha,i}(x),H_{\beta,j}(y))
 \leq d_{n+1}(p_\alpha(x),p_\beta(y))+\zeta_n.
\]
The reverse inequality follows because the source distance is at most
\(\widehat{\zeta}_n<\zeta_n\), whereas the target distance is
nonnegative.
\item \emph{Distinct anchors with \(Q\leq-\rho_n\).}
If \(Q\leq-{\rho_n}\), \Cref{lem:ac-far} is equivalent to the reverse
nontrivial inequality
\[
 d_n(H_{\alpha,i}(x),H_{\beta,j}(y))
 \geq d_{n+1}(p_\alpha(x),p_\beta(y))-\zeta_n.
\]
For the remaining inequality, the source distance is at least
\(\zeta_n\), while the target distance is at most \(\pi\), so its excess
over the source is at most \(\pi-\zeta_n=\widehat{\zeta}_n<\zeta_n\).

\item \emph{Distinct anchors with \(-\rho_n<Q<\rho_n\).}
Finally, if \(-{\rho_n}<Q<{\rho_n}\), then the source distance lies strictly between
\(\widehat{\zeta}_n\) and \(\zeta_n\); since every target distance lies in \([0,\pi]\),
both distortion inequalities follow directly.
\end{itemize}
Hence \(\dis(\mathcal R_n^+)\leq\zeta_n\), and therefore
\(\dis(\mathcal R_n)\leq\zeta_n\).

At the north pole one source point is related to all \(n+2\) simplex
vertices, and two distinct vertices are at distance \(\zeta_n\).  Therefore
\(\dis(\mathcal R_n)\geq\zeta_n\), proving \acSharpDistortionEquation.
By \eqref{eq:gh-correspondence-formula},
\(\dis(\mathcal R_n)=\zeta_n\) gives
\(2\dgh(\Sp^n,\Sp^{n+1})\leq\zeta_n\), whereas
\Cref{prop:adjacent-lower-bound} gives
\(2\dgh(\Sp^n,\Sp^{n+1})\geq\zeta_n\).  Hence
\acMainResultEquation{} holds.

\end{proof}

\paragraph{Acknowledgments.} D.~Kim gratefully acknowledges support from the National Research Foundation of Korea (NRF) through grants funded by the Korean government (MSIT, RS-2025-00515946; MOE, RS-2025-25397599). S.~Lim gratefully acknowledges support from the National Research Foundation of Korea (NRF) through grants funded by the Korean government (MSIT, RS-2025-23324186). F.~M\'emoli gratefully acknowledges support from the National Science Foundation through grants CCF-2523653 and DMS-2524362. During the development and preparation of this work, the authors used OpenAI’s ChatGPT as an assistive tool for exploratory calculations, identifying possible gaps, ambiguities, and imprecisions in proofs, and improving the clarity and style of the text. The authors take full responsibility for the contents of the manuscript.

\clearpage
\appendix
\numberwithin{equation}{section}
\section{Estimates for pairs with a common anchor}
\label{app:ac-common-anchor-estimates}

This section contains the scalar estimates used in the proof of the
common-anchor comparison in \Cref{lem:ac-common-anchor-pointwise}.

\subsection{Auxiliary half-angle inequalities}
\label{app:ac-half-angle-inequalities}

\begin{lemma}[Auxiliary half-angle inequalities]
\label{lem:ac-half-angle-inequalities}
For every \(n\geq2\), one has
\[
 s_n(1+\sqrt {\rho_n})\geq c_n,\qquad
 4s_n(1-s_n)\geq\frac1{c_n(1+c_n)},\qquad
 \sin \widehat{\zeta}_n\geq c_n,
\]
and
\[
 \rho_n\sqrt {\rho_n}
 +(1-\rho_n)\sin \widehat{\zeta}_n\geq c_n.
\]
\end{lemma}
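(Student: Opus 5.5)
The plan is to reduce all four inequalities to one-variable statements in $\rho:=\rho_n\in(0,\tfrac13]$, using the explicit formulas
\[
 c_n=\sqrt{\tfrac{1+\rho}{2}},\qquad s_n=\sqrt{\tfrac{1-\rho}{2}},\qquad \sin\widehat{\zeta}_n=\sqrt{1-\rho^2}.
\]
Each inequality then becomes a comparison of elementary algebraic functions of $\rho$ on $(0,1/3]$. In every case I would establish it by a monotonicity argument together with an endpoint check at $\rho=1/3$, or at $\rho\downarrow0$ when that is the binding end.

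\emph{Third inequality.} The inequality $\sin\widehat{\zeta}_n\ge c_n$ is equivalent to $1-\rho^2\ge(1+\rho)/2$, that is, to $(1+\rho)(1-2\rho)\ge0$. This holds because $\rho\le\tfrac13$.

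\emph{First inequality.} Squaring $s_n(1+\sqrt\rho)\ge c_n$ gives $(1-\rho)(1+\sqrt\rho)^2\ge1+\rho$. With $t=\sqrt\rho\in(0,1/\sqrt3]$ this reads $(1-t^2)(1+t)^2-(1+t^2)\ge0$. After expansion the left side is $2t-2t^3-t^4$, which equals $t(2-2t^2-t^3)$. The factor $2-2t^2-t^3$ is positive for $t\le1/\sqrt3$.

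\emph{Fourth inequality.} Set $t=\sqrt\rho$. The inequality is
\[
 t^3+(1-t^2)\sqrt{1-t^4}\ge\sqrt{(1+t^2)/2}.
\]
As $t\to0$ the left side tends to $1$ and the right side tends to $1/\sqrt2$, so there is slack there. At $t^2=1/3$ the left side is $\approx0.192+0.628=0.820$ and the right side is $\approx0.816$, so the margin is tight. I would first bound the left side from below using $\sqrt{1-t^4}\ge 1-t^4$ (valid since $1-t^4\in[0,1]$) and the right side from above by its tangent line in $\rho$. This gives a polynomial inequality in $t$, which I would check directly. If the slack at $\rho=1/3$ is too small for these crude bounds, I would instead square carefully: move the $t^3$ term across, observe that both sides are positive, and verify the resulting polynomial on $[0,1/\sqrt3]$ via an explicit factorization or a Sturm-type sign check.

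\emph{Second inequality.} This is the main obstacle: $4s_n(1-s_n)\ge 1/(c_n(1+c_n))$. On $(0,1/3]$, $s_n$ ranges over $[1/\sqrt3,1/\sqrt2)$. Since $s\mapsto s(1-s)$ is decreasing for $s>1/2$, the left side increases as $\rho$ increases. The right side decreases as $\rho$ increases, because $c_n$ increases. Hence the worst case is $\rho\downarrow0$. There the left side tends to $2\sqrt2(1-1/\sqrt2)=2\sqrt2-2\approx0.828$, and the right side tends to $1/\bigl(\tfrac1{\sqrt2}(1+\tfrac1{\sqrt2})\bigr)=2/(\sqrt2+1)=2\sqrt2-2$. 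So the inequality holds with equality in the limit and strictly for every $\rho_n>0$. Because the two sides are monotone in opposite directions, no calculus beyond this is needed. I expect the delicate points to be verifying these monotonicity claims precisely, and confirming that the fourth inequality's margin at $\rho=1/3$ survives the chosen lower bounds.
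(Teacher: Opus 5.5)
Your arguments for the second and third inequalities match the paper's. For the second, the two sides are monotone in opposite directions with common limit \(2\sqrt2-2\) as \(\rho\downarrow0\); the third reduces to \((1+\rho)(1-2\rho)\geq0\). Your squaring argument for the first is a valid alternative to the paper's use of the identity \(c_n-s_n=\rho_n/(c_n+s_n)\). It does contain an expansion slip: \((1-t^2)(1+t)^2-(1+t^2)=2t-t^2-2t^3-t^4=t(2-t-2t^2-t^3)\), not \(2t-2t^3-t^4\). The corrected cubic factor is decreasing and is still about \(0.56\) at \(t=1/\sqrt3\), so the conclusion stands.

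The genuine gap is the fourth inequality, which you leave unproved.

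\emph{Your primary route fails.} With \(\sqrt{1-t^4}\geq1-t^4\), the left side at \(\rho=1/3\) is bounded below only by \(3^{-3/2}+16/27\approx0.785\). This is already smaller than the exact right side \(\sqrt{2/3}\approx0.816\), so no upper bound on \(\sqrt{(1+\rho)/2}\) can rescue it; the true margin is about \(0.0045\).

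\emph{Your fallback is not carried out, and it is misdescribed.} After moving \(t^3\) across and squaring, the cross term \(2t^3\sqrt{(1+t^2)/2}\) remains, so no polynomial appears yet. You would need a second isolation and squaring, with its own sign condition, before any polynomial check. That check is never performed.

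\emph{The missing idea} is the one you announced in your opening paragraph but did not apply here: monotonicity of the difference. Put \(f(\rho):=\rho\sqrt\rho+(1-\rho)\sqrt{1-\rho^2}-\sqrt{(1+\rho)/2}\). Then
\[
 f'(\rho)=\tfrac32\sqrt\rho-\sqrt{1-\rho^2}-\frac{\rho(1-\rho)}{\sqrt{1-\rho^2}}-\frac{1}{4\sqrt{(1+\rho)/2}} .
\]
The only positive term satisfies \(\tfrac32\sqrt\rho\leq\tfrac{\sqrt3}{2}<\tfrac{2\sqrt2}{3}\leq\sqrt{1-\rho^2}\) on \((0,1/3]\), so \(f\) is decreasing. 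It therefore suffices to check \(\rho=1/3\). There \(9\sqrt3\,f(1/3)=3+4\sqrt6-9\sqrt2\), which is positive because \((3+4\sqrt6)^2-162=24\sqrt6-57>0\). This is exactly how the paper proves it.
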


\begin{proof}[Proof of \Cref{lem:ac-half-angle-inequalities}]
Recall that
\[
 0<\rho_n\leq\frac13,\qquad
 \frac1{\sqrt3}\leq s_n<\frac1{\sqrt2}
 <c_n\leq\sqrt{\frac23}.
\]
First,
\[
 s_n(c_n+s_n)\geq2s_n^2=1-\rho_n\geq\frac23
 >\frac1{\sqrt3}\geq\sqrt{\rho_n}.
\]

Multiplying the preceding inequality
\(s_n(c_n+s_n)\geq\sqrt{\rho_n}\) by the positive quantity
\(\sqrt{\rho_n}/(c_n+s_n)\) gives
\[
 s_n\sqrt{\rho_n}
 \geq\frac{\rho_n}{c_n+s_n}=c_n-s_n.
\]
This is equivalent to
\(s_n(1+\sqrt{\rho_n})\geq c_n\), proving the first inequality.

The function \(\tau\mapsto4\tau(1-\tau)\) is decreasing on
\([1/\sqrt3,1/\sqrt2]\), while
\(\tau\mapsto1/[\tau(1+\tau)]\) is decreasing on
\([1/\sqrt2,\sqrt{2/3}]\).  Since
\(s_n\in[1/\sqrt3,1/\sqrt2)\) and
\(c_n\in(1/\sqrt2,\sqrt{2/3}]\), respectively, these two
monotonicity statements give
\[
 4s_n(1-s_n)\geq2(\sqrt2-1)
 \geq\frac1{c_n(1+c_n)}.
\]
Furthermore,
\[
 \sin^2\widehat{\zeta}_n-c_n^2
 =1-\rho_n^2-\frac{1+\rho_n}{2}
 =\frac{(1+\rho_n)(1-2\rho_n)}2\geq0,
\]
which proves the third inequality.

For the last inequality, define
\[
 f(\rho):=\rho\sqrt\rho+(1-\rho)\sqrt{1-\rho^2}
 -\sqrt{\frac{1+\rho}{2}},
 \qquad 0<\rho\leq\frac13.
\]
Then
\[
 f'(\rho)=\frac32\sqrt\rho-\sqrt{1-\rho^2}
 -\frac{\rho(1-\rho)}{\sqrt{1-\rho^2}}
 -\frac1{4\sqrt{(1+\rho)/2}}<0.
\]
Indeed,
\[
 \frac32\sqrt\rho\leq\frac{\sqrt3}{2}
 <\frac{2\sqrt2}{3}\leq\sqrt{1-\rho^2},
\]
and the remaining two terms in \(f'(\rho)\) are negative.  Moreover,
\(
 9\sqrt3\,f(1/3)=3+4\sqrt6-9\sqrt2>0.
\)
For the final strict inequality, both sides of
\(3+4\sqrt6>9\sqrt2\) are positive, and
 \(
 (3+4\sqrt6)^2-(9\sqrt2)^2=24\sqrt6-57>0,
 \)
because
\(
 (24\sqrt6)^2=3456>3249=57^2.
\)
Since \(f\) is decreasing and \(0<\rho_n\leq1/3\), it follows that
\(
 f(\rho_n)\geq f(\tfrac{1}{3})>0,
\)
which is the last asserted half-angle inequality.
\end{proof}

\subsection{Endpoint reduction for the comparison inner product}
\label{app:ac-common-anchor-endpoint-reduction}

\begin{proof}[Proof of \Cref{lem:ac-common-anchor-scalar}]

\noindent\emph{No interior minimum.}
Differentiation gives
\[
 E_\alpha'(a)
 =\frac{\Gamma_\alpha(a)}
 {(1+2r_\alpha a+r_\alpha^2)^{3/2}},
\]
where
\[
\begin{aligned}
 \Gamma_\alpha(a)
 &:=g_{\alpha,0}+g_{\alpha,1}a
 -3r_\alpha^2\sin\alpha(1-s_n)a^2,\\
 g_{\alpha,0}
 &:=r_\alpha^3c_n\cos\alpha
 +\sin\alpha\bigl(s_n-r_\alpha^2(1-s_n)\bigr),\\
 g_{\alpha,1}
 &:=r_\alpha^2c_n\cos\alpha
 +r_\alpha\sin\alpha
 \bigl(3s_n-2-2r_\alpha^2(1-s_n)\bigr).
\end{aligned}                                                     \acEquationTag{eq:ac-22}
\]
The denominator in the formula for \(E_\alpha'(a)\) is positive on
\([\rho_n,1]\), and the leading coefficient of \(\Gamma_\alpha\) is
negative. If \(g_{\alpha,0}>0\), the discriminant
 \(
 g_{\alpha,1}^2
 +12r_\alpha^2\sin\alpha(1-s_n)g_{\alpha,0}
 \)
is positive.  Thus the two roots of \(\Gamma_\alpha\) are real, and
their product is negative.  Exactly one root is positive, so on the
positive half-line the sign of \(\Gamma_\alpha\) can change only from
\(+\) to \(-\).

If \(g_{\alpha,0}=0\), then
 \(
 \Gamma_\alpha(a)
 =a\bigl(g_{\alpha,1}
 -3r_\alpha^2\sin\alpha(1-s_n)a\bigr),
 \)
so on the positive half-line its sign either is always negative or
changes once from \(+\) to \(-\).  In either case,
\(E_\alpha\) has no interior minimum.

It remains to consider \(g_{\alpha,0}<0\).  The first term in the
formula for \(g_{\alpha,0}\) is nonnegative, so
\(
 s_n-r_\alpha^2(1-s_n)<0.
\)
Moving that first term of \(g_{\alpha,0}<0\) to the other side and dividing by
\(r_\alpha^2>0\) gives
\[
 r_\alpha c_n\cos\alpha
 <\sin\alpha\left(1-s_n-\frac{s_n}{r_\alpha^2}\right).
\]
Consequently,
\[
\begin{aligned}
 \frac{g_{\alpha,1}}{r_\alpha}+\sin\alpha
 &=r_\alpha c_n\cos\alpha
   +\sin\alpha\bigl(3s_n-1-2r_\alpha^2(1-s_n)\bigr)\\
 &<\sin\alpha\left(
   2s_n-\frac{s_n}{r_\alpha^2}-2r_\alpha^2(1-s_n)\right)<0.
\end{aligned}
\]
The last inequality follows from
\(s_n-r_\alpha^2(1-s_n)<0\), which implies
\(2r_\alpha^2(1-s_n)>2s_n\).  Hence
\(g_{\alpha,1}<0\), and every coefficient of
\(\Gamma_\alpha\) is negative.  Thus \(E_\alpha'(a)<0\) for
\(a\in(\rho_n,1)\). We have again excluded an interior minimum. Since \(E_\alpha\) is continuous on \([\rho_n,1]\), it follows that
\[
 E_\alpha(a)\geq
 \min\{E_\alpha(\rho_n),E_\alpha(1)\}.                       \acEquationTag{eq:ac-24}
\]

\smallskip
\noindent\emph{The endpoint \(a=1\).}
At \(a=1\),
 \(
 E_\alpha(1)=c_n\cos\alpha+s_n\sin\alpha
 =\cos(\alpha-\widehat{\zeta}_n/2)\geq c_n.
 \)

\smallskip
\noindent\emph{The endpoint \(a=\rho_n\).} To realize this scalar endpoint in the common-anchor geometry, fix
distinct indices \(i,j\) and put \(x:=-v_j\).  The boundary
characterization in \Cref{lem:ac-simplex-cell} gives
\(x\in M_i\) and \(v_i\cdot x=\rho_n\).  We may consequently write
\[
 x=\rho_n v_i+w\sin\widehat{\zeta}_n,
 \qquad w\perp v_i,\qquad \lVert w\rVert=1.
\]
Put \(\theta_\alpha:=\vartheta_n(\alpha)\). For the chosen point \(x=-v_j\), we have \(v_i\cdot x=\rho_n\).
By \Cref{prop:ac-maximal-displacement},
\(
 d_n\bigl(v_i,H_{\alpha,i}(x)\bigr)
 =\vartheta_n(\alpha)=\theta_\alpha.
\)

Applying
\eqref{eq:chord-angular-form} to
\[
 x=\rho_n v_i+w\sin\widehat{\zeta}_n
 \quad\text{and}\quad
 H_{\alpha,i}(x)
 =\frac{v_i+r_\alpha x}{\lVert v_i+r_\alpha x\rVert}
\]
gives
\(
 H_{\alpha,i}(x)
 =v_i\cos\theta_\alpha+w\sin\theta_\alpha.
\)
Using
\[
 p_\alpha(x)
 =e_0\cos\alpha
  +\bigl(\rho_n v_i
             +w\sin\widehat{\zeta}_n\bigr)\sin\alpha
\]
and
\[
 \iota_{v_i}H_{\alpha,i}(x)
 =(c_ne_0+s_nv_i)\cos\theta_\alpha
  +w\sin\theta_\alpha,
\]

Taking the inner product of the preceding two vectors and comparing
with the definition of \(E_\alpha\) in
\Cref{lem:ac-common-anchor-scalar}, we obtain
\[
 p_\alpha(x)\cdot\iota_{v_i}H_{\alpha,i}(x)
 =E_\alpha(\rho_n)
 =\bigl(c_n\cos\alpha+s_n\rho_n\sin\alpha\bigr)\cos\theta_\alpha
 +\sin \widehat{\zeta}_n\sin\alpha\sin\theta_\alpha.
 \acEquationTag{eq:ac-26}
\]

\smallskip
\noindent\emph{Low radial levels
\(\bigl(0\leq\alpha\leq\alpha_n^\ast\bigr)\).}
If \(0\leq\alpha\leq \alpha_n^\ast\), then
\(\sin\theta_\alpha=\sin\Theta_n(\alpha)=c_n\sin\alpha\).  Moreover,
\(\theta_\alpha=\Theta_n(\alpha)\leq
\Theta_n(\alpha_n^\ast)=\widehat{\zeta}_n/2\), and hence
\(\cos\theta_\alpha\geq c_n\).

Moreover,
 \(
 \theta_\alpha
 =\arcsin(c_n\sin\alpha)\leq\arcsin(\sin\alpha)=\alpha,
 \)
because \(0<c_n\leq1\) and
\(0\leq\alpha\leq\alpha_n^\ast<\tfrac{\pi}{2}\).  Hence
\(\cos\theta_\alpha\geq\cos\alpha\).  Substituting
\(\sin\theta_\alpha=c_n\sin\alpha\) into \eqref{eq:ac-26} and using
this last inequality gives \[
E_\alpha({\rho_n})
\geq c_n-c_n(1-{\sin \widehat{\zeta}_n})\sin^2\alpha
      +s_n{\rho_n}\sin\alpha\cos\theta_\alpha
\geq c_n.
\]

The last step follows from
\(\cos\theta_\alpha\geq c_n\), \(0\leq\sin\alpha\leq1\), and
\[
 1-{\sin \widehat{\zeta}_n}
 =\frac{\rho_n^2}{1+{\sin \widehat{\zeta}_n}}
 \leq \rho_n^2
 \leq s_n{\rho_n}.
\]
Indeed,
\[
 s_n\rho_n\sin\alpha\cos\theta_\alpha
 \geq c_ns_n\rho_n\sin\alpha
 \geq c_ns_n\rho_n\sin^2\alpha
 \geq c_n(1-\sin\widehat{\zeta}_n)\sin^2\alpha.
\]

\smallskip
\noindent\emph{High radial levels
\(\bigl(\alpha_n^\ast\leq\alpha<\widehat{\zeta}_n\bigr)\).} Suppose \(\alpha_n^\ast\leq\alpha<\widehat{\zeta}_n\).
As a function of
\(\theta_\alpha\in[\widehat{\zeta}_n/2,\widehat{\zeta}_n]\), the
right-hand side of \eqref{eq:ac-26} is positive: both coefficients of
\(\cos\theta_\alpha\) and \(\sin\theta_\alpha\) are positive, and both
trigonometric functions are nonnegative on this interval.  Its second
derivative with respect to \(\theta_\alpha\) is the negative of the
same expression.  It is therefore concave, and its minimum on the
closed interval occurs at
\(\widehat{\zeta}_n/2\) or \(\widehat{\zeta}_n\).
At \(\widehat{\zeta}_n/2\) it is
\[
 c_n\left[c_n\cos\alpha+(1-{\rho_n}+{\rho_n}s_n)\sin\alpha\right],               \acEquationTag{eq:ac-29}
\]
and at \(\widehat{\zeta}_n\) it is
\[
 c_n{\rho_n}\cos\alpha+(s_n\rho_n^2+\sin^2 \widehat{\zeta}_n)\sin\alpha.                          \acEquationTag{eq:ac-30}
\]

Each of \eqref{eq:ac-29} and \eqref{eq:ac-30}, viewed as a function of
\(\alpha\), is positive on
\([\alpha_n^\ast,\widehat{\zeta}_n]\), and its second derivative is its
negative.  Hence each function is concave, so its minimum on this
closed interval occurs at an endpoint.  The formulas, initially
obtained for \(\alpha<\widehat{\zeta}_n\), extend continuously to
\(\alpha=\widehat{\zeta}_n\).  It remains to evaluate them at
\(\alpha=\alpha_n^\ast\) and
\(\alpha=\widehat{\zeta}_n\); the four inequalities in
\Cref{lem:ac-half-angle-inequalities} provide the required bounds.

\smallskip
\noindent\emph{The four high-radial-level endpoint checks.}
We now spell out the four endpoint verifications.  Since
\[
 \sin\alpha_n^\ast=\frac{s_n}{c_n},\qquad
 \cos\alpha_n^\ast=\frac{\sqrt{\rho_n}}{c_n},
 \qquad \sin \widehat{\zeta}_n=2c_ns_n,\qquad \cos \widehat{\zeta}_n=\rho_n,
\]
the value in \eqref{eq:ac-29} at \(\alpha=\alpha_n^\ast\) is
\[
 c_n\sqrt{\rho_n}+s_n(1-\rho_n+\rho_ns_n)
 \geq c_n\sqrt{\rho_n}+s_n(1-\rho_n)\geq c_n.
\]
Here the last inequality is the first inequality in
\Cref{lem:ac-half-angle-inequalities}, multiplied by
\(1-\sqrt{\rho_n}\).  At \(\alpha=\widehat{\zeta}_n\), the value in
\eqref{eq:ac-29} is
\[
 c_n^2\bigl[1+4s_n^3(1-s_n)\bigr]\geq c_n,
\]
where the displayed bound is equivalent to the second inequality in
\Cref{lem:ac-half-angle-inequalities}, using
\(1-c_n=s_n^2/(1+c_n)\).

At \(\alpha=\alpha_n^\ast\), the value in \eqref{eq:ac-30} is
\[
 \rho_n\sqrt{\rho_n}+(1-\rho_n)\sin \widehat{\zeta}_n
 +\frac{s_n^2\rho_n^2}{c_n}\geq c_n
\]
by the last inequality in
\Cref{lem:ac-half-angle-inequalities}.  Finally, its value at
\(\alpha=\widehat{\zeta}_n\) is
\[
 \sin \widehat{\zeta}_n+\rho_n^2\bigl[c_n-\sin \widehat{\zeta}_n(1-s_n)\bigr]
 \geq\sin \widehat{\zeta}_n\geq c_n,
\]
because
\(c_n-\sin \widehat{\zeta}_n(1-s_n)=c_n[1-2s_n(1-s_n)]>0\).
The final inequality is the third one in
\Cref{lem:ac-half-angle-inequalities}.
Thus the continuous extensions of \eqref{eq:ac-29}--\eqref{eq:ac-30}
are at least \(c_n\) at both endpoints.  Equations
\eqref{eq:ac-24}, \eqref{eq:ac-26}, and
\eqref{eq:ac-29}--\eqref{eq:ac-30}, together with
\Cref{lem:ac-half-angle-inequalities} and these endpoint calculations,
prove
\[
 \min_{\rho_n\leq a\leq1}E_\alpha(a)
 =\min\{E_\alpha(\rho_n),E_\alpha(1)\}
 \geq c_n.
\]
\end{proof}

\section{Algebraic calculations for the low radial range}
\label{app:ac-low-radial-algebra}

This section contains the coefficient calculation used in the exact
factorization and the proof of the polynomial estimate used in the
low radial range.

\subsection{Coefficient verification for the factorization}
\label{app:ac-low-factorization}

We verify the coefficient identity used in
\eqref{eq:ac-45}. Retain the notation from the proof of
\Cref{lem:ac-near-low} and, for this calculation only, abbreviate
\[
 \rho:=\rho_n,\qquad
 k:=r_{\alpha\beta}^{\times},\qquad
 s:=r_{\alpha\beta}^{+},\qquad
 \chi:=\chi_n.
\]

We begin by expanding the polynomial defined in
\Cref{lem:ac-low-polynomial-sign}.  Its four summands satisfy
\[
\begin{aligned}
 -2(1-\rho)\bigl(k(1+z)+2\bigr)s
 &=-2(1-\rho)(k+2)s-2(1-\rho)ksz,\\
 k^3(1+z)\bigl[1+(4\rho-5)z^2\bigr]
 &=k^3+k^3z+k^3(4\rho-5)z^2+k^3(4\rho-5)z^3,\\
 4k^2(1-\chi z^2)&=4k^2-4k^2\chi z^2,\\
 4k\bigl[2-\rho(2-\rho)(1+z)\bigr]
 &=4k\bigl[2-\rho(2-\rho)\bigr]
   -4k\rho(2-\rho)z.
\end{aligned}
\]
Consequently, if
\(
 P_\rho(k,s,z)=p_0+p_1z+p_2z^2+p_3z^3,
\)
then
\[
\begin{aligned}
 p_0:={}&-2(1-\rho)(k+2)s+k^3+4k^2
         +4k\bigl[2-\rho(2-\rho)\bigr],\\
 p_1:={}&-2(1-\rho)ks+k^3-4k\rho(2-\rho),\\
 p_2:={}&k^3(4\rho-5)-4k^2\chi,\\
 p_3:={}&k^3(4\rho-5).
\end{aligned}
\]

We next expand the left-hand side of \eqref{eq:ac-45}.  Put
\[
 a:=1+\rho s+k^2,
 \qquad
 b:=k^2+2k-(1-\rho)s.
\]
The formulas preceding \eqref{eq:ac-45}, together with
\(\sin^2\widehat{\zeta}_n=1-\rho^2\), become
\[
\begin{aligned}
 X_\alpha^2X_\beta^2
 &=a^2+(1-\rho^2)(s^2-4k^2),\\
 X_\alpha X_\beta\cos\eta
 &=a-2(1-\rho^2)k^2z^2,\\
 \frac{\mathcal G}{1+\rho}
 &=b-2kz-\chi k^2z^2.
\end{aligned}
\]
Define the normalized left-hand side
\[
 \mathscr V(z)
 :=
 \frac{
 \sin^2\widehat{\zeta}_n
 (X_\alpha X_\beta)^2\sin^2\eta-\mathcal G^2
 }{k(1+\rho)^2}.
\]
Since
\(
(X_\alpha X_\beta)^2\sin^2\eta
=X_\alpha^2X_\beta^2-(X_\alpha X_\beta\cos\eta)^2,
\)
substitution gives
\[
\begin{aligned}
 \mathscr V(z)
={}&
 \frac{1-\rho^2}{k(1+\rho)^2}
 \left[
 a^2+(1-\rho^2)(s^2-4k^2)
 -\bigl(a-2(1-\rho^2)k^2z^2\bigr)^2
 \right]\\
 &-\frac1k\bigl(b-2kz-\chi k^2z^2\bigr)^2\\
={}&\frac1k\Bigl\{
 (1-\rho)^2
 \bigl[s^2-4k^2+4ak^2z^2
       -4(1-\rho^2)k^4z^4\bigr]\\
 &\hspace{31mm}
 -\bigl(b-2kz-\chi k^2z^2\bigr)^2
 \Bigr\}.
\end{aligned}
\]
Here the second equality uses
\((1-\rho^2)^2/(1+\rho)^2=(1-\rho)^2\).
The remaining square expands as
\[
\begin{aligned}
 \bigl(b-2kz-\chi k^2z^2\bigr)^2
 ={}&b^2-4kbz
 +(4k^2-2\chi k^2b)z^2\\
 &+4\chi k^3z^3+\chi^2k^4z^4.
\end{aligned}
\]
Thus
\(
\mathscr V(z)=q_0+q_1z+q_2z^2+q_3z^3+q_4z^4,
\)
where
\[
\begin{aligned}
 q_0&:=\frac{(1-\rho)^2(s^2-4k^2)-b^2}{k},\\
 q_1&:=4b,\\
 q_2&:=4(1-\rho)^2ak-4k+2\chi kb,\\
 q_3&:=-4\chi k^2,\\
 q_4&:=-k^3\bigl[4(1-\rho)^2(1-\rho^2)+\chi^2\bigr].
\end{aligned}
\]

It remains to compare these five coefficients with
\(p_0,\ldots,p_3\).  For the constant and linear coefficients, direct
substitution of \(b=k^2+2k-(1-\rho)s\) gives
\[
\begin{aligned}
 q_0
 &=2(1-\rho)(k+2)s-k^3-4k^2
   -4k\bigl[2-\rho(2-\rho)\bigr]
 =-p_0,\\
 q_1
 &=4k^2+8k-4(1-\rho)s
 =p_0-p_1.
\end{aligned}
\]
For the quadratic coefficient, use
\[
 2\rho(1-\rho)-\chi=-1,
 \qquad
 4(1-\rho)^2+2\chi=6-4\rho.
\]
After substituting \(a\) and \(b\) in \(q_2\) and collecting the terms
containing \(ks\), \(k\), \(k^2\), and \(k^3\), respectively, we obtain
\[
\begin{aligned}
 q_2
 &=-2(1-\rho)ks-4k\rho(2-\rho)
   +4\chi k^2+(6-4\rho)k^3\\
 &=p_1-p_2.
\end{aligned}
\]
The cubic coefficient satisfies immediately
\[
 q_3=-4\chi k^2=p_2-p_3.
\]
Finally,
\[
 4(1-\rho)^2(1-\rho^2)+\chi^2=5-4\rho,
\]
and hence
\[
 q_4=k^3(4\rho-5)=p_3.
\]
We have therefore proved
\[
\begin{aligned}
 \mathscr V(z)
 &=-p_0+(p_0-p_1)z+(p_1-p_2)z^2
   +(p_2-p_3)z^3+p_3z^4\\
 &=(z-1)(p_0+p_1z+p_2z^2+p_3z^3)\\
 &=(z-1)P_\rho(k,s,z).
\end{aligned}
\]
Multiplying by \(k(1+\rho)^2\) and restoring the original notation
gives
\[
 \sin^2\widehat{\zeta}_n(X_\alpha X_\beta)^2\sin^2\eta-\mathcal G^2
 =r_{\alpha\beta}^{\times}(1+\rho_n)^2(z-1)
   P_{\rho_n}
   (r_{\alpha\beta}^{\times},r_{\alpha\beta}^{+},z),
\]
which is \eqref{eq:ac-45}.

\subsection{Sign of \texorpdfstring{\(P_{\rho_n}\)}{P}}
\label{app:ac-low-polynomial-sign}

\begin{proof}[Proof of \Cref{lem:ac-low-polynomial-sign}]
The polynomial is affine and strictly decreasing in
\(r_{\alpha\beta}^{+}\):
\(
 \partial_{r_{\alpha\beta}^{+}}P_{\rho_n}
 =-2(1-\rho_n)
 \bigl(r_{\alpha\beta}^{\times}(1+z)+2\bigr)<0.
\)
Moreover, the arithmetic--geometric mean inequality gives
\(
 r_{\alpha\beta}^{+}\geq2r_{\alpha\beta}^{\times}.
\)

We divide the proof according to the sign of
\(
 r_{\alpha\beta}^{\times}(1-\chi_nz^2)+2(\rho_n-z).
\)
When this expression is nonpositive, the arithmetic--geometric mean
bound \(r_{\alpha\beta}^{+}\geq2r_{\alpha\beta}^{\times}\), together
with the monotonicity of \(P_{\rho_n}\) in
\(r_{\alpha\beta}^{+}\), directly gives the required sign.  When it is
positive, the hypothesis \eqref{eq:ac-low-poly-hyp} gives a sharper
lower bound for \(r_{\alpha\beta}^{+}\); monotonicity then reduces the
conclusion to the positivity of the polynomial
\(\mathcal B_{\rho_n}\) defined below.

First suppose
\[
 r_{\alpha\beta}^{\times}(1-\chi_nz^2)+2(\rho_n-z)\leq0.
\acEquationTag{eq:ac-low-poly-first-subcase}
\]

Since \(P_{\rho_n}\) is decreasing in
\(r_{\alpha\beta}^{+}\), we have
\[
\begin{aligned}
 P_{\rho_n}
 (r_{\alpha\beta}^{\times},r_{\alpha\beta}^{+},z)
 &\leq
 P_{\rho_n}
 (r_{\alpha\beta}^{\times},2r_{\alpha\beta}^{\times},z)\\
 &=
 r_{\alpha\beta}^{\times}
 \bigl(2\rho_n+r_{\alpha\beta}^{\times}(1+z)\bigr)\Bigl[
 r_{\alpha\beta}^{\times}(1-\chi_nz^2)+2(\rho_n-z)\\
 &\hspace{43mm}
 -2(1-\rho_n)z
 \bigl(1+r_{\alpha\beta}^{\times}(2-\rho_n)z\bigr)
 \Bigr]\leq0.
\end{aligned}
\]

It remains to consider
\[
 r_{\alpha\beta}^{\times}(1-\chi_nz^2)+2(\rho_n-z)>0.
\acEquationTag{eq:ac-low-poly-second-subcase}
\]
The hypothesis \eqref{eq:ac-low-poly-hyp} gives
\[
 r_{\alpha\beta}^{+}>
 \frac{(r_{\alpha\beta}^{\times})^2(1-\chi_nz^2)
 +2r_{\alpha\beta}^{\times}(1-z)}{1-\rho_n}.
\]

Since \(P_{\rho_n}\) is strictly decreasing in
\(r_{\alpha\beta}^{+}\), the preceding strict lower bound gives
\[
P_{\rho_n}
 (r_{\alpha\beta}^{\times},r_{\alpha\beta}^{+},z)
<
 P_{\rho_n}\left(
 r_{\alpha\beta}^{\times},
 \frac{(r_{\alpha\beta}^{\times})^2(1-\chi_nz^2)
 +2r_{\alpha\beta}^{\times}(1-z)}{1-\rho_n},
 z\right).
\]
Substitution in the displayed formula for \(P_{\rho_n}\)
and collection of like powers give
\[
P_{\rho_n}\left(
 r_{\alpha\beta}^{\times},
 \frac{(r_{\alpha\beta}^{\times})^2(1-\chi_nz^2)
 +2r_{\alpha\beta}^{\times}(1-z)}{1-\rho_n},
 z\right)
=-r_{\alpha\beta}^{\times}
 \mathcal B_{\rho_n}(r_{\alpha\beta}^{\times},z),
\]
where
\[
\begin{aligned}
 \mathcal B_{\rho_n}(r_{\alpha\beta}^{\times},z):={}&
 (1+z)(r_{\alpha\beta}^{\times})^2
 [1+(4\rho_n^2-8\rho_n+3)z^2]
 +4r_{\alpha\beta}^{\times}(1-z^2)\\
 &+4\rho_n(2-\rho_n)(1+z)-8z.
\end{aligned}
\]

Since \(0<\rho_n\leq1/2\),
\(
 4\rho_n^2-8\rho_n+3
 =(2\rho_n-1)(2\rho_n-3)\geq0.
\)
Thus the terms in \(\mathcal B_{\rho_n}\) containing
\(r_{\alpha\beta}^{\times}\) are nonnegative.

For every \(z\in[0,1]\),
\(
 \mathcal B_{\rho_n}(r_{\alpha\beta}^{\times},z)
 \geq4\rho_n(2-\rho_n)(1+z)-8z.
\)

The affine function
\(
 z\longmapsto4\rho_n(2-\rho_n)(1+z)-8z
\)
is decreasing on \([0,\rho_n]\).  Therefore, if \(z\leq\rho_n\),
\[
 \mathcal B_{\rho_n}(r_{\alpha\beta}^{\times},z)
 \geq4\rho_n(2-\rho_n)(1+\rho_n)-8\rho_n
 =4\rho_n^2(1-\rho_n)>0.
\]

Suppose now that \(z>\rho_n\).

Inequality \eqref{eq:ac-low-poly-second-subcase} gives
\(
 r_{\alpha\beta}^{\times}(1-\chi_nz^2)
 >2(z-\rho_n)>0.
\)
Since \(r_{\alpha\beta}^{\times}>0\), it follows first that
\(1-\chi_nz^2>0\).  Division by this positive quantity then gives
\[
 r_{\alpha\beta}^{\times}>
 \frac{2(z-\rho_n)}{1-\chi_nz^2}.
\]
Moreover,
\[
 \partial_{r_{\alpha\beta}^{\times}}
 \mathcal B_{\rho_n}(r_{\alpha\beta}^{\times},z)
 ={}2(1+z)r_{\alpha\beta}^{\times}
 \bigl[1+(4\rho_n^2-8\rho_n+3)z^2\bigr]
 +4(1-z^2)>0.
\]
Hence
\(r_{\alpha\beta}^{\times}\mapsto
\mathcal B_{\rho_n}(r_{\alpha\beta}^{\times},z)\)
is strictly increasing.

Since
\[
 r_{\alpha\beta}^{\times}>
 \frac{2(z-\rho_n)}{1-\chi_nz^2},
\]
monotonicity gives
\[
\mathcal B_{\rho_n}(r_{\alpha\beta}^{\times},z)
\geq
\mathcal B_{\rho_n}\left(
  \frac{2(z-\rho_n)}{1-\chi_nz^2},z\right).
\]

To verify the required value, multiply first by the positive quantity
\((1-\chi_nz^2)^2/4\).  Direct substitution in the definition of
\(\mathcal B_{\rho_n}\) gives
\[
\begin{aligned}
 &\frac{(1-\chi_nz^2)^2}{4}
 \mathcal B_{\rho_n}\left(
   \frac{2(z-\rho_n)}{1-\chi_nz^2},z\right)\\
 &\quad=(1+z)(z-\rho_n)^2
       [1+(4\rho_n^2-8\rho_n+3)z^2]\\
 &\qquad\quad
       +2(z-\rho_n)(1-z^2)(1-\chi_nz^2)\\
 &\qquad\quad
       +[\rho_n(2-\rho_n)(1+z)-2z](1-\chi_nz^2)^2.
\end{aligned}
\]
Substituting \(\chi_n=1+2\rho_n-2\rho_n^2\), expanding, and collecting
powers of \(z\), the right-hand side factors as
\[
\begin{aligned}
 z^2(1-\rho_n)^2
 [1+(1-2\rho_n^2)z]\,
 [1+3z^2-2\rho_n(2-\rho_n)z(1+z)].
\end{aligned}
\]
Since
\(
 1+(1-2\rho_n^2)z=-(2\rho_n^2z-z-1),
\)
division by \((1-\chi_nz^2)^2/4\) yields
\[
\begin{aligned}
 \mathcal B_{\rho_n}\left(
   \frac{2(z-\rho_n)}{1-\chi_nz^2},z\right)
 ={}&\frac{-4z^2(1-\rho_n)^2(2\rho_n^2z-z-1)}
 {(1-\chi_nz^2)^2}\\
 &\quad\cdot
 [1+3z^2-2\rho_n(2-\rho_n)z(1+z)].
\end{aligned}                                                \acEquationTag{eq:ac-50}
\]

Here \(2\rho_n^2z-z-1<0\).  Since
\(\rho\mapsto2\rho(2-\rho)\) is increasing on \([0,1/2]\) and
\(0<\rho_n\leq1/2\),
\(
 2\rho_n(2-\rho_n)\leq\tfrac32,
\)
the remaining factor in \eqref{eq:ac-50} satisfies
\[
\begin{aligned}
 1+3z^2-2\rho_n(2-\rho_n)z(1+z)
 &\geq1-\frac32z+\frac32z^2
 &=\frac58+\frac32\left(z-\frac12\right)^2>0.
\end{aligned}
\]
Thus
\(\mathcal B_{\rho_n}(r_{\alpha\beta}^{\times},z)>0\), and consequently
\(
 P_{\rho_n}
 (r_{\alpha\beta}^{\times},r_{\alpha\beta}^{+},z)<0
\)
in the subcase \eqref{eq:ac-low-poly-second-subcase}.

Together with the argument under
\eqref{eq:ac-low-poly-first-subcase}, this proves the lemma.
\end{proof}

\section{The mixed lower-bound endpoint estimate}
\label{app:ac-near-mixed-slack}

This section contains the concavity and endpoint verification used in
\Cref{lem:ac-near-mixed-slack}.

\begin{proof}[Proof of \Cref{lem:ac-near-mixed-slack}]
Fix \(\beta\).  We carry out the calculation described after the statement of the lemma.

\smallskip
\noindent\emph{Preliminary angle bound and the endpoint expression.}
The complementary angles satisfy
\[
 \overline\theta_\beta\geq \delta_\beta.                  \acEquationTag{eq:ac-59}
\]

By \Cref{prop:ac-maximal-displacement},
\(\theta_\beta=\vartheta_n(\beta)\leq\beta\).  Hence
\(
 \overline\theta_\beta
 =\widehat{\zeta}_n-\theta_\beta
 \geq\widehat{\zeta}_n-\beta
 =\delta_\beta,
\)
which proves \eqref{eq:ac-59}.
The displayed formula for
\(\mathscr E_{\alpha,\beta}^{\mathrm{left}}\) follows by substituting
\(z=0\) in \eqref{eq:ac-54} and using
\(\sin^2\widehat{\zeta}_n=(1-{\rho_n})(1+{\rho_n})\).

\smallskip
\noindent\emph{Concavity of the \(T_{\alpha,\beta}\)-term.}
Write the \(T_{\alpha,\beta}\)-term as

\[
 \sqrt{\sin(\widehat{\zeta}_n-\overline\theta_\beta)
 \sin\overline\theta_\beta}\,\psi_n(\theta_\alpha),
 \qquad
 \psi_n(\xi)
 :=\sqrt{\sin\xi\sin(\widehat{\zeta}_n-\xi)}.
\]
For \(0<\xi\leq \widehat{\zeta}_n/2\),
\[
 \psi_n'(\xi)
 =\frac{\sin(\widehat{\zeta}_n-2\xi)}{2\psi_n(\xi)}\geq0,
\]
and

\[
 4\psi_n(\xi)^3\psi_n''(\xi)
 =-4\sin\xi\sin(\widehat{\zeta}_n-\xi)\cos(\widehat{\zeta}_n-2\xi)
  -\sin^2(\widehat{\zeta}_n-2\xi)<0.
\]
Indeed,
\(0\leq \widehat{\zeta}_n-2\xi
<\widehat{\zeta}_n<\tfrac{\pi}{2}\).

Since \(\psi_n(0)=0\) and \(\psi_n\) is continuous at \(0\),
monotonicity and concavity extend to
\([0,\widehat{\zeta}_n/2]\).

By the derivative identities in \eqref{eq:theta-derivatives},
\(\Theta_n\) is increasing and concave on
\([0,\alpha_n^\ast]\).

For \(\alpha_0,\alpha_1\in[0,\alpha_n^\ast]\) and
\(t\in[0,1]\), concavity of \(\Theta_n\) and monotonicity of
\(\psi_n\) give the first inequality below, while concavity of
\(\psi_n\) gives the second:
\[
\begin{aligned}
 \psi_n\!\left(\Theta_n(t\alpha_0+(1-t)\alpha_1)\right)
 &\geq
 \psi_n\!\left(t\Theta_n(\alpha_0)
 +(1-t)\Theta_n(\alpha_1)\right)\\
 &\geq
 t\psi_n(\Theta_n(\alpha_0))
 +(1-t)\psi_n(\Theta_n(\alpha_1)).
\end{aligned}
\]
Thus
\(\psi_n\circ\Theta_n\) is concave.  Since the coefficient multiplying
this composition in the displayed formula above is nonnegative,
\(\alpha\mapsto T_{\alpha,\beta}\) is concave as well.

\smallskip
\noindent\emph{Concavity of
\(\mathscr E_{\alpha,\beta}^{\mathrm{left}}-T_{\alpha,\beta}\).}
We reduce the sign of its second derivative to a scalar estimate.

\smallskip
\noindent\textbf{Reduction to the scalar estimate.}
We differentiate the remaining part of
\(\mathscr E_{\alpha,\beta}^{\mathrm{left}}\) directly.  Put
\[
 \Xi_n(\alpha):=\cos\theta_\alpha
 -\frac{s_n^2}{\cos^3\theta_\alpha}.
\]
Using \eqref{eq:theta-derivatives} and the definition of
\(\Xi_n(\alpha)\), we obtain
\[
\begin{aligned}
 \cos(\theta_\alpha-\overline\theta_\beta)(\theta_\alpha')^2
 +\sin(\theta_\alpha-\overline\theta_\beta)\theta_\alpha''
 =\sin\overline\theta_\beta\sin\theta_\alpha
 +\cos\overline\theta_\beta\,\Xi_n(\alpha)
\end{aligned}
\]
and hence

\[
 \partial_\alpha^2\bigl(\mathscr E_{\alpha,\beta}^{\mathrm{left}}-T_{\alpha,\beta}\bigr)
 =-(1-{\rho_n})[\cos(\delta_\beta+\alpha)-\cos\overline\theta_\beta\,\Xi_n(\alpha)]
  +\sin\theta_\alpha(\sin\overline\theta_\beta
  -\sin\widehat{\zeta}_n\cos\overline\theta_\beta).
\]

Since
\(0\leq\overline\theta_\beta\leq\widehat{\zeta}_n/2\),
\[
 \tan\overline\theta_\beta
 \leq\tan\frac{\widehat{\zeta}_n}{2}
 =\frac{\sin\widehat{\zeta}_n}{1+\rho_n}
 <\sin\widehat{\zeta}_n.
\]

Since \(\cos\overline\theta_\beta>0\), multiplication by this
quantity gives
\(
 \sin\overline\theta_\beta
 -\sin\widehat{\zeta}_n\cos\overline\theta_\beta < 0.
\)

Hence the summand
\(
 \sin\theta_\alpha
 \bigl(\sin\overline\theta_\beta
 -\sin\widehat{\zeta}_n\cos\overline\theta_\beta\bigr)
\)
in
\(\partial_\alpha^2
(\mathscr E_{\alpha,\beta}^{\mathrm{left}}-T_{\alpha,\beta})\)
is nonpositive.  It remains to prove
\[
 \cos(\delta_\beta+\alpha)
 -\cos\overline\theta_\beta\,\Xi_n(\alpha)\geq0.
\]
If \(\Xi_n(\alpha)<0\), this follows immediately because both cosine
factors are positive.  Suppose therefore that
\(\Xi_n(\alpha)\geq0\), and put
\(
 \tau_\alpha:=\tan\alpha,\qquad
 \tau_n^\ast:=\tan(\widehat{\zeta}_n-\alpha_n^\ast).
\)

The identities
\[
 \sin\alpha
 =\frac{\tau_\alpha}{\sqrt{1+\tau_\alpha^2}},
 \qquad
 \cos\alpha
 =\frac{1}{\sqrt{1+\tau_\alpha^2}}
\]
and \(\theta_\alpha=\Theta_n(\alpha)\) give
\[
 \sin\theta_\alpha
 =\frac{c_n\tau_\alpha}{\sqrt{1+\tau_\alpha^2}},
 \qquad
 \cos\theta_\alpha
 =\frac{\sqrt{1+s_n^2\tau_\alpha^2}}
 {\sqrt{1+\tau_\alpha^2}}.
\]
Substitution into the definition of \(\Xi_n\) yields
\[
 \Xi_n(\alpha)=
 \frac{c_n^2(1-s_n^2\tau_\alpha^4)}
 {\sqrt{1+\tau_\alpha^2}(1+s_n^2\tau_\alpha^2)^{3/2}},       \acEquationTag{eq:ac-65}
\]
because the numerator after passage to a common denominator is
\(
 (1+s_n^2\tau_\alpha^2)^2
 -s_n^2(1+\tau_\alpha^2)^2
 =c_n^2(1-s_n^2\tau_\alpha^4).
\)

It remains to prove the scalar estimate
\[
 \Xi_n(\alpha)\leq\cos\alpha-\tau_n^\ast\sin\alpha.          \acEquationTag{eq:ac-66}
\]

\smallskip
\noindent\textbf{The case \(n\geq2\).}
Assume \(n\geq2\), equivalently
\(\rho_n\leq1/3\).  If
\(\tau_\alpha=0\), then \(\alpha=0\), and \eqref{eq:ac-66} reduces to
\(c_n^2\leq1\).  Assume therefore that \(\tau_\alpha>0\).  Multiplying the
right-hand side minus the left-hand side of \eqref{eq:ac-66} by the
positive factor \(\sqrt{1+\tau_\alpha^2}\) gives
\[
\begin{aligned}
&\sqrt{1+\tau_\alpha^2}
\bigl[\cos\alpha-\tau_n^\ast\sin\alpha-\Xi_n(\alpha)\bigr]\\
&\quad=
\underbrace{\tau_\alpha\left[
s_n^2(\tau_\alpha^{-1}+c_n^2\tau_\alpha^3)-\tau_n^\ast\right]}_
{\text{first summand}}\\
&\qquad\quad+
\underbrace{c_n^2(1-s_n^2\tau_\alpha^4)
\left[1-(1+s_n^2\tau_\alpha^2)^{-3/2}\right]}_
{\text{second summand}}.
\end{aligned}
\]

We first establish the bound
\(
 \tau_n^\ast<\tfrac12.
\)

Regard \(\rho\) as a continuous variable in \((0,1/3]\), and put
\[
 \widehat\zeta(\rho):=\arccos\rho,\qquad
 c_\rho:=\sqrt{\frac{1+\rho}{2}},\qquad
 s_\rho:=\sqrt{\frac{1-\rho}{2}},
\]
\[
 \alpha^\ast(\rho):=
 \arcsin\left(\frac{s_\rho}{c_\rho}\right).
\]
Direct differentiation gives
\[
 \frac{d}{d\rho}
 \bigl(\widehat\zeta(\rho)-\alpha^\ast(\rho)\bigr)
 =\frac{1-2c_\rho\sqrt\rho}
 {4c_\rho^2s_\rho\sqrt\rho}>0.
\]
Indeed,
\(2c_\rho\sqrt\rho=\sqrt{2\rho(1+\rho)}<1\) for
\(0<\rho\leq1/3\).  Therefore
\(\tan(\widehat\zeta(\rho)-\alpha^\ast(\rho))\) is increasing on this
interval, and at its right endpoint it equals
\[
 \tan\bigl(\widehat\zeta(1/3)-\alpha^\ast(1/3)\bigr)
 =\frac{2\sqrt2-1}{2\sqrt2+1}<\frac12.
\]
Taking \(\rho=\rho_n\) proves the stated bound.

We now show that the first summand is nonnegative.  Weighted AM--GM,
with weights \(3/4\) and \(1/4\), gives
\(
 \tau_\alpha^{-1}+c_n^2\tau_\alpha^3
 \geq \tfrac{4(c_n^2)^{1/4}}{3^{3/4}}
\).  Consequently,
\[
 s_n^2(\tau_\alpha^{-1}+c_n^2\tau_\alpha^3)
 \geq\frac{4s_n^2(c_n^2)^{1/4}}{3^{3/4}}
 \geq\frac{4\,2^{1/4}}9.
\]
For the second inequality, the function
\[
 \rho\longmapsto
 \frac{1-\rho}{2}
 \left(\frac{1+\rho}{2}\right)^{1/4}
\]
is decreasing for \(\rho>0\), since its logarithmic derivative is
\[
 -\frac{1}{1-\rho}+\frac{1}{4(1+\rho)}<0.
\]
Hence
\[
 s_n^2(c_n^2)^{1/4}
 \geq\frac13\left(\frac23\right)^{1/4}
\]
when \(\rho_n\leq1/3\).  Moreover,
\(
 \tfrac{4\,2^{1/4}}9>\tfrac12
\)
because the equivalent inequality \(8\,2^{1/4}>9\) follows after
taking fourth powers from \(8192>6561\).
Together with \(\tau_n^\ast<1/2\), these estimates show that
the first summand is nonnegative.

The second summand is also nonnegative: the assumption
\(\Xi_n(\alpha)\geq0\) and \eqref{eq:ac-65} give
\(1-s_n^2\tau_\alpha^4\geq0\), while
\(1-(1+s_n^2\tau_\alpha^2)^{-3/2}\geq0\).

\smallskip
\noindent\textbf{The case \(n=1\).}
Here \({\rho_1}=1/2\), and
\[
 c_1^2=\frac34,\qquad s_1^2=\frac14,\qquad
 0\leq \tau_\alpha\leq\frac1{\sqrt2},\qquad
 \tau_1^\ast=4\sqrt2-3\sqrt3<\frac12.
\]

Specializing \eqref{eq:ac-65} to \(n=1\) gives
\[
 \sqrt{1+\tau_\alpha^2}\,\Xi_1(\alpha)
 =
 \frac{3(1-\tau_\alpha^4/4)}
 {4(1+\tau_\alpha^2/4)^{3/2}}
 \leq
 \frac{3}{4(1+\tau_\alpha^2/4)^{3/2}},
\]
where the inequality uses \(1-\tau_\alpha^4/4\leq1\).  We now prove
the scalar estimate
\[
 \frac{3}{4(1+\tau_\alpha^2/4)^{3/2}}
 \leq1-\frac{\tau_\alpha}{2}.
\]

Bernoulli's inequality gives
\(
 (1+\tau_\alpha^2/4)^{3/2}
 \geq1+\tfrac{3\tau_\alpha^2}{8}
\).  Therefore
\[
 \frac{3}{4(1+\tau_\alpha^2/4)^{3/2}}
 \leq
 \frac{3}{4(1+3\tau_\alpha^2/8)}.
\]
It is enough to prove
\(
 \tfrac{3}{4(1+3\tau_\alpha^2/8)}
 \leq1-\tfrac{\tau_\alpha}{2}
\).
Multiplication by the positive denominator
\(4(1+3\tau_\alpha^2/8)\) shows that this inequality is equivalent to
\[
 4-8\tau_\alpha+6\tau_\alpha^2-3\tau_\alpha^3\geq0
 \qquad\left(0\leq \tau_\alpha\leq\frac1{\sqrt2}\right).
\]
The polynomial is decreasing on this interval, because its derivative
\(-8+12\tau_\alpha-9\tau_\alpha^2\) is always negative, and its value at the right endpoint
is \((28-19\sqrt2)/4>0\).
Combining the preceding estimate with the specialized form of
\eqref{eq:ac-65} gives
\[
 \sqrt{1+\tau_\alpha^2}\,\Xi_1(\alpha)
 \leq
 \frac{3}{4(1+\tau_\alpha^2/4)^{3/2}}
 \leq1-\frac{\tau_\alpha}{2}
 \leq1-\tau_1^\ast\tau_\alpha
 =\sqrt{1+\tau_\alpha^2}
 \bigl(\cos\alpha-\tau_1^\ast\sin\alpha\bigr).
\]
The penultimate inequality uses
\(\tau_1^\ast<1/2\) and \(\tau_\alpha\geq0\), and the final equality uses
\[
 \cos\alpha=\frac1{\sqrt{1+\tau_\alpha^2}},
 \qquad
 \sin\alpha=\frac{\tau_\alpha}{\sqrt{1+\tau_\alpha^2}}.
\]
Dividing by the positive factor \(\sqrt{1+\tau_\alpha^2}\) proves
\eqref{eq:ac-66} for \(n=1\).

\smallskip
\noindent\textbf{Return to concavity.}
We now return to the displayed second-derivative formula.
Equations \eqref{eq:ac-59} and \eqref{eq:ac-66}, together with
\(0\leq \delta_\beta\leq \widehat{\zeta}_n-\alpha_n^\ast\) give
\[
 \cos\overline\theta_\beta\,\Xi_n(\alpha)
 \leq\cos\delta_\beta(\cos\alpha-\tau_n^\ast\sin\alpha)
 \leq\cos\delta_\beta
 (\cos\alpha-\tan\delta_\beta\sin\alpha)
 =\cos(\delta_\beta+\alpha).
\]

The inequalities
\(
 \sin\overline\theta_\beta
 -\sin\widehat{\zeta}_n\cos\overline\theta_\beta<0
\)
and
\(
 \cos(\delta_\beta+\alpha)
 -\cos\overline\theta_\beta\,\Xi_n(\alpha)\geq0
\)
in the displayed formula for the second derivative show that
\[
 \partial_\alpha^2
 \bigl(\mathscr E_{\alpha,\beta}^{\mathrm{left}}
       -T_{\alpha,\beta}\bigr)\leq0
 \qquad(0<\alpha<\alpha_n^\ast).
\]
Hence
\(\mathscr E_{\alpha,\beta}^{\mathrm{left}}-T_{\alpha,\beta}\) is
concave on \((0,\alpha_n^\ast)\).  Because it is continuous on
\([0,\alpha_n^\ast]\), the concavity inequality extends to the
endpoints by taking limits.  Thus it is concave on
\([0,\alpha_n^\ast]\).  Since
\(\alpha\mapsto T_{\alpha,\beta}\) is also concave,
\(\alpha\mapsto\mathscr E_{\alpha,\beta}^{\mathrm{left}}\) is concave
on this interval.

\smallskip
\noindent\emph{Endpoint values.}
At the two endpoints,
\[
 \mathscr E_{0,\beta}^{\mathrm{left}}
 =(1-{\rho_n})
 (\cos\delta_\beta-\cos\overline\theta_\beta)\geq0,
\]

while, at \(\alpha=\alpha_n^\ast\), one has
\(\theta_\alpha=\Theta_n(\alpha_n^\ast)
=\widehat{\zeta}_n/2\).  Moreover,
\(0\leq\overline\theta_\beta\leq \widehat{\zeta}_n/2\), so
\(\sin(\widehat{\zeta}_n-\overline\theta_\beta)
\geq\sin\overline\theta_\beta\).  Therefore
\[
\begin{aligned}
 S_{\alpha,\beta}+T_{\alpha,\beta}
 &=s_n\left[\sin(\widehat{\zeta}_n-\overline\theta_\beta)
 +\sqrt{\sin(\widehat{\zeta}_n-\overline\theta_\beta)
 \sin\overline\theta_\beta}\right]\\
 &\geq s_n\left[\sin(\widehat{\zeta}_n-\overline\theta_\beta)
 +\sin\overline\theta_\beta\right]\\
 &=2s_n^2\cos(\widehat{\zeta}_n/2-\overline\theta_\beta)\\
 &=(1-{\rho_n})
 \cos(\widehat{\zeta}_n/2-\overline\theta_\beta),
\end{aligned}
\]
and hence
\[
 \mathscr E_{\alpha_n^\ast,\beta}^{\mathrm{left}}
 \geq(1-{\rho_n})\cos(\delta_\beta+\alpha_n^\ast)>0.
\]
Here \(0\leq \delta_\beta+\alpha_n^\ast
\leq \widehat{\zeta}_n<\tfrac{\pi}{2}\), which proves the
strict positivity.

\smallskip
\noindent\textbf{Conclusion.}
Concavity and the endpoint values prove
\(\mathscr E_{\alpha,\beta}^{\mathrm{left}}\geq0\).  By its
definition, this is equivalent to
\(
 \mathscr E_{\alpha,\beta}(0)
 \geq(1+\rho_n)T_{\alpha,\beta}.
\)
This proves the lemma.
\end{proof}

\section{The mixed upper-boundary estimate}
\label{app:ac-mixed-scalar-estimate}

This section contains the proof of the scalar estimate used when the
unconstrained maximizer belongs to the interval in
\Cref{cor:ac-cap-projected-maximizer}.

\subsection{The unconstrained-maximizer estimate}
\label{app:ac-unconstrained-maximizer-estimate}

\begin{proof}[Proof of \Cref{lem:ac-far-mixed-unconstrained}]

Since \(\mu=r\kappa\) and \(0<r\leq1\), one has
\(
 0<\mu\leq\kappa\leq1-2\rho.
\)
Therefore, for \(0<\rho\leq1/3\),
\[
 2\rho\mu\leq2\rho(1-2\rho)<1,
\]
which proves \eqref{eq:ac-mixed-positive-factor}.

Put
\[
 \Pi:=\sqrt{(1+2\rho r+r^2)(1+2\rho t+t^2)}
\]
and consider the difference of the squares of the two sides of
\eqref{eq:ac-mixed-free-bound}:
\[
 \Delta
 :=(\kappa\Pi+\rho t)^2
 -(t^2+\kappa^2-2\rho t\kappa)
  (1+\mu^2-2\rho\mu).
\]
Its exact expansion is
\[
 \Delta=(1-\kappa^2)(\mu^2-t^2)
 +2\rho\kappa(1+\kappa)(1+rt)(\mu+t)
 +\rho^2t^2+2\rho t\kappa\Pi.
 \acEquationTag{eq:ac-101}
\]

To prove \(\Delta\geq0\), it suffices to establish
\[
 (1-\kappa)(t-\mu)\leq2\rho\kappa.                       \acEquationTag{eq:ac-102}
\]
Indeed, the first two terms of \eqref{eq:ac-101} then combine as
\[
 (1+\kappa)(\mu+t)
 \bigl[2\rho\kappa(1+rt)-(1-\kappa)(t-\mu)\bigr]\geq0,
\]
and the remaining terms are nonnegative.

If \(t\leq\rho\kappa\), then
\[
 (1-\kappa)(t-\mu)
 \leq(1-\kappa)t
 \leq t
 \leq\rho\kappa
 \leq2\rho\kappa,
\]
which proves \eqref{eq:ac-102}.  Suppose instead that
\(t>\rho\kappa\).  The hypothesis
\eqref{eq:ac-mixed-left-endpoint} is now an inequality between positive
quantities, so it may be squared.

Moreover,
\(
 t^2+\kappa^2-2\rho t\kappa
 =(t-\rho\kappa)^2+(1-\rho^2)\kappa^2>0.
\)
After multiplying the difference between the squared right- and
left-hand sides of \eqref{eq:ac-mixed-left-endpoint} by
\(t^2+\kappa^2-2\rho t\kappa\), one obtains the
factorization below.

\[
\begin{aligned}
 0\leq{} &(t^2+\kappa^2-2\rho t\kappa)
 \left[
 \bigl(\rho+\mu(1-2\rho^2)\bigr)^2
 -\frac{(1+\mu^2-2\rho\mu)(t-\rho\kappa)^2}
  {t^2+\kappa^2-2\rho t\kappa}
 \right]\\
 &\quad=(1-\rho^2)
 [t(1-2\rho\mu)+\mu\kappa]\\
 &\qquad{}\times
 \{\kappa[2\rho+\mu(1-4\rho^2)]
  -t(1-2\rho\mu)\}.
\end{aligned}
\]

The factors
\[
 1-\rho^2
 \qquad\text{and}\qquad
 t(1-2\rho\mu)+\mu\kappa
\]
are positive: the first because \(0<\rho\leq1/3\), and the second
because \(t,\mu,\kappa>0\) and
\eqref{eq:ac-mixed-positive-factor} holds.  Hence the remaining factor
is nonnegative, which gives
\[
 t\leq
 \frac{\kappa[2\rho+\mu(1-4\rho^2)]}
 {1-2\rho\mu}.                                             \acEquationTag{eq:ac-103}
\]
Since \(0<\kappa<1\), the affine function
\(
 t\longmapsto2\rho\kappa-(1-\kappa)(t-\mu)
\)
is decreasing.  Substituting the upper bound \eqref{eq:ac-103} into
this function gives
\[
 2\rho\kappa-(1-\kappa)(t-\mu)
 \geq
 \frac{\mathscr P(\mu)}{1-2\rho\mu},
\]
where
\[
 \mathscr P(\varepsilon)
 :=
 2\rho\kappa^2
 +\varepsilon[(1-\kappa)^2-4\rho^2\kappa^2]
 -2\rho(1-\kappa)\varepsilon^2.
\]
This polynomial is concave on \([0,\kappa]\), and
\[
 \mathscr P(0)=2\rho\kappa^2,\qquad
 \mathscr P(\kappa)
 =\kappa[(1-\kappa)^2+2\rho(1-2\rho)\kappa^2]\geq0.
\]
Since \(0<\mu\leq\kappa\), it follows that
\(\mathscr P(\mu)\geq0\).  This proves \eqref{eq:ac-102} and hence
\(\Delta\geq0\).

Both
\(\kappa\Pi+\rho t\) and
\[
 \sqrt{1+\mu^2-2\rho\mu}\,
 \sqrt{t^2+\kappa^2-2\rho t\kappa}
\]
are positive.  Thus \(\Delta\geq0\) implies
\eqref{eq:ac-mixed-free-bound}.
\end{proof}
 
\bibliographystyle{alpha}

\begin{thebibliography}{CCSG{\etalchar{+}}09}

\bibitem[AAC{\etalchar{+}}26]{arya-GW}
Shreya Arya, Arnab Auddy, Ranthony~A. Clark, Sunhyuk Lim, Facundo M{\'e}moli,
  and Daniel Packer.
\newblock The {Gromov--Wasserstein} distance between spheres.
\newblock {\em Foundations of Computational Mathematics}, 26(1):75--130, 2026.

\bibitem[ABC{\etalchar{+}}26]{adams2022gromov}
Henry Adams, Johnathan Bush, Nate Clause, Florian Frick, Mario G{\'o}mez,
  Michael Harrison, R.~Amzi Jeffs, Evgeniya Lagoda, Sunhyuk Lim, Facundo
  M{\'e}moli, Michael Moy, Nikola Sadovek, Matt Superdock, Daniel Vargas,
  Qingsong Wang, and Ling Zhou.
\newblock Gromov-{H}ausdorff distances, {B}orsuk-{U}lam theorems, and
  {V}ietoris-{R}ips complexes.
\newblock {\em Algebraic \& Geometric Topology}, 2026.
\newblock To appear.

\bibitem[BBI01]{burago2022course}
Dmitri Burago, Yuri Burago, and Sergei Ivanov.
\newblock {\em A Course in Metric Geometry}, volume~33 of {\em Graduate Studies
  in Mathematics}.
\newblock American Mathematical Society, 2001.

\bibitem[CCSG{\etalchar{+}}09]{chazal2009gromov}
Fr{\'e}d{\'e}ric Chazal, David Cohen-Steiner, Leonidas~J Guibas, Facundo
  M{\'e}moli, and Steve~Y Oudot.
\newblock {Gromov--Hausdorff} stable signatures for shapes using persistence.
\newblock {\em Computer Graphics Forum}, 28(5):1393--1403, 2009.

\bibitem[CM10]{carlson-memoli-clustering}
Gunnar Carlsson and Facundo M{\'e}moli.
\newblock Characterization, stability and convergence of hierarchical
  clustering methods.
\newblock {\em Journal of Machine Learning Research}, 11(47):1425--1470, 2010.

\bibitem[Dek95]{dekster1995jung}
Boris~V. Dekster.
\newblock The {J}ung theorem for spherical and hyperbolic spaces.
\newblock {\em Acta Mathematica Hungarica}, 67(4):315--331, 1995.

\bibitem[DS81]{dubins1981equidiscontinuity}
Lester Dubins and Gideon Schwarz.
\newblock Equidiscontinuity of {B}orsuk-{U}lam functions.
\newblock {\em Pacific Journal of Mathematics}, 95(1):51--59, 1981.

\bibitem[Edw75]{edwards1975structure}
David~A. Edwards.
\newblock The structure of superspace.
\newblock In {\em Studies in Topology}, pages 121--133. Academic Press, 1975.

\bibitem[Gro99]{gromov1999metric}
Mikhail Gromov.
\newblock {\em Metric Structures for {R}iemannian and Non-{R}iemannian Spaces},
  volume 152 of {\em Progress in Mathematics}.
\newblock Birkh\"auser, 1999.
\newblock Based on the 1981 French original, with appendices by M. Katz, P.
  Pansu, and S. Semmes.

\bibitem[HJ23]{harrison2023quantitative}
Michael Harrison and R.~Amzi Jeffs.
\newblock Quantitative upper bounds on the {G}romov-{H}ausdorff distance
  between spheres.
\newblock {\em arXiv preprint arXiv:2309.11237}, 2023.

\bibitem[LMO24]{lim2020vietoris}
Sunhyuk Lim, Facundo M{\'e}moli, and Osman~Berat Okutan.
\newblock Vietoris-{R}ips persistent homology, injective metric spaces, and the
  filling radius.
\newblock {\em Algebraic \& Geometric Topology}, 24(2):1019--1100, 2024.

\bibitem[LMS23]{lim2021gromov}
Sunhyuk Lim, Facundo M{\'e}moli, and Zane Smith.
\newblock The {G}romov--{H}ausdorff distance between spheres.
\newblock {\em Geometry \& Topology}, 27(9):3733--3800, 2023.

\bibitem[M{\'e}m11]{memoli-2011-GW}
Facundo M{\'e}moli.
\newblock {Gromov--Wasserstein} distances and the metric approach to object
  matching.
\newblock {\em Foundations of Computational Mathematics}, 11(4):417--487, 2011.

\bibitem[M{\'e}m12]{memoli-properties}
Facundo M{\'e}moli.
\newblock Some properties of {G}romov--{H}ausdorff distances.
\newblock {\em Discrete \& Computational Geometry}, 48(2):416--440, 2012.

\bibitem[MS05]{memoli-sapiro}
Facundo M{\'e}moli and Guillermo Sapiro.
\newblock A theoretical and computational framework for isometry invariant
  recognition of point cloud data.
\newblock {\em Foundations of Computational Mathematics}, 5(3):313--347, 2005.

\bibitem[MS24]{memoli2024embedding}
Facundo M{\'e}moli and Zane~T. Smith.
\newblock Embedding--projection correspondences for the estimation of the
  {G}romov--{H}ausdorff distance.
\newblock {\em arXiv preprint arXiv:2407.03295}, 2024.

\bibitem[MSZ24]{zhou-cup-product}
Facundo M{\'e}moli, Anastasios Stefanou, and Ling Zhou.
\newblock Persistent cup product structures and related invariants.
\newblock {\em Journal of Applied and Computational Topology}, 8:93--148, 2024.

\bibitem[MZ25]{memoli2019persistent}
Facundo M{\'e}moli and Ling Zhou.
\newblock Persistent homotopy groups of metric spaces.
\newblock {\em Journal of Topology and Analysis}, 17(5):1481--1542, 2025.

\bibitem[RM26]{rodriguezmartin2024novel}
Sa{\'u}l Rodr{\'i}guez~Mart{\'i}n.
\newblock Some novel constructions of {G}romov--{H}ausdorff-optimal
  correspondences between spheres.
\newblock {\em Experimental Mathematics}, 2026.
\newblock Published online 11 April 2026.

\bibitem[San46]{santalo1946convex}
L.~A. Santal{\'o}.
\newblock Convex regions on the \(n\)-dimensional spherical surface.
\newblock {\em Annals of Mathematics}, 47(3):448--459, 1946.

\bibitem[Sch17]{schmiedl2017computational}
Felix Schmiedl.
\newblock Computational aspects of the {G}romov--{H}ausdorff distance and its
  application in non-rigid shape matching.
\newblock {\em Discrete \& Computational Geometry}, 57(4):854--880, 2017.

\bibitem[Stu23]{sturm-GW}
Karl-Theodor Sturm.
\newblock {\em The Space of Spaces: Curvature Bounds and Gradient Flows on the
  Space of Metric Measure Spaces}, volume 290, no. 1443 of {\em Memoirs of the
  American Mathematical Society}.
\newblock American Mathematical Society, Providence, RI, 2023.

\end{thebibliography}
\newcommand{\etalchar}[1]{$^{#1}$}

\end{document}